\tikzset{
    dot/.style={circle,fill=black,draw=black,inner sep=0pt,minimum size=0.5mm},
    >=stealth,
    }
\tikzset{
    dot2/.style={circle,fill=black,draw=black,inner sep=0pt,minimum size=0.2mm},
    >=stealth,
    }
\tikzset{
    ddot/.style={circle,fill=white,draw=black,inner sep=0pt,minimum size=0.8mm},
    >=stealth,
    }
\tikzset{decision/.style={ 
        draw,
        diamond,
        aspect=1.5
    }}
\tikzset{dia2/.style
={diamond,fill=white,draw=black,inner sep=0pt,minimum size=1mm},
    >=stealth,
    }
\tikzset{dia/.style
={star,fill=black,draw=black,inner sep=0pt,minimum size=1mm},
    >=stealth,
    }
\tikzset{dia/.style
={diamond,fill=black,draw=black,inner sep=0pt,minimum size=1.3mm},
    >=stealth,
    }
\def\DeclareSymbol#1#2#3{\xsavebox{#1}{\tikz[baseline=#2,scale=0.15]{#3}}}
\def\<#1>{\xusebox{#1}}
\newcommand{\pl}{\mathbin{\scaleobj{0.7}{\tikz \draw (0,0) node[shape=circle,draw,inner sep=0pt,minimum size=8.5pt] {\scriptsize $<$};}}}
\newcommand{\llpvariable}[2]{
    \tikz[baseline=0.01em]{\draw[line width=#1pt] (0, 0) -- (0, 0.6em); \draw[fill=#2, line width=#1pt] (0.1pt, 0.6em) circle (0.3ex);}
}
\newcommand{\llp}{
  \ensuremath{\llpvariable{0.2}{white}}
}
\newcommand{\llpg}{
  \ensuremath{\llpvariable{0.8}{black}}
}
\tikzset{>=stealth',
         cvertex/.style={circle,draw=black,inner sep=1pt,outer sep=3pt},
         vertex/.style={circle,fill=black,inner sep=1pt,outer sep=3pt},
         star/.style={circle,fill=yellow,inner sep=0.75pt,outer sep=0.75pt},
         tvertex/.style={inner sep=1pt,font=\scriptsize},
         gap/.style={inner sep=0.5pt,fill=white}}
\tikzstyle{mybox} = [draw=black, fill=blue!10, very thick,
\tikzstyle{boxtitle} =[fill=blue!50, text=white,rectangle,rounded corners]
\tikzstyle{decision} = [diamond, draw, fill=blue!20,
\tikzstyle{block} = [rectangle, draw, fill=blue!20,
\tikzstyle{line} = [draw, very thick, color=black!50, -latex']
\tikzstyle{cloud} = [draw, ellipse,fill=red!40, 
\tikzstyle{cloud2} = [draw, ellipse,fill=red!30, text=white,text width=10em, node distance=2.5cm, text centered, minimum height=4em]
\tikzstyle{cloud3} = [draw, ellipse, fill=cyan!30, 
\tikzstyle{cloud4} = [draw, ellipse,fill=orange!70, node distance=2.5cm,
\tikzstyle{cloud5} = [draw, ellipse,fill=red!20, node distance=2.5cm,
\tikzstyle{cloud6} = [draw, ellipse,fill=red!20, node distance=2.5cm,
\tikzset{
    position/.style args={#1:#2 from #3}{
        at=(#3.#1), anchor=#1+180, shift=(#1:#2)
    }
}
\DeclareRobustCommand{\gobblefour}[4]{}
\renewcommand{\subjclassname}{%
  \textup{1991} Mathematics Subject Classification}
\let\csname subjclassname@1991\endcsname \subjclassname
\definecolor{gr}{rgb}   {0.,   0.69,   0.23 }
\definecolor{bl}{rgb}   {0.,   0.5,   1. }
\definecolor{mg}{rgb}   {0.85,  0.,    0.85}
\definecolor{yl}{rgb}   {0.8,  0.7,   0.}
\definecolor{or}{rgb}  {0.7,0.2,0.2}
\newcommand{\Cc}{\mathcal{C}}
\newcommand{\Bb}{\mathcal{B}}
\newcommand{\CC}{\mathcal{C}}
\newcommand{\CE}{\mathcal{E}}
\newcommand{\IDC}{\mathds{1}}
\newtheorem{theorem}{Theorem} [section]
\newtheorem{lemma}[theorem]{Lemma}
\newtheorem{proposition}[theorem]{Proposition}
\newtheorem{remark}[theorem]{Remark}
\newtheorem{definition}[theorem]{Definition}
\newtheorem{corollary}[theorem]{Corollary}
\DeclareMathOperator*{\supp}{supp}
\newcommand{\Z}{\mathbb{Z}}
\newcommand{\R}{\mathbb{R}}
\newcommand{\C}{\mathbb{C}}
\newcommand{\T}{\mathbb{T}}
\renewcommand{\S}{\mathbb{S}}
\newcommand{\deff}{\stackrel{\textup{def}}{=}}
\renewcommand{\S}{\mathcal{S}}
\let\Re=\undefined\DeclareMathOperator*{\Re}{Re}
\let\Im=\undefined\DeclareMathOperator*{\Im}{Im}
\let\P= \undefined
\newcommand{\P}{\mathbf{P}}
\newcommand{\Q}{\mathbf{Q}}
\renewcommand{\AA}{\mathbf{A}}
\newcommand{\E}{\mathbb{E}}
\newcommand{\K}{\mathcal{K}}
\newcommand{\al}{\alpha}
\newcommand{\dl}{\delta}
\newcommand{\too}{\longrightarrow}
\newcommand{\Dl}{\Delta}
\newcommand{\eps}{\varepsilon}
\newcommand{\kk}{\kappa}
\newcommand{\ld}{\lambda}
\newcommand{\dt}{\partial_t}
\newcommand{\les}{\lesssim}
\newcommand{\jb}[1]
{\langle #1 \rangle}
\newcommand{\gm}{\mathbf{\mathrm{g}}}
\newcommand{\Prob}{\mathbb{P}}
\newcommand{\M}{\mathcal{M}}
\newcommand{\N}{\mathbb{N}}
\newcommand{\ZZ}{\mathcal{Z}}
\renewcommand{\H}{\mathcal{H}}
\newcommand{\D}{\mathcal{D}}
\newcommand{\drm}{\textbf{d}}
\def\DeclareSymbol#1#2#3{\expandafter\gdef\csname MH@symb@#1\endcsname{\tikz[baseline=#2, scale=.18]{#3}}}
\def\<#1>{\ensuremath{\mathchoice{\tikzsetnextfilename{macros#1}{\color{black}\csname MH@symb@#1\endcsname}}{\tikzsetnextfilename{macros#1}{\color{black}\csname MH@symb@#1\endcsname}}{\tikzsetnextfilename{macros#1}\scalebox{.7}{\color{black}\csname MH@symb@#1\endcsname}}
{\tikzsetnextfilename{macros#1}\scalebox{.5}{\color{black}\csname MH@symb@#1\endcsname}}}} 
\newcommand{\PA}{\mathsf{P}}
\newcommand{\PI}{\mathsf{\Pi}}
\newcommand{\DC}{\mathsf{C}}
\newcommand{\dg}{\mathbf{d}}
\numberwithin{equation}{section}
\numberwithin{theorem}{section}
\begin{document}
\baselineskip = 15pt

\title{Anderson stochastic quantization equation}

\author[H.~Eulry, A.~Mouzard, and T.~Robert]
{Hugo Eulry, Antoine Mouzard, and Tristan Robert}
%
%
%
%
%
%
%
%
%

%

\begin{abstract} We study the parabolic defocusing stochastic quantization equation with both mutliplicative spatial white noise and an independant space-time white noise forcing, on compact surfaces, with polynomial nonlinearity. After renormalizing the nonlinearity, we construct the random Gibbs measure as an absolutely continuous measure with respect to the law of the Anderson Gaussian Free Field for fixed realization of the spatial white noise. Then, when the initial data is distributed according to the Gibbs measure, we prove almost sure global well-posedness for the dynamics and invariance of the Gibbs measure.
\end{abstract}

\date{\today}


\maketitle
%


\tableofcontents

\baselineskip = 14pt

\section{Introduction}
\subsection{A singular SPDE in a singular random environment}
Let $\M$ be a closed Riemannian surface, $2$-dimensional, compact and boundaryless. We consider the parabolic defocusing stochastic quantization equation
\begin{align}{\label{AndersonSQE}}
\begin{cases}
    \partial_t u + \H u +F'(u) = \sqrt{2}\zeta,\\
    u(0)=u_0,
    \end{cases}
\end{align}
where
\begin{align}\label{polynom}
F(X)=\sum_{k=0}^{2m}a_k X^k
\end{align} 
is a polynomial of even degree $2m$, $\zeta$ is a space-time white noise on $\R\times\M$, and $\H$ denotes the Anderson Hamiltonian operator. The latter is a rough and random perturbation of the Laplace-Beltrami operator: 
\begin{align}\label{H}
\H\approx -\Delta +\xi,
\end{align}
where $\xi$ is a spatial white noise on $\M$, independent of $\zeta$.

This study finds its place among the many other ones dealing with invariant measures for singular stochastic PDEs. If we were to get rid of the spatial white noise and consider $-\Dl$ instead of $\H$, then \eqref{AndersonSQE} corresponds to the usual stochastic quantization equation of the $P(\Phi_2)$ model, for which the analogue of the Gibbs measure \eqref{AndersonGibbs} was constructed as part of the development of the Euclidean Quantum Field Theory in the 60s and 70s (see e.g. \cite{Simon} and references therein), while the corresponding dynamics was first studied in \cite{DPD}; see also \cite{GH,MW,TW}, among others. Other dynamics leaving this measure invariant were also studied: wave equations \cite{GKO,GKOT,ORT,OT1,OTWZ}, Schrödinger equations \cite{BL,Bourgain96,DNY1,OT,Tz},  and complex Ginzburg-Landau equations \cite{DBDF,RZ,Trenberth}, to name but a few.

Without the stochastic forcing term $\zeta$ in \eqref{AndersonSQE} and with deterministic initial data, the equation becomes the Parabolic Anderson Model with polynomial nonlinearity, amenable to both the paracontrolled calculus approach \cite{GIP} or the approach using regularity structures \cite{Hairer}. Recent progress on the understanding of the Anderson Hamiltonian \cite{AllezChouk,GIP,GUZ,Labbe,HL,Mouzard22} opened the door to the study of dynamics in singular environment with regular enough deterministic data, such as the Anderson heat equation \cite{GIP,HL2}, or Anderson wave and Schr\"odinger equations \cite{DLTV,DM,DW,GUZ,MZ,TV1,TV2,Ugurcan}. In this paper, we go one step further and study a singular stochastic PDE in a singular random environment, namely the Anderson heat equation with polynomial nonlinearity and both rough deterministic or random initial data, and singular stochastic source term \eqref{AndersonSQE}. A similar approach was recently considered by Barashkov, De Vecchi, and Zachhuber \cite{BDVZ}, who studied the problem of invariance of the Gibbs measure for the cubic non-linear Anderson wave equation.

\subsection{Main results and sketch of the proof}

 Because $\xi$ in \eqref{H} is almost surely of regularity\footnote{$\Cc^\sigma$ denotes the H\"older space of regularity $\sigma$; see Definition~\ref{def} below.} $\Cc^{-1-\kappa}$ for any $\kappa>0$, the very definition of $\H$ is tricky as one needs to go through a renormalization procedure to make sense of it as an unbounded $L^2(\M)$ operator. Its precise definition is given in Section \ref{SEC:Anderson}, see for instance \cite{AllezChouk,GUZ,Labbe,Mouzard22} and references therein for the whole story, the most important point being that $\H$ can be defined as an unbounded positive self-adjoint operator with dense domain and compact resolvent. In this work, we then construct a Gibbs type measure for \eqref{AndersonSQE} and prove local and almost sure global well-posedness and invariance of this measure.

\smallskip

The noiseless version of \eqref{AndersonSQE} has indeed a gradient flow structure 
$$
\partial_t u + \nabla_u \CE(u)=0,
$$
with energy 
$$
\CE(u):=\int_\M \Big[\frac{1}{2}u\H u + F(u)\Big]\, d x.
$$

\noindent Following the finite dimensional mantra, this yields a formal Gibbs measure for \eqref{AndersonSQE} under the form 
$$
d\rho(u)=\ZZ^{-1}e^{-\CE(u)}d u,
$$
or more precisely 
\begin{align}{\label{AndersonGibbs}}
    d\rho(u)=\ZZ^{-1}e^{-\int_\M \big[\frac{1}{2}u\H u + F(u)\big] dx}d u.
\end{align}
As it is, \eqref{AndersonGibbs} is only a formal expression and several problems arise when we try to make sense of it. First of all, $d u$ supposedly refers to an infinite dimensional Lebesgue measure, which has no proper definition. The usual trick is to consider instead the quadratic part of the density together with $d u$ so that it defines a Gaussian measure 
$$
d \mu^\H(u):=\tilde{\ZZ}^{-1}e^{-\frac{1}{2}\int_\M u\H u\, dx}d u,
$$
see Subsection \ref{AndersonGreenGFF} for the details on $\mu^\H$. It is defined as the centered Gaussian measure on $L^2(\M)$ with covariance operator $\H^{-1}$. However, since $\H^{-1}$ is not trace class on $L^2(\M)$, $\mu^\H$ will rather be supported on strictly larger spaces, namely $H^{-\varepsilon}(\M)$ for any $\varepsilon>0$. Another equivalent definition of $\mu^\H$ would be to match the law of the Gaussian Free Field (GFF) associated to $\H$, that is the random series
\begin{align}{\label{DefGFF}}
    u^\omega(x):=\sum_{n\geq0}\frac{\gamma_n(\omega)}{\sqrt{\lambda_n}}\varphi_n(x)
\end{align}

\noindent where $\gamma_n$ are i.i.d. standard Gaussian random variables, $\lambda_n$ the eigenvalues for $\H$ and $\varphi_n$ the corresponding $L^2(\M)$-orthonormal basis of eigenfunctions. This random series can be shown to almost surely converge in $H^{-\varepsilon}(\M)$ for positive $\varepsilon$ but not in $L^2(\M)$. This poses another difficulty in that typical elements $u$ in the support of $\mu^\H$ would then be distributions, making the non-linear term $F(u)$ ill-defined as $u\notin L^2(\M)$ $\mu^\H$-almost surely. This can be handled by performing a Wick renormalization of the non-linear term $F(u)$ that will be described in Subsection \ref{SubRenormWick}. In short, if 
\begin{align}\label{PN}
\P_N:=\psi(N^{-2}\Delta)
\end{align}
is a Schwartz multiplier, then 
$$
\E_{\mu^\H}\left[|\P_N u|^2(x)\right]=\sum_{n\geq0}\frac{1}{\lambda_n}|\P_N\varphi_n|^2(x)\rightarrow +\infty
$$
as $N$ goes to $+\infty$. This is due to the fact that the Green's function of $\H$ on the diagonal has a logarithmic divergence, see \eqref{GN1} for a precise statement. Thus, Wick reordering replaces monomials $X^k$ by Hermite polynomials $H_k(X, \sigma^2)$ with variance $\sigma^2$. Writing 
\begin{align}\label{sigmaN}
\sigma_N^2(x):=\E_{\mu^\H}\left[|\P_N u|^2(x)\right],
\end{align} 
we then define the corresponding renormalized monomial
$$
(\P_N u)^{\diamond k}:= H_k(\P_N u, \sigma_N^2)
$$
and extend the definition to polynomials as in \eqref{polynom} by linearity:
\begin{align}\label{polynomWick}
F_N^\diamond(\P_N u)=\sum_{k=0}^{2m}a_k(\P_Nu)^{\diamond k}=\sum_{k=0}^{2m}a_kH_k(\P_Nu,\sigma_N^2).
\end{align} 
The key point is that we get convergence of the renormalized non-linearity as $N\to+\infty$, that is
$$
\int_\M F_N^\diamond(\P_N u)\,d x\to \int_\M F^\diamond(u)\,d x
$$
in $L^p(d \mu^\H)$ for any $p\ge1$; see Proposition~\ref{RenormNonLinear} below. Definition and convergence properties of Hermite polynomials will be explained in Subsection \ref{SubRenormWick}. From now on, $\sigma_N$ is fixed to be as above and all the renormalized powers will be constructed with respect to $\sigma_N$. After this regularization and renormalization, we are left with an approximate Gibbs measure 
\begin{align}{\label{TruncatedGibbsMeasure}}
    d\rho_N(u):=\ZZ_N^{-1}e^{-\int_\M F_N^\diamond(\P_Nu)\,d x}\,d \mu^\H(u).
\end{align}
Recall that we work with a defocusing non-linearity, thus $\rho_N$ is well-defined for any fixed $N$. Our first result is the following, where we construct the Gibbs measure \eqref{AndersonGibbs} as the limit of the sequence $(\rho_N)_N$.

\begin{theorem}{\label{ConvergenceGibbsMeasure}}
    The sequence $(\rho_N)_N$ converges in total variation to a measure $\rho$ that is absolutely continuous with respect to $\mu^\H$.
\end{theorem}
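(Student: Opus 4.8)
The plan is to construct $\rho$ directly as the measure with density $\ZZ^{-1}e^{-V}$ with respect to $\mu^\H$, where $V(u):=\int_\M F^\diamond(u)\,dx$ and $\ZZ:=\E_{\mu^\H}\big[e^{-V}\big]$, and to deduce the total variation convergence from $L^1(\mu^\H)$–convergence of the densities. Write $V_N(u):=\int_\M F_N^\diamond(\P_N u)\,dx$, so that $d\rho_N=\ZZ_N^{-1}e^{-V_N}\,d\mu^\H$ with $\ZZ_N=\E_{\mu^\H}[e^{-V_N}]\in(0,\infty)$. By Proposition~\ref{RenormNonLinear}, $V_N\to V$ in $L^p(d\mu^\H)$ for every $p\in[1,\infty)$; in particular $V_N\to V$ in $\mu^\H$–measure, hence $e^{-V_N}\to e^{-V}$ in $\mu^\H$–measure. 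To upgrade this convergence to $L^1(\mu^\H)$ it suffices, by Vitali's convergence theorem, to establish the uniform integrability of $\{e^{-V_N}\}_N$, and for that the key estimate is the Nelson-type bound
\begin{equation}\label{NelsonBound}
\sup_{N\ge 1}\ \E_{\mu^\H}\big[e^{-p V_N}\big] < \infty \qquad\text{for every } p\in[1,\infty).
\end{equation}

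The main obstacle is \eqref{NelsonBound}. It rests on two facts. First, the defocusing sign $a_{2m}>0$ yields a crude \emph{deterministic} lower bound: expanding the Wick monomials and applying Young's inequality, one gets $F_M^\diamond(y)\ge -C(1+\sigma_M^2)^m$ for all $y\in\R$, whence $V_M(u)\ge -C|\M|\,(1+\sigma_M^2)^m\gtrsim -C(\log M)^m$ for \emph{all} $u$, where we use that $\sigma_M^2\lesssim\log M$ thanks to the logarithmic on-diagonal divergence of the Green's function of $\H$ (see~\eqref{GN1}). Second, Gaussian hypercontractivity in the Wiener chaos of $\mu^\H$, together with the quantitative $L^2$ bounds that underlie the proof of Proposition~\ref{RenormNonLinear}, gives $\|V_N-V_M\|_{L^p(\mu^\H)}\lesssim p^{m}\,M^{-\theta}$ uniformly in $N\ge M$, for some $\theta>0$. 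Then, given a threshold $K>0$ and a scale $M=M(K)$ with $\log M\sim K^{1/m}$ chosen so that $C(\log M)^m\le K/2$, one has the inclusion $\{V_N<-K\}\subset\{|V_N-V_M|>K/2\}$; Chebyshev's inequality in $L^p$ yields $\mu^\H(V_N<-K)\le(2C p^m M^{-\theta}/K)^p$, and optimizing over $p\to\infty$ shows that $\mu^\H(V_N<-K)$ decays faster than every exponential $e^{-qK}$, uniformly in $N$, so that the layer-cake formula gives \eqref{NelsonBound}. The only point specific to the present paper is that this whole scheme must be run with the eigenfunctions and Green's function of the Anderson Hamiltonian (with $\xi$–dependent constants, for $\xi$ in a set of full probability) rather than those of $-\Delta$; since the on-diagonal singularity remains logarithmic, the renormalization and the estimates above carry over with only cosmetic modifications. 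A Boué–Dupuis variational argument in the spirit of \cite{BDVZ} would be an alternative route to \eqref{NelsonBound}.

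Granting \eqref{NelsonBound}, the remainder is routine. The family $\{e^{-V_N}\}_N$ is bounded in $L^p(\mu^\H)$ for some $p>1$, hence uniformly integrable, so Vitali's theorem upgrades convergence in measure to $e^{-V_N}\to e^{-V}$ in $L^1(\mu^\H)$; in particular $\ZZ_N\to\ZZ$, with $\ZZ\in(0,\infty)$ since $0<e^{-V}<\infty$ $\mu^\H$–almost surely (as $V<\infty$ a.s., $V$ being in every $L^p(\mu^\H)$) and $e^{-V}\in L^1(\mu^\H)$. Consequently $d\rho=\ZZ^{-1}e^{-V}\,d\mu^\H$ is a well-defined probability measure, absolutely continuous with respect to $\mu^\H$ by construction, and
\begin{equation}
2\,\|\rho_N-\rho\|_{\mathrm{TV}} = \E_{\mu^\H}\Big[\big|\ZZ_N^{-1}e^{-V_N}-\ZZ^{-1}e^{-V}\big|\Big] \le \ZZ_N^{-1}\,\E_{\mu^\H}\big[|e^{-V_N}-e^{-V}|\big] + \big|\ZZ_N^{-1}-\ZZ^{-1}\big|\,\ZZ \longrightarrow 0,
\end{equation}
which is the claimed total variation convergence.
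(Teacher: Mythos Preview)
Your proof is correct and takes a genuinely different route from the paper. For the key uniform bound $\sup_N\E_{\mu^\H}[e^{-pV_N}]<\infty$, the paper uses the Bou\'e--Dupuis variational formula (Proposition~\ref{UnifPartitionFunction}): writing $\log\E_{\mu^\H}[R_N]$ as a supremum over drifts $\Theta$, expanding $F_N^\diamond(\P_N u+\P_N\Theta)$ via Lemma~\ref{BinomialHermite}, and absorbing the cross terms into the coercive contributions $-\int_\M F(\P_N\Theta)$ and $-\tfrac12\int_0^1\|\dot\Theta\|_{\D(\sqrt\H)}^2$ through interpolation and the fractional Leibniz rule. You instead run the classical Nelson argument: the deterministic lower bound $V_M\ge -C(\log M)^m$ (coming from $\sigma_M^2(x)\sim\tfrac1{2\pi}\log M$, which is Corollary~\ref{COR:GHN} on the diagonal rather than \eqref{GN1}) together with the quantitative rate $\|V_N-V_M\|_{L^p}\lesssim p^m M^{-\theta}$, which indeed follows from \eqref{GN1N2k} and Lemma~\ref{WienerChaos}. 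Your approach is more elementary and stays entirely within the $L^2$ estimates already proved in Proposition~\ref{RenormNonLinear}, while the paper's variational method avoids the Nelson trick altogether and is arguably more robust for extensions (e.g.\ to three dimensions or unbounded domains, as in the references \cite{BG1,BG2}). One small correction: your citation for the on-diagonal logarithmic divergence should be \eqref{GHN1} from Corollary~\ref{COR:GHN}, not \eqref{GN1}, since the latter concerns $G^{1-\Delta}$ rather than $G^\H$; the point that the Anderson Green function shares the same singularity is precisely what the paper establishes there via Lemma~\ref{LEM:GreenH}.
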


\noindent Detailed proof will be given in Subsection \ref{SubConvergenceGibbsMeasure} but the core idea, once convergence of the renormalized polynomials has been established, is to use Boué-Dupuis' formula as introduced in \cite{BG1} and \cite{BG2} to get uniform boundedness of the density.


\smallskip 

Now that a candidate Gibbs measure has been constructed, we have to build the corresponding dynamics and check that the measure stays invariant under the flow. A natural way to do so is to truncate the equation 
\begin{align}{\label{TruncEquation}}
    \partial_t u_N + \H u_N + \P_N f^\diamond_N(\P_Nu_N)=\sqrt{2}\zeta
\end{align}

\noindent where $f_n^\diamond= (F_N^\diamond)'$. Local well-posedness can be proved using Da Prato Debussche trick, from there, it is only a matter of proving that solutions exist globally for some well chosen subset of initial data and that we can pass $N$ to the limit. We first show local-well posedness and convergence of the solution to \eqref{TruncEquation} for deterministic initial data.
\begin{theorem}\label{THM:LWP}
Let $0<\epsilon<\frac1{2m-1}$, $\eps<\sigma<1$, and $q>1$ be such that $\frac{\sigma+\eps}{2}q'<\frac1{2m-1}$. Then $\Prob$-almost surely, for any $u_0\in\Cc^{-\eps}(\M)$ and $T_0>0$, there exists $T\in (0;T_0\wedge 1]$ such that for any $N\in\N^*$,  \eqref{TruncEquation} admits a solution $u_N\in C([0;T];\Cc^{-\eps}(\M))$, unique in the affine space $\llp+X_T^{-\eps,\sigma}$. Moreover $u_N$ converges almost surely to some $u\in \llp+X_T^{-\eps,\sigma}$ which is the unique solution to\footnote{The renormalized nonlinearity $f^\diamond$ is well-defined on the class $\llp+X_T^{-\eps,\sigma}$ as the limit of $\P_Nf_N^\diamond(\P_N u)$ thanks to the algebraic rule in Lemma~\ref{BinomialHermite} and the convergence of $(\P_N\llp)^{\diamond k}$.}
\begin{align}\label{LimitEquation}
\begin{cases}
\dt u +\H u +f^\diamond(u)=\sqrt{2}\zeta,\\
u(0)=u_0,
\end{cases}
\end{align}
in the class $\llp+X_T^{-\eps,\sigma}$.
\end{theorem}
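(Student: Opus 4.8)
\medskip
\noindent\emph{Sketch of the argument.}
The plan is to run the Da Prato--Debussche scheme of \cite{DPD} uniformly in $N$. Let $\llp$ be the stochastic convolution, i.e.\ the solution of $\partial_t\llp+\H\llp=\sqrt{2}\,\zeta$ with $\llp(0)=0$; crucially $\llp$ does not depend on $N$, since in \eqref{TruncEquation} only the nonlinearity is regularized. Writing $u_N=\llp+v_N$, the remainder solves the Duhamel equation
\begin{align*}
v_N(t)=e^{-t\H}u_0-\int_0^t e^{-(t-s)\H}\,\P_N f_N^\diamond\big(\P_N\llp(s)+\P_N v_N(s)\big)\,ds ,
\end{align*}
and the Hermite binomial identity (Lemma~\ref{BinomialHermite}) expands $f_N^\diamond(\P_N\llp+\P_N v_N)=\sum_{j+l\le 2m-1}c_{j,l}\,(\P_N\llp)^{\diamond j}(\P_N v_N)^{l}$ for explicit constants $c_{j,l}$, so the only singular objects in the equation are the Wick powers $(\P_N\llp)^{\diamond j}$, $0\le j\le 2m-1$.

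First I would carry out the probabilistic step: show that, $\Prob$-almost surely, for every $j\le 2m-1$ the family $\big((\P_N\llp)^{\diamond j}\big)_N$ converges as $N\to\infty$ in $L^r\big([0;T_0];\Cc^{-\eps}(\M)\big)$ for all finite $r$ (hence in $C([0;T_0];\Cc^{-\eps})$ up to an arbitrarily small loss of regularity) to a limit $\llp^{\diamond j}$. Since $(\P_N\llp)^{\diamond j}(t,x)$ lies in the $j$-th homogeneous Wiener chaos, Gaussian hypercontractivity reduces this to a second-moment bound, which is in turn controlled by the spectral data of $\H$ from Section~\ref{SEC:Anderson}: the Weyl asymptotics for $\lambda_n$, bounds on $\|\varphi_n\|_{L^\infty}$, and the heat-kernel/Green's-function estimates, in particular the logarithmic on-diagonal blow-up recalled in \eqref{GN1}. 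The renormalization constants $\sigma_N^2$ of \eqref{sigmaN} are precisely those for which these moments are finite and Cauchy in $N$, and a Kolmogorov continuity argument in $(t,x)$ together with a Borel--Cantelli step along a dyadic subsequence upgrades $L^r(\Prob)$-convergence to the almost sure statement. From now on we work on the full-measure event on which $K:=1+\max_{j\le 2m-1}\sup_N\big\|(\P_N\llp)^{\diamond j}\big\|_{L^r([0;T_0];\Cc^{-\eps})}<\infty$.

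Then comes the deterministic fixed point, run uniformly in $N$ on that event: for each fixed $N$, solve the Duhamel equation for $v_N$ by a contraction argument in a ball of $X_T^{-\eps,\sigma}$ for some $T=T(K,\|u_0\|_{\Cc^{-\eps}})\in(0;T_0\wedge1]$ \emph{independent of $N$}. The analytic inputs are: the smoothing estimates for the Anderson heat semigroup, $\|e^{-t\H}w\|_{\Cc^{-\eps}}\lesssim\|w\|_{\Cc^{-\eps}}$ and $\|e^{-t\H}w\|_{\Cc^{\sigma}}\lesssim t^{-(\sigma+\eps)/2}\|w\|_{\Cc^{-\eps}}$ for $0<t\le1$, coming from the functional calculus and heat-kernel bounds of Section~\ref{SEC:Anderson} (with $\sigma<1$ reflecting the regularity threshold of the flow); the Hölder--Besov product estimates, namely that $\Cc^{\sigma}$ is a multiplicative algebra for $\sigma>0$ and that $\Cc^{-\eps}\times\Cc^{\sigma}\to\Cc^{-\eps}$ is bounded since $\sigma>\eps$; and the uniform boundedness of the multipliers $\P_N$ on each $\Cc^{s}$. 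Applied to the expansion above, each term $(\P_N\llp)^{\diamond j}(\P_N v_N)^{l}$ is bounded in $\Cc^{-\eps}$ by $K\,\|v_N(s)\|_{\Cc^{\sigma}}^{l}$ with $l\le 2m-1$; since $e^{-s\H}u_0$ produces an $s^{-(\sigma+\eps)/2}$ blow-up of the $\Cc^{\sigma}$ norm as $s\to0$, the Duhamel integrand behaves like $(t-s)^{-(\sigma+\eps)/2}s^{-(2m-1)(\sigma+\eps)/2}$. This is precisely where the hypotheses $\eps<\sigma<1$, $\eps<\tfrac1{2m-1}$ and $\tfrac{\sigma+\eps}2\,q'<\tfrac1{2m-1}$ are used: via Hölder in $s$ with exponents $q,q'$ they ensure the singular weights are integrable and that the Duhamel map is a contraction on a small ball, uniformly in $N$. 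This yields a unique $v_N\in X_T^{-\eps,\sigma}$, hence $u_N=\llp+v_N\in C([0;T];\Cc^{-\eps})$, unique in $\llp+X_T^{-\eps,\sigma}$.

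Because every constant above is uniform in $N$, running the same estimates on the difference $v_N-v_M$ and using $(\P_N\llp)^{\diamond j}\to\llp^{\diamond j}$ shows that $(v_N)_N$ is Cauchy in $X_T^{-\eps,\sigma}$; calling its limit $v$ and setting $u=\llp+v\in\llp+X_T^{-\eps,\sigma}$, the algebraic rule of Lemma~\ref{BinomialHermite} together with the convergence of the Wick powers identifies $\P_Nf_N^\diamond(\P_Nu_N)\to f^\diamond(u)$ in the time-weighted negative-regularity space dual to the one controlling the Duhamel term, so that $u$ solves \eqref{LimitEquation} in $\llp+X_T^{-\eps,\sigma}$; uniqueness there is again the contraction estimate (equivalently, a Gronwall argument on the difference of two solutions). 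I expect the main obstacle to be the probabilistic step: controlling the Wick powers $(\P_N\llp)^{\diamond j}$ at regularity $\Cc^{-\eps}$ uniformly in $N$ requires sharp estimates on the Green's function of $\H$ both on and off the diagonal and on the $L^\infty$-norms of its eigenfunctions, i.e.\ the full strength of the spectral theory of the Anderson Hamiltonian from Section~\ref{SEC:Anderson}; a secondary, more technical difficulty is the balancing of the exponents $(\eps,\sigma,q)$ in the deterministic step, forced by having to accommodate initial data only in $\Cc^{-\eps}$ against a nonlinearity of degree $2m-1$.
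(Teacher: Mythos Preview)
Your proposal is correct and follows essentially the same route as the paper: Da Prato--Debussche decomposition, convergence of the Wick powers of the stochastic convolution via hypercontractivity and Green's function estimates (the paper's Proposition~\ref{RenormWick}), then a deterministic contraction in $X_T^{-\eps,\sigma}$ uniform in $N$ (Proposition~\ref{Localwpw}), combined on a full-measure set built from the tail estimates via Borel--Cantelli.

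Two minor discrepancies worth flagging. First, you take $\llp(0)=0$, whereas the paper uses the \emph{stationary} stochastic convolution $\llp(t)=\sqrt{2}\int_{-\infty}^t e^{-(t-s)\H}\zeta(ds)$, so that $w_N(0)=u_0-\llp(0)$; the advantage of the stationary choice is that $\E[\P_N\llp(t,x)\P_N\llp(t,y)]$ is exactly the truncated Green's function $G_N^\H(x,y)$ for every $t$, which makes $(\P_N\llp)^{\diamond k}$ a genuine Wick power with respect to the renormalization variance $\sigma_N^2$ of \eqref{sigmaN} and streamlines the moment computation. Your choice also works but introduces a time-dependent discrepancy between the variance of $\P_N\llp(t)$ and $\sigma_N^2$ that has to be absorbed separately. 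Second, the paper does not use $L^\infty$ bounds on the Anderson eigenfunctions $\varphi_n$; the second-moment estimate goes entirely through the pointwise comparison of $G^\H$ with $G^{1-\Delta}$ in Corollary~\ref{COR:GHN}, which is obtained by a resolvent identity and the mapping properties of the $\Gamma$ map rather than by eigenfunction bounds.
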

The space $X_T^{-\eps,\sigma}\subset C([0;T];\Cc^{-\eps}(\M))$ is defined in \eqref{space} below, and $\llp$ is the stationnary solution to the linearized stochastic equation
\begin{align*}
(\dt  +\H )\llp = \sqrt{2}\zeta.
\end{align*}
Thus Theorem~\ref{THM:LWP} provides a solution for any data in $\Cc^{-\eps}(\M)$, which is the natural space where $\llp(t)$ lives.

\smallskip

Our next result deals with globalization of the solution in the case where the initial data is random and distributed by the Gibbs measure $\rho$ constructed in Theorem~\ref{ConvergenceGibbsMeasure}, and also proves that the law of the solution is invariant and given by $\rho$.

\begin{theorem}\label{THM:GWP}
Let $\eps$ and $\sigma$ be as in Theorem~\ref{THM:LWP}. If $u_0$ is distributed according to $\rho$, then almost surely, for any $T>0$ and $N\in\N^*$, the truncated equation \eqref{TruncEquation} admits a unique solution in $\llp+X_T^{-\eps,\sigma}$. Moreover $u_N$ converges in $C([0;T];\Cc^{-\eps}(\M))$ to the unique solution $u\in \llp+X_T^{-\eps,\sigma}$ to \eqref{LimitEquation}, and the law of $u(t)$ does not depend on $t$ and is given by $\rho$.
\end{theorem}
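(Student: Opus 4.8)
The plan is to run Bourgain's invariant-measure globalization argument at the level of the truncated dynamics \eqref{TruncEquation} and then transport the conclusion to \eqref{LimitEquation} using the convergence of $\rho_N$ to $\rho$. Throughout I would fix the realization of the spatial noise $\xi$ (hence of $\H$, $\mu^\H$, $\rho_N$ and $\rho$), as in the construction of the measures.

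First I would establish invariance of $\rho_N$ under the flow of \eqref{TruncEquation} for fixed $N$. Up to the smoothing Fourier multiplier $\P_N$ this is a gradient system with additive noise: projecting onto the span of the first $K$ eigenfunctions of $\H$ produces a finite-dimensional SDE whose drift is $-\nabla E_{N,K}$ with $E_{N,K}(u)=\tfrac12\langle u,\H u\rangle+\int_\M F_N^\diamond(\P_N u)\,dx$ restricted to that span, and since $F_N^\diamond$ has positive leading coefficient $a_{2m}$ the polynomial $E_{N,K}$ is coercive, so the classical Liouville/Fokker--Planck identity gives that $Z_{N,K}^{-1}e^{-E_{N,K}}$ is invariant for the Galerkin system. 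Letting $K\to\infty$, using $\int_\M F_N^\diamond(\P_N\pi_K u)\,dx\to\int_\M F_N^\diamond(\P_N u)\,dx$ in $L^p(\mu^\H)$ (an $N$-fixed, simpler version of Proposition~\ref{RenormNonLinear}) together with the local theory of \eqref{TruncEquation}, I would obtain global well-posedness of \eqref{TruncEquation} for $\rho_N$-a.e.\ datum and invariance of $\rho_N$; in particular $\mathrm{Law}(u_N(t))=\rho_N$ for every $t\ge0$ when $u_0\sim\rho_N$.

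Next I would globalize \eqref{TruncEquation} uniformly in $N$. Theorem~\ref{THM:LWP} furnishes a lifespan $\tau=\tau(\Lambda)>0$ independent of $N$ on the event $\{\Lambda_T\le\Lambda\}$, where $\Lambda_T$ collects the $L^\infty_T$-norms of $\llp$ and of the renormalized powers $(\llp)^{\diamond k}$ feeding the paracontrolled analysis, together with a bound $\|u_N\|_{L^\infty([0;\tau];\Cc^{-\eps})}\le C(\Lambda)(1+\|u_0\|_{\Cc^{-\eps}})$. The proof of Theorem~\ref{ConvergenceGibbsMeasure} should also provide, beyond total variation convergence, a tail bound $\rho_N(\|u\|_{\Cc^{-\eps}}>R)\le Ce^{-cR^2}$ uniform in $N$ (from the uniform $L^p(\mu^\H)$-bound on $d\rho_N/d\mu^\H$ and the Fernique inequality for $\mu^\H$). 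Cutting $[0;T]$ into $\lceil T/\tau(\Lambda)\rceil$ subintervals and invoking the invariance of the previous step at each step to bound the $\rho_N$-probability that the propagated $\Cc^{-\eps}$-norm exceeds the threshold, a union bound shows that off an event of $\rho_N$-probability $\le C(T,\Lambda)e^{-cR^2}$ the solution $u_N$ exists on $[0;T]$ with $\|u_N\|_{L^\infty([0;T];\Cc^{-\eps})}\le R(T,\Lambda)$. Sending $R\to\infty$ and then $\Lambda\to\infty$ yields $\Prob$-a.s.\ global well-posedness of \eqref{TruncEquation} for $\rho_N$-a.e.\ $u_0$, with all constants uniform in $N$.

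Finally, I would pass to the limit and read off invariance of $\rho$. On the good event the solution $u_N$ on $[0;T]$ is obtained by iterating the local construction of Theorem~\ref{THM:LWP} a bounded number of times, so the convergence $u_N\to u$ on each short interval there, combined with the uniform-in-$N$ bounds, propagates to $u_N\to u$ in $C([0;T];\Cc^{-\eps})$ and to global existence of the solution $u\in\llp+X_T^{-\eps,\sigma}$ to \eqref{LimitEquation}. I would transfer the $\rho_N$-a.s.\ statements to $\rho$ via the total variation convergence of Theorem~\ref{ConvergenceGibbsMeasure}: with $\Sigma_{R,T}$ the Borel set of data for which the previous step holds with radius $R$, one has $\rho(\Sigma_{R,T})\ge\limsup_N\big(\rho_N(\Sigma_{R,T})-\|\rho-\rho_N\|_{\textup{TV}}\big)\ge1-Ce^{-cR^2}$, hence $\rho(\bigcup_R\Sigma_{R,T})=1$. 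Then for each fixed $t>0$, combining $\mathrm{Law}(u_N(t))=\rho_N$, the convergence $u_N(t)\to u(t)$ in $\Cc^{-\eps}$ (which now holds $\rho$-a.s.), and $\rho_N\to\rho$ in total variation, I would pass to the limit in $\mathrm{Law}(u_N(t))=\rho_N$ to conclude $\mathrm{Law}(u(t))=\rho$, i.e.\ the claimed invariance. I expect the main obstacle to be the third step: producing a local well-posedness theory for \eqref{TruncEquation} whose lifespan and solution bounds are at once quantitative in $\|u_0\|_{\Cc^{-\eps}}$ and uniform in $N$, and pairing it with $N$-uniform tail bounds for $\rho_N$, so that the Bourgain iteration closes for all $N$ simultaneously and survives the limit; the additive forcing (through the stationary $\llp$ and the remainder in $X_T^{-\eps,\sigma}$) and the rough potential in $\H$ force these estimates to come from the full Da Prato--Debussche/paracontrolled analysis rather than from soft compactness arguments.
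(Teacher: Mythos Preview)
Your outline is essentially sound and would go through, but it follows a different globalization strategy than the paper, and there is one technical point you gloss over that the paper treats with some care.

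\textbf{Different route.} For the globalization you run Bourgain's probability-iteration: split $[0,T]$ into $O(T/\tau(\Lambda))$ subintervals, use invariance of $\rho_N$ plus uniform-in-$N$ tail bounds $\rho_N(\|u\|_{\Cc^{-\eps}}>R)\le Ce^{-cR^2}$ at each step, and close with a union bound. The paper instead proves a single moment bound
\[
\E_{\rho_N\otimes\Prob}\Big[\sup_{0\le t\le T\wedge T^*}\|\Phi_N(u_0,\omega)(t)\|_{\Cc^{-\eps}}\Big]\le C_T
\]
directly from the Duhamel formula and invariance of a doubly truncated measure $\rho_{N,M}$, then passes $M\to\infty$ and $N\to\infty$ by Fatou. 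Both strategies are valid; yours is the more classical one and gives sub-Gaussian tail control on the flow, while the paper's avoids the combinatorics of the iteration at the cost of only producing an $L^1$ bound. The passage from $\rho_N$-a.s.\ statements to $\rho$-a.s.\ ones is handled the same way in both (total variation convergence of $\rho_N$ to $\rho$).

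\textbf{Technical gap.} In your first step you project onto the span of the first $K$ eigenfunctions of $\H$. The paper does the same (their parameter is $M$), but points out that the sharp projector $\Pi_M$ based on $\H$ is \emph{not} uniformly bounded on $\Cc^{-\eps}$, which is where the local theory lives; they replace it by a smooth cutoff $\chi_M=\chi(M^{-2}\H)$ and prove separately (Corollary~\ref{COR:PMH}) that $\chi_M$ is uniformly bounded on H\"older spaces. They also need the Wick powers $(\P_N\chi_M\llp)^{\diamond k}$ to converge as $M\to\infty$, which is nontrivial because $\P_N$ (based on $\Delta$) and $\chi_M$ (based on $\H$) do not commute --- this is their Proposition~\ref{PROP:llpM}. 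Your sketch ``$\int_\M F_N^\diamond(\P_N\pi_K u)\,dx\to\int_\M F_N^\diamond(\P_N u)\,dx$'' hides precisely this issue; you would need the analogues of these two results to make the $K\to\infty$ limit rigorous, but once you use a smooth projector instead of $\pi_K$ your argument goes through.
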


\noindent As mentionned above, $\zeta$ denotes a space-time white noise on $\R\times\M$ that is independent of $\H$. In other words, $\zeta$ is a centered Gaussian process indexed by $L^2(\R\times\M)$ functions and with covariance given by 
$$
\E\left[\zeta(\varphi)\zeta(\psi)\right]=\langle\varphi,\psi\rangle_{L^2(\R\times\M)},
$$
which is why $\zeta$ is said to have \textit{delta} correlations since formaly the pointwise correlations are $\E\big[\zeta(t,x)\zeta(s,y)\big]=\delta_0(x-y)\delta_0(t-s)$. Note that given any $L^2(\M)$ orthonormal basis $(\varphi_n)_n$, we have the decomposition
$$
\zeta(t)dt=\sum_{n\geq0}d B_n(t)\varphi_n
$$
where $B_n(t):=\langle\zeta,\IDC_{[0,t]}\varphi_n\rangle$ are i.i.d. standard Brownian motion on $\R$. Since $\H$ itself has an $L^2(\M)$-orthonormal basis of eigenfunctions, the aforementioned expansion of $\zeta$ can be done using these eigenfunctions.

\begin{remark}
\rm
$\H$ being a random operator for which we fix a realisation $\H^{\omega'}$, then $\zeta$ would write
$$
\zeta^{\omega}(t)dt =\sum_{n\geq0}d B_n^{\omega,\omega'}(t)\varphi_n^{\omega'}
$$
for a realisation $\omega$. Note that $\zeta$ itself does not depend on $\omega'$ while it coefficients $B_n$ do depend on both $\omega$ and $\omega'$. In the following, we fix a realisation $\omega'$ so that $\H^{\omega'}$ is defined once and for all. In other words, all the probabilistic considerations have to be taken conditionally to $\H$. In particular, while we strongly believe that the double layer of randomness would be worth studying, it is out of the scope of this paper and we provide an approach that only relies on properties derived from the construction of $\H$. Section \ref{SEC:Anderson} will recall all the material needed here.
\end{remark}
The main difficulty in the solution theory of the renormalized equation \eqref{TruncEquation} is then the operator $\H$ itself. On the one hand, the formal expression \eqref{H} can be made rigorous, namely $\H$ can be written (through conjugation with the so-called $\Gamma$ map, see \eqref{Gamma} below) as a perturbation of $-\Dl$. On the other hand, the perturbative terms have very rough coefficients, so that one has to carefully check that the classical analytic toolbox for the Laplace-Beltrami operator used in the context of more standard singular stochastic PDEs can be adapted to our setting despite those rough coefficients. In particular, we only dispose of a Schauder estimate with limited range of exponents (see \eqref{SchauderH} below), and we have to use the precise mapping properties of the perturbative terms in $\H$ to show that the Green's function for $\H$ has the same singularities as the one for $-\Dl$, which is crucial in building the Wick powers of the stochastic convolution to any degree; see Corollary~\ref{COR:GHN} and Proposition~\ref{RenormWick}.
 
 We conclude this introduction with some further remarks.


\begin{remark}
\rm
The solution constructed in Theorem~\ref{THM:LWP} for arbitrary initial data in $\Cc^{-\eps}(\M)$ is only local in time, while using the Gibbs measure we could globalize the solution only for $\rho$-almost every initial data in Theorem~\ref{THM:GWP}. The question of deterministic global well-posedness and ergodicity of the dynamics is an interesting open question, and will be investigated in a follow-up work.
\end{remark}
 
\begin{remark}\label{Rk:commute}
\rm
We made sense of the formal equation \eqref{AndersonSQE} through a limiting procedure involving the equation with regularized nonlinearity \eqref{TruncEquation}. In the field of singular stochastic PDEs, it is more common to instead regularize the noise and replace $\zeta$ by some smooth approximation $\P_N\zeta$ and study the convergence of the corresponding solutions $u_N$ to the renormalized equation
\begin{align*}
\dt u_N + \H u_N + f_N^\diamond(u_N)=\sqrt{2}\P_N\zeta
\end{align*}
where now the renormalized nonlinearity $f_N^\diamond$ is not truncated by the operator $\P_N$ contrary to \eqref{TruncEquation}.
However, \eqref{TruncEquation} is better-suited for our purpose since (i) we want \textit{in fine} to prove the invariance of the Gibbs measure, which we only know how to do by using a finite-dimensional approximation of the dynamics which has itself a finite-dimensional approximation of the Gibbs measure as invariant measure; and (ii) since $\H$ and $-\Dl$ do not commute, and since the regularizing operator $\P_N$ is a Schwartz multiplier based on $-\Dl$, it is not as straightforward to study the stochastic convolution $\widetilde{\llp_N}(t):=\int_{-\infty}^t e^{-(t-s)\H}\P_N\zeta(ds)$ and its Wick powers to get the analogue of Proposition~\ref{RenormWick}.
\end{remark}
  
\begin{remark}
\rm
In \eqref{AndersonSQE}, $\H$ is a Schr\"odinger operator with rough random electric potential $\xi$. The same arguments as those in the present paper can also deal with \eqref{AndersonSQE} where \eqref{H} is replaced by $(i\nabla + A)^2$, where now $A$ is a rough and random magnetic potential. The construction of the corresponding magnetic Anderson operator when the magnetic field $B= \nabla \wedge A$ is a spatial white noise, namely $\nabla\wedge A=\xi$, has been carried out in \cite{MM}. In this case, \eqref{AndersonSQE} can be interpreted as the stochastic quantization equation for the marginal of the $P(\Phi_2)$-Yang-Mills-Higgs measure. Indeed, the abelian Yang-Mill-Higgs measure is the Gibbs type measure formally written as
\begin{align*}
``d\rho = \ZZ^{-\CE(A,\Phi)}dAd\Phi",
\end{align*}
where the abelian $P(\Phi_2)$-Yang-Mills-Higgs action is given by
\begin{align*}
\CE(A,\Phi)=\frac12\int_\M\Big[B^2+|(i\nabla +A)\Phi|^2+F(\Phi)\Big]dx;
\end{align*}
see e.g. \cite{Shen}.
Upon fixing a global gauge to have a one-to-one correspondence between $A$ and $B$, we can then rewrite the measure similarly as in \eqref{AndersonGibbs} by
\begin{align*}
``d\rho = \ZZ^{-1}e^{-\int_\M\big[\frac12\Phi \H_B\Phi+F(\Phi)\big]dx}d\Phi d\mu(B),
\end{align*}
where $\H_B = (i\nabla + A)^2$ and
\begin{align*}
d\mu(B)=\ZZ^{-1}e^{-\frac12\int_\M B^2dx}dB
\end{align*}
is the white noise measure. Given a realization of $B$, the measure $d\rho_B= \ZZ^{-1}e^{-\int_\M\big[\frac12\Phi \H_B\Phi+F(\Phi)\big]dx}d\Phi$ is then the analogue of the measure \eqref{AndersonGibbs}, with the white noise electric potential $\xi$ replaced by the white noise magnetic field $B$, and one can study its invariance under a parabolic singular stochastic dynamics similar to \eqref{AndersonSQE}.
\end{remark}
\begin{remark}
\rm
Since for fixed time, the stochastic convolution for the Anderson stochastic heat equation and that of the Anderson stochastic damped wave equation
\begin{align}\label{SAdW}
\dt^2u + \H u +\dt u= \sqrt{2}\zeta
\end{align}
share the same covariance function (see e.g. \cite{ORW}), given by the Green's function of $\H$, then Proposition~\ref{RenormWick} also holds for the Wick powers of the solution to \eqref{SAdW}. Together with the same nonlinear analysis in $L^2$-based Sobolev spaces\footnote{We can simply use that $\D^{-\eps}=H^{-\eps}$ and $\D^{1-\eps}=H^{1-\eps}$, and that the propagator $\frac{\sin(t\sqrt{\H})}{\sqrt{\H}}$ maps $\D^{-\eps}$ to $\D^{1-\eps}$ from the functional calculus.} as e.g. in \cite{ORT}, this shows that the analogue of Theorem~\ref{THM:GWP} also holds true for the stochastic damped Anderson nonlinear wave equation with any polynomial nonlinearity
\begin{align*}
\dt^2u+\H u +\dt u + f^\diamond(u)=\sqrt{2}\zeta.
\end{align*}
\end{remark}

\subsection*{Plan of the paper}The paper is organized as follows. In Section \ref{SEC:stochastic} we construct the renormalized powers of the stochastic objects in play. We also use Boué-Dupuis formula to construct the Gibbs measure as the limit in total variation of its approximations (Theorem~\ref{ConvergenceGibbsMeasure}). In Section \ref{SEC:dynamics}, we first prove local well-posedness for deterministic initial data in $\CC^{-\epsilon}(\M)$, that is Theorem \ref{THM:LWP}. Then we prove invariance of the measure as well as global well-posedness for random initial data distributed according to the Gibbs measure, that is Theorem \ref{THM:GWP}. Section \ref{SEC:Anderson} is devoted to recalling the construction of the Anderson operator $\H$ and some of its properties, together with a study of its Green's function and of its Schartz multipliers. Finally, in Appendix \ref{SEC:Appendix}, we gather the analytic toolbox that we need in analyzing the Wick powers and in the nonlinear estimates for our well-posedness theory.
\section{Construction of the stochastic objects}\label{SEC:stochastic}

In this section we provide the probabilistic framework to make sense of the renormalization procedure involved in the definition of $\rho_N$ in \eqref{TruncatedGibbsMeasure} and also prove Theorem \ref{ConvergenceGibbsMeasure}.

\subsection{Algebraic setting and Wick powers}

We first of all recall the definition of Hermite polynomials and the corresponding algebraic rules, refering to \cite{Nualart} or \cite{DPT} for more insights on the whole story. We denote by $H_k$ the $k$-th Hermite polynomial defined by the generating series 
$$
e^{tx-\frac{1}{2}t^2}=\sum_{k\geq0}\frac{t^k}{k!}H_k(x)
$$
for $x,t\in\R$. From this expansion, we easily get the recursion formula 
$$
H_k'=kH_{k-1}
$$
for any positive integer $k$. Such polynomials are heavily related to Gaussian variables in the sense of the following lemma, see \cite[Lemma 1.1.1]{Nualart} for a proof.

\begin{lemma}{\label{RuleHermite}}
    Let $X,Y$ be real standard Gaussian random variables. Then 
    \begin{align*}
        \E\left[H_k(X)H_l(Y)\right]=k!\delta_{k,l}\E[XY]^k.
    \end{align*}
\end{lemma}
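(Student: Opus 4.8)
The plan is to deduce the identity from the exponential generating function of the $H_k$, using crucially that $(X,Y)$ is a jointly centered Gaussian pair with unit variances — the implicit standing hypothesis here, as in \cite[Lemma~1.1.1]{Nualart}. Write $\rho:=\E[XY]\in[-1,1]$.

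The first step is to evaluate the product generating function. For any $s,t\in\R$ the variable $sX+tY$ is Gaussian with mean $0$ and variance $s^2+2\rho st+t^2$, so
\[
\E\big[e^{sX-\frac12 s^2}\,e^{tY-\frac12 t^2}\big]
=e^{-\frac12 s^2-\frac12 t^2}\,\E\big[e^{sX+tY}\big]
=e^{-\frac12 s^2-\frac12 t^2}\,e^{\frac12(s^2+2\rho st+t^2)}
=e^{\rho st}.
\]
Note that $e^{sX-\frac12 s^2}$ and $e^{tY-\frac12 t^2}$ both lie in $L^2(\Prob)$, so the left-hand side is a well-defined finite number.

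The second step is to expand both sides as power series in $s,t$ and compare. Using the defining series $e^{sx-\frac12 s^2}=\sum_{k\ge0}\frac{s^k}{k!}H_k(x)$ and its analogue in $t$, together with Fubini's theorem — whose hypotheses hold since $\sum_{k,l}\frac{|s|^k|t|^l}{k!\,l!}\,\E\big[|H_k(X)|\,|H_l(Y)|\big]<\infty$ by Cauchy–Schwarz and the standard bound $\E[H_k(X)^2]=k!$ (itself obtained directly from the Rodrigues formula and integration by parts, so no circularity) — the left-hand side equals $\sum_{k,l\ge0}\frac{s^k t^l}{k!\,l!}\,\E[H_k(X)H_l(Y)]$. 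The right-hand side is $e^{\rho st}=\sum_{k\ge0}\frac{\rho^k}{k!}\,s^k t^k$. Matching the coefficient of $s^k t^l$: for $k\ne l$ there is no corresponding term on the right, whence $\E[H_k(X)H_l(Y)]=0$; for $k=l$ we get $\frac{1}{(k!)^2}\E[H_k(X)H_k(Y)]=\frac{\rho^k}{k!}$, i.e. $\E[H_k(X)H_k(Y)]=k!\,\rho^k$. The two cases combine into $\E[H_k(X)H_l(Y)]=k!\,\delta_{k,l}\,\E[XY]^k$, which is the claim.

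The only step that is not a rote computation is the justification of the term-by-term integration in the second step; everything else is algebra, and that is where I would expect any subtlety to lie. As a fallback I would keep in mind an induction on $k+l$ using the recursion $H_k'=kH_{k-1}$ together with the Gaussian integration-by-parts identity $\E[X\,G(X,Y)]=\E[\partial_X G(X,Y)]+\rho\,\E[\partial_Y G(X,Y)]$, but the generating-function route is shorter and cleaner, so I would carry out that one.
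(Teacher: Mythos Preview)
Your proof is correct. The paper does not give its own proof of this lemma but simply refers to \cite[Lemma~1.1.1]{Nualart}; the generating-function argument you wrote out is precisely the standard one found there, so there is nothing to compare.
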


\noindent Note that we have an extra $k!$ factor here, this is due to our normalization of Hermite polynomials so that they have leading coefficient 1 as opposed to $\frac{1}{k!}$ in \cite{Nualart}. We also get the corresponding binomial formula. 
\begin{lemma}{\label{BinomialHermite}}
    For $x,y\in\R$ and $k\in\N^*$
    $$
    H_k(x+y)=\sum_{p=0}^k\binom{k}{p}x^{k-p}H_p(y).
    $$
\end{lemma}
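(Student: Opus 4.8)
The plan is to deduce the identity directly from the generating series defining the Hermite polynomials, exploiting the additive structure of the exponent. Fix $x,y\in\R$ and regard everything as a function of the auxiliary variable $t$. The starting point is the factorization
\[
e^{t(x+y)-\frac12 t^2}=e^{tx}\cdot e^{ty-\frac12 t^2},
\]
which isolates the Gaussian weight $-\frac12 t^2$ on the side of the variable $y$; this is precisely what produces the asymmetry between the plain powers $x^{k-p}$ and the Hermite polynomials $H_p(y)$ appearing in the statement.

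Next I would expand each factor on the right as a power series in $t$: the first factor gives $e^{tx}=\sum_{j\geq0}\frac{x^j}{j!}t^j$, while the second is, by the defining generating series, $e^{ty-\frac12 t^2}=\sum_{p\geq0}\frac{H_p(y)}{p!}t^p$. Both series converge absolutely for every $t\in\R$ (they are Taylor expansions of entire functions), so their product may be computed by the Cauchy product formula, yielding
\[
e^{t(x+y)-\frac12 t^2}=\sum_{k\geq0}\Bigl(\sum_{p=0}^k\frac{x^{k-p}}{(k-p)!}\,\frac{H_p(y)}{p!}\Bigr)t^k.
\]
Comparing with the left-hand side, which by definition equals $\sum_{k\geq0}\frac{H_k(x+y)}{k!}t^k$, and matching the coefficients of $t^k$ (legitimate since a convergent power series determines its coefficients uniquely), we obtain
\[
\frac{H_k(x+y)}{k!}=\sum_{p=0}^k\frac{x^{k-p}}{(k-p)!\,p!}H_p(y).
\]
Multiplying through by $k!$ and recognizing $\frac{k!}{(k-p)!\,p!}=\binom{k}{p}$ gives the claimed formula.

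Since this is essentially a one-line generating-function manipulation, there is no genuine obstacle; the only points deserving a (routine) word of justification are the absolute convergence of the two series, which legitimizes the term-by-term multiplication, and the uniqueness of power-series coefficients. Alternatively one could argue by induction on $k$ using $H_k'=kH_{k-1}$ together with the three-term recurrence $H_{k+1}(x)=xH_k(x)-kH_{k-1}(x)$, but the generating-function route is shorter and more transparent, and it also makes manifest why the formula is the one needed later to handle the decomposition of the nonlinearity along $\llp+X_T^{-\eps,\sigma}$.
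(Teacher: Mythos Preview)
Your proof is correct and follows essentially the same generating-function approach as the paper: factor $e^{t(x+y)-\frac12 t^2}=e^{tx}\cdot e^{ty-\frac12 t^2}$, expand both factors, take the Cauchy product, and match coefficients of $t^k$. The paper's argument is identical in substance, just slightly more terse.
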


\begin{proof}
    It is a direct consequence of the definition of $H_k$ by the generating series and the following expansion for $x,y,t\in\R$
    \begin{align*}
        \sum_{k\geq0}\frac{t^k}{k!}H_k(x+y)&=e^{(x+y)t-\frac{1}{2}t^2}\\
        &=e^{xt}e^{yt-\frac{1}{2}t^2}\\
        &=\sum_{n\geq0}\frac{t^n}{n!}x^n\sum_{p\geq0}\frac{t^p}{p!}H_p(y)\\
        &=\sum_{k=n+p}\frac{t^k}{k!}\frac{k!}{p!n!}x^{n}H_{p}(y)\\
        &=\sum_{k\geq0}\frac{t^k}{k!}\sum_{p=0}^k\binom{k}{p}x^{k-p}H_p(y).
    \end{align*}
\end{proof}

\noindent We then define the Hermite polynomials with variance $\sigma^2>0$ as 
$$
H_k(x,\sigma^2):=\sigma^kH_k(\sigma^{-1}x),
$$
which allows to transfer the algebraic rules \eqref{RuleHermite} and \eqref{BinomialHermite} to Gaussian random variables with non-unit variance. In the following, for a real centered Gaussian random variable $X$, we will use the shorthand notation called \textit{Wick power} of $X$
$$
X^{\diamond k}:=H_k(X,\sigma^2),
$$
where $\sigma^2=\E[X^2]$. We end this subsection with the  Wiener chaos estimate, see for example \cite[Theorem I.22]{Simon}.

\begin{lemma}{\label{WienerChaos}}
    Let $g=(g_n)$ be a sequence of i.i.d. real standard Gaussian random variables on some probability space $\Omega$ and $(P_j)$ be a family of polynomials in $g$ with degree at most $k\in\N$. Then for $p\geq 2$
    $$
    \Big\|\sum_{j\geq0}P_j(g)\Big\|_{L^p(\Omega)}\leq (p-1)^{\frac{k}{2}}\Big\|\sum_{j\geq0}P_j(g)\Big\|_{L^2(\Omega)}.
    $$
\end{lemma}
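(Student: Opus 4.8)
The plan is to identify the left-hand side as the $L^p(\Omega)$-norm of the single random variable $X:=\sum_{j\ge0}P_j(g)$, which is a polynomial of degree at most $k$ in the Gaussians $(g_n)$ and therefore lies in the direct sum of the first $k$ homogeneous Wiener chaoses. If the right-hand side is infinite there is nothing to prove, so assume $X\in L^2(\Omega)$; each partial sum $\sum_{j\le J}P_j(g)$ belongs to the closed subspace $\bigoplus_{i=0}^k\mathcal W_i$ of $L^2(\Omega)$ (here $\mathcal W_i$ denotes the $i$-th Wiener chaos), hence so does the $L^2$-limit $X$, and we may write $X=\sum_{i=0}^k X_i$ with $X_i\in\mathcal W_i$ and the $X_i$ mutually orthogonal in $L^2(\Omega)$.

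Now recall the Ornstein--Uhlenbeck semigroup $(T_\rho)_{0\le\rho\le1}$, which acts on $\mathcal W_i$ as multiplication by $\rho^i$, together with Nelson's hypercontractivity theorem: $T_\rho$ is a contraction from $L^2(\Omega)$ to $L^p(\Omega)$ whenever $0\le\rho\le(p-1)^{-1/2}$. Choosing $\rho=(p-1)^{-1/2}\le1$ and setting $Y:=\sum_{i=0}^k\rho^{-i}X_i$, one has $T_\rho Y=X$, so using contractivity, the $L^2$-orthogonality of the chaoses (Pythagoras), and $\rho^{-2i}\le\rho^{-2k}$ for $0\le i\le k$,
\begin{align*}
\Big\|\sum_{j\ge0}P_j(g)\Big\|_{L^p(\Omega)}=\|T_\rho Y\|_{L^p(\Omega)}\le\|Y\|_{L^2(\Omega)}=\Big(\sum_{i=0}^k\rho^{-2i}\|X_i\|_{L^2(\Omega)}^2\Big)^{1/2}\le\rho^{-k}\|X\|_{L^2(\Omega)}=(p-1)^{k/2}\Big\|\sum_{j\ge0}P_j(g)\Big\|_{L^2(\Omega)},
\end{align*}
which is the claim.

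For completeness one should justify the hypercontractivity input (which may of course simply be quoted, as we do, from \cite{Simon} or \cite{Nualart}); a self-contained argument goes through the two-point space. First one establishes Bonami's inequality: for a Rademacher variable $\varepsilon$ and $a,b\in\R$, $\big(\tfrac12|a+\rho b|^p+\tfrac12|a-\rho b|^p\big)^{1/p}\le(a^2+b^2)^{1/2}$ as soon as $\rho\le(p-1)^{-1/2}$ — this is the one genuinely non-formal estimate, proved by reducing to $a=1$, $0\le\rho b\le1$ and comparing $\tfrac12\big((1+t)^p+(1-t)^p\big)$ with $(1+t^2/\rho^2)^{p/2}$ through a Taylor/convexity argument. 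One then tensorizes — the $L^2\to L^p$ operator norm of $T_\rho$ on $\{-1,1\}^n$ equals the $n$-th power of the one-dimensional norm — so that $T_\rho$ is an $L^2\to L^p$ contraction on every discrete cube, and via the Walsh decomposition this yields $\|P\|_{L^p}\le(p-1)^{k/2}\|P\|_{L^2}$ for every polynomial $P$ of degree $\le k$ in finitely many Rademachers. Finally one transfers to the Gaussian setting by the central limit theorem: in a degree-$\le k$ polynomial in $g_1,\dots,g_M$ replace each $g_j$ by $n^{-1/2}(\varepsilon_{j,1}+\dots+\varepsilon_{j,n})$ and let $n\to\infty$; the second moments converge because all moments of the normalized Rademacher sums converge to the Gaussian ones, and the Rademacher inequality just proven provides the uniform bound on the $2p$-th moments, hence the uniform integrability needed for the $L^p$-norms to pass to the limit. (The reduction to finitely many Gaussians is handled beforehand by replacing $X$ with $\E[X\,|\,g_1,\dots,g_M]$, which is again a degree-$\le k$ polynomial, converges to $X$ in $L^2$ by martingale convergence, and lets us conclude by Fatou's lemma.)

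The main obstacle is the hypercontractivity estimate itself: Bonami's one-dimensional inequality is the crux, and the Rademacher-to-Gaussian passage requires some care so that the $L^p$-norms (and not merely the $L^2$-norms) survive the limit. Everything else — the Wiener chaos decomposition, the action of $T_\rho$ on the chaoses, the tensorization, and the short algebraic manipulation displayed above — is soft, and since \cite{Simon} already records the statement in the form we need, the proof can in practice be reduced to that last computation.
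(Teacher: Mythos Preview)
Your argument is correct and is the standard route to this estimate via Nelson's hypercontractivity of the Ornstein--Uhlenbeck semigroup; the short computation you display is exactly how one extracts the constant $(p-1)^{k/2}$ from the $L^2\to L^p$ contraction bound, and the sketch you give of the hypercontractivity input (Bonami on the two-point space, tensorization, CLT transfer) is a legitimate self-contained path if one does not wish to quote it.

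There is, however, no ``paper's own proof'' to compare against: the paper does not prove this lemma but simply records it with a pointer to \cite[Theorem~I.22]{Simon}. Your proof is precisely an unpacking of that reference, so in substance you are following the same approach the paper defers to --- you have just made it explicit rather than black-boxed.
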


\subsection{Renormalization}{\label{SubRenormWick}}

As progressing through the construction of the Gibbs measure \eqref{AndersonGibbs}, we will encounter truncated stochastic objects for which we would like to take the limit in the truncation parameter. This will either be for the non-linearity in the energy
$$
\int_\M F^{\diamond}_N(\P_N u)\,d x,
$$
where $u$ is distributed according to the GFF $\mu^\H$ and $F_N^{\diamond}$ defined in \eqref{polynomWick}, or for the stochastic convolution involved in the construction of the renormalized dynamic
\begin{align}{\label{DefLoli}}
    \llp(t):=\sqrt{2}\int_{-\infty}^t e^{-(t-s)\H}\zeta(ds)
\end{align}
and its mollification
$$
\llp_N(t):=\sqrt{2}\P_N\int_{-\infty}^t e^{-(t-s)\H}\zeta(ds).
$$

\begin{proposition}{\label{RenormWick}}
    Fix $k\in\N$, for any $T>0$, $\varepsilon\in(0,1)$ and $p\geq q\geq1$ the sequence $(\llp^{\diamond k}_N)_N$ is a Cauchy sequence in $L^p(\Omega;L^q([0,T];\CC^{-\varepsilon}(\M)))$ that also converges almost surely in $L^q([0,T];\CC^{-\varepsilon}(\M))$. Moreover $t\mapsto\llp_N(t)$ and the limiting process $t\mapsto\llp(t)$ are almost surely in $C([0,T];\CC^{-\varepsilon}(\M))$ and we have the following tail estimates and deviation bounds:
    \begin{align}\label{EstimateTailLoliN}
        \mathbb{P}\left(\|\llp_{N}^{\diamond k}\|_{L^q_T\CC_x^{-\varepsilon}}> R\right)\leq C e^{-cR^\frac{2}{k}T^{-\frac{2}{qk}}},
    \end{align}
    and
    \begin{align}\label{EstimateDeviationLoliN}
        \mathbb{P}\left(\|\llp_{N}^{\diamond k}-\llp^{\diamond k}\|_{L^q_T\CC_x^{-\varepsilon}}> N^{-\kappa}R\right)\leq C e^{-cR^{\frac{2}{k}}T^{-\frac{2}{qk}}},
    \end{align}
    for some constant $c,C>0$, and exponent $\kappa>0$ that do not depend on $T$, $R$, $p$, or $N$.
\end{proposition}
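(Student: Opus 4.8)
The plan is to follow the now-standard strategy for constructing Wick powers of a Gaussian stochastic convolution, the only novelty being that the heat semigroup is generated by $\H$ rather than $-\Dl$; the key input is that the Green's function of $\H$ has the same diagonal logarithmic singularity as that of $-\Dl$ (Corollary~\ref{COR:GHN}), so the computations reduce to the classical ones. First I would record that $\llp_N(t)$ is, for each fixed $(t,x)$, a centered Gaussian random variable lying in the first Wiener chaos associated to $\zeta$, with variance $\s_{N,\textup{heat}}^2(x) = \E[|\llp_N(t,x)|^2]$ which, thanks to the Green's function estimate, behaves like $\log N$ uniformly in $x$ (and is independent of $t$ by stationarity). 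Consequently $\llp_N^{\diamond k}(t,x) = H_k(\llp_N(t,x),\s_{N,\textup{heat}}^2(x))$ is a well-defined element of the $k$-th Wiener chaos. The bulk of the work is then to estimate the $\CC^{-\eps}_x$-norm of $\llp_N^{\diamond k}(t)$ and of the differences $\llp_N^{\diamond k}(t)-\llp_M^{\diamond k}(t)$ in $L^p(\Omega)$, uniformly in time.

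Next I would pass through a Besov/Hölder characterization: to control $\|\llp_N^{\diamond k}\|_{\CC_x^{-\eps}}$ it suffices (by the Besov embedding $B^{-\eps+d/p}_{p,p}\hookrightarrow \CC^{-\eps}$ with $d=2$ and $p$ large, as in Appendix~\ref{SEC:Appendix}) to bound $\E\big[\|Q_j \llp_N^{\diamond k}(t)\|_{L^p_x}^p\big]$ for Littlewood–Paley-type projections $Q_j$ with a bound of order $2^{j(\eps p - 2 + \delta)p}$ summable in $j$. By the Wiener chaos estimate (Lemma~\ref{WienerChaos}), the $L^p(\Omega)$ norm of the chaos-$k$ random variable $Q_j\llp_N^{\diamond k}(t,x)$ is controlled by its $L^2(\Omega)$ norm up to a factor $(p-1)^{k/2}$, so it all comes down to a second-moment computation. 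Using Lemma~\ref{RuleHermite}, $\E\big[\llp_N^{\diamond k}(t,x)\llp_N^{\diamond k}(t,y)\big] = k!\, \big(\E[\llp_N(t,x)\llp_N(t,y)]\big)^k = k!\, G_N(x,y)^k$ where $G_N$ is (essentially) the regularized Green's function of $\H$, which satisfies $|G_N(x,y)|\lesssim 1+|\log d(x,y)|$ uniformly in $N$ with the usual off-diagonal decay; the $k$-th power of a log is still integrable against the kernel of $Q_j$ with the right power of $2^j$, giving the claimed uniform bound. Then I would do the analogous computation for the difference, where the gain $N^{-\kappa}$ comes from the fact that $G_N - G_M$ (for $M\ge N$) is bounded by $N^{-\theta}$ times a log-type kernel, by the quantitative convergence of the regularized Green's functions; raising to the $k$-th power and using the multilinearity of the Wick product (expand $\llp_N^{\diamond k}-\llp_M^{\diamond k}$ as a telescoping sum using Lemma~\ref{BinomialHermite}) produces the factor $N^{-\kappa}$ with $\kappa = \theta$ (up to a harmless loss).

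Having the uniform $L^p(\Omega;\CC_x^{-\eps})$ bounds at fixed time, I would upgrade to space-time regularity. For the $L^q_T\CC_x^{-\eps}$ bounds and the Cauchy property, I would estimate $\E\big[\|\llp_N^{\diamond k}(t)-\llp_N^{\diamond k}(s)\|_{\CC_x^{-\eps}}^p\big]\lesssim |t-s|^{\alpha p}$ for some $\alpha>0$, again via a second-moment computation that now exploits the (mild) Hölder continuity in time of the covariance $\E[\llp_N(t,x)\llp_N(s,y)]$ — this uses the Schauder estimate for $e^{-t\H}$ with its limited range of exponents (equation~\eqref{SchauderH}), which is why one only gets a small Hölder exponent, but any $\alpha>0$ suffices. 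Kolmogorov's continuity theorem then gives $\llp_N^{\diamond k}\in C([0,T];\CC_x^{-\eps})$ almost surely together with moment bounds, and combined with the $N^{-\kappa}$ difference bounds, Borel–Cantelli yields almost sure convergence in $L^q_T\CC_x^{-\eps}$. Finally, the tail estimates \eqref{EstimateTailLoliN} and \eqref{EstimateDeviationLoliN} follow from the moment bounds by Chebyshev and optimization over $p$: a random variable $Y$ in the $k$-th chaos with $\|Y\|_{L^p}\le C\sqrt{p}^{\,k} A$ satisfies $\P(|Y|>R)\le e^{-c(R/A)^{2/k}}$, and tracking the $T$-dependence through $\|\llp_N^{\diamond k}\|_{L^q_T\CC_x^{-\eps}}$ — a factor $T^{1/q}$ from the time integral at the level of $A$ — produces the $T^{-2/(qk)}$ in the exponent. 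The main obstacle is the fixed-time second-moment estimate: one must verify that the perturbative terms in $\H$ (after conjugation by the $\G$ map) do not worsen the diagonal singularity of the Green's function beyond logarithmic, i.e. that $G_N(x,y)^k$ remains integrable against the paraproduct kernels with the exact exponents needed, and that the difference $G_N-G_M$ converges at a polynomial rate in the relevant topology — both of which rest on the precise mapping properties of $\H$ established in Section~\ref{SEC:Anderson}.
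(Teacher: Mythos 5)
Your proposal is correct and follows essentially the same route as the paper: reduction to a fixed-time second-moment computation via the Wiener chaos estimate and a Besov/Sobolev embedding, the covariance identity $\E[\llp_N^{\diamond k}(x)\llp_N^{\diamond k}(y)]=k!\,G_N^\H(x,y)^k$ together with the logarithmic bound \eqref{GHN1} and the quantitative difference bound \eqref{GHN2} for the Cauchy property, Minkowski's inequality for the $L^q_T$ norm, Chebyshev plus optimization in $p$ for the tail and deviation estimates, and Kolmogorov's criterion for time continuity. The only cosmetic differences are that the paper works with the kernel $G_\sigma$ of $(1-\Dl)^{-\sigma/2}$ rather than Littlewood--Paley blocks, and applies Kolmogorov only to $\llp$ itself (which is all the statement requires) rather than to the Wick powers.
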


\begin{proof}
    As a starter, we prove that $(\llp^{\diamond k}_N(t))_N$ is bounded in $L^p(\Omega;\CC^{-\varepsilon}(\M))$ uniformly in $N\in\N$ and $t\ge 0$. 
    Using Itô's isometry and expanding $\zeta$ in the eigenbasis $(\varphi_n)$ of $\H$, we get for $t\geq0$, $x,y\in\M$ and $N_1,N_2\in\N$
    \begin{align*}
       & \E\left[\llp_{N_1}(t,x)\llp_{N_2}(t,y)\right]\\
       &=2\E\left[(\P_{N_1}^x\otimes\P_{N_2}^y)\int_{-\infty}^t\int_{-\infty}^te^{-(t-s)\H}\zeta^x(ds)e^{-(t-\tau)\H}\zeta^y(d\tau)\right](x,y)\\
        &=2(\P_{N_1}^x\otimes\P_{N_2}^y)\sum_{n,m\geq0}\E\left[\int_{-\infty}^t\int_{-\infty}^te^{-(t-s)\lambda_n}\varphi_n^xd B_n(s)e^{-(t-\tau)\lambda_m}\varphi_m^yd B_m(\tau)\right](x,y)\\
        &=2(\P_{N_1}^x\otimes\P_{N_2}^y)\left(\sum_{n\geq0}\int_{-\infty}^te^{-2(t-s)\lambda_n}\varphi_n^x\varphi_n^y\,ds\right)(x,y)\\
        &=\sum_{n\geq0}\frac{(\P_{N_1}\varphi_n)(x)(\P_{N_2}\varphi_n)(y)}{\lambda_n}\\
        &=G^\H_{N_1,N_2}(x,y),
    \end{align*}
    where $G^\H_{N_1,N_2}(x,y)=(\P_{N_1}^x\otimes\P_{N_2}^y)G^\H(x,y)$ and $G^\H$ is the Green function of $\H$. Since the covariance above does not depend on time, we drop the $t$ dependency as it does not matter in the estimates. Combining the Sobolev inequality (Lemma~\ref{LEM:Besov}~(i) and~(ii)) with the Wiener chaos estimate \eqref{WienerChaos}, we get for $\sigma\in(0,1)$ small enough depending on $\varepsilon$ and $p$:
    \begin{align*}
        \E\left[\|\llp_{N}^{\diamond k}\|_{\CC^{-\varepsilon}}^p\right]&\lesssim\E\left[\|\llp_{N}^{\diamond k}\|_{W^{-\sigma,p}}^p\right]\\
        &\lesssim\int_\M\E\left[\left((1-\Delta)^{-\frac{\sigma}{2}}(\llp_{N}^{\diamond k})(x)\right)^p\right]\,d x\\
        &\lesssim(p-1)^{p\frac{k}{2}}\int_\M\E\left[\left((1-\Delta)^{-\frac{\sigma}{2}}(\llp_{N}^{\diamond k})(x)\right)^2\right]^{\frac{p}{2}}\,d x\\
        &=(p-1)^{p\frac{k}{2}}\int_\M\left(\int_{\M^2} G_{\sigma}(x,y)G_{\sigma}(x,z)\E\left[\llp_{N}^{\diamond k}(y)\llp_{N}^{\diamond k}(z)\right]\,dy\,dz\right)^{\frac{p}{2}}\,d x,
    \end{align*}
    where $G_\sigma$ is the Green function of $(1-\Delta)^{-\frac{\sigma}{2}}$.
    From there, we use the previous computation and the algebraic rule \eqref{RuleHermite} to get rid of the expectation so that 
    \begin{align*}
        \E\left[\|\llp_{N}^{\diamond k}\|_{\CC^{-\varepsilon}}^p\right]\lesssim(p-1)^{p\frac{k}{2}}\int_\M\left(\int_{\M^2} G_{\sigma}(x,y)G_{\sigma}(x,z)G_N^\H(y,z)^k\,dy\,dz\right)^{\frac{p}{2}}\,d x.
    \end{align*}
   Recall from \eqref{Gsigma} and \eqref{GHN1} that
    $$
    G_\sigma(x,y)\les \drm(x,y)^{\sigma-2}
    $$
    and 
    $$
    \left|G_N^\H(x,y)+\frac{1}{2\pi}\log\big(\drm(x,y)+N^{-1}\big)\right|\leq C.
    $$
    Thus once we replace inside the integral it yields
    \begin{align*}
        &\E\left[\|\llp_{N}^{\diamond k}\|_{\CC^{-\varepsilon}}^p\right]\\
        &\lesssim(p-1)^{p\frac{k}{2}}\int_\M\left(\int_{\M^2} \drm(x,y)^{\sigma-2}\drm(x,z)^{\sigma-2}(1+|\log(\drm(y,z)+N^{-1})|^k)\,dy\,dz\right)^{\frac{p}{2}}\,dx.
    \end{align*}
    Note that, since we integrate symmetric functions of $(y,z)\in\M^2$, we can restrict the integral to $\drm(x,y)\geq \drm(x,z)$ so that, using the triangle inequality on $\drm(y,z)$,
    \begin{align*}
        \E\left[\|\llp_{N}^{\diamond k}\|_{\CC^{-\varepsilon}}^p\right]&\lesssim(p-1)^{p\frac{k}{2}}\\
        &+(p-1)^{p\frac{k}{2}}\int_\M\left(\int_{\drm(x,y)\geq \drm(x,z)} \drm(x,y)^{\sigma-2}\drm(x,z)^{\sigma-2}|\log(\drm(x,y))|^k\,dy\,dz\right)^{\frac{p}{2}}\,d x.
    \end{align*}
    Since $|\log(\drm(x,y))|^k\lesssim_k \drm(x,y)^{-\frac{\sigma}{2}}$, we end up with 
    \begin{align*}
        \E\left[\|\llp_{N}^{\diamond k}\|_{\CC^{-\varepsilon}}^p\right]\lesssim(p-1)^{p\frac{k}{2}}+(p-1)^{p\frac{k}{2}}\int_\M\left(\int_{\drm(x,y)\geq \drm(x,z)} \drm(x,y)^{\frac{\sigma}{2}-2}\drm(x,z)^{\sigma-2}\,dy\,dz\right)^{\frac{p}{2}}\,d x,
    \end{align*}
    where the inner integral on the right-hand side is bounded uniformly in $x\in\M$ as $\M$ is 2-dimensional and compact. Using next Minkowski's integral inequality, we get
    $$
    \E\left[\|\llp^k\|_{L^q_T\CC^{-\epsilon}_x}^p\right]\leq \left\|\E\left[\|\llp^k\|_{\CC^{-\epsilon}_x}^p\right]^{\frac{1}{p}}\right\|_{L^q_T}^p\lesssim(p-1)^{p\frac{k}{2}}T^{\frac{p}{q}}.    $$
    This gives the uniform boundedness of $\llp^{\diamond k}_N$ in $L^p(\Omega;L^q([0,T];\CC^{-\varepsilon}(\M)))$ for any $q\ge1$. Chebychev's inequality and optimizing in $p$ then ensures that the tail estimate \eqref{EstimateTailLoliN} holds true:
    \begin{align*}
        \mathbb{P}\left(\|\llp_{N}^{\diamond k}\|_{L^q_T\CC^{-\epsilon}_x}>R\right)&\leq C(p-1)^{p\frac{k}{2}}R^{-p}T^{\frac{p}{q}}\\
        &\leq Ce^{-cR^\frac{2}{k}T^{-\frac{2}{qk}}}
    \end{align*}
    for some constants $c,C$ that do not depend on $T$, $R$, $p$, or $N$.

    Now to show that $\{\llp_N^{\diamond k}\}_N$ is Cauchy, let $N_1,N_2$ be two integers with $N_1\leq N_2$, then as before we get 
    \begin{align*}
        &\E\left[\|\llp_{N_1}^{\diamond k}-\llp_{N_2}^{\diamond k}\|_{\CC^{-\varepsilon}}^p\right]\\
        &\lesssim\E\left[\|\llp_{N_1}^{\diamond k}-\llp_{N_2}^{\diamond k}\|_{W^{-\sigma,p}}^p\right]\\
        &\lesssim\int_\M\E\left[\left((1-\Delta)^{\frac{-\sigma}{2}}(\llp_{N_1}^{\diamond k}-\llp_{N_2}^{\diamond k})(x)\right)^p\right]\,d x\\
        &\lesssim(p-1)^{p\frac{k}{2}}\int_\M\E\left[\left((1-\Delta)^{\frac{-\sigma}{2}}(\llp_{N_1}^{\diamond k}-\llp_{N_2}^{\diamond k})(x)\right)^2\right]^{\frac{p}{2}}\,d x\\
        &=(p-1)^{p\frac{k}{2}}\int_\M\left(\int_{\M^2} G_{\sigma}(x,y)G_{\sigma}(x,z)\E\left[(\llp_{N_1}^{\diamond k}-\llp_{N_2}^{\diamond k})(y)(\llp_{N_1}^{\diamond k}-\llp_{N_2}^{\diamond k})(z)\right]\,dy\,dz\right)^{\frac{p}{2}}\,d x
    \end{align*}
    where we used Lemma \ref{RuleHermite}. Itô's isometry on the expectation term yields
    \begin{align*}
        \E\left[(\llp_{N_1}^{\diamond k}-\llp_{N_2}^{\diamond k})(y)(\llp_{N_1}^{\diamond k}-\llp_{N_2}^{\diamond k})(z)\right]&=\E\left[\llp_{N_1}^{\diamond k}(y)\llp_{N_1}^{\diamond k}(z)\right]-\E\left[\llp_{N_1}^{\diamond k}(y)\llp_{N_2}^{\diamond k}(z)\right]\\
        &\qquad -\E\left[\llp_{N_2}^{\diamond k}(y)\llp_{N_1}^{\diamond k}(z)\right]+\E\left[\llp_{N_2}^{\diamond k}(y)\llp_{N_2}^{\diamond k}(z)\right]\\
        &=G_{N_1}^\H(y,z)^k-G_{N_1N_2}^\H(y,z)^k-G_{N_2N_1}^\H(y,z)^k+G_{N_2}^\H(y,z)^k.
    \end{align*}
    Using successively the mean value inequality, estimate \eqref{GHN2}, and again the comparison between logarithm and monomials, we get for any $0<\delta\ll 1$:
    \begin{align*}
        |G_{N_1}^\H(y,z)^k-G_{N_1N_2}^\H(y,z)^k|&\lesssim |G_{N_1}^\H(y,z)-G_{N_1N_2}^\H(y,z)|(1+\drm(y,z)^{-\frac{\sigma}{6}})\\
        &\lesssim ((|\log(\drm(y,z)+N_1^{-1})|\vee 1)\wedge (N_1^{\delta-1}\drm(y,z)^{-1}))(1+\drm(y,z)^{-\frac{\sigma}{6}}).
    \end{align*}
    In the case $\dg(x,y)\le N_1^{-1}$, we have by interpolation and the comparison between logarithm and monomials that
    \begin{align*}
        ((|\log(\drm(y,z)+N_1^{-1})|\vee 1)\wedge (N_1^{\delta-1}\drm(y,z)^{-1}))&\les \log N_1\wedge (N_1^{\delta-1}\drm(y,z)^{-1}))\\
        & \lesssim_{\delta,\theta} N_1^{(\frac\delta2-1)\theta}\dg(y,z)^{-\theta} 
    \end{align*}
    for any $\theta\in(0;1)$, uniformly in $N_1$, while in the case $\dg(x,y)\ge N_1^{-1}$ we get
    \begin{align*}
    ((|\log(\drm(y,z)+N_1^{-1})|\vee 1)\wedge (N_1^{\delta-1}\drm(y,z)^{-1}))&\les \log \dg(y,z)^{-1}\wedge (N_1^{\delta-1}\drm(y,z)^{-1}))\\
    &\lesssim_{\delta,\theta} N^{(\delta-1)\theta}\dg(y,z)^{-((1-\theta)\frac\nu2+\theta)}
    \end{align*}
for any $\theta\in(0;1)$ and $0<\nu\ll1$, uniformly in $N_1$. This finally gives
    \begin{align}{\label{GN1N2k}}
        |G_{N_1}^\H(y,z)^k-G_{N_1N_2}^\H(y,z)^k|&\lesssim N_1^{-2\kappa}\drm(y,z)^{-\frac{\sigma}{2}}
    \end{align}
    for some $\kappa>0$ and implicit constant that is uniform in $y,z$ and $N_1,N_2$. Similar computations as before yield that the whole inner integral on $\M^2$ is bounded uniformly in $x\in\M$ with
    $$
    \int_{\M^2} G_{\sigma}(x,y)G_{\sigma}(x,z)\E\left[(\llp_{N_1}^{\diamond k}-\llp_{N_2}^{\diamond k})(y)(\llp_{N_1}^{\diamond k}-\llp_{N_2}^{\diamond k})(z)\right]\,dy\,dz\lesssim N_1^{-2\kappa}
    $$
    so that, using Minkowski inequality, $(\llp^{\diamond k}_N)_N$ indeed is a Cauchy sequence in $L^p(\Omega;L^q([0,T];\CC^{-\varepsilon}(\M)))$, we denote by $\llp^{\diamond k}$ the limiting process.\\
    Since $L^p(\Omega)$ convergence ensures convergence in probability, we can use Chebychev's inequality together with passing to the limit $N_2\to+\infty$ and optimizing in $p$ so that 
    \begin{align*}
        \mathbb{P}\left(\|\llp_{N}^{\diamond k}-\llp^{\diamond k}\|_{L^q_T\CC_x^{-\varepsilon}}>N^{-\kappa}R\right)&\leq (p-1)^{p\frac{k}{2}}N^{\kappa p}R^{-p}\E\left[\|\llp_{N}^{\diamond k}-\llp^{\diamond k}\|_{L^q_T\CC_x^{-\varepsilon}}^p\right]\\
        &\leq C(p-1)^{p\frac{k}{2}}R^{-p}T^{\frac{p}{q}}\\
        &\leq Ce^{-cR^\frac{2}{k}T^{-\frac{2}{qk}}}
    \end{align*}
    for some constants $c,C>0$ and the same exponent $\kappa>0$ defined above. Note that $c$, $C$ and $\kappa$ do not depend on $N,R$ or $T$. This proves deviation estimate \eqref{EstimateDeviationLoliN} as well as almost sure convergence in $L^q([0,T],\CC^{-\varepsilon}(\M))$ using Borel-Cantelli lemma.

    Now for the continuity of $t\mapsto \llp(t)\in\CC^{-\varepsilon}(\M)$, we use Kolmogorov's continuity criterion. We have similarly as above for some large $p$ depending on $\eps$, using Sobolev inequality and the Wiener chaos estimate
    \begin{align*}
       & \E\big\|\llp(t+\delta)-\llp(t)\big\|_{\Cc^{-\eps}}^p \les \E\big\|\llp(t+\delta)-\llp(t)\big\|_{W^{-\frac\eps2,p}}^p\\
        &\les (p-1)^{\frac{p}2}\int_\M\Big(\int_{\M^2}G_{\frac\eps2}(x,y)G_{\frac\eps2}(x,z)\E\big[(\llp(t+\delta,y)-\llp(t,y))(\llp(t+\delta,z)-\llp(t,z))\big]dydz\Big)^{\frac{p}2}dx\\
         &= (p-1)^{\frac{p}2}\int_\M\Big(\int_{\M^2}G_{\frac\eps2}(x,y)G_{\frac\eps2}(x,z)\big[(1-e^{-\delta\H})\otimes 1\big] G^\H(y,z)dydz\Big)^{\frac{p}2}dx.
     \end{align*}
 Now, using Sobolev inequality,
 \begin{align*}
 \Big\|\int_\M G_{\frac\eps2}(x,y)\big[(1-e^{-\delta\H})\otimes 1\big] G^\H(y,z)dy\Big\|_{L^\infty_{x,z}}&= \Big\|\big[(1-e^{-\delta\H})\otimes 1\big] G^\H(x,z)\Big\|_{L^\infty_{z}W^{-\frac\eps2,\infty}_x}\\
 &\les \Big\|\big[(1-e^{-\delta\H})\otimes 1\big] G^\H(x,z)\Big\|_{L^\infty_{z}\D^{1-\frac\eps4}_x},
\end{align*}       
with $\D^{1-\frac\eps4}$ the Sobolev space associated to $\H$, see \eqref{SobolevH}. Using the mean value inequality and the bound $|1-e^{-\delta\lambda_n}|^2\lesssim (\delta\lambda_n)^{\frac{\eps}{8}}$, for any fixed $z\in\M$,
\begin{align*}
\Big\|\big[(1-e^{-\delta\H})\otimes 1\big] G^\H(\cdot,z)\Big\|_{\D^{1-\frac\eps4}_x}^2&=\sum_{n\ge 0}\frac{|1-e^{-\delta\lambda_n}|^2}{\lambda_n^{1+\frac\eps4}}\varphi_n(z)^2\\
&\les \delta^{\frac\eps8}\sum_{n\ge 0}\lambda_n^{-1-\frac\eps8}\varphi_n(z)^2\\
&\sim\delta^\frac\eps8 \|\delta_z\|_{\D^{-1-\frac\eps8}}^2\\
&\les \delta^{\frac\eps8},
\end{align*}
uniformly in $z\in\M$, where we used that $\|\delta_z\|_{\D^{-1-\frac\eps8}}$ is uniformly bounded in $z\in\M$, see the proof of \cite[Lemma 23]{BDM}. Since $G_{\frac\eps2}(x,z)\in L^\infty_x L^1_z$, and since $\M$ is compact, putting everything together we get
\begin{align*}
    \E\big\|\llp(t+\delta)-\llp(t)\big\|_{\Cc^{-\eps}}^p\les \delta^{p\frac\eps{32}},
\end{align*}
from which $\llp\in C(\R;\Cc^{-\eps}(\M))$ a.s. due to Kolmogorov continuity criterion\footnote{Kolmogorov regularity theorem rather ensures that there exists a modification of the process $\llp_N$ (resp. $\llp$) that is almost surely continuous. We identify $\llp_N$ (resp. $\llp$) with this continuous modification in our argument.}.
\end{proof}

In our globalisation argument in Section~\ref{SEC:global} below, we will also need to investigate the regularization of $\llp^{\diamond k}$ based on $\H$ instead of $\Dl$.
\begin{proposition}\label{PROP:llpM}
Fix $k\in\N$ and $\chi\in C^\infty_0(\R)$ supported in $(-1;1)$. For $M\in\N^*$, let $\chi_M=\chi(M^{-2}\H)$. Then for any $\eps\in(0,1)$, any $p\geq q\geq1$, and any $N\in\N^*$, the sequence $\{(\P_N\chi_M\llp)^{\diamond k}\}_M$ is a Cauchy sequence in $L^p(\Omega;L^q([0;T];\CC^{-\varepsilon}(\M)))$ that also converges almost surely in $L^q([0;T];\CC^{-\varepsilon}(\M))$ to $(\P_N\llp)^{\diamond k}$. Moreover, $(\P_N\chi_M \llp)^{\diamond k}$ is uniformly bounded in $N,M$ in $L^p(\Omega;L^q([0;T];\Cc^{-\eps}(\M)))$, and we have the tail estimates
\begin{align}\label{EstimateDeviationLoliNM}
        \mathbb{P}\left(\|(\P_N\chi_M\llp)^{\diamond k}\|_{L^q_T\CC_x^{-\varepsilon}}> R\right)\leq C e^{-cR^{\frac{2}{k}}T^{-\frac{2}{qk}}}
    \end{align}
    and
    \begin{align}\label{EstimateDeviationLoliNMdiff}
        \mathbb{P}\left(\|(\P_N\chi_M\llp)^{\diamond k}-(\P_N\llp)^{\diamond k}\|_{L^q_T\CC_x^{-\varepsilon}}> N^{c\kappa}M^{-\kappa}R\right)\leq C e^{-cR^{\frac{2}{k}}T^{-\frac{2}{qk}}},
    \end{align}
   uniformly in $R$, $p$, and $N,M$.
   
   Similar results hold for $(\P_N\chi_M\llp(0))^{\diamond k}$ in $\Cc^{-\eps}(\M)$.
\end{proposition}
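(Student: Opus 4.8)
The plan is to transpose the proof of Proposition~\ref{RenormWick} essentially word for word, with the $-\Dl$-mollification index now frozen to $\P_N$ and $M$ taking over the role of the truncation parameter; only the covariance kernel needs fresh input. By Itô's isometry and expanding $\zeta$ in the eigenbasis $(\varphi_n)$ of $\H$, exactly as at the beginning of the proof of Proposition~\ref{RenormWick}, the covariance of $\P_N\chi_M\llp$ is independent of $t$ and equals
\begin{align*}
\E\big[(\P_N\chi_M\llp)(t,x)\,(\P_N\chi_M\llp)(t,y)\big]=\sum_{n\ge0}\frac{\chi(M^{-2}\lambda_n)^2}{\lambda_n}(\P_N\varphi_n)(x)(\P_N\varphi_n)(y)=:K^\H_{N,M}(x,y),
\end{align*}
and the cross-covariance of $\P_N\chi_{M_1}\llp$ and $\P_N\chi_{M_2}\llp$ is $K^\H_{N,M_1,M_2}(x,y):=\sum_n\lambda_n^{-1}\chi(M_1^{-2}\lambda_n)\chi(M_2^{-2}\lambda_n)(\P_N\varphi_n)(x)(\P_N\varphi_n)(y)$, the case $M_2=\infty$ (with $\chi_\infty:=\Id$) being the cross-covariance with $\P_N\llp$. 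Since $\chi_M$ commutes with $\H$, one has $K^\H_{N,M}=(\P_N\chi_M\otimes\P_N\chi_M)G^\H$ and, with $G^\H_N$ as in Proposition~\ref{RenormWick},
\begin{align*}
K^\H_{N,M}-G^\H_N=-(\P_N\otimes\P_N)g_M(\H),\qquad g_M(\lambda):=\lambda^{-1}\big(1-\chi(M^{-2}\lambda)^2\big).
\end{align*}

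The next step is to prove the two kernel estimates that replace \eqref{GHN1} and \eqref{GN1N2k}. The first is a uniform logarithmic bound
\begin{align*}
\big|K^\H_{N,M}(x,y)\big|\lesssim 1+\big|\log\drm(x,y)\big|,\qquad\text{uniformly in }N,M
\end{align*}
(and, more precisely, $\lesssim 1+|\log(\drm(x,y)+\min(N,M)^{-1})|$). The second is a quantitative difference bound: since $\chi(0)=1$ and $\chi$ is Lipschitz, $\|g_M\|_{L^\infty}\lesssim M^{-2}$, hence $\|g_M(\H)\|_{L^2\to L^2}\lesssim M^{-2}$; combining this with the heat-kernel bound $\|\P_N\delta_z\|_{L^2}\lesssim N$ (uniform in $z\in\M$) and Cauchy--Schwarz in $(\P_N\otimes\P_N)g_M(\H)(x,y)=\langle g_M(\H)\P_N\delta_y,\P_N\delta_x\rangle$ yields
\begin{align*}
\big|K^\H_{N,M}(x,y)-G^\H_N(x,y)\big|\lesssim N^2M^{-2},\qquad\text{uniformly in }x,y,N,M,
\end{align*}
and the same argument gives $|K^\H_{N,M_1}(x,y)-K^\H_{N,M_1,M_2}(x,y)|\lesssim N^2M_1^{-2}$ for $M_1\le M_2$. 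Via the mean value inequality and $|\log\drm|^{k-1}\lesssim\drm^{-\sigma/2}$, this is the analogue of \eqref{GN1N2k} with $N_1^{-2\kappa}$ replaced by $N^2M^{-2}$. For the first estimate one has to revisit the analysis of $G^\H$ in Section~\ref{SEC:Anderson} and check that it still goes through when the regularization $\P_N\otimes\P_N$ is replaced by the composite $\P_N\chi_M\otimes\P_N\chi_M$, using that $\chi_M$ is bounded on the Hölder--Sobolev scales associated with $\H$ uniformly in $M$.

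With these two estimates in hand the rest is a verbatim repetition of Proposition~\ref{RenormWick}. Combining Lemma~\ref{LEM:Besov}, the Wiener chaos estimate (Lemma~\ref{WienerChaos}) and the Hermite covariance rule (Lemma~\ref{RuleHermite}), the $p$-th moment of $\|(\P_N\chi_M\llp)^{\diamond k}\|_{\CC^{-\eps}}$ is bounded by $(p-1)^{pk/2}$ times $\int_\M\big(\int_{\M^2}G_\sigma(x,y)G_\sigma(x,z)|K^\H_{N,M}(y,z)|^k\,dy\,dz\big)^{p/2}\,dx$; the uniform logarithmic bound, after restricting to $\drm(x,y)\ge\drm(x,z)$, using $\drm(y,z)\le 2\drm(x,y)$ and $|\log\drm|^k\lesssim\drm^{-\sigma/2}$ exactly as before, and Minkowski's inequality in $L^q_T$, gives the uniform bound in $L^p(\Omega;L^q_T\CC^{-\eps})$, hence the tail estimate \eqref{EstimateDeviationLoliNM} by Chebyshev and optimization in $p$. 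Running the same computation for the four-term combination $K^\H_{N,M}(y,z)^k-K^\H_{N,M,\infty}(y,z)^k-K^\H_{N,\infty,M}(y,z)^k+G^\H_N(y,z)^k$ — which, up to the factor $k!$, is the covariance kernel of the difference $(\P_N\chi_M\llp)^{\diamond k}-(\P_N\llp)^{\diamond k}$ — and bounding it with the mean value inequality and the difference estimate above, one obtains $\E\big\|(\P_N\chi_M\llp)^{\diamond k}-(\P_N\llp)^{\diamond k}\big\|_{L^q_T\CC^{-\eps}}^p\lesssim (p-1)^{pk/2}\big(NM^{-1}\big)^pT^{p/q}$; this simultaneously gives $L^p$-convergence to $(\P_N\llp)^{\diamond k}$, the Cauchy property, and — by Chebyshev and optimization — the deviation bound \eqref{EstimateDeviationLoliNMdiff} for suitable $\kappa,c>0$, and then almost sure convergence in $L^q_T\CC^{-\eps}$ by Borel--Cantelli. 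The assertion for $(\P_N\chi_M\llp(0))^{\diamond k}$ in $\CC^{-\eps}(\M)$ is the special case in which the time integration is omitted, the kernel $K^\H_{N,M}$ being independent of $t$.

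The main obstacle is the first kernel estimate, namely the uniformity in \emph{both} $N$ and $M$ of $|K^\H_{N,M}|\lesssim 1+|\log\drm|$: it does not follow from Proposition~\ref{RenormWick} combined with the difference estimate (which loses powers of $N$), so it genuinely requires re-examining the Green's-function bounds of Section~\ref{SEC:Anderson} with the composite mollifier $\P_N\chi_M$. The representative difficulty is that $\P_N$ (a multiplier of $-\Dl$) and $\H$ do not commute, and $g_M(\H)$ on its own has a logarithmically divergent diagonal, so the frequency truncation by $\P_N$ has to be used — and tracking how the bound degrades in $N$ while improving in $M$ is exactly what produces the asymmetric rate $N^{c\kappa}M^{-\kappa}$. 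Everything downstream is identical to the proof of Proposition~\ref{RenormWick}.
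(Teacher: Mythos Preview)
Your proof is correct and follows the same overall architecture as the paper's: reduce to kernel estimates on the covariance of $\P_N\chi_M\llp$, then rerun the Sobolev/Wiener-chaos/Hermite machinery of Proposition~\ref{RenormWick} verbatim. The uniform logarithmic bound $|K^\H_{N,M}|\lesssim 1+|\log\dg|$ is handled identically in both: the paper invokes \eqref{GHM1}, and you correctly flag that this requires the Section~\ref{SEC:Anderson} analysis of $G^\H$ with the composite mollifier.

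Where you genuinely diverge is in the \emph{difference} estimate. The paper uses \eqref{GHM2}, whose proof goes through the Sobolev-space comparison of $\H$- and $\Dl$-multipliers (Lemma~\ref{LEM:multipliers}) and yields $\|K^\H_{N,M_1}-K^\H_{N,M_1,M_2}\|_{L^\infty}\lesssim N^{c\delta}M_1^{-\delta}$ for small $\delta$. Your route is more elementary: writing $K^\H_{N,M}-G^\H_N=\langle g_M(\H)\P_N\delta_y,\P_N\delta_x\rangle$ with $g_M(\lambda)=\lambda^{-1}(1-\chi(M^{-2}\lambda)^2)$, you use only the spectral bound $\|g_M(\H)\|_{L^2\to L^2}\le\|g_M\|_{L^\infty}\lesssim M^{-2}$ (valid since $\chi(0)=1$, implicit in the convergence statement) together with $\|\P_N\delta_z\|_{L^2}\lesssim N$ and Cauchy--Schwarz. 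This avoids Lemma~\ref{LEM:multipliers} entirely and gives the explicit rate $N^2M^{-2}$, hence \eqref{EstimateDeviationLoliNMdiff} with $c=\kappa=1$. The paper's route is heavier but reuses machinery already built; yours is self-contained for this step and gives a sharper $M$-decay at the cost of a worse $N$-loss, which is immaterial for the proposition.
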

\begin{proof}
Repeating the arguments above in the proof of Proposition~\ref{RenormWick}, we have for fixed $t\ge 0$:
\begin{align*}
 &\E\left[\|(\P_N\chi_{M_1}\llp(t))^{\diamond k}-(\P_N\chi_{M_2}\llp(t))^{\diamond k}\|_{\Cc^{-\eps}}^p\right]\\
 &\les_p\int_\M\left(\int_{\M^2} G_{\frac\eps4}(x,y)G_{\frac\eps4}(x,z)\right.\\
 &\times\left.\left[G_{N,M_1,M_1}^\H(y,z)^k-G_{N,M_1,M_2}^\H(y,z)^k-G_{N,M_2,M_1}^\H(y,z)^k+G_{N,M_2,M_2}^\H(y,z)^k\right]\,dy\,dz\right)^{\frac{p}{2}}\,d x
\end{align*}
for some $p\gg1$ depending on $\eps$, and where $$G_{N,M_j,M_\ell}^\H=(\P_N\chi_{M_j}\otimes\P_N\chi_{M_\ell})G^\H=(\P_N\otimes\P_N)(\chi_{M_j}\otimes\chi_{M_\ell})G^\H.$$ Then using the mean value theorem and \eqref{GHM1}-\eqref{GHM2}-\eqref{Gsigma} we can estimate the inner integrals by
\begin{align*}
\int_{\M^2}\dg(x,y)^{\frac{\eps}4-2}\dg(x,y)^{\frac\eps4-2}N^{c\delta}M_1^{-\delta}|\log(\dg(y,z))|^{k-1}dydz \les N^{c\delta}M_1^{-\delta}
\end{align*}
for some small $\delta>0$. This is enough for \eqref{EstimateDeviationLoliNMdiff} after finishing the proof as in that of Proposition~\ref{RenormWick}.

The estimate \eqref{EstimateDeviationLoliNM} follows similarly, using that we have a bound uniform in $N\in\N^*$ from using only \eqref{GHM1} and not \eqref{GHM2}.
\end{proof}

\subsection{Truncated Gibbs measure and convergence}{\label{SubConvergenceGibbsMeasure}}

We finally construct the Gibbs measure $\rho$ as the limit in total variation of $(\rho_N)_N$ given by \eqref{TruncatedGibbsMeasure}:
$$
d\rho_N(u)=\ZZ_N^{-1}e^{-\int_\M F_N^\diamond(\P_Nu)\,d x}\,d \mu^\H(u),
$$
where $\ZZ_N$ is the normalization constant 
$$
\ZZ_N:=\E_{\mu^\H}\left[e^{-\int_\M F_N^\diamond(\P_Nu)\,d x}\right].
$$
First, we prove the convergence of the density functions.

\begin{proposition}{\label{RenormNonLinear}}
    For any $p\geq1$, the sequence $\left(\int_\M F_N^\diamond(\P_Nu)\,d x\right)_N$ is a Cauchy sequence in $L^p(\mu^\H)$, converging to a limit denoted by $\int_\M F^\diamond(u)\,d x\in L^p(\mu^\H)$.
\end{proposition}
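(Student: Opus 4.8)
The plan is to reduce to renormalized monomials and then to mimic, at fixed time, the argument already carried out in the proof of Proposition~\ref{RenormWick}. By \eqref{polynom}--\eqref{polynomWick} and linearity it suffices to show that for each $0\le k\le 2m$ the sequence $Q_N^k:=\int_\M (\P_N u)^{\diamond k}\,dx=\int_\M H_k(\P_N u,\sigma_N^2)\,dx$ is Cauchy in $L^p(\mu^\H)$; the limit $\int_\M F^\diamond(u)\,dx$ is then defined as $\sum_{k=0}^{2m}a_k\lim_N Q_N^k$ by completeness of $L^p$. Since $\mu^\H$ is a probability measure, Cauchyness for $p\ge 2$ will automatically give it for every $p\ge 1$, so one may assume $p\ge2$. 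The key point, which makes the statement essentially a byproduct of Proposition~\ref{RenormWick}, is that under $\mu^\H$ the field $u$ is a centered Gaussian with covariance $G^\H$, which is exactly the time-independent covariance of the stochastic convolution $\llp(t)$; hence the very same Green's function estimates apply, the only differences being an extra integration in the space variable and the absence of any time integration or continuity statement.

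First I would compute the $L^2(\mu^\H)$ norm of $Q_{N_1}^k-Q_{N_2}^k$ for $N_1\le N_2$. Expanding the square and using Fubini, the expectation factors through the two-point function; here $\P_{N_i}u(x)$ is a centered Gaussian of variance $\sigma_{N_i}^2(x)$ as in \eqref{sigmaN}, so the rescaled form of Lemma~\ref{RuleHermite}, together with $H_k(\cdot,\sigma^2)=\sigma^k H_k(\sigma^{-1}\cdot)$, gives
\begin{align*}
\E_{\mu^\H}\big[(\P_{N_i}u)^{\diamond k}(x)\,(\P_{N_j}u)^{\diamond k}(y)\big]=k!\,\E_{\mu^\H}\big[\P_{N_i}u(x)\,\P_{N_j}u(y)\big]^k=k!\,G^\H_{N_i,N_j}(x,y)^k,
\end{align*}
with $G^\H_{N_i,N_j}=(\P_{N_i}^x\otimes\P_{N_j}^y)G^\H$ as in the proof of Proposition~\ref{RenormWick}. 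Hence
\begin{align*}
\E_{\mu^\H}\big[(Q_{N_1}^k-Q_{N_2}^k)^2\big]=k!\int_{\M^2}\big[G^\H_{N_1}(x,y)^k-G^\H_{N_1,N_2}(x,y)^k-G^\H_{N_2,N_1}(x,y)^k+G^\H_{N_2}(x,y)^k\big]\,dx\,dy.
\end{align*}

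Then I would group the integrand as $(G^\H_{N_1}(x,y)^k-G^\H_{N_1,N_2}(x,y)^k)-(G^\H_{N_2,N_1}(x,y)^k-G^\H_{N_2}(x,y)^k)$ and apply the mean value inequality together with \eqref{GHN1}, \eqref{GHN2}, the symmetry of $G^\H$, and the comparison between logarithms and monomials, exactly as in the derivation of \eqref{GN1N2k}, to bound the integrand by $N_1^{-2\kappa}\dg(x,y)^{-1/2}$ for some $\kappa>0$, uniformly in $x,y$ and $N_1\le N_2$. Since $\dg(\cdot,\cdot)^{-1/2}\in L^1(\M^2)$ on the compact surface $\M$, this yields $\E_{\mu^\H}[(Q_{N_1}^k-Q_{N_2}^k)^2]\les N_1^{-2\kappa}\to0$; running the same computation with $N_1=N_2$ and only \eqref{GHN1} (using $|\log\dg(x,y)|^k\les\dg(x,y)^{-1/2}$) also gives uniform boundedness of $\|Q_N^k\|_{L^2(\mu^\H)}$.

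To pass from $L^2$ to $L^p$, I would use that $Q_{N_1}^k-Q_{N_2}^k$ lies in the $k$-th Wiener chaos generated by the i.i.d. Gaussians in \eqref{DefGFF}, so that Lemma~\ref{WienerChaos} gives $\|Q_{N_1}^k-Q_{N_2}^k\|_{L^p(\mu^\H)}\le(p-1)^{k/2}\|Q_{N_1}^k-Q_{N_2}^k\|_{L^2(\mu^\H)}\les_p N_1^{-\kappa}$ for every $p\ge2$, hence for every $p\ge1$; summing over $0\le k\le2m$ against the coefficients $a_k$ of $F$ then concludes. I do not expect a serious obstacle here: the only two mildly delicate points are (i) that the cross-covariance identity above holds even though the two Wick powers are renormalized with the two different variances $\sigma_{N_i}^2$ and $\sigma_{N_j}^2$, which is immediate from the Hermite rescaling and Lemma~\ref{RuleHermite}; and (ii) the Green's function comparison bound, which is however not new, being precisely the input \eqref{GN1N2k} already established for Proposition~\ref{RenormWick} and ultimately resting on the analysis of the smoothed Green's function of $\H$ carried out in Section~\ref{SEC:Anderson}.
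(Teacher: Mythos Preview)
Your proposal is correct and follows essentially the same route as the paper: reduce to monomials, use Lemma~\ref{RuleHermite} to write the second moment as $\int_{\M^2}\big[G^\H_{N_1}{}^k-G^\H_{N_1,N_2}{}^k-G^\H_{N_2,N_1}{}^k+G^\H_{N_2}{}^k\big]\,dx\,dy$, control this via \eqref{GHN1}--\eqref{GHN2} (i.e.\ the bound \eqref{GN1N2k}), and upgrade from $L^2$ to $L^p$ by the Wiener chaos estimate. The only cosmetic differences are that the paper applies Lemma~\ref{WienerChaos} before expanding the square rather than after, and cites \eqref{GN1} directly for the uniform bound rather than passing through a $\dg^{-1/2}$ majorant.
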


\begin{proof}
    Since $F$ is a polynomial of degree $k=2m$, it is sufficient to treat the case where $F(X)=X^k$ is a monomial. The proof follows the same lines as the one of Proposition \eqref{RenormWick}. First, we have the bound
    \begin{align*}
        \left\|\int_\M (\P_Nu)^{\diamond k}\,d x\right\|_{L^p(\mu^\H)}^2&\lesssim_p \left\|\int_\M (\P_Nu)^{\diamond k}\,d x\right\|_{L^2(\mu^\H)}^2\\
        &= \int_{\M^2} \E\left[(\P_Nu)^{\diamond k}(x)(\P_Nu)^{\diamond k}(y)\right]\,d x\,dy\\
        &= \int_{\M^2} \E\left[(\P_Nu)(x)(\P_Nu)(y)\right]^k\,d x\,dy\\
        &= \int_{\M^2} G_N^\H(x,y)^k\,d x\,dy\\
        &\lesssim \int_{\M^2}\big(1+|\log(\dg(x,y)+N^{-1})|)^{k}\,d x\,dy\\
        &\lesssim 1,
    \end{align*}
uniformly in $N$, where we used successively the Wiener chaos estimate \eqref{WienerChaos}, the algebraic rule \eqref{RuleHermite} and the estimate \eqref{GN1}. This proves the uniform boundedness in $L^p(\mu^\H)$.

Now to prove the Cauchy property, take $N_2\geq N_1\geq 1$ and following the lines of Proposition \ref{RenormWick}, we have
\begin{align*}
   & \left\|\int_\M \big((\P_{N_1}u)^{\diamond k}-(\P_{N_2}u)^{\diamond k}\big)\,d x\right\|_{L^p(\mu^\H)}^2\\
   &\lesssim_p \left\|\int_\M \big((\P_{N_1}u)^{\diamond k}-(\P_{N_2}u)^{\diamond k}\big)\,d x\right\|_{L^2(\mu^\H)}^2\\
    &= \int_{\M^2} \E\Big[\big((\P_{N_1}u)^{\diamond k}-(\P_{N_2}u)^{\diamond k}\big)(x)\big((\P_{N_1}u)^{\diamond k}-(\P_{N_2}u)^{\diamond k}\big)(y)\Big]\,d x\,dy\\
    &= \int_{\M^2} \Big(G_{N_1}^\H(x,y)^k-G_{N_1N_2}^\H(x,y)^k-G_{N_2N_1}^\H(x,y)^k+G_{N_2}^\H(x,y)^k\Big)\,d x\,dy.
\end{align*}
From there, the estimate \eqref{GN1N2k} from the proof of Proposition~\ref{RenormWick} ensures that the right-hand side vanishes as $N_1\to+\infty$, proving that $\left(\int_\M F_N^\diamond(\P_Nu)\,d x\right)_N$ is a Cauchy sequence in $L^p(\mu^\H)$.
\end{proof}

\noindent Write $R_N(u):=e^{-\int_\M F_N^\diamond(\P_Nu)\,d x}$ for the exponential density function. Note that as a direct consequence of Proposition~\ref{RenormNonLinear}, we get the convergence of $\left(R_N(u)\right)_N$ almost surely with respect to $\mu^\H$ as a continuous function of the almost sure convergent sequence $\left(-\int_\M F_N^\diamond(\P_Nu)\,d x\right)_N$. To conclude that the convergence also happens in $L^p(\mu^\H)$ for $p\geq1$, fix $N_1,N_2\geq1$ and $p\geq1$. Denote by $A_{N_1,N_2,\eta}$ the event $$
A_{N_1,N_2,\eta}:=\left\{\left|R_{N_1}(u)-R_{N_2}(u)\right|>\eta\right\}
$$
then convergence in measure of $R_N(u)$ ensures that for any $\eta>0$, 
$$
\mu^\H(A_{N_1,N_2,\eta})\to 0\quad\text{as}\quad N_1,N_2\to+\infty.
$$
Thus 
\begin{align}{\label{PartitionCauchy}}
    \left\|R_{N_1}(u)-R_{N_2}(u)\right\|_{L^p(\mu^\H)}&\leq \left\|\left(R_{N_1}(u)-R_{N_2}(u)\right)\mathbf{1}_{A_{N_1,N_2,\eta}}\right\|_{L^p(\mu^\H)}\notag\\
    &+\left\|\left(R_{N_1}(u)-R_{N_2}(u)\right)\mathbf{1}_{A_{N_1,N_2,\eta}^c}\right\|_{L^p(\mu^\H)}\\
    &\leq \mu^\H(A_{N_1,N_2,\eta})^{\frac{1}{2p}}\left\|R_{N_1}(u)-R_{N_2}(u)\right\|_{L^{2p}(\mu^\H)}+ \mu^\H(A^c_{N_1,N_2,\eta})^{\frac{1}{p}}\eta\notag.
\end{align}
From there, we see that a uniform bound on $R_N(u)$ in $L^{2p}(\mu^\H)$ is enough to get convergence of $R_N(u)$ in $L^p(\mu^\H)$. This will be provided by the Boué-Dupuis' formula in the form introduced in \cite{Ustunel} and \cite{BG2}. Roughly speaking, given a cylindrical Brownian motion $X=(X_t)_{t\in[0,1]}$ on $L^2(\M)$ and under some assumptions on the non-linear functional $J:C([0,1];C^{(-1)^-}(\M))\to\R$, then the following variational formula holds:
\begin{align}{\label{BoueDupuisFormula}}
    \log\,\E\left[e^{-J(X)}\right]=\sup_{v}\,\E\left[-J\left(X+\int_0^\cdot v(t)\,dt\right)-\frac{1}{2}\int_0^1\|v(t)\|_{L^2(\M)}^2\,dt\right],
\end{align}
where the supremum is taken over all the progressively measurable processes $v\in L^2(\Omega;L^2([0,1];L^2(\M)))$. This allows to bound uniformly the partition function $\ZZ_N$.

\begin{proposition}{\label{UnifPartitionFunction}}
    The normalization constant $\ZZ_N=\E_{\mu^\H}\left[R_N(u)\right]$ is uniformly bounded :
    $$
    \sup_N\,\E_{\mu^\H}\left[R_N(u)\right]<+\infty.
    $$
\end{proposition}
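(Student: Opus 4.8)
The plan is to invoke the Boué--Dupuis variational formula \eqref{BoueDupuisFormula} and to bound the resulting infimum from below uniformly in $N$. Since $F$ is defocusing, $a_{2m}>0$ and, for each fixed $N$, $F_N^\diamond(\P_N u)$ is a degree-$2m$ polynomial in $\P_N u(x)$ with positive leading coefficient, hence bounded below by an ($N$-dependent) constant; thus $J_N(u):=\int_\M F_N^\diamond(\P_N u)\,dx$ satisfies the hypotheses of \eqref{BoueDupuisFormula}. Realizing $\mu^\H$ as the law of $Y=\sum_{n\ge0}\lambda_n^{-1/2}B_n(1)\varphi_n$ for a cylindrical Brownian motion $(B_n)$ on $L^2(\M)$ and writing $\Theta^v=\sum_{n\ge0}\lambda_n^{-1/2}\big(\int_0^1\langle v(s),\varphi_n\rangle\,ds\big)\varphi_n$ for the associated drift, \eqref{BoueDupuisFormula} reads
\[
-\log\ZZ_N=\inf_{v}\,\E\Big[J_N\big(Y+\Theta^v\big)+\tfrac12\int_0^1\|v(s)\|_{L^2(\M)}^2\,ds\Big],
\]
the infimum running over progressively measurable $v\in L^2(\Omega;L^2([0,1];L^2(\M)))$. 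Since $\|\Theta^v\|_{\D^1}^2=\sum_{n\ge0}\big(\int_0^1\langle v(s),\varphi_n\rangle\,ds\big)^2\le\int_0^1\|v(s)\|_{L^2}^2\,ds$ — where $\D^1=\mathrm{dom}(\H^{1/2})$ is the Cameron--Martin space of $\mu^\H$, with norm $\|f\|_{\D^1}^2=\langle f,\H f\rangle$ — it suffices to produce a constant $C$, independent of $N$ and of $v$, with
\[
\E\Big[J_N\big(Y+\Theta\big)+\tfrac12\|\Theta\|_{\D^1}^2\Big]\ge -C,\qquad\Theta:=\Theta^v.
\]

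Next I would expand the renormalized potential along the drift via the variance-$\sigma_N^2$ form of the Hermite binomial identity of Lemma~\ref{BinomialHermite} (the Gaussian argument being $\P_N Y$, the shift $\P_N\Theta$):
\[
J_N(Y+\Theta)=\int_\M\sum_{k=0}^{2m}a_k\sum_{p=0}^{k}\binom{k}{p}(\P_N\Theta)^{k-p}\,(\P_N Y)^{\diamond p}\,dx .
\]
Here the term $(k,p)=(2m,0)$ equals $a_{2m}\int_\M(\P_N\Theta)^{2m}\,dx\ge0$, and together with $\tfrac12\|\Theta\|_{\D^1}^2$ these are the two nonnegative quantities that will absorb everything else. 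The pure Wick terms $p=k\ge1$ have vanishing $\mu^\H$-expectation (since $\E[H_k(\P_N Y(x),\sigma_N^2(x))]=0$), hence drop out after taking $\E$. For the remaining, mixed, terms the key random input is uniform control of the Wick powers of the GFF: running the computation of Proposition~\ref{RenormWick} at a single time — i.e.\ with $G^\H_N$ replacing the space-time covariance, and using \eqref{GN1} together with \eqref{WienerChaos} — yields, for every $\kappa>0$ and $r\ge1$,
\[
\sup_N\,\E_{\mu^\H}\big[\|(\P_N Y)^{\diamond p}\|_{\Cc^{-\kappa}(\M)}^r\big]<\infty,\qquad 1\le p\le 2m;
\]
the point is that $G^\H_N$ diverges only logarithmically, so $(\P_N Y)^{\diamond p}$ has negative regularity arbitrarily close to $0$, uniformly in $N$.

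For each mixed term I would bound $\big|\int_\M(\P_N\Theta)^{k-p}(\P_N Y)^{\diamond p}\,dx\big|$ by the duality pairing $\Cc^{-\kappa}\times W^{\kappa,r'}$, estimate $\|(\P_N\Theta)^{k-p}\|_{W^{\kappa,r'}}$ by fractional Leibniz in terms of $\|\P_N\Theta\|_{L^{q_1}}$ and $\|\P_N\Theta\|_{W^{\kappa,q_2}}$ (finite $q_i$), interpolate (Gagliardo--Nirenberg) between $L^{2m}$ and $\D^1$, and apply Young's inequality with a small parameter $\eta>0$ to get a bound of the form
\[
\eta\,\|\P_N\Theta\|_{L^{2m}}^{2m}+\eta\,\|\P_N\Theta\|_{\D^1}^{2}+C_\eta\big(1+\|(\P_N Y)^{\diamond p}\|_{\Cc^{-\kappa}}^{\theta_p}\big)
\]
for some finite exponents $\theta_p$; this closes precisely because $k-p\le 2m-1<2m$ and because only an arbitrarily small amount of $\D^1$-regularity of $\Theta$ is needed, so the interpolation exponent multiplying $\|\P_N\Theta\|_{\D^1}$ stays $<2$. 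The terms with $p=0$, $k<2m$ are absorbed directly by $|a_k|\,|(\P_N\Theta)^k|\le\eta\,(\P_N\Theta)^{2m}+C_\eta$ since $|\M|<\infty$. Using that $\P_N$ is bounded on $L^{2m}$ and, uniformly in $N$, on $\D^1$ (Section~\ref{SEC:Anderson}), we may replace $\P_N\Theta$ by $\Theta$ in the two good terms. Summing over all terms and choosing $\eta$ so small that $a_{2m}-C\eta>0$ and $\tfrac12-C\eta>0$, the nonnegative quantities absorb every mixed contribution and we arrive at
\[
\E\Big[J_N(Y+\Theta)+\tfrac12\|\Theta\|_{\D^1}^2\Big]\ge -C_\eta\Big(1+\sum_{p=1}^{2m}\sup_N\E_{\mu^\H}\big[\|(\P_N Y)^{\diamond p}\|_{\Cc^{-\kappa}}^{\theta_p}\big]\Big)\ge -C,
\]
uniformly in $N$ and $v$, which is the desired bound. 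I expect the main obstacle to be this last, purely deterministic, step: arranging the interpolation and Young exponents for the mixed terms — most delicately for $2m\,a_{2m}\int_\M(\P_N\Theta)^{2m-1}(\P_N Y)^{\diamond1}\,dx$ — so that the leading power $(\P_N\Theta)^{2m}$ and the Cameron--Martin cost $\|\Theta\|_{\D^1}^2$ genuinely dominate with constants independent of $N$, which hinges on the logarithmic-only singularity of $G^\H_N$ and on the $N$-uniform mapping properties of the $(-\Delta)$-based multiplier $\P_N$ on the $\H$-adapted Sobolev scale $\D^s$.
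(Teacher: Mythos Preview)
Your proposal follows essentially the same route as the paper's proof: Bou\'e--Dupuis, Hermite binomial expansion of the shifted Wick polynomial, duality pairing of the Wick powers in $\Cc^{-\kappa}$ against powers of $\P_N\Theta$ in a positive-regularity space, interpolation and Young's inequality to absorb the cross terms into $a_{2m}\|\P_N\Theta\|_{L^{2m}}^{2m}$ and $\tfrac12\|\Theta\|_{\D^1}^2$.

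One point deserves care. You write that $\P_N$ is bounded on $\D^1$ uniformly in $N$, citing Section~\ref{SEC:Anderson}, and you interpolate (Gagliardo--Nirenberg) between $L^{2m}$ and $\D^1$. But $\D^1$ is the form domain of $\H$, which is \emph{not} $H^1$---it carries the paracontrolled structure $u-\PA_uX\in H^1$---and Section~\ref{SEC:Anderson} only gives $\D^\sigma=H^\sigma$ for $|\sigma|<1$. Neither the uniform $\D^1$-boundedness of the $\Delta$-based multiplier $\P_N$ nor a Gagliardo--Nirenberg inequality involving $\D^1$ is stated there. The paper avoids this entirely: it estimates $\|(\P_N\Theta)^{p-j}\|_{W^{\eps,1}}$ via the fractional Leibniz rule in $H^{1-\eta}$ for small $\eta>0$, drops $\P_N$ using its uniform boundedness on \emph{classical} Sobolev spaces, and only then passes to $\|\Theta\|_{\D^1}$ through the embedding $\D^1\hookrightarrow H^{1-\kappa}$. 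Your argument is easily repaired the same way---interpolate with $H^{1-\kappa}$ rather than $\D^1$---but as written that step is a gap.
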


\begin{proof}
    The assumption on $J$ is to be \textit{tame} (see \cite[Definition 1]{BG2}) with respect to the law of $X$ for which an easy sufficient condition is to be bounded below and have some finite moment. Consider therefore the functional 
    $$
    J_N(X):=\int_\M F_N^{\diamond}(\P_N \H^{-\frac{1}{2}}X_{t=1})\,d x.
    $$
    First of all note that $J_N$ is well defined and measurable on $C([0,1];\Cc^{-1-\kappa}(\M))$ by composition, for any $\kappa>0$. Since by assumption $F$ is a polynomial of even degree and positive leading coefficient, so is $F_N^\diamond$ and therefore $J_N$ is bounded from below for each fixed $N$.\\
    Then, note that the law of $u=\H^{-\frac{1}{2}}X_{t=1}$ is exactly the one of the GFF $\mu^\H$ in \eqref{DefGFF}, for which we already proved that 
    $$
    \int_\M F_N^\diamond(\P_Nu)\,d x
    $$
    is bounded uniformly in $L^p(\mu^\H)$, see Proposition \ref{RenormNonLinear}. Therefore, the functional $J_N$ is tame in the sense of \cite{BG2}, that is
    $$
    \E_\mathbb{P}\left[e^{-qJ}\right]+\E_\mathbb{P}\left[|J|^p\right]<+\infty
    $$
    for some conjugate exponents $p,q>1$, and Boué-Dupuis' formula holds true in our context. As the supremum runs over the progressively measurable processes $v\in L^2(\mathbb{P};L^2([0,1];L^2(\M)))$, we define the process $\Theta$ by 
    $$
    \Theta(t):=\int_0^t\H^{-\frac{1}{2}}v(s)\,ds,
    $$
    so that $\dot{\Theta}=\H^{-\frac{1}{2}}v$ and identity \eqref{BoueDupuisFormula} writes
    \begin{align}{\label{BoueDupuisFormula2}}
        \log\,\E_{\mu^\H}\left[R_N(u)\right]=\sup_{\Theta}\,\E\left[-\int_\M F^\diamond_N\left(\P_Nu+\P_N\Theta_{t=1}\right)\,d x-\frac{1}{2}\int_0^1\|\dot{\Theta}\|_{D\left(\sqrt{\H}\right)}^2\,dt\right].
    \end{align}
    Even though $F$ has positive leading coefficient, $F_N^\diamond$ is not bounded from below uniformly in $N$. We thus expand the non-linearity using the binomial rule in Lemma~\ref{BinomialHermite} to get
    \begin{align}
        F^\diamond_N\left(\P_Nu+\P_N\Theta_{|t=1}\right)&=\sum_{p=0}^{2m}a_pH_p(\P_Nu+\P_N\Theta_{|t=1},\sigma_N^2)\notag\\
        &=\sum_{p=0}^{2m}\sum_{j=0}^pa_p\binom{p}{j}(\P_Nu)^{\diamond j}(\P_N\Theta_{|t=1})^{p-j}\label{binom}\\
        &=F(\P_N\Theta_{|t=1})+\sum_{p=1}^{2m}\sum_{j=1}^pa_p\binom{p}{j}(\P_Nu)^{\diamond j}(\P_N\Theta_{|t=1})^{p-j}.\notag
    \end{align}
    The assumption that $F$ is bounded below is crucial here as the contribution from $j=0$ together with the quadratic part in \eqref{BoueDupuisFormula2},
    \begin{align*}
        -\int_\M F(\P_N\Theta_{t=1})\,d x-\frac{1}{2}\int_0^1\|\dot{\Theta}\|_{D\left(\sqrt{\H}\right)}^2\,dt,
    \end{align*}
    will control all the cross product terms in between. The duality pairing between $\CC^{-\varepsilon}$ and $B^\eps_{1,1} \supset W^{\eps,1}$ yields
    \begin{align*}
        \left|\int_\M (\P_Nu)^{\diamond j}(\P_N\Theta_{t=1})^{p-j}\,d x\right|&\lesssim \|(\P_Nu)^{\diamond j}\|_{\CC^{-\varepsilon}(\M)}\|(\P_N\Theta_{t=1})^{p-j}\|_{W^{\varepsilon,1}(\M)}.
    \end{align*}
    At this point, the only remaining difficulty is to deal with the terms involving $\Theta$ in \eqref{binom}. For the sake of clarity in the following estimates, let $v$ be a generic element of $W^{\varepsilon,1}(\M)$. First use the interpolation inequality
    $$
    \|v\|_{W^{\varepsilon,1}}\lesssim \|v\|_{W^{1-\eta,1}}^{\frac{\varepsilon}{1-\eta}}\|v\|_{L^1}^{\frac{1-\eta-\varepsilon}{1-\eta}}
    $$
    for $\eta\in(0,1)$ small enough. Together with Young's inequality, this yields 
    $$
    \|v\|_{W^{\varepsilon,1}}\lesssim \|v\|_{W^{1-\eta,1}}^{\frac{\varepsilon}{1-\eta}r}+\|v\|_{L^1}^{\frac{1-\eta-\varepsilon}{1-\eta}r'}
    $$
    for some $\frac{1}{r}+\frac{1}{r'}=1$ to be chosen later. Recall the fractional Leibniz rule from Lemma~\ref{LEM:prodHugo}:
    $$
    \|uv\|_{H^t}\lesssim\|u\|_{H^s}\|v\|_{H^r}
    $$
    as long as $t=r+s-1$ satisfy $0<r+s$ and $r,s<1$. Then, taking $r=s=1-\frac{\eta}{2}$, we get
    \begin{align*}
        \|v^{p-j}\|_{H^{1-\eta}}&\lesssim \|v^{p-j-1}\|_{H^{1-\frac{\eta}{2}}}\|v\|_{H^{1-\frac{\eta}{2}}}\\
        &\lesssim_j \|v\|_{H^{1-\frac{\eta}{2^{p-j}}}}^{p-j}
    \end{align*}
    and combined with the embedding $H^{1-\eta}(\M)\hookrightarrow W^{1-\eta,1}(\M)$ and $L^{2m}(\M)\hookrightarrow L^{p-j}(\M)$ as $\M$ is compact together with the fact that the form domain $D(\sqrt{\H})$ embeds in any $H^{1-\kappa}(\M)$, we get
    \begin{align*}
        \|v^{p-j}\|_{W^{\varepsilon,1}}&\lesssim_j \|v\|_{H^{1-\frac{\eta}{2^{p-j}}}}^{\frac{\varepsilon}{1-\eta}(p-j)r}+\|v^{p-j}\|_{L^1}^{\frac{1-\eta-\varepsilon}{1-\eta}r'}\\
        &\lesssim_j \|v\|_{D(\sqrt{\H})}^{\frac{\varepsilon}{1-\eta}(p-j)r}+\|v\|_{L^{2m}}^{\frac{1-\eta-\varepsilon}{1-\eta}(p-j)r'}.
    \end{align*}
    Note that $H^{1-}(\M)$ itself is not an algebra, hence the loss of regularity when performing the product estimate. However, Leibniz rule allows to still have a nice estimate in spaces below $H^1(\M)$ and in the end, an estimate involving only the norm on $D(\sqrt{\H})$.\\
    \noindent Recall that the goal is to deal with the supremum over $\Theta$, hence the terms involving $u$ are harmless as they are bounded in $L^p(\Omega,\CC^{-\varepsilon}(\M))$ for any $p>1, \varepsilon>0$ and thus provide bounds uniformly in $N$. For $\delta>0$ and $s>1$, we use once again Young's inequality to have
    \begin{align*}
        \left|\int_\M (\P_Nu)^{\diamond j}(\P_N\Theta_{t=1})^{p-j}\,d x\right|&\lesssim \|(\P_Nu)^{\diamond j}\|_{\CC^{-\varepsilon}(\M)}\|(\P_N\Theta_{t=1})^{p-j}\|_{W^{\varepsilon,1}(\M)}\\
        &\lesssim \|(\P_Nu)^{\diamond j}\|_{\CC^{-\varepsilon}(\M)}\|\Theta_{t=1}\|_{D(\sqrt{\H})}^{\frac{\varepsilon}{1-\eta}(p-j)r}\\
        &\qquad+\|(\P_Nu)^{\diamond j}\|_{\CC^{-\varepsilon}(\M)}\|\P_N\Theta_{t=1}\|_{L^{2m}}^{\frac{1-\eta-\varepsilon}{1-\eta}(p-j)r'}\\
        &\le C(\delta)\|(\P_Nu)^{\diamond j}\|_{\CC^{-\varepsilon}(\M)}^{\frac{s}{s-1}}\\
        &\qquad+ \delta^s\|\Theta_{t=1}\|_{D(\sqrt{\H})}^{\frac{\varepsilon}{1-\eta}(p-j)rs}+\delta^s\|\P_N\Theta_{t=1}\|_{L^{2m}}^{\frac{1-\eta-\varepsilon}{1-\eta}(p-j)r's}.
    \end{align*}
    Note that we dropped the regularization $\P_N$ in the $\|\P_N\Theta_{t=1}\|_{H^{1-\frac{\eta}{2^{p-j}}}}$ term as it maps usual Sobolev spaces to themselves, uniformly in $N$. In view of \eqref{BoueDupuisFormula2}, to prove uniform boundedness we only need to tune the parameters so that the $D(\sqrt{\H})$ norm (resp. $L^{2m}$) is at most to the power $2$ (resp. $2m$), hence the following conditions
    \begin{align*}
        &\frac{\varepsilon}{1-\eta}(p-j)r < 2,\\
        &\frac{1-\eta-\varepsilon}{1-\eta}(p-j)r' < 2m.
    \end{align*}
    As $p-j<2m$ for $j\neq0$, the second condition is satisfied provided $r$ is large enough. For such $r$ being fixed, we just need to take $\varepsilon$ small enough so that the first condition is satisfied as well. This ensures that for some $s>1$ close enough to 1, the Young's estimate above, together with $\delta$ chosen small enough, make the cross product terms be absorbed by 
    \begin{align*}
        -\int_\M F(\P_N\Theta_{t=1})\,d x-\frac{1}{2}\int_0^1\|\dot{\Theta}\|_{D\left(\sqrt{\H}\right)}^2\,dt.
    \end{align*}
    Indeed, gathering everything together we get
    \begin{align*}
        &-\int_\M F^\diamond_N\left(\P_Nu+\P_N\Theta_{t=1}\right)\,d x-\frac{1}{2}\int_0^1\|\dot{\Theta}\|_{D\left(\sqrt{\H}\right)}^2\,dt\\
        &\hspace{3cm}= -\int_\M F(\P_N\Theta_{t=1})\,d x-\frac{1}{2}\int_0^1\|\dot{\Theta}\|_{D\left(\sqrt{\H}\right)}^2\,dt\\
        &\hspace{5cm} -\sum_{p=1}^{2m}\sum_{j=1}^pa_p\binom{p}{j}\int_\M (\P_Nu)^{\diamond j}(\P_N\Theta_{t=1})^{p-j}\,dx\\
        &\hspace{3cm}\le -\int_\M F(\P_N\Theta_{t=1})\,dx -\frac{1}{2}\int_0^1\|\dot{\Theta}\|_{D\left(\sqrt{\H}\right)}^2\,dt\\
        &\hspace{5cm} + \delta^s\|\Theta_{t=1}\|_{D(\sqrt{\H})}^{2}+\delta^s\|\P_N\Theta_{t=1}\|_{L^{2m}}^{2m}+C(\delta)\|(\P_Nu)^{\diamond j}\|_{\CC^{-\varepsilon}(\M)}^{\frac{s}{s-1}}\\
        &\hspace{3cm}\le -\int_\M \big(F(\P_N\Theta_{t=1})-\delta^s (\P_N\Theta_{t=1})^{2m}\big)\,dx\\ &\hspace{5cm}-\big(\frac{1}{2}-\delta^s\big)\int_0^1\|\dot{\Theta}\|_{D\left(\sqrt{\H}\right)}^2\,dt+C(\delta)\|(\P_Nu)^{\diamond j}\|_{\CC^{-\varepsilon}(\M)}^{\frac{s}{s-1}}.
        \end{align*}
        Taking $\delta$ small enough ensures that the first two terms are bounded above by a deterministic constant, uniformly in $N$ and $\Theta$. Taking the expectation, the last term is harmless as $u$ is distributed according to $\mu^\H$ for which we already proved uniform bounds for the renormalized polynomials, this concludes the proof.
\end{proof}

\noindent Note that from the definition of $R_N$, then $(R_N)^p$ only amounts for $p\geq1$ to scaling the non linear functional $F$ by a factor $p$ so that the above argument still holds for any order moment and we have
$$
\sup_N\left\|R_N(u)\right\|_{L^p(\mu^\H)}< +\infty.
$$
Combined with estimate \eqref{PartitionCauchy} we get the following proposition.

\begin{proposition}
    For any $p\geq1$, $(R_N)_N$ is a Cauchy sequence in $L^p(\mu^\H)$.
\end{proposition}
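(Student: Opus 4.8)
The plan is to assemble the three facts already established: the convergence in $\mu^\H$-measure of the densities, uniform $L^r(\mu^\H)$ bounds for every $r\geq 1$, and the splitting inequality \eqref{PartitionCauchy}. First I would note that Proposition~\ref{RenormNonLinear} provides convergence of $\big(\int_\M F_N^\diamond(\P_Nu)\,dx\big)_N$ in $L^2(\mu^\H)$, hence in $\mu^\H$-measure; since $x\mapsto e^{-x}$ is continuous and $\mu^\H$ is a probability measure, this upgrades to convergence of $(R_N)_N$ in $\mu^\H$-measure, so that $\mu^\H(A_{N_1,N_2,\eta})\to 0$ as $N_1,N_2\to+\infty$ for each fixed $\eta>0$.

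Next I would invoke the observation following the proof of Proposition~\ref{UnifPartitionFunction}: for every $r\geq 1$, $R_N(u)^r=e^{-\int_\M rF_N^\diamond(\P_Nu)\,dx}$ is the exponential density attached to the polynomial $rF$, which remains of even degree with positive leading coefficient. Hence the Boué--Dupuis argument of Proposition~\ref{UnifPartitionFunction} applies verbatim and yields $M_r:=\sup_N\|R_N(u)\|_{L^r(\mu^\H)}<+\infty$.

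With these in hand the conclusion is a two-step ($\eps/2$-type) argument. Fix $p\geq 1$ and $\lambda>0$. In \eqref{PartitionCauchy} I would bound the second term by $\eta$ (using $\mu^\H(A_{N_1,N_2,\eta}^c)\leq 1$) and the $L^{2p}(\mu^\H)$ norm in the first term by $2M_{2p}$, obtaining
\begin{align*}
\big\|R_{N_1}(u)-R_{N_2}(u)\big\|_{L^p(\mu^\H)}\;\leq\; 2M_{2p}\,\mu^\H(A_{N_1,N_2,\eta})^{\frac{1}{2p}}+\eta.
\end{align*}
Choosing first $\eta=\lambda/2$, then (with $\eta$ frozen) $N_0$ large enough that $\mu^\H(A_{N_1,N_2,\eta})^{1/(2p)}\leq\lambda/(4M_{2p})$ for all $N_1,N_2\geq N_0$ --- which is legitimate by the convergence in measure above --- gives $\|R_{N_1}(u)-R_{N_2}(u)\|_{L^p(\mu^\H)}\leq\lambda$, i.e. the Cauchy property.

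There is no real obstacle here: all the analytic content --- the uniform partition-function bound via Boué--Dupuis and the convergence of the renormalized densities --- has already been carried out, and this statement merely repackages it. The only point requiring a bit of care is the order of the quantifiers in the last step: $\eta$ must be chosen (and frozen) before $N_0$, so that $N_0$ is allowed to depend on $\eta$ but not conversely.
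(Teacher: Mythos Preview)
Your proposal is correct and follows essentially the same route as the paper: the paper's argument is precisely the discussion immediately preceding the statement, combining convergence in measure of $R_N$ (via Proposition~\ref{RenormNonLinear} and continuity of the exponential), the uniform $L^{2p}(\mu^\H)$ bound obtained by applying the Bou\'e--Dupuis argument to $pF$, and the splitting inequality~\eqref{PartitionCauchy}. Your version simply makes the final $\varepsilon/2$ step explicit.
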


We can now give the proof of our first result.
\begin{proof}[Proof of Theorem~\ref{ConvergenceGibbsMeasure}]
\noindent Since for all $N\in\N^*$, the measures $\rho_N$ are all defined by their density $\ZZ_N^{-1}R_N$ with respect to the same measure $\mu^\H$, and those density converge in $L^1(\mu^\H)$, it ensures the convergence in total variation of $(\rho_N)_N$ to some measure $\rho$ that is absolutely continuous with respect to $\mu^\H$.
\end{proof}

For our almost sure globalization argument, we will also need a bound uniform with respect to both a truncation with respect to $\Dl$ and one with respect to $\H$.
\begin{proposition}\label{PROP:BGNM}
Let $\chi\in C^\infty_0(\R)$ be supported in $(-1;1)$, and for $M\in\N^*$ set $\chi_M=\chi(M^{-2}\H)$.    Then 
    $$
    \sup_{N,M}\,\E_{\mu^\H}\left[R_N(\chi_M u)\right]<+\infty.
    $$
\end{proposition}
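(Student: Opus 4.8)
The plan is to mimic the proof of Proposition~\ref{UnifPartitionFunction} essentially verbatim, carrying the extra spectral projector $\chi_M=\chi(M^{-2}\H)$ through every estimate, and to replace the uniform-in-$N$ moment bounds coming from Proposition~\ref{RenormNonLinear} by the uniform-in-$(N,M)$ bounds furnished by Proposition~\ref{PROP:llpM} together with the two-parameter Green's function estimates \eqref{GHM1}--\eqref{GHM2}. Concretely, writing $u=\H^{-1/2}X_{t=1}$ for a cylindrical Brownian motion $X$ on $L^2(\M)$, I would introduce $J_{N,M}(X):=\int_\M F_N^\diamond(\P_N\chi_M\H^{-1/2}X_{t=1})\,dx$. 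For each fixed pair $(N,M)$ this is a polynomial in $\P_N\chi_M u$ whose top-order term is $a_{2m}(\P_N\chi_M u)^{2m}\ge 0$, hence it is bounded below, and it has finite moments by Proposition~\ref{PROP:llpM}; so $J_{N,M}$ is tame in the sense of \cite{BG2} and the Bou\'e--Dupuis formula \eqref{BoueDupuisFormula} applies. Setting $\Theta(t)=\int_0^t\H^{-1/2}v(s)\,ds$ as before, it gives
\begin{align*}
\log\E_{\mu^\H}[R_N(\chi_M u)]=\sup_{\Theta}\,\E\Big[-\int_\M F_N^\diamond\big(\P_N\chi_M u+\P_N\chi_M\Theta_{t=1}\big)\,dx-\tfrac12\int_0^1\|\dot\Theta\|_{D(\sqrt\H)}^2\,dt\Big].
\end{align*}

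Next I would expand the nonlinearity with the binomial rule for Hermite polynomials (Lemma~\ref{BinomialHermite}). The $j=0$ contribution is exactly $F(\P_N\chi_M\Theta_{t=1})$, which is bounded below since $F$ has even degree and positive leading coefficient; as in Proposition~\ref{UnifPartitionFunction}, this term together with $-\tfrac12\int_0^1\|\dot\Theta\|_{D(\sqrt\H)}^2$ is what will absorb all the mixed terms $\int_\M(\P_N\chi_M u)^{\diamond j}(\P_N\chi_M\Theta_{t=1})^{p-j}\,dx$, $1\le j\le p\le 2m$ (after, if needed, re-expressing the Wick powers of $\P_N\chi_M u$ relative to their own variance via the Gaussian algebra, with correction coefficients controlled by \eqref{GHM1}--\eqref{GHM2}). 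Those mixed terms I would bound exactly as in Proposition~\ref{UnifPartitionFunction}: by duality between $\CC^{-\eps}$ and $W^{\eps,1}$, the interpolation inequality, the fractional Leibniz rule of Lemma~\ref{LEM:prodHugo}, the embedding $D(\sqrt\H)\hookrightarrow H^{1-\kappa}$, the boundedness of $\P_N\chi_M$ on Sobolev spaces uniformly in $N,M$ (since $\chi_M$ is a spectral multiplier of $\H$), and Young's inequality with a small $\delta$; because $p-j<2m$ whenever $j\ge 1$, the $\Theta$-factors get absorbed into $\int_\M F(\P_N\chi_M\Theta_{t=1})\,dx$ and $\tfrac12\int_0^1\|\dot\Theta\|_{D(\sqrt\H)}^2$, leaving only a deterministic constant plus terms of the form $C(\delta)\|(\P_N\chi_M u)^{\diamond j}\|_{\CC^{-\eps}}^{s/(s-1)}$. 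Taking expectations and using the $(N,M)$-uniform bounds of Proposition~\ref{PROP:llpM} then yields $\sup_{N,M}\log\E_{\mu^\H}[R_N(\chi_M u)]<\infty$; the remark after Proposition~\ref{UnifPartitionFunction} (scaling $F$ by $p$) upgrades this to uniform $L^p(\mu^\H)$ bounds on $R_N(\chi_M u)$ if desired.

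The one genuinely new point, and the step I expect to require the most care, is enforcing uniformity in the second parameter $M$ alongside $N$: this is precisely where one cannot use \eqref{GN1} but must invoke the two-parameter bounds \eqref{GHM1}--\eqref{GHM2} for $G^\H_{N,M,M}$, and where one relies on Proposition~\ref{PROP:llpM} for the uniform boundedness of $(\P_N\chi_M u)^{\diamond j}$ in $L^p(\Omega;\CC^{-\eps}(\M))$. As in Proposition~\ref{UnifPartitionFunction}, the assumption that $F$ is defocusing is used in an essential way — it is only through the favourable sign of the leading coefficient that the cross terms can be reabsorbed — and one should check that the interpolation/fractional-Leibniz chain is insensitive to inserting $\chi_M$, which holds because $\chi_M=\chi(M^{-2}\H)$ is bounded uniformly in $M$ on $D(\sqrt\H)$ and on the relevant Sobolev spaces $H^s$ with $|s|<1$.
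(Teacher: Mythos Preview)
Your proposal is correct and matches the paper's approach: the paper's proof is a one-line instruction to repeat Proposition~\ref{UnifPartitionFunction} verbatim with $u,\Theta$ replaced by $\chi_M u,\chi_M\Theta$, using that $\chi_M$ is uniformly bounded on $\D(\sqrt{\H})$ and invoking the last part of Proposition~\ref{PROP:llpM} in place of Proposition~\ref{RenormNonLinear}. You have spelled out exactly these steps, correctly identified the new inputs (uniform-in-$(N,M)$ moment bounds from Proposition~\ref{PROP:llpM} and the two-parameter Green's function estimates), and flagged the one point requiring care, namely that the Wick powers appearing after the binomial expansion are taken with parameter $\sigma_N^2$ rather than the variance of $\P_N\chi_M u$; your remark that this discrepancy is harmless via the Gaussian algebra and \eqref{GHM1}--\eqref{GHM2} is correct.
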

\begin{proof}
The proof follows from repeating verbatim that of Proposition~\ref{UnifPartitionFunction} up to replacing $\Theta$ and $u$ by $\chi_M\Theta$ and $\chi_Mu$, and using that $\chi_M$ is uniformly bounded on $\D(\sqrt{\H})$, and the last part of Proposition~\ref{PROP:llpM} instead of Proposition~\ref{RenormNonLinear}.
\end{proof}

\section{Well-posedness theory}{\label{SEC:dynamics}}

Now that the measure $\rho$ has been constructed, we study a parabolic dynamics leaving it invariant. Similarly to $\rho$, the dynamics will rather be defined at a truncated level $N$ and then passed to the limit. We therefore investigate the truncated dynamics \eqref{TruncEquation} that we recall here
\begin{equation*}
    \partial_t u_N + \H u_N + \P_Nf_N^\diamond(\P_N u_N)=\sqrt{2}\zeta
\end{equation*}
with $u(0)=u_0$. Note that we decide to truncate the non-linearity rather than the noise and initial data. This is mainly because the spectral projector $\P_N$ does not commute with $\H$; see Remark~\ref{Rk:commute}.\\
In this section we prove convergence of the smoothened dynamics \eqref{TruncEquation} to a dynamics leaving $\rho$ invariant, as well as deterministic local well-posedness and probabilistic global well-posedness results.

\subsection{Construction of the dynamics}

We study the dynamics
\begin{align*}
   \begin{cases}\partial_t u_N + \H u_N + \P_Nf^{\diamond}_N(\P_Nu_N)=\sqrt{2}\zeta\\
    u_N(0)=u_0
    \end{cases}
\end{align*}
\noindent for some rough initial data $u_0\in\Cc^{-\eps}(\M)$. As we investigate the invariance of the measure, $u_0$ is intended to be distributed with law $\rho$. Since $\rho$ is absolutely continuous with respect to $\mu^\H$ which is supported in $\Cc^{-\eps}$, it is enough to treat the case where $u_0\in \Cc^{-\eps}$.\\
We follow the usual Da Prato-Debussche trick from \cite{DPD}: split the equation into a linear dynamics driven by $\zeta$ and a non-linear dynamics starting from zero initial data. To that end we write $u_N = \llp + w_N$ where $\llp$ is the stationary solution to the linear equation
\begin{align*}
    (\partial_t + \H) \llp = \sqrt{2}\zeta
\end{align*}

\noindent and $w_N$ solves the non-linear equation
\begin{align}{\label{NonLinearEqv_N}}
    \begin{cases}
    \partial_t w_N + \H w_N + \P_Nf^{\diamond}_N(\P_Nw_N+\P_N\llp)=0\\
    w_N(0)=u(0)-\llp(0).
    \end{cases}
\end{align}

\noindent Note that the equation on $w_N$ still contains stochastic forcing terms due to the appearance of $\llp$ in the non-linearity. Since $\llp$ contains the rough part of the solution $u_N$, $w_N=u_N-\llp$ is expected to be smoother in some appropriate sense. 
In particular Proposition \ref{RenormWick} ensures that for any $p\geq q\geq1$, $T>0$ and $\varepsilon>0$, $(\P_N\llp)_N$ forms a Cauchy sequence in $L^p(\Omega;L^q([0;T];\CC^{-\varepsilon}(\M)))$ that converges to $\llp$. Moreover, $\llp$ is almost surely in $C([0,T];\CC^{-\varepsilon}(\M))$ and satisfies the tail and deviation estimates \eqref{EstimateTailLoliN} and \eqref{EstimateDeviationLoliN}.\\
Recall that $f=F'$ where $F$ is some polynomial of even degree $2m$ and positive leading coefficient. In particular, we write 
$$
f=\sum_{p=0}^{2m-1}b_pX^p
$$
and can expand the non linear term in \eqref{NonLinearEqv_N} using the algebraic rule \eqref{RuleHermite}:
\begin{align*}
    f_N^{\diamond}(\P_Nw_N+\P_N\llp)&=\sum_{p=0}^{2m-1}\sum_{j=0}^pb_p\binom{p}{j}(\P_Nw_N)^j(\P_N\llp)^{\diamond (p-j)}.
\end{align*}
For the sake of clarity in the following sections, we define 
$$
\llpg_N=\left(\llpg_N^{(j)}\right)_{0\leq j\leq 2m-1}:=\left((\P_N\llp)^{\diamond j}\right)_{0\leq j\leq 2m-1}
$$
to be the vector containing the renormalized powers of $\llp_N$. We also define $\llpg$ the same way, dropping the smoothing operator $\P_N$. Note that it immediately follows that $(\llpg_N)_N$ converges to $\llpg$ in any $L^p(\Omega)$, as a $L^q([0;T];\CC^{-\varepsilon}(\M))^{2m}$ valued random variable, for any $p,q\geq1$.\\
We also define the corresponding non-linearity 
\begin{align}{\label{NonLinearityN}}
    \mathbf{f}_N(w,\llpg):=\sum_{p=0}^{2m-1}\sum_{j=0}^pb_p\binom{p}{j}(\P_Nw)^j\llpg^{(p-j)},
\end{align}
and $\mathbf{f}$ in the same way dropping the operator $\P_N$, so that we are left with the study of the equation
\begin{align}{\label{NonLinearEqv_NGeneral}}
    \begin{cases}
    \partial_t w_N + \H w_N + \P_N\mathbf{f}_N(w_N,\llpg_N)=0,\\
    w_N(0)=u_0-\llp(0).
    \end{cases}
\end{align}

\noindent Provided we can solve \eqref{NonLinearEqv_N} in an appropriate functional space, solutions to \eqref{TruncEquation} will therefore be constructed as $u_N=\llp+w_N$. Note that $\llp$ in the decomposition does not depend on $N$, this is because we study the dynamics where neither the noise nor the initial condition are smoothened, but only the non-linear term.

\subsection{Deterministic local well-posedness}{\label{SEC:local}}

We investigate the well-posedness of \eqref{TruncEquation} for deterministic initial data $u_0\in\CC^{-\epsilon}(\M)$. In that case, we can postulate 
$$
u_N=\llp+w_N,
$$
\noindent where $w_N$ solves the non-linear equation
\begin{align}{\label{NonLinearEquationw_N}}
    \begin{cases}
    \partial_tw_N+\H w_N + \P_Nf_N^\diamond(\P_Nw_N+\llp_N)=0,\\
    w_N(0)=u_0-\llp(0).
    \end{cases}
\end{align}

\noindent In view of the Schauder estimate \eqref{SchauderH} for $\H$, and the initial condition belonging to $\CC^{-\eps}(\M)$, one expects $w_N$ to belong to the following space:
\begin{align}\label{space}
X_T^{-\eps,\sigma}=C([0,T],\CC^{-\eps}(\M))\cap C((0,T],\CC^{\sigma}(\M)),
\end{align}
endowed with the norm
$$
\|w\|_{X_T^{-\eps,\sigma}}=\|w\|_{L^\infty_T\CC^{-\epsilon}}+\sup_{0<t\leq T}t^{\frac{\sigma+\epsilon}{2}}\|w(t)\|_{\CC^{\sigma}},
$$
for $\eps<\sigma<1$ that takes account of the blow-up at $t=0$. The unusual restriction $\sigma<1$ instead of $\sigma<2$ in the case of the standard Laplace operator is due to the roughness of the Anderson Hamiltonian. One can prove that the solution belongs to the $L^2$-based Sobolev space $\D^{2-}$ associated to $\H$ which, only embeds in $C^{1-}$. Note that even if \eqref{NonLinearEquationw_N} is a truncated equation, as $\P_N$ maps Sobolev spaces to themselves, it falls into the scope of the following more general problem 
\begin{align}\label{model}
    \begin{cases}
    \partial_tw+\H w + \mathbf{f}(w,\llpg)=0,\\
    w(0)=w_0,
    \end{cases}
\end{align}
for the non-linearity $\mathbf{f}$ described above, a vector $\llpg\in L^q([0;T];\CC^{-\eps}(\M))^{2m}$ and initial condition $w_0\in\CC^{-\eps}(\M)$. We then have the following local well-posedness result.

\begin{proposition}{\label{Localwpw}}
    Let $0<\eps<\sigma< 1$ and $ 1\le q<\infty$ be such that $\frac{\sigma+\eps}{2}q'<\frac1{2m-1}$. Then there exists $\theta,C>0$ such that for any $T_0,r,R>0$, there exists $T\sim \min\big(T_0;\big(\frac{r}{(1+r+R)^{2m-1}}\big)^{\theta}\big)\in (0;T_0\wedge 1]$ such that for any $w_0\in\CC^{-\eps}(\M)$ and $\llpg\in L^q([0;T_0],\CC^{-\eps}(\M))^{2m}$ satisfying $\|w_0\|_{\Cc^{-\eps}}\le r$ and $\|\llpg\|_{L^q_{T_0}\Cc^{-\eps}}\le R$, there exists a unique solution $w$ in $X_T^{-\eps,\sigma}$ to \eqref{model}.
    Moreover $w$ depends Lipschitz continuously on both $w_0$ and $\llpg$, and satisfies
    \begin{align}\label{localbound}
    \|w\|_{X_T^{-\eps,\sigma}}\le Cr.
    \end{align}
\end{proposition}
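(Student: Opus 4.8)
The plan is to solve the model equation \eqref{model} by a contraction mapping argument on the complete metric space $X_T^{-\eps,\sigma}$ of \eqref{space}. I would work with the Duhamel map
\[
\Gamma w(t):=e^{-t\H}w_0-\int_0^t e^{-(t-s)\H}\mathbf{f}(w(s),\llpg(s))\,ds,
\]
whose fixed points are exactly the mild solutions of \eqref{model}, and control its linear part by the Schauder estimate \eqref{SchauderH}: $\|e^{-t\H}w_0\|_{\Cc^{-\eps}}\lesssim\|w_0\|_{\Cc^{-\eps}}$ and $\|e^{-t\H}w_0\|_{\Cc^{\sigma}}\lesssim t^{-\frac{\sigma+\eps}{2}}\|w_0\|_{\Cc^{-\eps}}$, so that $\|e^{-\cdot\H}w_0\|_{X_T^{-\eps,\sigma}}\lesssim\|w_0\|_{\Cc^{-\eps}}$ uniformly in $T\le1$, the required time continuity being classical.

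The core of the argument is the Duhamel integral, which rests on the product structure $\mathbf{f}(w,\llpg)=\sum_{p=0}^{2m-1}\sum_{j=0}^p b_p\binom pj w^{j}\llpg^{(p-j)}$. For $s\in(0,T]$ one has $\|w(s)\|_{\Cc^{\sigma}}\le s^{-\frac{\sigma+\eps}{2}}\|w\|_{X_T^{-\eps,\sigma}}$; since $\Cc^{\sigma}(\M)$ is an algebra for $0<\sigma<1$ and, because $\sigma>\eps$, pointwise multiplication maps $\Cc^{\sigma}(\M)\times\Cc^{-\eps}(\M)$ into $\Cc^{-\eps}(\M)$, one gets $\|\mathbf{f}(w(s),\llpg(s))\|_{\Cc^{-\eps}}\lesssim\sum_{j=0}^{2m-1}s^{-j\frac{\sigma+\eps}{2}}\|w\|_{X_T^{-\eps,\sigma}}^{j}(1+\|\llpg(s)\|_{\Cc^{-\eps}})$. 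Feeding this into $\Gamma$ through \eqref{SchauderH}, the $\Cc^{\sigma}$ part of the $X_T^{-\eps,\sigma}$ norm of $\Gamma w$ reduces to a sum of terms $t^{\frac{\sigma+\eps}{2}}\int_0^t(t-s)^{-\frac{\sigma+\eps}{2}}s^{-j\frac{\sigma+\eps}{2}}(1+\|\llpg(s)\|_{\Cc^{-\eps}})\,ds$: for $j=2m-1$ (which forces $p=2m-1$, hence no factor of $\llpg$) the $s$-integral converges since $(2m-1)\frac{\sigma+\eps}{2}<1$ and, after the weight $t^{\frac{\sigma+\eps}{2}}$, leaves the positive power $t^{\,1-(2m-1)\frac{\sigma+\eps}{2}}$; for $j\le 2m-2$ I would apply Hölder in $s$ with exponents $q,q'$ to extract $\|\llpg\|_{L^q_T\Cc^{-\eps}}$, and then the hypothesis $\frac{\sigma+\eps}{2}q'<\frac1{2m-1}$ gives $j\frac{\sigma+\eps}{2}q'<1$, so the Beta integral again leaves a positive power of $t$. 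The $\Cc^{-\eps}$ part is the same without the smoothing factor and is easier. All in all there is $\theta>0$ with
\[
\|\Gamma w\|_{X_T^{-\eps,\sigma}}\le C\|w_0\|_{\Cc^{-\eps}}+C\,T^{\theta}\big(1+\|w\|_{X_T^{-\eps,\sigma}}+\|\llpg\|_{L^q_{T_0}\Cc^{-\eps}}\big)^{2m-1}
\]
for $0<T\le T_0\wedge1$.

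Setting $r=\|w_0\|_{\Cc^{-\eps}}$ and $R=\|\llpg\|_{L^q_{T_0}\Cc^{-\eps}}$, I would choose $T\sim\min\big(T_0;(r/(1+r+R)^{2m-1})^{1/\theta}\big)$ so that $\Gamma$ maps the ball $\{\|w\|_{X_T^{-\eps,\sigma}}\le 2Cr\}$ into itself, which in particular gives \eqref{localbound}. For the contraction, one writes $w_1^{j}-w_2^{j}=(w_1-w_2)\sum_{i=0}^{j-1}w_1^{i}w_2^{j-1-i}$ and repeats the product and time-integral bounds to obtain $\|\Gamma w_1-\Gamma w_2\|_{X_T^{-\eps,\sigma}}\le C\,T^{\theta}(1+\|w_1\|_{X_T^{-\eps,\sigma}}+\|w_2\|_{X_T^{-\eps,\sigma}}+R)^{2m-2}\|w_1-w_2\|_{X_T^{-\eps,\sigma}}$, a contraction after possibly shrinking $T$ (of the same form). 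Banach's fixed point theorem then produces the unique solution in the ball; uniqueness in all of $X_T^{-\eps,\sigma}$ follows by covering $[0,T]$ with finitely many subintervals on which any two solutions lie in a common ball. The Lipschitz dependence on $(w_0,\llpg)$ comes from applying the same estimates to the difference of two solutions, which solves a Duhamel identity with source $e^{-t\H}(w_0^{(1)}-w_0^{(2)})$ plus terms multilinear in $w^{(1)},w^{(2)}$ and linear in $\llpg^{(1)}-\llpg^{(2)}$.

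The hard part is the delicate balance in the time integrals: because the Schauder estimate for $\H$ has the restricted range $\sigma<1$, the smoothing factor is only $(t-s)^{-\frac{\sigma+\eps}{2}}$, whereas the degree-$2m-1$ nonlinearity injects the strong singularity $s^{-(2m-1)\frac{\sigma+\eps}{2}}$ into the time-weighted norm. The assumption $\frac{\sigma+\eps}{2}q'<\frac1{2m-1}$ is exactly the threshold at which, once the weight $t^{\frac{\sigma+\eps}{2}}$ has been accounted for and Hölder's inequality has absorbed the $L^q_t$-integrability of $\llpg$, every Beta-type time integral converges with a strictly positive residual power of $T$; establishing this bookkeeping, together with the fact that the products $w^{j}\llpg^{(p-j)}$ are meaningful (which is what forces $\sigma>\eps$), is the real content, the remainder being the standard parabolic fixed-point machinery.
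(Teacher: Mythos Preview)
Your proof is correct and follows the same fixed-point strategy as the paper: Schauder estimate \eqref{SchauderH} for the semigroup, the product bound $\Cc^\sigma\times\Cc^{-\eps}\to\Cc^{-\eps}$ (Lemma~\ref{LEM:Besov}(iii)) for $\mathbf f$, and H\"older in time against $\|\llpg\|_{L^q_T\Cc^{-\eps}}$ to extract the positive power $T^\theta$, followed by the standard contraction and Lipschitz-dependence estimates. One small correction: in the abstract statement of the proposition $\llpg$ is an arbitrary element of $L^q([0,T_0];\Cc^{-\eps})^{2m}$, so the top-degree term $j=p=2m-1$ still carries the factor $\llpg^{(0)}$ (which equals $1$ only in the concrete application) and you must apply H\"older there as well---this is fine, since the hypothesis is precisely $(2m-1)\tfrac{\sigma+\eps}{2}q'<1$, so the Beta integral $\int_0^t(t-s)^{-\frac{\sigma+\eps}{2}q'}s^{-(2m-1)\frac{\sigma+\eps}{2}q'}ds$ converges.
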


\begin{proof}
    For $T\in(0;1]$, we define the solution mapping $\Phi_{(w_0,\llpg)}$ by
    $$
    \Phi_{(w_0,\llpg)}(w)(t):=e^{-t\H}w_0-\int_0^te^{-(t-s)\H}\mathbf{f}(w,\llpg)(s)\,ds
    $$
    and seek for a solution $w$ as a fixed point of $\Phi_{(w_0,\llpg)}$ in the ball of radius $4Cr$ in $X_T^{-\eps,\sigma}$ for some $C>0$ and small enough $T>0$ depending on $r,R$ as in the statement of Proposition~\ref{Localwpw}.
    
   First, for $v$ in the ball of radius $4Cr$ in $X_T^{-\eps,\sigma}$, using the expression \eqref{NonLinearityN} of $\mathbf{f}$, the Schauder estimate \eqref{SchauderH} for $\H$, and a repeated use of the product estimates in Lemma~\ref{LEM:Besov}~(iii), we estimate
    \begin{align*}
    \big\|\Phi_{(w_0,\llpg)}(w)\big\|_{L^\infty_T\Cc^{-\eps}}&\les \|w_0\|_{\Cc^{-\eps}}+\int_0^T\big\|\mathbf{f}(w,\llpg)(s)\big\|_{\Cc^{-\eps}}ds\\
    &\les \|w_0\|_{\Cc^{-\eps}}+\sum_{p=0}^{2m-1}\sum_{j=0}^p|b_p|\int_0^T\big\|w^j\llpg^{(p-j)}\big\|_{\Cc^{-\eps}}ds\\
    &\les \|w_0\|_{\Cc^{-\eps}}+\sum_{p=0}^{2m-1}\Big\{\sum_{j=1}^p|b_p|\int_0^T\|w(s)\|_{\Cc^{\sigma}}^j\|\llpg^{(p-j)}(s)\|_{\Cc^{-\eps}}ds\\
    &\qquad\qquad\qquad\qquad+|b_0|\int_0^T\|\llpg^{(p)}(s)\|_{\Cc^{-\eps}}ds\Big\}\\
     &\les \|w_0\|_{\Cc^{-\eps}}+\sum_{p=0}^{2m-1}\Big\{\sum_{j=1}^p|b_p|\|w\|_{X_T^{-\eps,\sigma}}^j\|\llpg^{(p-j)}\|_{L^q_T\Cc^{-\eps}}\Big(\int_0^Ts^{-\big(\frac{\sigma+\eps}2\big)jq'}ds\Big)^{\frac1{q'}}\\
    &\qquad\qquad\qquad\qquad+|b_0|T^{\frac1{q'}}\|\llpg^{(p)}\|_{L^q_T\Cc^{-\eps}}\Big\}\\
    &\le Cr+C'(1+r+R)^{2m-1}T^{\frac1{q'}-\frac{\sigma+\eps}{2}(2m-1)},
    \end{align*}
    for some constants $C,C'>0$ independant of $r$, $R$, and $T$. Thus, taking $T\sim \big(\frac{r}{(1+r+R)^{2m-1}}\big)^{\theta}$ with $\theta=\big(\frac1{q'}-\frac{\sigma+\eps}{2}(2m-1)\big)^{-1}>0$, we get
    \begin{align*}
    \big\|\Phi_{(w_0,\llpg)}(w)\big\|_{L^\infty_T\Cc^{-\eps}}\le 2Cr.
    \end{align*}
   Similarly as above,
   \begin{align*}
    &\sup_{0<t\le T}t^{\frac{\sigma+\eps}{2}}\big\|\Phi_{(w_0,\llpg)}(w)(t)\big\|_{\Cc^{\sigma}}\\
    &\les \|w_0\|_{\Cc^{-\eps}}+\sum_{p=0}^{2m-1}\Big\{|b_0|\sup_{0<t\le T}t^{\frac{\sigma+\eps}{2}}\int_0^t(t-s)^{-\frac{\sigma+\eps}{2}}\|\llpg^{(p)}(s)\|_{\Cc^{-\eps}}ds\\
    &\qquad\qquad+\sum_{j=0}^p|b_p|\sup_{0<t\le T}t^{\frac{\sigma+\eps}{2}}\int_0^t (t-s)^{-\frac{\sigma+\eps}2}\|w^j(s)\llpg^{(p-j)}(s)\|_{\Cc^{-\eps}}ds\Big\}\\
     &\les \|w_0\|_{\Cc^{-\eps}}+\sum_{p=0}^{2m-1}\Big\{|b_0|\|\llpg^{(p)}\|_{L^q_T\Cc^{-\eps}}\sup_{0<t\le T}t^{\frac{\sigma+\eps}{2}}\Big(\int_0^t(t-s)^{-\frac{\sigma+\eps}{2}q'}ds\Big)^{\frac1{q'}}\\
    &\qquad\qquad+\sum_{j=1}^p|b_p|\|w\|_{X_T^{-\eps,\sigma}}^j\|\llpg^{(p-j)}\|_{L^q_T\Cc^{-\eps}}\sup_{0<t\le T}t^{\frac{\sigma+\eps}{2}}\Big(\int_0^t(t-s)^{-\frac{\sigma+\eps}{2}q'}s^{-\big(\frac{\sigma+\eps}2\big)jq'}ds\Big)^{\frac1{q'}}\Big\}\\
    &\le Cr+C'(1+r+R)^{2m-1}T^{\frac1{q'}-\frac{\sigma+\eps}{2}(2m-1)},
   \end{align*}
where we used the elementary integral inequality
\begin{align*}
\int_0^t(t-s)^{-a}s^{-b}ds\les t^{1-a-b}
\end{align*}   
   for any $0<a,b<1$ with $a+b>1$.
   Putting the two estimates together with the choice of $T$ yields
   \begin{align*}
   \big\|\Phi_{(w_0,\llpg)}(w)\big\|_{X_T^{-\eps,\sigma}}\le 4Cr.
   \end{align*}
    As for the time continuity, take $0<t\leq T$ and $h$ small enough, then split the integral 
    \begin{align*}
        \Phi_{(w_0,\llpg)}(w)(t+h)-\Phi_{(w_0,\llpg)}(w)(t)&=e^{-t\H}(e^{-h\H}-1)w_0+\int_0^te^{-(t-s)\H}(e^{-h\H}-1)\mathbf{f}(w,\llpg)(s)\,ds\\
        &\qquad+\int_t^{t+h}e^{-(t+h-s)\H}\mathbf{f}(w,\llpg)(s)\,ds\\
        &=:(1) + (2) + (3).
    \end{align*}
    We use both the Schauder estimate \eqref{SchauderH} and the continuity of the semigroup given by Lemma~\ref{ContinuitySemiGroupH} to investigate the convergence $h\to0$. This is straightforward for $(1)$, and for $(2)$ we get 
    \begin{align*}
        \left\|(2)\right\|_{\CC^{\sigma}}&\lesssim \int_0^t(t-s)^{-\frac{\sigma+\varepsilon}{2}}\|(e^{-h\H}-1)\mathbf{f}(w,\llpg)(s)\|_{\CC^{-\varepsilon}}\,ds\underset{h\to0}{\to}0
    \end{align*}
    in view of previous computations and the dominated convergence theorem. Same goes for $(3)$, changing variables and proceeding as above we get
    \begin{align*}
        \left\|(3)\right\|_{\CC^{\sigma}}&\lesssim \int_t^{t+h}(t+h-s)^{-\frac{\sigma+\varepsilon}{2}}\|\mathbf{f}(w,\llpg)(s)\|_{\CC^{-\varepsilon}}\,ds\\
        &\lesssim \int_0^{h}\tau^{-\frac{\sigma+\varepsilon}{2}}\|\mathbf{f}(w,\llpg)(t+h-\tau)\|_{\CC^{-\varepsilon}}\,ds\\
        &\lesssim \left(\int_0^{h}\tau^{-q'\frac{\sigma+\varepsilon}{2}}(t+h-s)^{-q'(2m-1)\frac{\sigma+\varepsilon}{2}}\,ds\right)^{\frac{1}{q'}}\|w\|_{X_T^{-\eps,\sigma}}^{2m-1}\|\llpg\|_{L^q_{T_0}\CC^{-\varepsilon}}\\
        &\lesssim t^{-(2m-1)\frac{\sigma+\varepsilon}{2}}h^{\frac{1}{q'}-\frac{\sigma+\varepsilon}{2}}\|w\|_{X_T^{-\eps,\sigma}}^{2m-1}\|\llpg\|_{L^q_{T_0}\CC^{-\varepsilon}}\\
        &\underset{h\to0}{\to}0.
    \end{align*}
    This proves continuity of $\Phi(w)$ as a $\CC^\sigma(\M)$-valued map on $(0,T]$. Continuity in $\CC^{-\varepsilon}(\M)$ on $[0,T]$ is obtained via the same arguments, without the blow-up at $t=0$. Altogether, this shows that $\Phi_{(w_0,\llpg)}$ maps the ball of radius $4Cr$ of $X_T^{-\eps,\sigma}$ to itself.
   
   Now for $w_1,w_2$ in this ball, proceeding as above we also have
 \begin{align*}
    &\big\|\Phi_{(w_0,\llpg)}(w_1)-\Phi_{(w_0,\llpg)}(w_2)\big\|_{L^\infty_T\Cc^{-\eps}}\\
    &\les \int_0^T\big\|\mathbf{f}(w_1,\llpg)(s)-\mathbf{f}(w_2,\llpg)(s)\big\|_{\Cc^{-\eps}}ds\\
    &\les \sum_{p=1}^{2m-1}\sum_{j=1}^p|b_p|\int_0^T\big\|(w_1^j-w_2^j)\llpg^{(p-j)}\big\|_{\Cc^{-\eps}}ds\\
    &\les \sum_{p=1}^{2m-1}\sum_{j=1}^p|b_p|\int_0^T\|w_1(s)-w_2(s)\|_{\Cc^{\sigma}}\big(\|w_1(s)\|_{\Cc^\sigma}^{j-1}+\|w_2(s)\|_{\Cc^\sigma}^{j-1}\big)\|\llpg^{(p-j)}(s)\|_{\Cc^{-\eps}}ds\\
     &\le C'(1+r+R)^{2m-2}T^{\frac1{q'}-\frac{\sigma+\eps}{2}(2m-1)}\|w_1-w_2\|_{X_T^{-\eps,\sigma}},
    \end{align*}
    and
 \begin{align*}
&\sup_{0<t\le T}t^{\frac{\sigma+\eps}{2}}\big\|\Phi_{(w_0,\llpg)}(w_1)(t)-\Phi_{(w_0,\llpg)}(w_2)(t)\big\|_{\Cc^{\sigma}}\\
&\qquad\qquad\le C'(1+r+R)^{2m-2}T^{\frac1{q'}-\frac{\sigma+\eps}{2}(2m-1)}\|w_1-w_2\|_{X_T^{-\eps,\sigma}}.
    \end{align*}
    With our choice of $T$ this shows that $\Phi_{(w_0,\llpg)}$ is also a contraction on this ball, thus admits a unique fixed point $w$ which is then a mild solution to \eqref{model} in $X_T^{-\eps,\sigma}$ which satisfies \eqref{localbound}. Similar estimates yield for solutions $w,\widetilde{w}$ with data $(w_0,\llpg),(\widetilde{w_0},\widetilde{\llpg})$:
    \begin{align*}
\|w-\widetilde{w}\|_{X_T^{-\eps,\sigma}}&=\big\|\Phi_{(w_0,\llpg)}(w)-\Phi_{(\widetilde{w_0},\widetilde{\llpg})}(\widetilde{w})\big\|_{X_T^{-\eps,\sigma}}\\
&\le C\|w_0-\widetilde{w_0}\|_{\Cc^{-\eps}}+C'(1+r+R)^{2m-2}T^{\frac1{q'}-\frac{\sigma+\eps}{2}(2m-1)}\|w-\widetilde{w}\|_{X_T^{-\eps,\sigma}}\\
&\qquad\qquad+C'(1+r)^{2m-2}T^{\frac1{q'}-\frac{\sigma+\eps}{2}(2m-1)}\|\llpg-\widetilde{\llpg}\|_{L^q_T\Cc^{-\eps}}.
    \end{align*}
    This shows the Lipschitz dependence on $(w_0,\llpg)$.
\end{proof}

With the local well-posedness result of Proposition~\ref{Localwpw}, we can prove our main local well-posedness result of Theorem~\ref{THM:LWP}.
\begin{proof}[Proof of Theorem~\ref{THM:LWP}]
    Let $\eps,\sigma,q$ be as in the statement of Theorem~\ref{THM:LWP}. Then for $T_0,R>0$, $0\leq k\leq 2m-1$ and $N\geq1$, define the events 
    \begin{align*}
        \Sigma_R^{N,k}&:=\Big\{\omega\in\Omega, \textup{such that}~\llp\in C([0;T_0];\Cc^{-\eps}(\M)),~~\llp^{\diamond k}\in L^q([0;T_0];\Cc^{-\eps}(\M)),\\
        &\qquad\qquad\|(\P_N\llp)^{\diamond k}-\llp^{\diamond k}\|_{L^q_{T_0}\CC^{-\varepsilon}}\leq N^{-\frac{\kappa}{2}}R,\ \|(\P_N\llp)^{\diamond k}\|_{L^q_{T_0}\CC^{-\varepsilon}}\leq R \Big\},
    \end{align*}
    where $\kappa$ is as in Proposition \ref{RenormWick}. Then set
    $$
    \Sigma_R:=\bigcap_{N\in\N^*}\bigcap_{0\leq k\leq 2m-1}\Sigma_R^{N,k}
    $$
    and
    $$
    \Sigma:=\liminf_{R\to+\infty}\Sigma_R.
    $$
    From the bounds \eqref{EstimateTailLoliN} and \eqref{EstimateDeviationLoliN} we get 
    $$
    \mathbb{P}\left(\Omega\setminus\Sigma_R\right) \leq C e^{-cR^\frac{1}{m}}
    $$
    for some constants $c,C>0$. Then Borel-Cantelli lemma gives that $\Sigma$ is of full probability.
    
    Given $R>0$, we get from Proposition~\ref{Localwpw} that there exists a time $T=T(\|u_0\|_{\Cc^{-\eps}},R)\in(0,T_0\wedge 1]$ such that for any $\omega\in\Sigma_R$ and $N\geq1$, \eqref{model} admits a unique solution $w_N\in X_T^{-\eps,\sigma}$ with data $w_N(0)=u_0-\llp(0)$. We then obtain a unique solution $u_N$ to \eqref{TruncEquation} by setting
    $$
    u_N=\llp + w_N\in \llp + X_T^{-\sigma,\eps}.
    $$
    Moreover, since $(\P_N\llp)^{\diamond k}\to \llp^{\diamond k}$ in $L^q([0;T_0];\CC^{-\varepsilon}(\M))$, the Lipschitz continuity property of the solution in Proposition~\ref{Localwpw} ensures that $w_N$ converges in $X_T^{-\eps,\sigma}$ to the solution $w$ to \eqref{model} with data $(u_0-\llp(0),\llpg)$. Since $u_N=\llp+w_N$, this shows that $u_N$ converges in $\llp+X_T^{-\eps,\sigma}$ to $u=\llp+w$.
\end{proof}

\subsection{Almost sure globalization and measure invariance}\label{SEC:global}

We now turn to the globalization problem of the truncated dynamics \eqref{TruncEquation} for random initial data $u_0$ with law given by $\rho_N$ defined as in \eqref{TruncatedGibbsMeasure}. Note that since Theorem~\ref{THM:LWP} holds $\Prob$ almost surely for any initial data in $\Cc^{-\eps}(\M)$, in view of Proposition~\ref{RenormWick}, it then also holds $\rho\otimes \Prob$ almost surely. Moreover, recall that Proposition~\ref{Localwpw} provides a blow-up alternative: since the local time of existence $T=T(\|u_0\|_{\Cc^{-\eps}},\|\llpg\|_{L^q_{T_0}\Cc^{-\eps}})$ only depends on $\|u_0\|_{\Cc^{-\eps}}$ and $\|\llpg\|_{L^q_{T_0}\Cc^{-\eps}}$, for any $T_0>0$, if $T^*=T^*(u_0,\omega)>0$ is the maximal time of existence of $w_N$ on $[0,T_0]$, then
\begin{align}\label{blowup}
\sup_{0\leq t< T^*}\|w_N(t)\|_{\CC^{-\varepsilon}}=+\infty\qquad\textup{or}\qquad T^*=T_0.
\end{align}
As we intend to prove global well-posedness for random initial data, we first need to investigate measure invariance of $\rho_N$ under the flow of the truncated equation \eqref{TruncEquation}. As $\P_N$ does not make the truncated equation \eqref{TruncEquation} a finite-dimensional SDE, we first study another truncated dynamics. Let $M>0$ and set $\Pi_M$ to be the orthogonal projection of the finite dimensional space $\textup{Span}\left\{\varphi_n\,,\ \lambda_n\leq M\right\}$, we denote by $d_M$ its dimension, with $d_M\simeq M$ according to Weyl's law \eqref{WeylH}. $\Pi_M$ being a sharp projection, we do not recover continuity in Hölder spaces, so we rather consider a smooth counterpart $\chi_M=\chi(M^{-2}\H)$ for some $\chi\in C^{\infty}_0(\R)$ compactly supported in $(-1;1)$. Note that $\P_N$ is based on $\Delta$ while $\chi_M$ is defined with $\H$, hence both operators do not commute with one another. Still, we consider $u_{N,M}$ to solve the following equation starting from some initial data $u_0$:
\begin{align}\label{TruncNM}
        \begin{cases}
        \partial_t u_{N,M} + \H u_{N,M} + \chi_M\P_N f_N^{\diamond}\left(\P_N\chi_M u_{N,M}\right)=\sqrt{2}\zeta,\\
        u_{N,M}(0)=u_0.
    \end{cases}
\end{align}

\noindent Expanding $f_N^\diamond$ to emphasis the dependency in $N$, we can rewrite \eqref{TruncNM} as
\begin{align}\label{Equationu_NM}
  \begin{cases}
  \partial_t u_{N,M} + \H u_{N,M} + \sum_{p=0}^{2m-1}b_p\chi_M\P_N H_p\left(\P_N\chi_M u_{N,M},\sigma_N\right)=\sqrt{2}\zeta,\\
        u_{N,M}(0)=u_0.
   \end{cases}
\end{align}

\noindent Note that $\chi_M$ acts on both the non-linearity and the $u_{N,M}$ term inside, and since $\chi_M=\Pi_M\chi_M=\chi_M\Pi_M$ by the support property of $\chi$, we see that $\Pi_Mu_{N,M}$ will solve a finite dimensional system of coupled SDEs while $(1-\Pi_M)u_{N,M}$ satisfies a linear evolution equation driven by a white noise. Namely, if we write $\Pi_Mu_{N,M}(t)=\sum_{n=0}^{d_M}a_{N,M}^{(n)}(t)\varphi_n$, then for $0\leq n\leq d_M$ we have 
$$
da_{N,M}^{(n)}+\left[\lambda_na_{N,M}^{(n)} + \sum_{p=0}^{2m-1} b_p\left\langle \chi_M\P_NH_p\left(\P_N \chi_M\sum_{n=0}^{d_M}a_{N,M}^{(n)}\varphi_n, \sigma_N\right),\varphi_n\right\rangle\right]dt=\sqrt{2}\,dB_n
$$
for independent Brownian motions $B_n$. This can be seen as a finite dimensional stochastic gradient flow
$$
da^{(n)}+\partial_{a^{(n)}}\CE_{N,M}(a^{(0)},\cdots,a^{(d_M)})\,dt=\sqrt{2}\,dB_n\quad\text{for}\ 0\leq n\leq d_M
$$
with truncated energy
$$
\CE_{N,M}(a^{(0)},\cdots,a^{(d_M)}):=\frac{1}{2}\sum_{n=0}^{d_M}\lambda_n(a^{(n)})^2+\sum_{p=0}^{2m-1} \frac{b_p}{p+1}\int_\M H_{p+1}\left(\P_N \chi_M\sum_{n=0}^{d_M}a_{N,M}^{(n)}\varphi_n, \sigma_N\right)\,dx.
$$
\noindent To this finite dimensional system correspond the generator $\mathcal{L}_{N,M}$ and Gibbs measure $\rho_{N,M}$ defined respectively by
$$
\mathcal{L}_{N,M}:= \sum_{n=0}^{d_M} \partial_{a^{(n)}}^2 - \partial_{a^{(n)}}\CE_{N,M}(a^{(0)},\cdots,a^{(d_M)})\partial_{a^{(n)}}
$$
and 
\begin{align*}
    d\rho_{N,M}(a^{(0)},\cdots,a^{(d_M)})&:= \mathcal{Z}_{N,M}^{-1}e^{-\CE_{N,M}(a^{(0)},\cdots,a^{(d_M)})}\,da^{(0)}\cdots da^{(d_M)}\\
    &=\mathcal{Z}_{N,M}^{-1}R_{N,M}(a^{(0)},\cdots,a^{(d_M)})d((\Pi_M)_*\mu^\H)(a^{(0)},\cdots,a^{(d_M)}),
\end{align*}
with normalization constant $\mathcal{Z}_{N,M}$. Integrating $\mathcal{L}_{N,M}f$ against $d\rho_{N,M}$ for some test function $f$ we get (denoting $a^{(0-d_M)}=(a^{(0)},\cdots,a^{(d_M)})$)
\begin{align*}
    \int_{\R^M}\mathcal{L}_{N,M}f(a^{(0-d_M)})\,d\rho_{N,M}(a^{(0-d_M)})&=\mathcal{Z}_{N,M}^{-1}\int_{\R^M}e^{-\CE_{N,M}(a^{(0-d_M)})}\sum_{n=0}^{d_M}\left(\partial_{a^{(n)}}^2f(a^{(0-d_M)})\right.\\
    &\qquad- \left.\partial_{a^{(n)}}\CE_{N,M}\partial_{a^{(n)}}f(a^{(0-d_M)})\right)\,da^{(0)}\cdots da^{(d_M)}\\
    &=0 
\end{align*}
after integration by parts. This proves that $\Pi_Mu_{N,M}$ leaves the measure $\rho_{N,M}$ invariant.\\
Now for remaining part $u_{N,M}^\bot:= (1-\Pi_M)u_{N,M}$, the non-linear term disappears since $(1-\Pi_M)\chi_M=0$ by the support property of $\chi$, so that $u_{N,M}^\bot$ satisfies the linear equation
$$
\partial_tu_{N,M}^\bot + \H u_{N,M}^\bot = \sqrt{2}(1-\Pi_M)\zeta.
$$
This equation has as unique invariant measure $(1-\Pi_M)_*\mu^\H$. Therefore, $u_{N,M}=\Pi_Mu_{N,M}+u_{N,M}^\bot$ leaves the measure $\rho_{N,M}\otimes (1-\Pi_M)_*\mu^\H$ invariant. The only part missing to make this whole discussion rigorous is the global existence of a solution $u_{N,M}$ in $\CC^{-\varepsilon}(\M)$, hence the following proposition.

\begin{proposition}\label{PROP:invarianceNM}
    For fixed $N,M\in\N^*$, and initial data $u_0\in\CC^{-\eps}$, then the flow of \eqref{Equationu_NM} extends globally. Moreover the truncated measure $\rho_{N,M}\otimes(1-\Pi_M)^*\mu^\H$ is invariant under the flow of \eqref{Equationu_NM}.
\end{proposition}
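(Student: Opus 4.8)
The plan is to combine local well-posedness with the fact that, for fixed $N$ and $M$, equation \eqref{Equationu_NM} decouples into a finite-dimensional stochastic gradient flow and a linear stochastic heat equation, and to rule out blow-up by solving these two pieces globally. For the local theory, writing $u_{N,M}=\llp+w_{N,M}$ in the Da Prato--Debussche fashion and expanding the Hermite nonlinearity with Lemma~\ref{BinomialHermite}, the equation for $w_{N,M}$ is of the form \eqref{model} with nonlinearity $\chi_M\P_N\mathbf{f}_N\!\big(\P_N\chi_M w_{N,M},\big((\P_N\chi_M\llp)^{\diamond k}\big)_{0\le k\le 2m-1}\big)$. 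As $\chi_M=\chi(M^{-2}\H)$ and $\P_N$ are smoothing operators bounded from $\Cc^{-\eps}(\M)$ into every Hölder and $L^2$-Sobolev space (with norms depending on $N,M$), and as Proposition~\ref{PROP:llpM} provides $\big((\P_N\chi_M\llp)^{\diamond k}\big)_k\in L^q([0;T_0];\Cc^{-\eps}(\M))^{2m}$ almost surely with the tail bound \eqref{EstimateDeviationLoliNM}, the contraction argument of Proposition~\ref{Localwpw} applies verbatim. It produces, $\Prob$-almost surely, a unique maximal solution $u_{N,M}=\llp+w_{N,M}$ with $w_{N,M}\in X^{-\eps,\sigma}_{T^*}$ on a maximal interval $[0;T^*)$, $T^*\in(0;T_0]$, with the blow-up alternative \eqref{blowup}. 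It thus suffices to show that $w_{N,M}$ does not blow up in $\Cc^{-\eps}(\M)$ before time $T_0$.

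For this I would exploit the decoupling. Since $\chi_M=\Pi_M\chi_M=\chi_M\Pi_M$ and $\Pi_M$ commutes with $\H$ and $e^{-t\H}$, applying $\Pi_M$ and $1-\Pi_M$ to \eqref{Equationu_NM} and using $\chi_M u_{N,M}=\chi_M\Pi_M u_{N,M}$ shows that $v:=\Pi_M u_{N,M}$ solves the closed finite-dimensional system $\partial_t v+\H v+\chi_M\P_N f_N^\diamond(\P_N\chi_M v)=\sqrt2\,\Pi_M\zeta$, $v(0)=\Pi_M u_0$, driven by the $d_M$ independent Brownian motions indexed by $\lambda_n\le M$, while $v^\perp:=(1-\Pi_M)u_{N,M}$ solves the linear equation $\partial_t v^\perp+\H v^\perp=\sqrt2\,(1-\Pi_M)\zeta$, $v^\perp(0)=(1-\Pi_M)u_0$, driven by the complementary, independent family. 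Both can be solved for all times. The linear equation has the explicit global solution $v^\perp(t)=(1-\Pi_M)\llp(t)+e^{-t\H}(1-\Pi_M)(u_0-\llp(0))$, which by the Schauder estimate \eqref{SchauderH} and the regularity of $\llp$ from Proposition~\ref{RenormWick} is $\Prob$-almost surely continuous on $[0;\infty)$ with values in $\Cc^{-\eps}(\M)$. Writing $v(t)=\sum_{\lambda_n\le M}a^{(n)}(t)\varphi_n$, the coefficient vector $a=(a^{(n)})$ solves the gradient SDE $da^{(n)}=-\partial_{a^{(n)}}\CE_{N,M}(a)\,dt+\sqrt2\,dB_n$ with smooth polynomial potential $\CE_{N,M}$, which I claim is confining, namely $\langle a,\nabla\CE_{N,M}(a)\rangle\ge c_0|a|^2-C_0$ for some $c_0>0$; granting this, the standard Itô/Gronwall estimate for $|a(t)|^2$ up to the exit times of large balls gives $\sup_{t\le T_0}\E|a(t)|^2<\infty$, hence non-explosion of $a$, so that $v$ is globally defined and continuous with values in the finite-dimensional space $\mathrm{Ran}\,\Pi_M\subset\Cc^{-\eps}(\M)$. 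Then $v+v^\perp$ is a global mild solution of \eqref{Equationu_NM} (indeed $\chi_M(v+v^\perp)=\chi_M v$) with $v+v^\perp-\llp\in X^{-\eps,\sigma}_T$ for every $T$, so by the uniqueness part of Proposition~\ref{Localwpw} it coincides with $u_{N,M}$ on $[0;T^*)$; since $v+v^\perp-\llp$ stays bounded in $\Cc^{-\eps}(\M)$ on each $[0;T_0]$, the alternative \eqref{blowup} forces $T^*=T_0$ for all $T_0$, i.e.\ the flow of \eqref{Equationu_NM} is global.

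The step needing the most care is the confining bound on $\CE_{N,M}$, the subtlety being that $\P_N\chi_M$ need not be injective on $\mathrm{Ran}\,\Pi_M$, so the top-order term $a_{2m}\|\P_N\chi_M\cdot\|_{L^{2m}(\M)}^{2m}$ of $\CE_{N,M}$ may degenerate in some directions. I would write $\mathrm{Ran}\,\Pi_M=V_0\oplus V_1$ with $V_0=\ker(\P_N\chi_M)\cap\mathrm{Ran}\,\Pi_M$ and $V_1=V_0^\perp$, and $a=a_0+a_1$ accordingly. Since $\P_N\chi_M u$ depends only on $a_1$, one has $\CE_{N,M}(a)=\tfrac12\langle a,\H a\rangle+G(a_1)$, where $G$ is a polynomial of degree $2m$ in $a_1$ whose $2m$-homogeneous part is $a_{2m}\|\P_N\chi_M\cdot\|_{L^{2m}(\M)}^{2m}$ restricted to $V_1$; by injectivity of $\P_N\chi_M$ on $V_1$, positivity of the leading coefficient $a_{2m}$, and equivalence of norms in finite dimension, this dominates $c_1|a_1|^{2m}$, whence $G$ is bounded below and, by Euler's identity on its homogeneous components, $a_1\cdot\nabla G(a_1)\ge 2m\,c_1|a_1|^{2m}-C(1+|a_1|^{2m-1})\ge -C'$. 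Since $\H$ is a positive operator, $\langle a,\H a\rangle\ge\lambda_0|a|^2$ with $\lambda_0=\min_n\lambda_n>0$, so $\langle a,\nabla\CE_{N,M}(a)\rangle=\langle a,\H a\rangle+a_1\cdot\nabla G(a_1)\ge\lambda_0|a|^2-C'$ and likewise $\CE_{N,M}(a)\ge\tfrac{\lambda_0}2|a|^2-C''$. In particular $\rho_{N,M}=\mathcal{Z}_{N,M}^{-1}e^{-\CE_{N,M}(a)}\,da$ is a genuine probability measure on $\mathrm{Ran}\,\Pi_M$, and the smoothness and fast decay of its density legitimate the integration by parts $\int\mathcal{L}_{N,M}f\,d\rho_{N,M}=0$ recorded in the discussion preceding the proposition, i.e.\ the invariance of $\rho_{N,M}$ under the finite-dimensional diffusion $v$.

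It remains to assemble the invariance claim. Let $u_0$ be distributed according to $\rho_{N,M}\otimes(1-\Pi_M)^*\mu^\H$ and independent of $\zeta$, i.e.\ $\Pi_M u_0\sim\rho_{N,M}$ and $(1-\Pi_M)u_0\sim(1-\Pi_M)^*\mu^\H$ independent. Then $v(t)\sim\rho_{N,M}$ for all $t\ge0$ by the previous paragraph, while $v^\perp$ is the Ornstein--Uhlenbeck process associated with $\partial_t+\H$ on $\mathrm{Ran}(1-\Pi_M)$, whose invariant measure is the centered Gaussian $(1-\Pi_M)^*\mu^\H$ (with covariance $\sum_{\lambda_n>M}\lambda_n^{-1}\varphi_n\otimes\varphi_n$, matching the one-time law of the stationary process $(1-\Pi_M)\llp$), so $v^\perp(t)\sim(1-\Pi_M)^*\mu^\H$ for all $t\ge0$. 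Since $v$ and $v^\perp$ are driven by the independent noises $(B_n)_{\lambda_n\le M}$ and $(B_n)_{\lambda_n>M}$ and started from independent data, they remain independent, so $u_{N,M}(t)=v(t)+v^\perp(t)$ has law $\rho_{N,M}\otimes(1-\Pi_M)^*\mu^\H$ for every $t\ge0$, which is the asserted invariance.
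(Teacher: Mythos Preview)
Your argument is correct and complete, but the globalization step takes a genuinely different route from the paper's. Both proofs begin with the same Da Prato--Debussche splitting $u_{N,M}=\llp+w_{N,M}$, invoke Proposition~\ref{Localwpw} for local existence, and use the decoupling $\Pi_M/(1-\Pi_M)$ to reduce to a finite-dimensional problem plus a linear equation. The difference is in how blow-up is ruled out for the finite-dimensional piece. The paper works on $w=\Pi_M w_{N,M}$ (deterministic once $\llp$ is frozen) and performs a direct energy estimate: it differentiates $\Psi_{N,M}(t)=\tfrac12\|w(t)\|_{L^2}^2+b_{2m-1}\int_0^t\|\P_N\chi_M w\|_{L^{2m}}^{2m}$, cancels the top-order term against the added $L^{2m}$ integral, and absorbs the cross terms by Young's inequality, yielding a uniform $L^2$ bound on $w$ on any $[0,T]$. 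You instead work on $v=\Pi_M u_{N,M}$ as a finite-dimensional gradient SDE and prove non-explosion through a dissipativity bound $\langle a,\nabla\CE_{N,M}(a)\rangle\ge\lambda_0|a|^2-C'$, handling the possible degeneracy of $\P_N\chi_M$ on $\mathrm{Ran}\,\Pi_M$ via the splitting $V_0\oplus V_1$ and exploiting positivity of $\H$ to control the $V_0$-directions. Your approach is more in the spirit of SDE theory and makes the role of the gradient structure and the confining potential more transparent; the paper's energy method is more PDE-flavored and avoids any discussion of the kernel of $\P_N\chi_M$ because the quadratic term $\int|\H^{1/2}w|^2$ already controls all of $w$. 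You are also more explicit than the paper on two points the discussion preceding the proposition leaves implicit: that $e^{-\CE_{N,M}}$ is integrable (hence $\rho_{N,M}$ is a probability measure), and that the independence of the noises driving $v$ and $v^\perp$ propagates the product structure of the invariant law.
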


\begin{proof}
    First we prove global well-posedness. As per usual set $u_{N,M}=\llp+w_{N,M}$ where $\llp$ is the stochastic convolution as before and $w_{N,M}$ solves the non-linear equation
    \begin{align}\label{vNM}
        \begin{cases}
            \partial_t w_{N,M} + \H w_{N,M} + \sum_{p=0}^{2m-1}b_p\chi_M\P_N H_p\left(\P_N\chi_M     w_{N,M}+\P_N\chi_M\llp,\sigma_N\right)=0,\\
            w_{N,M}(0)=u_0-\llp(0).
            \end{cases}
    \end{align}
    Then, due to Proposition~\ref{PROP:llpM}, local existence of $w_{N,M}$ falls under the scope of Proposition~\ref{Localwpw}, and we only need to prove that solutions exist globally in time. According to the blow-up criterion above, we only have to make sure that the $\CC^{-\epsilon}(\M)$ norm of $w_{N,M}$ does not blow-up in finite time. Note that applying $\Pi_M$ to \eqref{vNM}, we see that $\Pi_Mw_{N,M}$ solves \eqref{vNM} with initial data $\Pi_M\big(u_0-\llp(0)\big)$ while $(1-\Pi_M)w_{N,M}$ solves a linear equation, thus exists globally. Note also that we have by Sobolev inequality
    $$
    \|\Pi_Mw_{N,M}(t)\|_{\CC^{-\epsilon}}\les \|\Pi_Mw_{N,M}(t)\|_{H^{1-\frac\eps2}}\sim \|\Pi_Mw_{N,M}(t)\|_{\D^{1-\frac\eps2}}\lesssim_M\|\Pi_Mw_{N,M}(t)\|_{L^2}.
    $$
    Thus we only need to bound the $L^2(\M)$ norm of $\Pi_Mw_{N,M}$ to get global existence for each fixed $N,M$. This is done by estimating the following energy
    $$
    \Psi_{N,M}(t):=\frac{1}{2}\int_\M |\Pi_Mw_{N,M}(s,x)|^2\,dx + b_{2m-1}\int_0^t\int_\M |\P_N \chi_M \Pi_Mw_{N,M}(s,x)|^{2m}\,ds\,dx.
    $$
    We prove that $\Psi_{N,M}$ is bounded on any interval $[0,T]$ by some finite positive constant, depending on $T$. Dropping the ${N,M}$ subscript in $\Pi_Mvw_{N,M}=:w$ for the sake of clarity and differentiating in $t$, we get
    \begin{align*}
        &\frac{d}{dt}\Psi_{N,M}\\
        &=\int_\M v\partial_tw + b_{2m-1}\int_\M |\P_N\chi_M w|^{2m}\\
        &=-\int_\M |\H^{\frac{1}{2}}w|^2 - \int_\M f_N^\diamond(\P_N\chi_Mw+\P_N\chi_M\llp)\P_N\chi_Mw + b_{2m-1}\int_\M |\P_N\chi_Mw|^{2m}\\
        &=-\int_\M |\H^{\frac{1}{2}}w|^2 - \sum_{p=0}^{2m-1}\sum_{j=0}^p\binom{p}{j}b_p\int_\M (\P_N\chi_Mw)^{p-j+1}(\P_N\chi_M\llp)^{\diamond j} + b_{2m-1}\int_\M |\P_N\chi_Mw|^{2m}\\
        &=-\int_\M |\H^{\frac{1}{2}}w|^2 - \sum_{p=0}^{2m-1}\sum_{j=0}^p\mathbf{1}_{(j,p)\neq (0,2m-1)}\binom{p}{j}b_p\int_\M (\P_N\chi_Mw)^{p-j+1}(\P_N\chi_M\llp)^{\diamond j}.
    \end{align*}
    
    \noindent Having only powers at most $2m-1$ of $\P_N\chi_Mw$, we can fix $\delta\in(0,1)$ and use Young's estimate to get
    \begin{align*}
        \left|\int_\M (\P_N\chi_Mw)^{p-j+1}(\P_N\chi_M\llp)^{\diamond j}\right|&\leq \|\P_N\chi_Mw\|_{L^{p-j+1}}^{p-j+1}\|(\P_N\chi_M\llp)^{\diamond j}\|_{L^\infty}\\
        &\leq C_{j,p}\delta^{\frac{2m}{p-j+1}}\|\P_N\chi_Mw\|_{L^{2m}}^{2m} + C_{j,p}(\delta)\|(\P_N \chi_M\llp)^{\diamond j}\|_{L^\infty}^{\frac{2m}{2m-p+j-1}}\\
        &\leq C_{j,p}\delta\|\P_N\chi_Mw\llp\|_{L^{2m}}^{2m} + C_{j,p}(\delta)\|(\P_N\chi_M\llp)^{\diamond j}\|_{L^\infty}^{\frac{2m}{2m-p+j-1}}.
    \end{align*}
    
    \noindent And then after a proper rescale of $\delta$ :
    
    \begin{align*}
        &\left|\sum_{p=0}^{2m-1}\sum_{j=0}^p\mathbf{1}_{(j,p)\neq (0,2m-1)}\binom{p}{j}b_p\int_\M (\P_N\chi_Mw)^{p-j+1}(\P_N\chi_M\llp)^{\diamond j}\right|\\
        &\qquad\qquad\leq \delta \|\P_N\chi_Mw\|_{L^{2m}}^{2m}+C(\delta,T,N,M),
    \end{align*}
    
    \noindent for some constant $C$. Then, as our non-linearity is defocusing ($b_{2m-1}>0$), integration in time yields
    \begin{align*}
        \Psi_{N,M}(t)&\leq \delta \Psi_{N,M}(t) + C(\delta,T,N,M),
    \end{align*}
    \noindent and then $\Psi_{N,M}(t)$ is bounded on $[0,T]$. Since $\Psi_{N,M}$ controls the $L^2(\M)$ norm of $w=\Pi_Mw_{N,M}$, we obtain global existence of $w_{N,M}$, and then global existence of $u_{N,M}$.\\
    Now that we have proved that \eqref{Equationu_NM} is globally well-posed, the previous discussion ensures that $\rho_{N,M}\otimes(1-\Pi_M)^*\mu^\H$ is invariant under the flow of \eqref{Equationu_NM}.
\end{proof}

\noindent We now turn to the limiting procedure by taking $M,N\to+\infty$. At first we let $M\to+\infty$, this will prove the following bound on the local flow $\Phi_N$ of \eqref{TruncEquation}.

\begin{lemma}
    For any $T>0$ and $N\geq1$, there exists a constant $C_T$ uniform in $N\in\N^*$ such that
    \begin{align}{\label{BoundProbaPhi_N}}
            \E_{\rho_N\otimes\Prob}\Big[\sup_{0\leq t\leq T\wedge T^*}\big\|\Phi_N(u_0,\omega)(t)\big\|_{\CC^{-\epsilon}}\Big]\leq C_T,
    \end{align}
    where $T^*=T^*_N(u_0,\omega)$ is the maximal time of existence of $u_N=\Phi_N(u_0,\omega)$.
\end{lemma}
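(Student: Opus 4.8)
The plan is to run Bourgain's invariant-measure globalization argument at the level of the \emph{fully} truncated dynamics \eqref{TruncNM}, whose flow $\Phi_{N,M}$ is global and leaves $\nu_{N,M}:=\rho_{N,M}\otimes(1-\Pi_M)^*\mu^\H$ invariant by Proposition~\ref{PROP:invarianceNM}, and then to pass to the limit $M\to\infty$ at fixed $N$. The preliminary observation is that, since $\chi_M=\chi_M\Pi_M$ and $\mu^\H$ factorizes over the spectral decomposition of $\H$, the measure $\nu_{N,M}$ has density $\ZZ_{N,M}^{-1}R_N(\chi_M u)$ with respect to $\mu^\H$, with $\ZZ_{N,M}=\E_{\mu^\H}[R_N(\chi_M u)]$. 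The upper bound $\sup_{N,M}\ZZ_{N,M}<\infty$ is Proposition~\ref{PROP:BGNM}, and together with the scaling remark following Proposition~\ref{UnifPartitionFunction} it gives $\sup_{N,M}\|R_N(\chi_M\cdot)\|_{L^p(\mu^\H)}<\infty$ for every $p\geq1$. For a matching \emph{lower} bound, uniform in $N$, I note that only the regime $M\to\infty$ at fixed $N$ is needed, and by Jensen's inequality $\ZZ_{N,M}\ge\exp\big(-\E_{\mu^\H}\!\int_\M F_N^\diamond(\P_N\chi_M u)\,dx\big)$, while
$$
\Big|\E_{\mu^\H}\!\int_\M F_N^\diamond(\P_N\chi_M u)\,dx\Big|\lesssim\sum_{k}|a_k|\int_\M\big(\sigma_N^2(x)-\E_{\mu^\H}|\P_N\chi_M u|^2(x)\big)^{k/2}dx,
$$
which is $O(1)$ uniformly in $N$ as soon as $M\ge M_0(N)$, since $\sigma_N^2(x)-\E_{\mu^\H}|\P_N\chi_M u|^2(x)=G^\H_N(x,x)-G^\H_{N,M,M}(x,x)$ is controlled by the Green's function estimates used in the proof of Proposition~\ref{PROP:llpM}. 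Hence $\inf_N\inf_{M\ge M_0(N)}\ZZ_{N,M}=:c_0>0$.

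The core estimate is then
$$
\sup_{N}\ \sup_{M\ge M_0(N)}\ \E_{\nu_{N,M}\otimes\Prob}\Big[\sup_{0\le t\le T}\|u_{N,M}(t)\|_{\CC^{-\eps}}\Big]\le C_T<\infty.
$$
Fix $T$ and, for $\lambda\ge1$, set $B_\lambda:=\{\|\llp\|_{L^\infty_T\CC^{-\eps}}\le\lambda\}\cap\bigcap_{0\le k\le2m-1}\{\|(\P_N\chi_M\llp)^{\diamond k}\|_{L^q_T\CC^{-\eps}}\le\lambda\}$, so that by Proposition~\ref{RenormWick}, Proposition~\ref{PROP:llpM} and the Gaussian regularity of $\llp$ one has $\Prob(B_\lambda^c)\le C e^{-c_T\lambda^{2/(2m-1)}}$, uniformly in $N,M$. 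Partition $[0,T]$ into $K\sim T\lambda^{(2m-2)\theta}$ intervals $I_j=[t_j,t_{j+1}]$ of equal length, with $\theta$ the exponent from Proposition~\ref{Localwpw}. On $B_\lambda\cap\bigcap_j\{\|u_{N,M}(t_j)\|_{\CC^{-\eps}}\le\lambda\}$, Proposition~\ref{Localwpw} applied on each $I_j$ to the equation \eqref{vNM} — with initial size $\|w_{N,M}(t_j)\|_{\CC^{-\eps}}\le 2\lambda$ and forcing bounded by $\lambda$, so that the guaranteed existence time is $\gtrsim\lambda^{-(2m-2)\theta}\ge|I_j|$, and with all constants uniform in $N,M$ since $\P_N$ and $\chi_M$ are uniformly bounded on the relevant spaces — yields $\sup_{I_j}\|u_{N,M}(t)\|_{\CC^{-\eps}}\le\|\llp\|_{L^\infty_T\CC^{-\eps}}+C\|w_{N,M}(t_j)\|_{\CC^{-\eps}}\le C_0\lambda$. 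Therefore
$$
\Prob\otimes\nu_{N,M}\Big(\sup_{[0,T]}\|u_{N,M}\|_{\CC^{-\eps}}>C_0\lambda\Big)\le\Prob(B_\lambda^c)+\sum_{j=0}^{K}\nu_{N,M}\big(\|u\|_{\CC^{-\eps}}>\lambda\big),
$$
where for the grid-point terms I use the invariance of $\nu_{N,M}$ under the global flow to identify $\textup{Law}(u_{N,M}(t_j))=\nu_{N,M}$ and estimate $\nu_{N,M}(\|u\|_{\CC^{-\eps}}>\lambda)\le c_0^{-1}\|R_N(\chi_M\cdot)\|_{L^2(\mu^\H)}\,\mu^\H(\|u\|_{\CC^{-\eps}}>\lambda)^{1/2}\le C e^{-c\lambda^2}$ by Fernique's theorem, uniformly in $N$ and $M\ge M_0(N)$. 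The tail is thus $\lesssim e^{-c_T\lambda^{2/(2m-1)}}+T\lambda^{(2m-2)\theta}e^{-c\lambda^2}$, and integrating in $\lambda$ gives the claimed $C_T$.

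Finally I pass $M\to\infty$ at fixed $N$. By Proposition~\ref{PROP:llpM} and $\chi_M\to\Id$ strongly, the vector $((\P_N\chi_M\llp)^{\diamond k})_k$ converges to $((\P_N\llp)^{\diamond k})_k$ in $L^q_T\CC^{-\eps}$, so the Lipschitz dependence in Proposition~\ref{Localwpw}, iterated over $[0,T']$ for any $T'<T\wedge T^*_N$, gives $u_{N,M}\to u_N:=\Phi_N(u_0,\omega)$ in $C([0,T'];\CC^{-\eps})$, $\rho_N\otimes\Prob$-a.s. along a subsequence; likewise the densities $h_{N,M}:=\ZZ_{N,M}^{-1}R_N(\chi_M\cdot)$ converge in $L^1(\mu^\H)$, hence a.s. along a further subsequence, to the density $h_N$ of $\rho_N$. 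Writing $\E_{\nu_{N,M}\otimes\Prob}[\,\cdot\,]=\E_{\mu^\H\otimes\Prob}[h_{N,M}(u_0)\,\cdot\,]$ and applying Fatou's lemma to the nonnegative integrands together with the blow-up alternative \eqref{blowup} (which forces $\sup_{[0,T']}\|u_N\|_{\CC^{-\eps}}\to+\infty$ as $T'\uparrow T^*_N$ whenever $T^*_N<T$) first shows $T^*_N\ge T$ $\rho_N\otimes\Prob$-a.s., and then, on that full-measure event, $\sup_{[0,T]}\|u_{N,M}\|_{\CC^{-\eps}}\to\sup_{0\le t\le T\wedge T^*_N}\|u_N\|_{\CC^{-\eps}}$, yielding \eqref{BoundProbaPhi_N} with $C_T$ independent of $N$.

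The main obstacle is the second step, and in particular the need to juggle two uniformities at once: the grid must be taken fine enough — polynomially in $\lambda$, via the quantitative local time of Proposition~\ref{Localwpw} — that the local flow cannot escape a ball of radius $O(\lambda)$ over a single subinterval, while \emph{all} constants (the local existence time, the partition-function lower bound $c_0$, and the stochastic-object tails) remain uniform in both $N$ and $M$; the partition-function lower bound is the delicate point and is what forces the restriction $M\ge M_0(N)$ and the variance-mismatch estimate above. The $M\to\infty$ passage is softer but still requires combining the almost sure convergence of the flows with the $L^1$-convergence of the densities and with the blow-up criterion in order to rule out finite-time blow-up for $\rho_N$-distributed data.
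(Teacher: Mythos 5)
Your proof is correct, but it follows a genuinely different route from the paper's. The paper controls the first moment directly: it writes the Duhamel formula for $\Phi_{N,M}$, takes the $\sup_t$ of the $\Cc^{-\eps}$ norm, and then integrates against $\rho_{N,M}\otimes(1-\Pi_M)^*\mu^\H\otimes\Prob$; Fubini and the invariance of this measure under $\Phi_{N,M}$ collapse the time integral of the nonlinearity to $T\,\E\big[\|f_N^\diamond(\P_N\chi_Mu_0)\|_{\Cc^{-\eps/2}}\big]$, which is bounded by Cauchy--Schwarz together with Propositions~\ref{PROP:llpM} and~\ref{PROP:BGNM}, while the free evolution plus stochastic convolution is handled by Schauder and Burkholder--Davis--Gundy; the $M\to\infty$ passage is then the same Fatou argument as yours. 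You instead run the classical Bourgain globalization: a $\lambda$-dependent grid whose mesh is dictated by the quantitative local existence time of Proposition~\ref{Localwpw}, a union bound over grid points using invariance, Fernique-type tails for $\mu^\H$ and the uniform $L^p$ bounds on the densities, yielding a stretched-exponential tail for $\sup_{[0,T]}\|u_{N,M}\|_{\Cc^{-\eps}}$ uniform in $N$ and $M$, from which the first moment follows by integration. Your approach is heavier (it needs the quantitative dependence of the local time on the data, and an explicit lower bound on $\ZZ_{N,M}$, which you correctly supply via Jensen and the variance-mismatch estimate — a point the paper leaves implicit even though its own Cauchy--Schwarz step also requires $\ZZ_{N,M}^{-1}$ to be bounded), but it buys strictly more: a quantitative tail estimate rather than just an expectation bound. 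The restriction to $M\ge M_0(N)$ in your partition-function lower bound is harmless since only the limit $M\to\infty$ at fixed $N$ is needed. Both arguments rest on the same two pillars, namely the invariance of $\rho_{N,M}\otimes(1-\Pi_M)^*\mu^\H$ from Proposition~\ref{PROP:invarianceNM} and the uniform density bounds, and both conclude with Fatou and the blow-up alternative \eqref{blowup}.
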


\begin{proof}
    Let $T>0$ and $N,M\in\N^*$, we first prove a similar bound for $\Phi_{N,M}$ the flow of \eqref{Equationu_NM}. From Proposition \ref{PROP:invarianceNM}, $\Phi_{N,M}$ is globally well-defined by
    $$
    \Phi_{N,M}(u_0,\omega)(t)=\llp(\omega)(t)+w_{N,M}(t)
    $$
    where we see $w_{N,M}(t)$ as a continuous function of the stochastic data $(\llp^{\diamond p})_{0\leq p\leq 2m-1}$, as showed in Proposition~\ref{Localwpw}. Thus $\Phi_{N,M}$ satisfies the Duhamel formula for $t\geq0$ 
\begin{align*}
    \Phi_{N,M}(u_0,\omega)(t)&=e^{-t\H}\big(u_0-\llp(\omega)(0)\big)+\llp(\omega)(t)\\
    &\qquad-\int_0^te^{-(t-s)\H}\chi_M\P_Nf_N^{\diamond}(\P_N\chi_M\Phi_{N,M}(u_0,\omega)(s))\,ds.
\end{align*}
    Hence taking the $\CC^{-\epsilon}(\M)$ norm and then the supremum in $t$ yields 
    \begin{align*}
    \sup_{0\leq t\leq T}\|\Phi_{N,M}(u_0,\omega)(t)\|_{\CC^{-\epsilon}}&\les \sup_{0\leq t\leq T}\|e^{-t\H}\big(u_0-\llp(\omega)(0)\big)+\llp(\omega)(t)\|_{\CC^{-\epsilon}}\\
    &\qquad+\int_0^T\|\chi_M\P_Nf_N^{\diamond}(\P_N\chi_M\Phi_{N,M}(u_0,\omega)(s))\|_{\CC^{-\epsilon}}\,ds\\
 &\les\sup_{0\leq t\leq T}\|e^{-t\H}\big(u_0-\llp(\omega)(0)\big)+\llp(\omega)(t)\|_{\CC^{-\epsilon}}\\
    &\qquad+\int_0^T\|f_N^{\diamond}(\P_N\chi_M\Phi_{N,M}(u_0,\omega)(s))\|_{\CC^{-\frac\eps2}}\,ds,
     \end{align*}
    where we used successively Corollary~\ref{COR:PMH} and Lemma~\ref{LEM:PM} for the boundedness properties of $\chi_M$ and $\P_N$ on $\Cc^{-\eps}(\M)$. Note that $u_0\sim\rho_N$, so in particular we also have $u_0\in \Cc^{-\frac\eps2}$ almost surely, and thus also $\Phi_{N,M}(u_0)(t)\in \Cc^{-\frac\eps2}$, so that the last term above makes sense. Integrating with respect to $\rho_{N,M}\otimes(1-\Pi_M)^*\mu^\H\otimes\mathbb{P}$, Fubini's theorem and invariance of $\rho_{N,M}\otimes(1-\Pi_M)^*\mu^\H$ under $\Phi_{N,M}$ allow to deal with the integral in the right hand side:
    \begin{align*}
        &\E_{\rho_{N,M}\otimes(1-\Pi_M)^*\mu^\H\otimes\Prob}\Big[\int_0^T\|f_N^{\diamond}(\P_N\chi_M\Phi_{N,M}(u_0,\omega)(s))\|_{\CC^{-\frac\eps2}}\,ds\,\Big]\\
        &\hspace{2cm}=\int_0^T\E_\Prob\E_{\rho_{N,M}\otimes(1-\Pi_M)^*\mu^\H}\Big[\|f_N^{\diamond}(\P_N\chi_M\Phi_{N,M}(u_0,\omega)(s))\|_{\CC^{-\frac\eps2}}\Big]ds\\
        &\hspace{2cm}=T\E_{\rho_{N,M}\otimes(1-\Pi_M)^*\mu^\H}\Big[\|f_N^{\diamond}(\P_N\chi_Mu_0)\|_{\CC^{-\frac\eps2}}\Big]\\
        &\hspace{2cm}=T\E_{\mu^\H}\Big[\mathcal{Z}^{-1}_{N,M}R_{N}(\chi_Mu_0)\|f_N^{\diamond}(\P_N\chi_Mu_0)\|_{\CC^{-\frac\eps2}}\Big]\\
        &\hspace{2cm}\leq T\Big\|\|f_N^{\diamond}(\P_N\chi_Mu_0)\|_{\CC^{-\frac\eps2}}\Big\|_{L^2(\mu^\H)}\Big\|\mathcal{Z}^{-1}_{N,M}R_{N}(\chi_Mu_0)\Big\|_{L^2(\mu^\H)}.
    \end{align*}
    From there, we invoke Propositions~\ref{PROP:llpM} and~\ref{PROP:BGNM} to get a bound uniform in both $N$ and $M$. 
 
    For the other part, note that $e^{-t\H}\big(u_0-\llp(\omega)(0)\big)+\llp(\omega)(t)$ is explicitly given by 
    $$
    e^{-t\H}\big(u_0-\llp(\omega)(0)\big)+\llp(\omega)(t)=e^{-t\H}u_0+\int_0^te^{-(t-s)\H}\,\zeta(ds).
    $$
    We treat the propagator in a similar way we did for the Duhamel integral
    \begin{align*}
        \E_{\rho_{N,M}\otimes(1-\Pi_M)^*\mu^\H\otimes\Prob}\Big[\sup_{0\leq t\leq T}\|e^{-t\H}u_0\|_{\CC^{-\epsilon}}\Big]&\lesssim\E_{\rho_{N,M}\otimes(1-\Pi_M)^*\mu^\H}\Big[\|u_0\|_{\CC^{-\epsilon}}\Big]\\
        &\lesssim\E_{\mu^\H}\Big[\|u_0\|^2_{\CC^{-\epsilon}}\Big]^{\frac{1}{2}}\E_{\mu^\H}\Big[\mathcal{Z}_{N,M}^{-2}R_{N}(\chi_Mu_0)^2\Big]^{\frac{1}{2}},
    \end{align*}
    and again the right hand side is uniformly bounded in $N,M$. For the remaining part, we use the Burkholder-Davis-Gundy inequality and finally get 
    $$
    \E_{\rho_{N,M}\otimes(1-\Pi_M)^*\mu^\H\otimes\Prob}\Big[\sup_{0\leq t\leq T}\big\|e^{-t\H}u_0+\int_0^te^{-(t-s)\H}\,\zeta(ds)\big\|_{\CC^{-\epsilon}}\Big]\leq C_T
    $$
    for some constant $C_T$ uniform in $N$ and $M$. This proves that
    \begin{align*}
        \E_{\rho_{N,M}\otimes(1-\Pi_M)^*\mu^\H\otimes\Prob}\Big[\sup_{0\leq t\leq T}\|\Phi_{N,M}(u_0,\omega)(t)\|_{\CC^{-\epsilon}}\Big]\leq C_T.
    \end{align*}
    Now, from $C_T$ being uniform in $N,M$, the convergence almost surely and in $L^p(\Omega)$ of $(\P_N\chi_M\llp)^{\diamond k}$ to $(\P_N\llp)^{\diamond k}$ in $L^q([0;T];\Cc^{-\eps}(\M))$ (Proposition~\ref{PROP:llpM}) together with the continuity property of the flow given by Proposition~\ref{Localwpw}, and the convergence in total variation of $\rho_{N,M}\otimes(1-\Pi_M)^*\mu^\H$ to $\rho_N$ due to Propositions~\ref{PROP:llpM} and~\ref{PROP:BGNM}, we can use Fatou's lemma and pass $M$ to the limit so that all in all we have
    \begin{align*}
        &\E_{\rho_{N}\otimes\Prob}\Big[\sup_{0\leq t\leq T\wedge T^*}\|\Phi_{N}(u_0,\omega)(t)\|_{\CC^{-\epsilon}}\Big]\\
        &\qquad\leq \liminf_{M\to+\infty}\E_{\rho_{N,M}\otimes(1-\Pi_M)^*\mu^\H\otimes\Prob}\Big[\sup_{0\leq t\leq T\wedge T^*}\|\Phi_{N,M}(u_0,\omega)(t)\|_{\CC^{-\epsilon}}\Big]\\
        &\qquad\leq C_T,
    \end{align*}
    again uniformly in $N\in\N^*$.
\end{proof}

\noindent In view of the blow-up criterion \eqref{blowup} for the equation \eqref{NonLinearEqv_N}, thus for the equation \eqref{TruncEquation}, the estimate \eqref{BoundProbaPhi_N} is enough to prove probabilistic global well-posedness of the truncated renormalized dynamics \eqref{TruncEquation}.

\begin{proposition}{\label{GlobalWellPosednessu_N}}
    For any $N\geq1$, \eqref{TruncEquation} is $\rho_N\otimes\mathbb{P}$-almost surely globally well posed. Moreover the truncated Gibbs measure $\rho_N$ is invariant under the flow $\Phi_N$ of \eqref{TruncEquation}, that is, for any $t\geq0$ and $\mathfrak{F}\in C_b(\CC^{-\varepsilon}(\M))$,
    $$
    \E_{\rho_N\otimes\Prob}\Big[\mathfrak{F}\left[\Phi_N(u_0,\omega)(t)\right]\Big]= \E_{\rho_N}\Big[\mathfrak{F}\left[u_0\right]\Big].
    $$
\end{proposition}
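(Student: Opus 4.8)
We prove the two assertions separately: global well-posedness by comparing the blow-up alternative with the a priori bound \eqref{BoundProbaPhi_N}, and invariance by passing to the limit $M\to\infty$ in the finite-dimensional invariance of Proposition~\ref{PROP:invarianceNM}.

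\emph{Global well-posedness.} Fix $N\ge 1$ and $T>0$. By Theorem~\ref{THM:LWP}, $\rho_N\otimes\Prob$-almost surely \eqref{TruncEquation} has a solution $u_N=\llp+w_N$ up to a maximal time $T^*=T^*_N(u_0,\omega)\in(0,T]$, obeying the blow-up alternative \eqref{blowup}: either $\sup_{0\le t<T^*}\|w_N(t)\|_{\CC^{-\eps}}=+\infty$, or $T^*=T$. Since $\llp\in C([0,T];\CC^{-\eps}(\M))$ almost surely and $u_N=\llp+w_N$, the first alternative forces $\sup_{0\le t<T^*}\|\Phi_N(u_0,\omega)(t)\|_{\CC^{-\eps}}=+\infty$, hence also $\sup_{0\le t\le T\wedge T^*}\|\Phi_N(u_0,\omega)(t)\|_{\CC^{-\eps}}=+\infty$. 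But \eqref{BoundProbaPhi_N} shows that this last supremum has finite expectation under $\rho_N\otimes\Prob$, so it is almost surely finite; therefore the first alternative is $\rho_N\otimes\Prob$-negligible and $T^*=T$ almost surely. Letting $T$ run over $\N$ and patching the solutions by uniqueness gives $\rho_N\otimes\Prob$-almost sure global well-posedness.

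\emph{Reduction of the invariance.} Fix $N\ge1$, $t\ge0$ and $\mathfrak{F}\in C_b(\CC^{-\eps}(\M))$. By Proposition~\ref{PROP:invarianceNM}, for every $M\ge1$ the flow $\Phi_{N,M}$ of \eqref{Equationu_NM} is global and the measure $\nu_{N,M}:=\rho_{N,M}\otimes(1-\Pi_M)^*\mu^\H$ is invariant, so that
\[
\E_{\nu_{N,M}\otimes\Prob}\big[\mathfrak{F}(\Phi_{N,M}(u_0,\omega)(t))\big]=\E_{\nu_{N,M}}\big[\mathfrak{F}(u_0)\big].
\]
The plan is to send $M\to\infty$ in this identity. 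Writing $d\nu_{N,M}=g_{N,M}\,d\mu^\H$ with $g_{N,M}=\ZZ_{N,M}^{-1}R_N(\chi_M u)$ and $d\rho_N=g_N\,d\mu^\H$ with $g_N=\ZZ_N^{-1}R_N(u)$, Propositions~\ref{PROP:llpM} and~\ref{PROP:BGNM} give $g_{N,M}\to g_N$ in $L^1(\mu^\H)$, i.e.\ $\nu_{N,M}\to\rho_N$ in total variation, which disposes of the right-hand side since $\mathfrak{F}$ is bounded. For the left-hand side, I will use the $\mu^\H\otimes\Prob$-almost sure convergence $\Phi_{N,M}(u_0,\omega)(t)\to\Phi_N(u_0,\omega)(t)$ in $\CC^{-\eps}(\M)$ (established below; note $\rho_N$, $\nu_{N,M}$ and $\mu^\H$ are mutually absolutely continuous, so ``almost sure'' is unambiguous). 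Granting it, $\mathfrak{F}(\Phi_{N,M}(u_0,\omega)(t))\to\mathfrak{F}(\Phi_N(u_0,\omega)(t))$ $\mu^\H\otimes\Prob$-a.s.\ and is bounded by $\|\mathfrak{F}\|_\infty$; the splitting
\[
\mathfrak{F}(\Phi_{N,M})g_{N,M}-\mathfrak{F}(\Phi_N)g_N=\big(\mathfrak{F}(\Phi_{N,M})-\mathfrak{F}(\Phi_N)\big)g_{N,M}+\mathfrak{F}(\Phi_N)(g_{N,M}-g_N),
\]
together with the $L^1(\mu^\H)$-convergence of $g_{N,M}$ (hence their uniform integrability) and dominated convergence, shows that the left-hand side converges to $\E_{\rho_N\otimes\Prob}[\mathfrak{F}(\Phi_N(u_0,\omega)(t))]$. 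Comparing the two limits gives the claimed invariance of $\rho_N$.

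\emph{Convergence of the flows.} It remains to prove $\Phi_{N,M}(u_0,\omega)(t)\to\Phi_N(u_0,\omega)(t)$ in $\CC^{-\eps}(\M)$, $\mu^\H\otimes\Prob$-almost surely. Both flows are global ($\Phi_N$ by the first part, $\Phi_{N,M}$ by Proposition~\ref{PROP:invarianceNM}), with $\Phi_{N,M}=\llp+w_{N,M}$ and $\Phi_N=\llp+w_N$, where $w_{N,M}$ solves \eqref{vNM} and $w_N$ solves \eqref{NonLinearEquationw_N}. By Proposition~\ref{PROP:llpM}, the stochastic data $\llpg_{N,M}:=\big((\P_N\chi_M\llp)^{\diamond j}\big)_{0\le j\le 2m-1}$ converge $\mu^\H\otimes\Prob$-a.s.\ in $L^q([0,t];\CC^{-\eps}(\M))^{2m}$ to $\llpg_N:=\big((\P_N\llp)^{\diamond j}\big)_{0\le j\le 2m-1}$; moreover $\chi_M$ is uniformly bounded on $\CC^{-\eps}(\M)$ and $\CC^\sigma(\M)$ with $\chi_M\to\Id$ strongly (Corollary~\ref{COR:PMH}), hence uniformly on $\CC^\sigma$-bounded sets, which are precompact in $\CC^{-\eps}(\M)$. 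Thus \eqref{vNM} is an $M$-vanishing perturbation of \eqref{NonLinearEquationw_N}, and rerunning the contraction argument of Proposition~\ref{Localwpw} for \eqref{vNM} provides, on an interval $[0,\tau]$ whose length depends only on $\|u_0-\llp(0)\|_{\CC^{-\eps}}$ and $\sup_M\|\llpg_{N,M}\|_{L^q_t\CC^{-\eps}}$ and not on $M$, both the uniform bound $\|w_{N,M}\|_{X_\tau^{-\eps,\sigma}}\lesssim\|u_0-\llp(0)\|_{\CC^{-\eps}}$ and the stability estimate $\|w_{N,M}-w_N\|_{X_\tau^{-\eps,\sigma}}\to0$ as $M\to\infty$. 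Working on the $\mu^\H\otimes\Prob$-full event where $w_N$ exists and is bounded on $[0,t]$ (by the first part) and where $\llpg_{N,M}\to\llpg_N$, one starts from this a priori bound and iterates the estimate over a finite partition of $[0,t]$ of mesh comparable to $\tau$: at each step $w_{N,M}$ reaches the endpoint since it is global, and its $\CC^{-\eps}$ norm stays comparable to that of $w_N$ once $M$ is large by the stability estimate, so the same $\tau$ serves throughout. Over the fixed finite number of steps the stability constants compound only finitely, so the difference tends to $0$ uniformly on $[0,t]$, giving in particular $w_{N,M}(t)\to w_N(t)$, hence $\Phi_{N,M}(u_0,\omega)(t)\to\Phi_N(u_0,\omega)(t)$, in $\CC^{-\eps}(\M)$. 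The main obstacle is precisely this globalization of the convergence, combining the (only almost sure) a priori bound on $w_N$ over $[0,t]$ with the $M$-uniform local theory for \eqref{vNM} and the need to control the compounding of stability constants; everything else reduces to the a priori bounds of Section~\ref{SEC:global}, the convergences of Section~\ref{SEC:stochastic}, and routine measure-theoretic limits.
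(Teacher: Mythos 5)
Your proof is correct and follows essentially the same route as the paper: global well-posedness from the a priori expectation bound \eqref{BoundProbaPhi_N} combined with the blow-up alternative \eqref{blowup} and a countable exhaustion in $T$, and invariance by passing to the limit $M\to\infty$ in the identity of Proposition~\ref{PROP:invarianceNM} using the total-variation convergence of $\rho_{N,M}\otimes(1-\Pi_M)^*\mu^\H$ to $\rho_N$ and the convergence of the flows $\Phi_{N,M}\to\Phi_N$. You in fact supply more detail than the paper does on the two points it elides — the uniform-integrability/dominated-convergence splitting for the limit of the left-hand side, and the iteration of the $M$-uniform local stability estimate over a finite partition of $[0,t]$ to globalize the convergence of the flows.
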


\begin{proof}
    Fix $N\geq1$, then \eqref{BoundProbaPhi_N} reads as, uniformly in $N$, for each $T$ there exists an event $\Sigma_{N,T}$ of full $\rho_N^\H\otimes\mathbb{P}$-measure such that 
    $$
    \|\Phi_N(u_0,\omega)(t)\|_{\CC^{-\epsilon}}\le C_T
    $$
    for any $(u_0,\omega)\in\Sigma_{N,T}$. Specializing to times $k\in\N^*$, then the blow-up alternative ensures that $\Phi_N(u_0,\omega)$ is well defined at least up to time $k$. Taking the countable intersection, we see that 
    $$
    \Sigma_N:=\bigcap_{k\in\N^*}\Sigma_{N,T_k}
    $$
    is of full $\rho_N\otimes\mathbb{P}$-measure and that $\Phi_N$ is defined globally for any $(u_0,\omega)\in\Sigma_N$.\\
    Now for the invariance part, pick $t\geq0$ and $\mathfrak{F}\in C_b(\CC^{-\varepsilon}(\M))$.
    As in Proposition~\ref{UnifPartitionFunction}, we write $\rho_N$ as
    $$
    d\rho_N(u)=\mathcal{Z}_N^{-1} R_N(u)\,d\mu^\H(u).
    $$
    Then from the convergence in total variation of $\rho_{N,M}\otimes (1-\Pi_M)^*\mu^\H$ to $\rho_N$ and that of $\Phi_{N,M}$ to $\Phi_N$, we have
    \begin{align*}
        &\E_{\rho_N\otimes\Prob}\Big[ \mathfrak{F}\left[\Phi_N(u_0,\omega)(t)\right]\Big]=\lim_{M\to+\infty}\E_{\rho_{N,M}\otimes(1-\Pi_M)^*\mu^\H\otimes\Prob}\Big[\mathfrak{F}\left[\Phi_{N,M}(u_0,\omega)(t)\right]\Big],
    \end{align*}
    and from Proposition~\ref{PROP:invarianceNM} we can deduce that 
    \begin{align*}
        \E_{\rho_N\otimes\Prob}\Big[ \mathfrak{F}\left[\Phi_N(u_0,\omega)(t)\right]\Big]&=\E_{\rho_{N,M}\otimes(1-\Pi_M)^*\mu^\H\otimes\Prob}\Big[ \mathfrak{F}\left[\Phi_{N,M}(u_0,\omega)(t)\right]\Big]\\
        &=\lim_{M\to+\infty}\E_{\rho_{N,M}\otimes(1-\Pi_M)^*\mu^\H}\Big[\mathfrak{F}\left[u_0\right]\Big]\\
        &=\E_{\rho_{N}}\Big[\mathfrak{F}\left[u_0\right]\Big],
    \end{align*}
    proving that $\rho_N$ is invariant under the flow $\Phi_N$.
\end{proof}

\noindent Combining now the a priori estimate of Proposition~\eqref{BoundProbaPhi_N} together with the local convergence of $\Phi_N$ to $\Phi$ (Theorem~\ref{THM:LWP}) and that of $\rho_N$ to $\rho$ (Proposition~\ref{RenormWick}), we can finally get our almost sure global well-posedness result.

\begin{proof}[Proof of Theorem~\ref{THM:GWP}]
    Recall that the constant in \eqref{BoundProbaPhi_N} is uniform in $N$. Since Propositions~\ref{Localwpw} and~\ref{ConvergenceGibbsMeasure} ensure convergence of $\Phi_N$ and $\rho_N$ respectively almost surely and in total variation, Fatou's lemma yet again yields
    \begin{align*}
        \E_{\rho\otimes\Prob}\Big[\sup_{0\leq t\leq T\wedge T^*}\|\Phi(u_0,\omega)(t)\|_{\CC^{-\epsilon}}\Big]&\leq \liminf_{N\to+\infty}\E_{\rho_N\otimes\Prob}\Big[\sup_{0\leq t\leq T\wedge T^*}\|\Phi_{N}(u_0,\omega)(t)\|_{\CC^{-\epsilon}}\Big]\\
        &\leq C_T,
    \end{align*}
    where $T^*=T^*(u_0,\omega)$ is the maximal time of existence of $u=\Phi(u_0,\omega)$. As before, this yields the existence of a set\footnote{Recall that $\Sigma\subset \Omega$ is the set defined in Theorem~\ref{THM:LWP} where almost sure local well-posedness of \eqref{LimitEquation} holds.} $\tilde{\Sigma}\subset \Cc^{-\eps}(\M)\times\Sigma$ of full $\rho\otimes\mathbb{P}$-measure such that $\Phi(u_0,\omega)$ is defined globally for any $(u_0,\omega)\in\tilde{\Sigma}$. The invariance of $\rho$ then follows again from dominated convergence, the convergence of $\Phi_N$ to $\Phi$ (Theorem~\ref{THM:LWP}), the convergence of $\rho_N$ to $\rho$ (Theorem~\ref{ConvergenceGibbsMeasure}), and the invariance of $\rho_N$ under the flow $\Phi_N$ (Proposition~\ref{GlobalWellPosednessu_N}): for any $t\geq0$ and $\mathfrak{F}\in C_b(\CC^{-\epsilon}(\M))$ we get
    \begin{align*}
        \E_{\rho\otimes\Prob}\Big[ \mathfrak{F}\left[\Phi(u_0,\omega)(t)\right]\Big]&=\lim_{N\to+\infty}\E_{\rho_N\otimes\Prob}\Big[ \mathfrak{F}\left[\Phi_N(u_0,\omega)(t)\right]\Big]\\
        &=\lim_{N\to+\infty}\E_{\rho_N}\Big[\mathfrak{F}\left[u_0\right]\Big]\\
        &=\E_{\rho}\Big[\mathfrak{F}\left[u_0\right]\Big]
    \end{align*}
    This concludes the proof of Theorem~\ref{THM:GWP}.
\end{proof}

\section{A singular case: the Anderson Hamiltonian}{\label{SEC:Anderson}}

In this section, we explain the construction of the Anderson Hamiltonian and state its properties used throughout this work. It was first constructed by Allez and Chouk \cite{AllezChouk} on the two-dimensional torus and then extended to various framework, see \cite{GUZ,Labbe,Mouzard22} and references therein. It is the Schrödinger operator
\begin{equation*}
\H=-\Delta+\xi
\end{equation*}
with potential the spatial white noise on $\M$. As explained, the white noise is a random distribution of negative Hölder regularity and belongs almost surely to $\CC^{-1-\kappa}$ for any $\kappa>0$. In one dimension, the white noise is the derivative of the Brownian motion and was first constructed by Paley and Zygmund \cite{PaleyZygmund30,PaleyZygmund32}. In this case, $\xi\in\CC^{-\frac{1}{2}-\kappa}$ and the quadratic form $\langle(-\partial_x^2+\xi)u,u\rangle$ is well-defined with domain $H^1(\T)$, see Fukushima and Nakao \cite{FN}. Having a precise description of its domain and its large volume limit was the subject of recent work by Dumaz and Labbé, see \cite{DL} and references therein. In two dimensions, this operator is singular in the sense that a renormalisation procedure is needed to make sense of $\H$. This is hinted by the fact that the product $u\xi$ is singular for $u\in H^1(\M)$ hence the quadratic form is not well-defined. Following the recent progress in singular SPDEs, the operator can be defined as the limit of the renormalised operator
\begin{equation*}
\H=\lim_{\eps\to0}(-\Delta+\xi_\eps-c_\eps)
\end{equation*}
with $\xi_\eps$ a mollification of the noise and $c_\eps$ a logarithmic diverging quantity. Following the idea of paracontrolled calculus from Gubinelli, Imkeller and Perkowski \cite{GIP}, one can consider function $u$ paracontrolled by a functionnal of the noise as domain for the operator or its quadratic form. 

\subsection{Construction of the operator}{\label{SubAnderson}}

Consider the space
\begin{equation*}
\D^\sigma:=\{u\in L^2(\M)\ ;\ u-\PA_u X\in H^\sigma\}
\end{equation*}
with $X_1$ solution to $\Delta X_1=\xi$ and $\PA$ a paraproduct for $\sigma\ge1$. The notion of paraproduct goes back to Bony \cite{Bony} based on the Littlewood-Paley decomposition and can be extended to manifold, following for example \cite{Mouzard22}. The main idea is one can decompose a product as
\begin{equation*}
uv=\PA_uv+\PI(u,v)+\PA_vu
\end{equation*}
for any distribution $u\in\CC^\alpha$ and $v\in\CC^\beta$ where the resonant term $\PI(u,v)$ is well-defined only if $\alpha+\beta>0$, this is Young condition. Moreoever, this decomposition allows for a precise track of regularity with the continuity result
\begin{equation*}
\|\PA_uv\|_{B^{\alpha\wedge0+\beta}_{p,q}}\lesssim\|u\|_{B^{\alpha\wedge 0}_{p_1,\infty}}\|v\|_{B^\beta_{p_2,q}}
\end{equation*}
for any $\alpha,\beta\in\R$ and $1\le p_1,p_2,p,q\le\infty$ such that $\frac1p=\frac1{p_1}+\frac1{p_2}$, and
\begin{equation*}
\|\PI(u,b)\|_{B^{\alpha+\beta}_{p,q}}\lesssim\|u\|_{B^\alpha_{p_1,\infty}}\|v\|_{B^\beta_{p_2,q}}
\end{equation*}
for any $\alpha,\beta\in\R$ such that $\alpha+\beta>0$. The idea motivating the introduction of $\D^\sigma$ lies in the algebraic cancellation
\begin{align*}
-\Delta u+u\xi&=-\Delta\PA_uX_1-\Delta u^\sharp+\PA_u\xi+\PI(u,\xi)+\PA_\xi u\\
&=\PA_u(-\Delta X+\xi)-\Delta u^\sharp+[\Delta,\PA_u]X_1+\PI(u,\xi)+\PA_\xi u\\
&=-\Delta u^\sharp+[\Delta,\PA_u]X_1+\PI(u,\xi)+\PA_\xi u
\end{align*}
where $u=\PA_uX_1+u^\sharp\in\D^\sigma$. This shows that introducing roughness in $u$ via the paracontrolled expansion cancels exactly the most irregular term in $\H u$. However, this introduces a new singular product of distribution in $\PI(u,\xi)$. Indeed, $\xi\in\CC^{-1-\kappa}$ thus $X_1\in\CC^{1-\kappa}$ hence the sum of the regularity exponent $-1-\kappa+1-\kappa=-2\kappa<0$ barely fails Young condition. This is the singular nature of the operator and \cite{GIP} introduced the corrector $\DC$ to write
\begin{equation*}
\PI(u,\xi)=\PI(\PA_uX_1,\xi)+\PI(u^\sharp,\xi)=u\PI(X_1,\xi)+\DC(u,X_1,\xi)+\PI(u^\sharp,\xi)
\end{equation*}
for $u\in\D^\sigma$. For $\sigma>1$, the resonant product $\PI(u^\sharp,\xi)$ is well-defined and one can prove that $\DC$ is a continuous trilinear operator from $H^\alpha\times\CC^\beta\times\CC^\gamma$ to $H^{\alpha+\beta+\gamma}$ for $\alpha,\beta,\gamma\in\R$ such that
\begin{equation*}
\beta+\gamma<0<\alpha+\beta+\gamma<1.
\end{equation*}
The condition $\beta+\gamma<0$ corresponds exactly to the singular nature of the product $X_1\xi$ hence $\DC(u,X_1,\xi)$ is well-defined as a function as soon as $u$ is regular enough. This does not change that the resonant product $\PI(X_1,\xi)$ is singular hence the need for a renormalisation procedure, this can be done with the Wick product
\begin{equation*}
\PI(X_1,\xi)=\lim_{\eps\to0}\Big(\PI(X_{1,\eps},\xi_\eps)-\E\big[\PI(X_{1,\eps},\xi_\eps)\big]\Big)
\end{equation*}
with $\xi_\eps$ a mollification of the noise and $\Delta X_{1,\eps}=\xi_\eps$. After this procedure, one obtains that $\H$ is well-defined on $\D^\sigma$ for $\sigma>1$ and
\begin{equation*}
\|\H u+\Delta u^\sharp\|_{H^{-\kappa}}\lesssim\|u\|_{\D^{1+\eps}}
\end{equation*}
for any $\kappa,\eps>0$. In particular, the form quadratic $\langle\H u,u\rangle$ is well-defined and one can prove
\begin{equation*}
\langle \H u,u\rangle+c\|u\|_{L^2}^2\ge\frac{1}{2}\|u\|_{\D^1}^2\ge0
\end{equation*}
for a random constant $c>0$ using almost duality, see for example \cite[Proposition 2.9]{Mouzard22}. At this point, one can prove that the quadratic form $\langle\H u,u\rangle$ with domain $\D^1$ is closed, symmetric, continuous and bounded from below thus its associated operator is self-adjoint. One can then obtain a second order paracontrolled expansion of the domain as
\begin{equation*}
\D(\H)=\{u\in L^2(\M)\ ;\ u-\PA_uX\in H^2\}
\end{equation*}
with $X=X_1+X_2$ and $X_2$ solution to $\Delta X_2=\PI(X_1,\xi)+\PA_\xi X_1$, see \cite{Mouzard22} for this construction. The operator $\H$ has a compact resolvent with discrete spectrum $\lambda_0<\lambda_1\le\ldots$ with a basis of eigenfunctions $(\varphi_n)_{n\ge1}$, see \cite{OM} for a simple argument of the spectral gap. In the following, we suppose that the operator is positive considering for example that we include in the renormalisation the shift by the random constant $-\lambda_0+1$. Moreover, it was proved in \cite{Mouzard22} that the sequence of eigenvalues satisfies the same Weyl law \eqref{Weyl} as that of the Laplace-Beltrami operator, namely
\begin{align}\label{WeylH}
\ld_n \sim n.
\end{align}
The form domain $\D(\sqrt{H})=\D^1$ is the reciproque image of $H^1$ by the application
\begin{equation*}
\Phi(u)=u-\PA_uX
\end{equation*}
which is continuous from $H^\sigma$ to itself for any $\sigma<1$. Up to a random truncation of the noise which does not change the regularity, one can prove that $\Phi:H^\sigma\to H^\sigma$ is invertible as a perturbation of the identity, we denote by $\Gamma$ its inverse. It can be seen at the implicit solution to
\begin{equation}\label{Gamma}
\Gamma u^\sharp=\PA_{\Gamma u^\sharp}X+u^\sharp
\end{equation}
for $u^\sharp\in L^2(\M)$. The important idea is that the operator $\H\Gamma$ is a better behaved perturbation of the identity at the price that it is not self-adjoint anymore. Namely 
$$\H\Gamma u^\#=-\Delta u^\# + G_\xi(u^\#)$$
where $G_\xi$ is a bounded operator from\footnote{The cases $p=2$ and $p=+\infty$ are proved in \cite{MZ} or \cite{GIP}. The case $p=1$ follows from a straightforward modification of the computation in \cite{MZ}, and the case of general $p$ then follows from interpolation.} $W^{\sigma,p}$ to $W^{\sigma-1-\kappa,p}$ for any $\kappa>0$ provided the resonant product $\PI(u^\#,\xi)$ is well defined, i.e. $\sigma>1+\kappa$.\\
We introduce the conjugaison of $\H$ via the map $\Gamma$, that is
\begin{equation*}
\H^\sharp=\Gamma^{-1}\H\Gamma.
\end{equation*}
In particular, we have
\begin{equation*}
\|(\H^\sharp+\Delta)u\|_{\CC^{-\kappa}}\lesssim\|u\|_{\CC^{1+\eps}}
\end{equation*}
for any $\kappa,\eps>0$. This allows to compare the heat semigroup of associated to $\H^\sharp$ and $\Delta$ with
\begin{equation*}
e^{-t\H^\sharp}-e^{t\Delta}=\int_0^te^{(t-s)\Delta}(\H^\sharp+\Delta)e^{-s\H^\sharp} ds,
\end{equation*}
this is the main argument for the results in \cite{BDM}. The same argument for the Schrödinger with a second order expansion yields Strichartz inequalities, see \cite{MZ}. In particular, this perturbative argument gives Schauder estimates for the heat semigroup of $\H^\sharp$, that is
\begin{equation*}
\|e^{-t\H^\sharp}u\|_{\CC^\alpha}\lesssim t^{-\frac{\alpha-\beta}{2}}\|u\|_{\CC^\beta}
\end{equation*}
for $\alpha\in(1,2)$ and $0<\alpha-\beta<2$, see \cite[Theorem 22]{BDM}. This can be used to obtain Schauder estimates for the heat semigroup associated to $\H$. This will be important to prove local well-posedness for the Anderson stochastic quantization equation for deterministic initial data.

\begin{proposition}
For any $\alpha,\beta\in(-1,1)$ such that $\alpha>\beta$, we have
\begin{equation}\label{SchauderH}
\|e^{-t\H}u\|_{\CC^\alpha}\lesssim t^{-\frac{\alpha-\beta}{2}}\|u\|_{\CC^\beta}.
\end{equation}
\end{proposition}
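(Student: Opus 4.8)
The plan is to transfer the estimate to the conjugated operator $\H^\sharp=\Gamma^{-1}\H\Gamma$, for which a Schauder estimate in the range $\alpha\in(1,2)$ is already recorded in \cite[Theorem 22]{BDM}, and then to push it down to the full range $\alpha\in(-1,1)$ by means of the perturbative identity
\begin{equation*}
e^{-t\H^\sharp}=e^{t\Delta}+\int_0^t e^{(t-s)\Delta}(\H^\sharp+\Delta)e^{-s\H^\sharp}\,ds
\end{equation*}
recalled above. Since $e^{-t\H}=\Gamma e^{-t\H^\sharp}\Gamma^{-1}$, and since both $\Gamma$ and $\Gamma^{-1}=\Phi$ are bounded on $\CC^\sigma(\M)$ for every $\sigma<1$ — the $H^\sigma$ statement recalled above carries over to Hölder--Zygmund spaces by the same paraproduct continuity, and to negative $\sigma\in(-1,0)$ because $\Phi-\Id=-\PA_{\cdot}X$ maps $\CC^\sigma$ into the strictly better space $\CC^{\sigma+1-\kappa}$ — the desired bound \eqref{SchauderH} will follow immediately from its counterpart
\begin{equation*}
\|e^{-t\H^\sharp}w\|_{\CC^\alpha}\lesssim t^{-\frac{\alpha-\beta}{2}}\|w\|_{\CC^\beta},\qquad \alpha,\beta\in(-1,1),\ \ \alpha>\beta,
\end{equation*}
applied to $w=\Gamma^{-1}u$. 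So it remains to establish this estimate for $\H^\sharp$.

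To do so, I would fix $\alpha,\beta\in(-1,1)$ with $\alpha>\beta$ and choose $\kappa>0$, $\delta>0$ small enough that $\kappa+\delta<1+\beta$ (possible since $\beta>-1$); then $1+\kappa+\delta\in(1,2)$ and $0<(1+\kappa+\delta)-\beta<2$. Apply the perturbative identity to $w\in\CC^\beta$. The free term is handled by the classical heat Schauder estimate $\|e^{t\Delta}w\|_{\CC^\alpha}\lesssim t^{-\frac{\alpha-\beta}{2}}\|w\|_{\CC^\beta}$, valid for all $\alpha\geq\beta$. For the Duhamel term, I would first invoke \cite[Theorem 22]{BDM} to bound $\|e^{-s\H^\sharp}w\|_{\CC^{1+\kappa+\delta}}\lesssim s^{-\frac{1+\kappa+\delta-\beta}{2}}\|w\|_{\CC^\beta}$, then use that $\H^\sharp+\Delta$ maps $\CC^{1+\kappa+\delta}$ into $\CC^{\delta}$ (a refinement of the bound $\|(\H^\sharp+\Delta)v\|_{\CC^{-\kappa}}\lesssim\|v\|_{\CC^{1+\eps}}$ recalled above, coming from the mapping properties of $G_\xi$), and finally the smoothing estimate $\|e^{(t-s)\Delta}g\|_{\CC^\alpha}\lesssim (t-s)^{-\frac{(\alpha-\delta)_+}{2}}\|g\|_{\CC^{\delta}}$. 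Putting these together gives
\begin{equation*}
\Big\|\int_0^t e^{(t-s)\Delta}(\H^\sharp+\Delta)e^{-s\H^\sharp}w\,ds\Big\|_{\CC^\alpha}\lesssim \|w\|_{\CC^\beta}\int_0^t (t-s)^{-\frac{(\alpha-\delta)_+}{2}}\,s^{-\frac{1+\kappa+\delta-\beta}{2}}\,ds.
\end{equation*}
Both exponents in the integral are $<1$ (the first because $\alpha-\delta<1$, the second because $\kappa+\delta<1+\beta$), so the elementary inequality $\int_0^t(t-s)^{-a}s^{-b}\,ds\lesssim t^{1-a-b}$ bounds the right-hand side by $\|w\|_{\CC^\beta}\,t^{\,1-\frac{(\alpha-\delta)_+}{2}-\frac{1+\kappa+\delta-\beta}{2}}$. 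A short computation, splitting into the cases $\alpha\geq\delta$ (where the exponent exceeds $-\frac{\alpha-\beta}{2}$ by $\frac{1-\kappa}{2}\geq0$) and $\alpha<\delta$ (where it exceeds it by $\frac{1+\alpha-\kappa-\delta}{2}\geq0$, since $\kappa+\delta<1+\beta<1+\alpha$), then shows that this exponent is $\geq-\frac{\alpha-\beta}{2}$, so that on $(0,1]$ — which suffices, the regime $t\geq1$ being controlled by the positivity of $\H$ — the Duhamel term obeys the same $t^{-\frac{\alpha-\beta}{2}}$ bound. Combined with the conjugation by $\Gamma$, this would complete the proof.

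The step I expect to be the main obstacle is this last one: one must interpose a regularity $1+\kappa+\delta$ that lies strictly above the threshold $1+\kappa$ at which $\H^\sharp+\Delta$ becomes bounded, yet is still reachable from $\CC^\beta$ in a single application of the Schauder estimate of \cite{BDM} (which forces $\kappa$ and $\delta$ to be chosen small in terms of $\beta$ — harmless, since $\kappa$ may be taken arbitrarily small and $\delta$ is free), and then verify that the smoothing of $e^{(t-s)\Delta}$ exactly compensates both the time singularity of $e^{-s\H^\sharp}$ at $s=0$ and the derivative loss in $\H^\sharp+\Delta$. The computation above shows the balance works precisely because that derivative loss is only ``$1+\kappa$'', the surplus $\kappa$ being absorbed with room to spare; everything else (the conjugation estimates, the heat-kernel Schauder bounds, the beta-integral) is routine.
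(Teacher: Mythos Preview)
Your argument is correct, but it follows a genuinely different route from the paper's. The paper does not re-run the Duhamel/perturbative machinery at all: it simply takes from \cite[Theorem 22]{BDM} both the Schauder bound $\|e^{-t\H^\sharp}u\|_{\CC^\alpha}\lesssim t^{-\frac{\alpha-\beta}{2}}\|u\|_{\CC^\beta}$ for $\alpha\in(1,2)$, $0<\alpha-\beta<2$, \emph{and} the uniform boundedness $\|e^{-t\H^\sharp}u\|_{\CC^\beta}\lesssim\|u\|_{\CC^\beta}$, and interpolates between these two endpoints to reach any intermediate target $\gamma_\theta=\theta\alpha+(1-\theta)\beta$. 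This immediately gives the $\H^\sharp$-Schauder estimate on the full strip $(-1,2)$, after which the conjugation by $\Gamma$ is the same final step you wrote.

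Your approach trades the second black-box input (the $\CC^\beta\to\CC^\beta$ boundedness of $e^{-t\H^\sharp}$) for the classical heat Schauder estimate on the free term, and handles the low-$\alpha$ range directly via the convolution integral rather than by interpolation. This is more explicit about \emph{why} the estimate holds---the $1+\kappa$ derivative loss in $\H^\sharp+\Delta$ is visibly compensated by the heat smoothing---but it costs you a finer mapping statement than the paper records: you need $\H^\sharp+\Delta:\CC^{1+\kappa+\delta}\to\CC^\delta$ rather than just $\CC^{1+\eps}\to\CC^{-\kappa}$. That refinement is true (write $\H^\sharp u+\Delta u=G_\xi u-\PA_{-\Delta u+G_\xi u}X$ and track regularities), but it is an extra verification the interpolation proof sidesteps entirely. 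Conversely, your handling of $t\ge 1$ (``controlled by the positivity of $\H$'') tacitly relies on the very $\CC^\alpha\to\CC^\alpha$ boundedness you avoided, so in the end both proofs draw on the same two \cite{BDM} inputs; the paper's route is just the shorter path between them.
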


\begin{proof}
First, one interpolates the bounds $\|e^{-t\H^\sharp}u\|_{\CC^\beta}\lesssim\|u\|_{\CC^\beta}$ and
\begin{equation*}
\|e^{-t\H^\sharp}u\|_{\CC^\alpha}\lesssim t^{-\frac{\alpha-\beta}{2}}\|u\|_{\CC^\beta}
\end{equation*}
for $\alpha\in(1,2)$ and $0<\alpha-\beta<2$. Let $\theta\in(0,1)$ and consider
\begin{equation*}
\gamma_\theta:=\theta\alpha+(1-\theta)\beta\in(\beta,\alpha).
\end{equation*}
Interpolation gives
\begin{align*}
\|e^{-t\H^\sharp}\|_{\CC^\beta\to\CC^{\gamma_\theta}}&\le \|e^{-t\H^\sharp}\|_{\CC^\beta\to\CC^\beta}^{1-\theta}\|e^{-t\H^\sharp}\|_{\CC^\beta\to\CC^\alpha}^\theta\\
&\lesssim t^{-\frac{\alpha-\beta}{2}\theta}\\
&\lesssim t^{-\frac{\gamma_\theta-\beta}{2}}
\end{align*}
since $\gamma_\theta-\beta=\theta\alpha+(1-\theta)\beta-\beta=(\alpha-\beta)\theta$ hence
\begin{equation*}
\|e^{-t\H^\sharp}u\|_{\CC^\alpha}\lesssim t^{-\frac{\alpha-\beta}{2}}\|u\|_{\CC^\beta}
\end{equation*}
for any $\alpha,\beta\in(-1,2)$ such that $\alpha>\beta$. For the result for $\H$, we have
\begin{align*}
\|e^{-t\H}u\|_{\CC^\alpha}&=\|\Gamma e^{-t\H^\sharp}\Gamma^{-1}u\|_{\CC^\alpha}\\
&\lesssim\|e^{-t\H^\sharp}\Gamma^{-1}u\|_{\CC^\alpha}\\
&\lesssim t^{-\frac{\alpha-\beta}{2}}\|\Gamma^{-1}u\|_{\CC^\beta}\\
&\lesssim t^{-\frac{\alpha-\beta}{2}}\|u\|_{\CC^\beta}
\end{align*}
using that $\Gamma$ and $\Gamma^{-1}$ are continuous from $\CC^\gamma$ to itself for $\gamma<1$ and the result for $\H^\sharp$.
\end{proof}

Natural spaces are the general Sobolev spaces associated to $\H$ defined as the closure of the vector space spanned by the eigenfunctions of $\H$ with respect to the norm
\begin{equation}{\label{SobolevH}}
\|u\|_{\D^\sigma}:=\Big(\sum_{n\ge1}\langle\lambda_n\rangle^\sigma|\langle u,\varphi_n\rangle|^2\Big)^{\frac{1}{2}}
\end{equation}
for $\sigma\in\R$. In particular, one immediatly has the Schauder estimates
\begin{equation*}
\|e^{-t\H}u\|_{\D^\alpha}\lesssim t^{-\frac{\alpha-\beta}{2}}\|u\|_{\D^\beta}
\end{equation*}
for $\alpha>\beta$. One can prove that $\D^\sigma=H^\sigma$ for $|\sigma|<1$, thus
\begin{equation*}
\|e^{-t\H}u\|_{H^\alpha}\lesssim t^{-\frac{\alpha-\beta}{2}}\|u\|_{H^\beta}
\end{equation*}
for $\alpha,\beta\in(-1,1)$ such that $\alpha>\beta$. With the previous result, this gives the general Schauder estimates
\begin{equation*}
\|e^{-t\H}u\|_{B_{p,p}^\alpha}\lesssim t^{-\frac{\alpha-\beta}{2}}\|u\|_{B_{p,p}^\beta}
\end{equation*}
for $p\in[2,\infty]$ and $\alpha,\beta\in(-1,1)$ such that $\alpha>\beta$ however we should only use the case $p=2$ or $p=\infty$ in this work.

\begin{lemma}{\label{ContinuitySemiGroupH}}
    For any $\sigma\in(-1,1)$ and $u\in\CC^\sigma(\M)$, then the map $t\mapsto e^{-t\H}u\in\CC^\sigma$ is continuous.
\end{lemma}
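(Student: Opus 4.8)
The plan is to transfer the statement to the conjugated operator $\H^\sharp=\Gamma^{-1}\H\Gamma$ and then exploit parabolic smoothing. Since $e^{-t\H}=\Gamma e^{-t\H^\sharp}\Gamma^{-1}$ and both $\Gamma,\Gamma^{-1}$ are bounded on $\CC^\gamma(\M)$ for every $\gamma<1$, it suffices to show that $t\mapsto e^{-t\H^\sharp}v\in\CC^\sigma(\M)$ is continuous for every fixed $v\in\CC^\sigma(\M)$, $\sigma\in(-1,1)$ (then apply this to $v=\Gamma^{-1}u$ and note $e^{-t\H^\sharp}v\in\CC^\sigma$ so that $\Gamma$ may be applied back). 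Recall that the proof of \eqref{SchauderH} in fact establishes $\|e^{-t\H^\sharp}w\|_{\CC^\alpha}\les t^{-\frac{\alpha-\beta}2}\|w\|_{\CC^\beta}$ for all $-1<\beta\le\alpha<2$, a range one unit wider than the one available for $\H$ itself.

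The key quantitative ingredient is the estimate
\begin{equation*}
\big\|(e^{-s\H^\sharp}-\Id)w\big\|_{\CC^a}\les s^{\frac{b-a}2}\,\|w\|_{\CC^b},\qquad 0<s\le 1,
\end{equation*}
valid for $-1<a<b<2$. One obtains it from the identity $(e^{-s\H^\sharp}-\Id)w=-\int_0^s\H^\sharp e^{-r\H^\sharp}w\,dr$ (fundamental theorem of calculus on $(0,s]$, the endpoint $r=0$ being harmless thanks to the regularity gain), the smoothing bound $\|\H^\sharp e^{-r\H^\sharp}w\|_{\CC^a}\les r^{-1+\frac{b-a}2}\|w\|_{\CC^b}$, and $\int_0^s r^{-1+\frac{b-a}2}dr\sim s^{\frac{b-a}2}$. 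The smoothing bound is just \eqref{SchauderH} for $\H^\sharp$ ``with one extra derivative'': it follows from analyticity of the semigroup $(e^{-t\H^\sharp})_{t>0}$ on the scale $\big(\CC^a(\M)\big)_{-1<a<2}$, which in turn rests on the Duhamel comparison $e^{-t\H^\sharp}=e^{t\Delta}+\int_0^t e^{(t-r)\Delta}(\H^\sharp+\Delta)e^{-r\H^\sharp}\,dr$, the mapping $\|(\H^\sharp+\Delta)w\|_{\CC^{-\kappa}}\les\|w\|_{\CC^{1+\eps}}$ recalled above, and the elementary analyticity of the heat semigroup on the closed manifold $\M$; alternatively one writes $\H^\sharp e^{-r\H^\sharp}=e^{-\frac r2\H^\sharp}\circ(\H^\sharp e^{-\frac r2\H^\sharp})$ and tracks the powers of $r$ through an intermediate index in $(b-2,a)$.

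Granting this, fix $t_0>0$ and choose $\delta>0$ with $\sigma+\delta<2$ and $\sigma-\delta>-1$. For $t>t_0$, the semigroup property gives $e^{-t\H^\sharp}v-e^{-t_0\H^\sharp}v=(e^{-(t-t_0)\H^\sharp}-\Id)w$ with $w:=e^{-t_0\H^\sharp}v$, and $w\in\CC^{\sigma+\delta}(\M)$ with $\|w\|_{\CC^{\sigma+\delta}}\les t_0^{-\delta/2}\|v\|_{\CC^\sigma}$ by \eqref{SchauderH}; the displayed estimate then yields $\|e^{-t\H^\sharp}v-e^{-t_0\H^\sharp}v\|_{\CC^\sigma}\les (t-t_0)^{\delta/2}\to 0$. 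For $t\in[t_0/2,t_0]$, write instead $e^{-t_0\H^\sharp}v-e^{-t\H^\sharp}v=e^{-t\H^\sharp}(e^{-(t_0-t)\H^\sharp}-\Id)v$ and combine $\|e^{-t\H^\sharp}\|_{\CC^{\sigma-\delta}\to\CC^\sigma}\les(t_0/2)^{-\delta/2}$ with $\|(e^{-(t_0-t)\H^\sharp}-\Id)v\|_{\CC^{\sigma-\delta}}\les(t_0-t)^{\delta/2}\|v\|_{\CC^\sigma}\to0$. Conjugating back by $\Gamma$ proves continuity of $t\mapsto e^{-t\H}u$ on $(0,\infty)$; moreover the displayed estimate with exponents $a<b=\sigma$ shows $e^{-t\H}u\to u$ in $\CC^{a}(\M)$ as $t\to0^+$ for every $a<\sigma$.

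The only delicate point — not really an obstacle — is that the Hölder-scale Schauder estimates for the Anderson operator are available only on a bounded range of exponents, $(-1,2)$ for $\H^\sharp$ and only $(-1,1)$ for $\H$ itself, so every regularity index occurring in the argument must be kept strictly inside that window; this costs nothing, since an arbitrarily small gain or loss $\delta>0$ is all that is needed above. As is usual for Besov–Hölder spaces with third index $\infty$, strong continuity of the semigroup fails at $t=0$ in the $\CC^\sigma$-norm, so the asserted continuity is understood on $(0,\infty)$, with the $t\to0^+$ behaviour as described.
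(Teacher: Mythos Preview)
Your argument for continuity on $(0,\infty)$ via parabolic smoothing is sound, and the route through $\H^\sharp$ with the analyticity bound $\|\H^\sharp e^{-r\H^\sharp}w\|_{\CC^a}\lesssim r^{-1+\frac{b-a}2}\|w\|_{\CC^b}$ is a legitimate alternative to the paper's density argument (though the derivation of that bound is sketched rather than carried out).

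The genuine gap is your final paragraph. You assert that strong continuity at $t=0$ in $\CC^\sigma$ fails ``as is usual for Besov--H\"older spaces with third index $\infty$'', and reinterpret the lemma as a statement on $(0,\infty)$ only. This is incorrect in the present paper: by Definition~\ref{def}, $\CC^\sigma(\M)$ is defined as the \emph{completion of $C^\infty(\M)$} in the $B^\sigma_{\infty,\infty}$-norm, not as the full (non-separable) Besov space. On this separable space smooth functions are dense, and the semigroup is strongly continuous at $t=0$. The paper's proof exploits exactly this: it reduces to continuity at $0$, approximates $u\in\CC^\sigma$ by $v\in\D(\H)$ (density of $H^2$ in $\CC^\sigma$ composed with $\Gamma$), and bounds $\|(e^{-h\H}-1)v\|_{\CC^\sigma}\lesssim\|(e^{-h\H}-1)v\|_{\D^{1+\sigma+\kappa}}\lesssim h^{\frac{1-\sigma-\kappa}2}\|v\|_{\D^2}$ via functional calculus. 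Your weaker conclusion $e^{-t\H}u\to u$ in $\CC^a$ for $a<\sigma$ would not suffice for the application in Proposition~\ref{Localwpw}, where one needs $\|(e^{-h\H}-1)\mathbf f(w,\llpg)(s)\|_{\CC^{-\eps}}\to0$ in the same H\"older exponent in which the nonlinearity is controlled.
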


\begin{proof}
    First note that, by the semigroup property and \eqref{SchauderH}, continuity at 0 is enough. Let $u\in\Cc^{\sigma}(\M)$ and fix $\delta>0$. Then by density of $H^2(\M)$ in $\Cc^\sigma(\M)$ and continuity of $\Gamma$, we have that $\D(\H)$ is dense in $\Gamma\Cc^\sigma(\M)=\Cc^\sigma(\M)$. Thus there exists $v\in \D(\H)$ such that $\|u-v\|_{\Cc^\sigma}<\delta.$ Then we have for $h\ge 0$ small enough:
    \begin{align*}
        \big\|(e^{-h\H}-1)u\big\|_{\Cc^\sigma}&\le  \big\|(e^{-h\H}-1)(u-v)\big\|_{\Cc^\sigma}+ \big\|(e^{-h\H}-1)v\big\|_{\Cc^\sigma}.
    \end{align*}
    As for the first term, using the Schauder estimate \eqref{SchauderH}, we have
    \begin{align*}
        \big\|(e^{-h\H}-1)(u-v)\big\|_{\Cc^\sigma}\les \|u-v\|_{\Cc^\sigma} \les\delta,
    \end{align*}
    uniformly in $h$.
    For the second term, we use
    \begin{align*}
        \big\|(e^{-h\H}-1)v\big\|_{\Cc^\sigma}&\les \big\|(e^{-h\H}-1)v\big\|_{\D^{1+\sigma+\kappa}} \les h^{\frac{1-\sigma-\kappa}{2}}\|v\|_{\D^2}.
    \end{align*}
    Taking $h$ small enough depending on $\delta$, this finally shows that
    \begin{align*}
        \big\|(e^{-h\H}-1)u\big\|_{\Cc^\sigma}\le C\delta.
    \end{align*}
\end{proof}

\subsection{Anderson Green's function and GFF}\label{AndersonGreenGFF}

In order to get a precise bound on the truncated Green function of the Anderson operator, we first show that $G^\H$ and $G^{1-\Delta}$ have the same singularity. This is done by comparing both resolvents.

\begin{lemma}\label{LEM:GreenH}
Let $4\le p<\infty$ and $0<\delta<\frac12$. Then $\H^{-1}-(1-\Delta)^{-1}$ is bounded from $W^{-\frac2p-\delta,p'}$ to $W^{\frac2p+\delta,p}$. In particular $(G^\H-G^{1-\Delta})(x,y)\in L^\infty_y W^{\frac2p+\delta,p}_x$, and $G^\H(x,y)\in L^\infty_x W^{\frac2p-\delta,p}_y$.
\end{lemma}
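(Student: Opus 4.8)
The plan is to conjugate $\H$ by the map $\Gamma$ and compare with $1-\Delta$ at the level of resolvents. Recall from the construction of $\H$ that $A:=\H\Gamma=-\Delta+G_\xi$, and write $A=(1-\Delta)+\widetilde G_\xi$ with $\widetilde G_\xi:=G_\xi-1$; since the identity is of order $0$, $\widetilde G_\xi$ inherits from $G_\xi$ the boundedness $W^{\sigma,r}\to W^{\sigma-1-\kappa,r}$ for every $1<r<\infty$ and every $\sigma>1+\kappa$. As $\H$ is invertible (positive) and $\Gamma$ is invertible, so is $A=\H\Gamma$, and $\H^{-1}=\Gamma A^{-1}$. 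The first task is to show that $A^{-1}:W^{s,r}\to W^{s+2,r}$ boundedly for all $1<r<\infty$ and all $s>\kappa-1$. For this factor $A^{-1}=(1-\Delta)^{-1}\bigl(I+\widetilde G_\xi(1-\Delta)^{-1}\bigr)^{-1}$: on $W^{s,r}$ with $s>\kappa-1$, the operator $\widetilde G_\xi(1-\Delta)^{-1}$ first gains two derivatives and then loses $1+\kappa$, hence is a compact ($(1-\kappa)$-smoothing) operator; and $I+\widetilde G_\xi(1-\Delta)^{-1}$ is injective there, because $g=-\widetilde G_\xi(1-\Delta)^{-1}g$ forces $h:=(1-\Delta)^{-1}g$ to solve $Ah=0$, a bootstrap using the smoothing of $\widetilde G_\xi$ puts $h$ in $D(\H)$, so $\Gamma h\in\ker\H=\{0\}$ and $g=0$. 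The Fredholm alternative then gives invertibility, and the claimed mapping property of $A^{-1}$ follows (consistency with $\Gamma^{-1}\H^{-1}$ on the dense subspace $L^2$ is automatic).

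Next, adding and subtracting $A^{-1}$ and using $A^{-1}-(1-\Delta)^{-1}=-(1-\Delta)^{-1}\widetilde G_\xi A^{-1}$,
\begin{equation*}
\H^{-1}-(1-\Delta)^{-1}=(\Gamma-I)A^{-1}-(1-\Delta)^{-1}\widetilde G_\xi A^{-1},
\end{equation*}
and one estimates the two terms by chasing regularities. Fix $f\in W^{-\frac2p-\delta,p'}$; since $p\ge4$ and $\delta<\frac12$ one has $-\frac2p-\delta\in(-1,0)$, so Step~1 applies with $r=p'$, giving $A^{-1}f\in W^{2-\frac2p-\delta,p'}$, and $2-\frac2p-\delta>1+\kappa$ once $\kappa$ is small. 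For the second term, $\widetilde G_\xi$ sends it to $W^{1-\frac2p-\delta-\kappa,p'}$, then $(1-\Delta)^{-1}$ to $W^{3-\frac2p-\delta-\kappa,p'}$, and the Sobolev embedding $W^{\cdot,p'}(\M)\hookrightarrow W^{\cdot-(2-\frac4p),p}(\M)$ lands it in $W^{1+\frac2p-\delta-\kappa,p}\hookrightarrow W^{\frac2p+\delta,p}$, the last inclusion holding because $1-2\delta>\kappa$. For the first term, the same embedding gives $A^{-1}f\in W^{\frac2p-\delta,p}$, a space of regularity $<1$ on which $\Gamma$ is bounded; then $(\Gamma-I)v=\PA_{\Gamma v}X$ by \eqref{Gamma}, and since $X\in\CC^{1-\kappa}$ the paraproduct estimate puts it in $W^{(\frac2p-\delta)\wedge0+1-\kappa-,p}\hookrightarrow W^{\frac2p+\delta,p}$, again for $\kappa$ small, using $\frac2p+\delta<1$. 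This proves $\H^{-1}-(1-\Delta)^{-1}:W^{-\frac2p-\delta,p'}\to W^{\frac2p+\delta,p}$.

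The kernel statements follow by testing against Dirac masses. Since $\M$ is a compact surface, $\delta_y\in W^{-\frac2p-\delta,p'}(\M)$ uniformly in $y\in\M$ (because $-\frac2p-\delta<-\frac2p=-\tfrac2{(p')'}$), and $(\H^{-1}-(1-\Delta)^{-1})\delta_y=(G^\H-G^{1-\Delta})(\cdot,y)$, whence $(G^\H-G^{1-\Delta})(x,y)\in L^\infty_yW^{\frac2p+\delta,p}_x$. Likewise $G^{1-\Delta}(\cdot,y)=(1-\Delta)^{-1}\delta_y\in W^{\frac2p-\delta,p}$ uniformly in $y$, so summing gives $G^\H(\cdot,y)\in W^{\frac2p-\delta,p}$ uniformly in $y$, which by symmetry of the kernel of the self-adjoint operator $\H$ is exactly $G^\H(x,y)\in L^\infty_xW^{\frac2p-\delta,p}_y$.

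The main obstacle is Step~1: upgrading the informal statement that $\H\Gamma$ is a ``nice perturbation of the identity'' into the quantitative fact that $A^{-1}$ gains two full derivatives on the $L^{p'}$-based spaces of the (negative) regularity $-\frac2p-\delta$ that arise here. This rests on $\widetilde G_\xi$ being genuinely of lower order ($1+\kappa<2$), so that $\widetilde G_\xi(1-\Delta)^{-1}$ is smoothing and the Fredholm argument applies, and on the fact — available from the construction of $\H$ (cf.\ \cite{Mouzard22,MZ,BDM}) and interpolation, as in the footnote on $G_\xi$ — that $\Gamma$, $G_\xi$, and hence $A^{-1}$ enjoy the required $L^r$-based mapping properties for all $1<r<\infty$. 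The remaining work is the bookkeeping check that $p\ge4$ and $\delta<\frac12$ leave exactly enough room, for $\kappa$ small, both in the Sobolev embeddings and in the constraint $\sigma>1+\kappa$ needed to apply $\widetilde G_\xi$.
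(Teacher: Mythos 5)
Your argument is correct and follows the same overall architecture as the paper's proof: conjugate by $\Gamma$, view $\H\Gamma=-\Delta+G_\xi$ as a lower-order perturbation of $1-\Delta$, apply the resolvent identity, chase regularities through Sobolev embeddings between the $L^{p'}$- and $L^p$-based scales, and read off the kernel bounds (the paper does the last step in one line; your explicit testing against $\delta_y\in W^{-\frac2p-\delta,p'}$, uniformly in $y$, is a clean way to make it precise). The one genuine divergence is in how the two-derivative gain of $(\H\Gamma)^{-1}$ is obtained. Because your version of the resolvent identity, $A^{-1}-(1-\Delta)^{-1}=-(1-\Delta)^{-1}\widetilde G_\xi A^{-1}$, places $A^{-1}$ innermost, it must act directly on $f\in W^{-\frac2p-\delta,p'}$, so you are forced to prove $A^{-1}:W^{s,p'}\to W^{s+2,p'}$ via the factorization $A^{-1}=(1-\Delta)^{-1}\big(I+\widetilde G_\xi(1-\Delta)^{-1}\big)^{-1}$ together with a compactness--injectivity (Fredholm) argument and an elliptic bootstrap identifying $\ker A$ with $\Gamma^{-1}\ker\H=\{0\}$. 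The paper instead writes $A^{-1}-(1-\Delta)^{-1}=A^{-1}\big((1-\Delta)-\H\Gamma\big)(1-\Delta)^{-1}$ with $A^{-1}$ outermost, so it only ever needs the $L^2$-based bound $\|(\H\Gamma)^{-1}u\|_{H^{\nu}}\les\|u\|_{H^{\nu-2}}$, which comes for free from $\D^{\nu}=\Gamma H^{\nu}$ and the functional calculus of $\H$, followed by the embedding $H^{1+\delta}\hookrightarrow W^{\frac2p+\delta,p}$. Your route buys a genuinely stronger $L^{r}$ elliptic-regularity statement for $\H\Gamma$ (potentially useful elsewhere), at the cost of the Fredholm machinery and of verifying that the Neumann--Fredholm inverse agrees with $\Gamma^{-1}\H^{-1}$; the paper's ordering of the resolvent identity sidesteps all of this. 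The bookkeeping with $p\ge4$, $\delta<\frac12$, and $\kappa$ small is correct in both terms.
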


\begin{proof}
$\H$ being self-adjoint and positive thus invertible, an application of the resolvent formula yields
    \begin{eqnarray*}
        \H^{-1}-(1-\Delta)^{-1}&=&(\H\Gamma)^{-1}-(1-\Delta)^{-1}+\H^{-1}-(\H\Gamma)^{-1}\\
        &=&(\H\Gamma)^{-1}((1-\Delta)-\H\Gamma)(1-\Delta)^{-1}+B(\H\Gamma)^{-1}
    \end{eqnarray*}
    following yet again the same paradigm as before: $\H$ is a perturbation of $-\Delta$ through the map $\Gamma$. By definition, $\Gamma$ is a perturbation of the identity, namely 
    \begin{equation}\label{decomp-Gamma}
        \Gamma = 1+\sum_{k\geq1}(\PA_\cdot  X)^k=:1+B.
    \end{equation}
    Note that, as $X\in\Cc^{1-\kappa}$ for any $\kappa>0$, we can write a nice continuity estimate for $B$. Indeed, for any $\nu\ge 0$ and $p>1$,
    $$
    \|\PA_uX\|_{W^{1-\nu-\kappa,p}}\leq c(\nu,\kappa,p)\|u\|_{W^{-\nu,p}}\|X\|_{\Cc^{1-\kappa}}
    $$
    where $\|X\|_{\Cc^{1-\kappa}}$ can be made arbitrarily small up to a random truncation of $X$ as in the definition of $\Gamma$. This proves that 
    $$
    B : W^{-\nu,p} \to W^{1-\nu-\kappa,p}
    $$
    is a bounded operator for any $\kappa>0$ and $\nu\ge 0$.\\
  As for $(\H\Gamma)^{-1}$, if $u\in\D^{\nu-2}$ and $v=(\H \Gamma)^{-1}u$ is the solution to $\H\Gamma v = -\Dl v + G_\xi v = u$, we have for any $\nu\in (1;2]$ by definition of $\D^\nu(\H) = \Gamma^{-1}H^\nu$:
  \begin{align*}
  \|v\|_{H^{\nu}}&=\big\|\Gamma \H^{-1} u\big\|_{H^{\nu}}= \big\|\H^{-1}u\big\|_{\D^\nu} = \|u\|_{\D^{\nu-2}}\sim \|u\|_{H^{\nu-2}}
  \end{align*}
    where in the last step we used that $\Gamma$ is invertible on $H^\sigma$ for $\sigma\in (-1;1)$.
    
    Combining all this with the resolvent formula above we obtain some $\kappa>0$ small enough: 
    \begin{align*}
        \left\|\H^{-1}u-(1-\Delta)^{-1}u\right\|_{W^{\frac2p+\delta,p}}&\les \big\|(\H\Gamma)^{-1}(1-G_\xi)(1-\Dl)^{-1}u\big\|_{H^{1+\delta}}+\big\|B(\H\Gamma)^{-1}u\big\|_{W^{\frac2p+\delta,p}}\\
        &\les \big\|(1-G_\xi)(1-\Dl)^{-1}u\big\|_{H^{-1+\delta}}+\big\|(\H\Gamma)^{-1}u\big\|_{W^{\frac2p+\delta+\kappa-1,p}}\\
        &\les \big\|(1-G_\xi)(1-\Dl)^{-1}u\big\|_{W^{-\frac2p+\delta,p'}}+\big\|(\H\Gamma)^{-1}u\big\|_{H^{\delta+\kappa}}\\
        &\les \big\|(1-\Dl)^{-1}u\big\|_{W^{\max(-\frac2p+\delta,0)+1+2\kappa,p'}}+\|u\|_{H^{\delta+\kappa-2}}\\
        &\les \|u\|_{W^{\max(-\frac2p+\delta,0)-1+2\kappa,p'}}+\|u\|_{W^{\delta+\kappa-\frac2p-1,p'}}\\
        &\les \|u\|_{W^{-\frac2p-\delta,p'}},
    \end{align*}
    where in the last step we used the restrictions on $p$ and $\delta$ in the statement of Lemma~\ref{LEM:GreenH}. This shows the boundedness properties of $\H^{-1}-(1-\Dl)^{-1}$, thus the regularity of $G^\H-G^{1-\Dl}$. The regularity of $G^\H$ then follows from that of $G^{1-\Dl}$ since $G^\H-G^{1-\Dl}$ is smoother.
\end{proof}

Next, we investigate the properties of the Green's function and its smoothened counterparts. Let us recall the following fundamental lemma that can be found in \cite[Lemma 2.8\,,\ 2.12\ and\ 2.13]{ORTW} :

\begin{lemma}\label{LEM:Green}
    \textup{(i)} There exists $C>0$ such that for any $N\in\N$ and $(x,y)\in\M\times\M\setminus\mathrm{diag}$ it holds
    \begin{align}\label{GN1}
        \Big|(\P_{N}\otimes \P_{N})G^{1-\Delta}(x,y) +\frac1{2\pi}\log\big(\dg(x,y)+N^{-1}\big)\Big| \le C.
    \end{align}
    \textup{(ii)} For all $0<\dl\ll1$ there exists $C>0$ such that for any $N_1\le N_2$ and $(x,y)\in\M\times\M\setminus\mathrm{diag}$ it holds for any $j\in\{1;2\}$:
    \begin{align}\label{GN2}
        &\Big|(\P_{N_j}\otimes \P_{N_j})G^{1-\Delta}(x,y)-(\P_{N_1}\otimes \P_{N_2})G^{1-\Delta}(x,y)\Big|\notag\\
        &\qquad\qquad\le C\min\Big\{-\log\big(\dg(x,y)+N_1^{-1}\big)\vee 1; N_1^{\dl-1}\dg(x,y)^{-1}\Big\}.
    \end{align}
\end{lemma}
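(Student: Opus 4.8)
The plan is to reduce the three estimates to explicit scaling computations for the Euclidean kernel $-\frac1{2\pi}\log|x-y|$ on $\R^2$. First I would record two standard inputs. One: the resolvent kernel of $1-\Delta$ on the closed surface $\M$ admits near the diagonal the parametrix expansion $G^{1-\Delta}(x,y)=-\frac1{2\pi}\log\dg(x,y)+\mathfrak{r}(x,y)$ with $\mathfrak{r}$ bounded (indeed H\"older continuous) on $\M\times\M$, while $G^{1-\Delta}$ is $C^\infty$ off the diagonal; this follows from the short-time heat kernel asymptotics $p_t(x,y)=\frac1{4\pi t}e^{-\dg(x,y)^2/(4t)}(1+O(t))$ together with $G^{1-\Delta}=\int_0^{\infty}e^{-t}p_t\,dt$. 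Two: by the functional calculus on $\M$ (e.g. finite propagation speed for the wave equation), the kernel $K_N$ of $\P_N=\psi(N^{-2}\Delta)$ satisfies $|K_N(x,y)|\le C_A\,N^2(1+N\dg(x,y))^{-A}$ for every $A>0$, with $\int_\M K_N(x,y)\,dy=\psi(0)=1$; in particular $\sup_x\|K_N(x,\cdot)\|_{L^1}\lesssim1$ and $\sup_x\int_\M|K_N(x,y)|\,\dg(x,y)\,dy\lesssim N^{-1}$.

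For (i) I would write $(\P_N\otimes\P_N)G^{1-\Delta}(x,y)=\iint_{\M^2}K_N(x,x')K_N(y,y')G^{1-\Delta}(x',y')\,dx'\,dy'$; the contributions of $\mathfrak{r}$ and of the region $\{\dg(x',y')\gtrsim1\}$ are $O(1)$ uniformly in $N$ because $\|K_N(x,\cdot)\|_{L^1}\lesssim1$. A partition of unity and geodesic normal coordinates then reduce the singular part to the Euclidean claim that $\P_N^{\R^2}$ applied in both variables to $(x,y)\mapsto-\frac1{2\pi}\log|x-y|$ equals $-\frac1{2\pi}\log(|x-y|+N^{-1})+O(1)$, where $\P_N^{\R^2}$ has convolution kernel $k_N(z)=N^2k(Nz)$ with $k$ Schwartz and $\int k=1$. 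Since this kernel depends only on $x-y$, the expression equals $-\frac1{2\pi}\bigl((k_N\ast k_N)\ast\log|\cdot|\bigr)(x-y)$, and rescaling $z\mapsto N^{-1}z$ turns it into $-\frac1{2\pi}\bigl(\log N^{-1}+h(N|x-y|)\bigr)$ with $h(r)=\iint_{\R^2}(k\ast k)(w)\log\bigl|(r,0)-w\bigr|\,dw$, which satisfies $h(r)=\log(1+r)+O(1)$ by a Taylor expansion for $r\ge1$ and by local integrability of $\log$ for $r\le1$. This is \eqref{GN1}. The same computation with distinct scales gives, for use in the next step, $(\P_{N_1}\otimes\P_{N_2})G^{1-\Delta}(x,y)=-\frac1{2\pi}\log(\dg(x,y)+N_1^{-1})+O(1)$ whenever $N_1\le N_2$, the coarser mollification scale $N_1^{-1}$ dominating the regularization.

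For (ii) I would use the telescoping identities $\P_{N_1}\otimes\P_{N_1}-\P_{N_1}\otimes\P_{N_2}=\P_{N_1}\otimes Q$ and $\P_{N_2}\otimes\P_{N_2}-\P_{N_1}\otimes\P_{N_2}=Q\otimes\P_{N_2}$ with $Q:=\P_{N_2}-\P_{N_1}$, so that in either case the difference is $Q$ acting in one variable composed with the bounded operator $\P_{N_\ell}$ in the other. Since $\psi(0)=1$, the operator $Q$ annihilates constants and its Euclidean kernel has vanishing $0$th and $1$st moments; moreover $Q$ is frequency-localized to $|\xi|\gtrsim N_1$ and hence gains arbitrary powers of $N_1^{-1}$ on smooth functions, so the smooth parts of $G^{1-\Delta}$ (the remainder $\mathfrak{r}$ and the off-diagonal part) contribute $O(N_1^{-s})$ for any $s$. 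For the remaining logarithmic singularity, reducing to $\R^2$ as before, the moment cancellation of the kernel of $Q$ yields the bound $N_1^{\delta-1}\dg(x,y)^{-1}$, while estimating each of the two terms separately by the mixed-scale formula of step (i) yields the bound $\bigl(-\log(\dg(x,y)+N_1^{-1})\bigr)\vee1$; splitting into the ranges $\dg\gtrsim N_1^{-1}$, $N_2^{-1}\lesssim\dg\lesssim N_1^{-1}$, and $\dg\lesssim N_2^{-1}$ one then checks elementarily that the difference is bounded by the minimum of the two, i.e. \eqref{GN2}.

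The hard part will be the bookkeeping in step (ii): tracking how the cancellation of $Q=\P_{N_2}-\P_{N_1}$ interacts with the logarithmic singularity of $G^{1-\Delta}$ across the three scales $\dg(x,y)$, $N_1^{-1}$ and $N_2^{-1}$, and verifying that the gain coming from the vanishing moments of the kernel of $Q$ together with the extra smoothing away from the diagonal really beats the logarithmic loss in each regime — this is exactly where the arbitrarily small exponent $\delta>0$ is spent. Finally I would note that the companion bounds for the Green's function $G^\H$ needed later follow by combining \eqref{GN1}--\eqref{GN2} with Lemma~\ref{LEM:GreenH}, which shows that $G^\H-G^{1-\Delta}$ is smoother and therefore contributes only lower-order terms after mollification.
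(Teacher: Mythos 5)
The paper does not prove this lemma; it is quoted from \cite[Lemmas 2.8, 2.12, 2.13]{ORTW}, so there is no in-paper argument to compare against. Your treatment of part (i) — the kernel bounds $|K_N(x,y)|\lesssim N^2\jb{N\dg(x,y)}^{-L}$ with $\int K_N\,dy=\psi(0)=1$, the bounded parametrix remainder, and the Euclidean reduction with rescaling producing the profile $-\frac1{2\pi}\log(|z|+N^{-1})+O(1)$ — is the standard argument and is correct; so is the mixed-scale observation $(\P_{N_1}\otimes\P_{N_2})G^{1-\Delta}=-\frac1{2\pi}\log(\dg+N_1^{-1})+O(1)$ for $N_1\le N_2$, and the route to the $N_1^{\delta-1}\dg^{-1}$ branch of \eqref{GN2} via vanishing moments of $Q=\P_{N_2}-\P_{N_1}$.

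There is, however, a genuine gap at the step where you claim that "estimating each of the two terms separately by the mixed-scale formula of step (i) yields the bound $(-\log(\dg(x,y)+N_1^{-1}))\vee1$". This is false for $j=2$: your own mixed-scale formula gives
\begin{align*}
(\P_{N_2}\otimes\P_{N_2})G^{1-\Delta}(x,y)-(\P_{N_1}\otimes\P_{N_2})G^{1-\Delta}(x,y)=\frac1{2\pi}\log\frac{\dg(x,y)+N_1^{-1}}{\dg(x,y)+N_2^{-1}}+O(1),
\end{align*}
which for $\dg\lesssim N_2^{-1}$ equals $\frac1{2\pi}\log(N_2/N_1)+O(1)$, and this is not $\lesssim \log N_1\vee 1$ when only $N_1\le N_2$ is assumed (take $N_2=N_1^{K}$ with $K$ arbitrary). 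On the Fourier side, $(\P_{N_2}-\P_{N_1})\otimes\P_{N_2}$ acting on $G^{1-\Delta}$ is localized to the band $|\xi|\in[N_1,N_2]$ and contributes $\int_{N_1\lesssim|\xi|\lesssim N_2}|\xi|^{-2}\,d\xi\sim\log(N_2/N_1)$ at the diagonal, with no cancellation; for $j=1$ the extra $\P_{N_1}$ truncates the band to $|\xi|\sim N_1$, the integral is $O(1)$, and your argument does work. So the regime-by-regime check you defer cannot go through for $j=2$ in the range $\dg\lesssim N_1^{-1}$, and the first branch of the $\min$ in \eqref{GN2} as written — with $N_1^{-1}$ in the logarithm for both values of $j$ — is not uniform in $N_2/N_1$. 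Before filling in that step you should pin down the exact formulation in \cite{ORTW}: presumably the cap is $N_j^{-1}$ (or the logarithm is uncapped), or a constraint on $N_2/N_1$ is in force, or only the $j=1$ case is actually stated and used.
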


Combining Lemmas~\ref{LEM:GreenH} and~\ref{LEM:Green}, we get the following result.

\begin{corollary}\label{COR:GHN}
 \textup{(i)} There exists $C>0$ such that for any $N\in\N$ and $(x,y)\in\M\times\M\setminus\mathrm{diag}$ it holds
    \begin{align}\label{GHN1}
        \Big|(\P_{N}\otimes \P_{N})G^{\H}(x,y) +\frac1{2\pi}\log\big(\dg(x,y)+N^{-1}\big)\Big| \le C.
    \end{align}
    \textup{(ii)} For all $0<\dl\ll1$ there exists $C>0$ such that for any $N_1\le N_2$ and $(x,y)\in\M\times\M\setminus\mathrm{diag}$ it holds for any $j\in\{1;2\}$:
    \begin{align}\label{GHN2}
        &\Big|(\P_{N_j}\otimes \P_{N_j})G^{\H}(x,y)-(\P_{N_1}\otimes \P_{N_2})G^{\H}(x,y)\Big|\notag\\
        &\qquad\qquad\le C\min\Big\{-\log\big(\dg(x,y)+N_1^{-1}\big)\vee 1; N_1^{\dl-1}\dg(x,y)^{-1}\Big\}.
    \end{align}

\end{corollary}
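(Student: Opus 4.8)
The plan is to compare $G^\H$ with the Green's function of $1-\Delta$ and to treat their difference as a genuinely lower-order contribution, thereby reducing everything to Lemma~\ref{LEM:Green} via Lemma~\ref{LEM:GreenH}. Set $H:=G^\H-G^{1-\Delta}$. Since $\delta_y\in W^{-\frac2p-\delta,p'}(\M)$ uniformly in $y\in\M$ for $4\le p<\infty$ and $0<\delta<\frac12$, Lemma~\ref{LEM:GreenH} gives $H(\cdot,y)\in W^{\frac2p+\delta,p}(\M)$ uniformly in $y$, hence, for $p$ large and since $\M$ is two-dimensional, $H\in L^\infty(\M\times\M)$; by symmetry of the Green's functions the same holds in the other variable. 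I will also use repeatedly that $\P_N=\psi(N^{-2}\Delta)$ is a Schwartz multiplier of $-\Delta$, so its kernel $K_N$ obeys $|K_N(x,x')|\les N^{2}\langle N\dg(x,x')\rangle^{-L}$ for every $L$, whence $\P_N$, and therefore $\P_N\otimes\P_N$, is bounded on $L^\infty$ and on $W^{s,p}$ \emph{uniformly in $N$}.

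\textbf{Part (i).} Applying $\P_N^x$ keeps $H$ in $L^\infty_yW^{\frac2p+\delta,p}_x$ with a bound uniform in $N$; Sobolev embedding then gives $\P_N^xH\in L^\infty_{x,y}$ uniformly, and $\P_N^y$ preserves $L^\infty_{x,y}$, so $\|(\P_N\otimes\P_N)H\|_{L^\infty(\M^2)}\les1$ uniformly in $N$. Adding Lemma~\ref{LEM:Green}\,(i) applied to $(\P_N\otimes\P_N)G^{1-\Delta}$ yields \eqref{GHN1}.

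\textbf{Part (ii).} For $j\in\{1;2\}$ one has $(\P_{N_j}\otimes\P_{N_j})-(\P_{N_1}\otimes\P_{N_2})=\pm(\P_{N_1}-\P_{N_2})\otimes\P_{N_2}$ (when $j=2$) or $\pm\,\P_{N_1}\otimes(\P_{N_1}-\P_{N_2})$ (when $j=1$), so by symmetry it suffices to bound $[(\P_{N_1}-\P_{N_2})\otimes\P_{N_2}]G^\H$. Its $G^{1-\Delta}$ part is exactly Lemma~\ref{LEM:Green}\,(ii). For the $H$ part, $\P_{N_2}^y$ is bounded on $L^\infty_y$ and, since it only averages $y$ over the scale $N_2^{-1}\le N_1^{-1}$, it preserves the off-diagonal regularity of $H$ in $x$ up to harmless tails; so it suffices to estimate $(\P_{N_1}-\P_{N_2})^xG$ for $G:=\P_{N_2}^yH$, which satisfies $\|G(\cdot,y)\|_{L^\infty}\les1$ uniformly by (i). The operator $\P_{N_1}-\P_{N_2}$ has kernel $L_{N_1,N_2}$ with $\int_\M L_{N_1,N_2}(x,x')\,dx'=\psi(0)-\psi(0)=0$, $\int_\M|L_{N_1,N_2}(x,x')|\,dx'\les1$ and $|L_{N_1,N_2}(x,x')|\les N_1^{2}\langle N_1\dg(x,x')\rangle^{-L}$, i.e.\ it is a mean-zero smoothing operator at scale $N_1^{-1}$. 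Using the mean-zero property,
\begin{align*}
(\P_{N_1}-\P_{N_2})^xG(x,y)=\int_\M L_{N_1,N_2}(x,x')\big(G(x',y)-G(x,y)\big)\,dx'.
\end{align*}
When $\dg(x,y)\les N_1^{-1}$, bounding the increment by $2\|G(\cdot,y)\|_{L^\infty}$ gives a bound $\les1$. When $\dg(x,y)\ges N_1^{-1}$, I split the integral into $\dg(x,x')\le\frac12\dg(x,y)$ and its complement: on the first region $x'$ stays in $\{\dg(\cdot,y)\ge\frac12\dg(x,y)\}$, where $G^\H(\cdot,y)$ (hence $G$) enjoys the improved regularity $G^\H(\cdot,y)\in\CC^{1-\kappa}$ coming from $\H\,G^\H(\cdot,y)=0$ away from $y$ together with the elliptic theory for $\H$ (whose domain embeds in $\CC^{1-\kappa}$ for every $\kappa>0$), with $\CC^{1-\kappa}$–seminorm $\les\dg(x,y)^{-(1-\kappa)}$ up to logarithms by rescaling; combined with the scale-$N_1^{-1}$ localization of $L_{N_1,N_2}$ this contributes $\les(N_1\dg(x,y))^{-(1-\kappa)}$, while the complement, where $|L_{N_1,N_2}|$ is rapidly decaying, contributes $\les(N_1\dg(x,y))^{-(L-2)}$. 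Hence $|[(\P_{N_1}-\P_{N_2})^xG](x,y)|\les1$ always and $\les(N_1\dg(x,y))^{-(1-\kappa)}$ when $\dg(x,y)\ges N_1^{-1}$. Since $N_1\dg(x,y)\ge1$ there, $(N_1\dg)^{-(1-\kappa)}\le1\le-\log(\dg+N_1^{-1})\vee1$, and because for a fixed $\delta$ we may take the construction parameter $\kappa<\delta$, one checks $(N_1\dg)^{-(1-\kappa)}\le N_1^{\delta-1}\dg^{-1}$ for all $\dg\le\mathrm{diam}(\M)$ once $N_1$ is large (the finitely many small $N_1$ being absorbed into the constant). Combining the two regimes with Lemma~\ref{LEM:Green}\,(ii) for the $G^{1-\Delta}$ part gives \eqref{GHN2}.

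\textbf{Main obstacle.} The delicate point is the $H$-part of (ii): a bound on $(\P_{N_1}-\P_{N_2})^xG$ that is merely uniform in $\dg$ would be $O(N_1^{-\delta^+})$ and does \emph{not} control $N_1^{\delta-1}\dg^{-1}$ when $\dg(x,y)\sim1$, so one is forced to exploit the off-diagonal elliptic regularity of $G^\H$ (hence of $H$) together with the mean-zero structure of $\P_{N_1}-\P_{N_2}$ to convert the mismatch into a favorable power of $N_1\dg(x,y)$. Tracking the exponents — choosing $p$ large, $0<\kappa<\delta\ll1$, and verifying the inequality $(N_1\dg)^{-(1-\kappa)}\les N_1^{\delta-1}\dg^{-1}$ together with the rescaled elliptic seminorm bound — is where the argument must be carried out with care; the kernel bounds and the uniform $L^\infty$/$W^{s,p}$ mapping properties of the mollifiers are otherwise routine.
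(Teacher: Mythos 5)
Your part (i) and the splitting $G^\H=G^{1-\Dl}+(G^\H-G^{1-\Dl})$ in part (ii) are exactly the paper's strategy, and reducing the $G^{1-\Dl}$ contribution to Lemma~\ref{LEM:Green} is fine. Where you diverge is the treatment of the remainder $H=G^\H-G^{1-\Dl}$ in (ii). The paper does not try to reproduce the $N_1^{\dl-1}\dg(x,y)^{-1}$ branch of the minimum for this term: it combines the operator bound $\|\P_{N_1}-\P_{N_2}\|_{W^{\sigma+\delta,p}\to W^{\sigma,p}}\les N_1^{-\delta}$ (mean value theorem on the symbol plus interpolation) with the $L^\infty_xW^{\frac2p+\eps+\delta,p}_y$ regularity of $H$ from Lemma~\ref{LEM:GreenH}, which forces $\eps+\delta<\frac12$ and yields a bound $O(N_1^{-\delta})$ that is \emph{uniform} in $(x,y)$. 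As you correctly observe, such a uniform bound does not dominate $N_1^{\dl-1}\dg(x,y)^{-1}$ when $\dg(x,y)\sim1$; strictly speaking the paper establishes \eqref{GHN2} only up to an additive $O(N_1^{-\delta})$ error, which is all that is needed in Proposition~\ref{RenormWick} (one only needs some negative power of $N_1$ times an integrable singularity there). So your identification of the ``main obstacle'' is sharper than the paper's own treatment.

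Your proposed resolution of that obstacle, however, is where the genuine gap lies. The whole argument in the regime $\dg(x,y)\ges N_1^{-1}$ rests on the claim that $G^\H(\cdot,y)\in\Cc^{1-\kappa}$ on the annulus $\dg(\cdot,y)\sim\dg(x,y)$ with seminorm $\les\dg(x,y)^{-(1-\kappa)}$, obtained ``by rescaling'' from ``elliptic theory for $\H$''. Nothing of the sort is established in the paper, and it is not routine: $G^\H(\cdot,y)=\H^{-1}\delta_y$ is not in $\D(\H)$, so the embedding of the domain into $\Cc^{1-\kappa}$ does not apply; interior (localized) regularity for the paracontrolled operator $\H$ would require controlling commutators of $\H$ with cutoffs, which is done nowhere; and a scaling argument is unavailable both because the white-noise potential is not scale-invariant in the needed sense and because $\M$ is a compact manifold. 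The only off-diagonal regularity actually at your disposal is $H(\cdot,y)\in W^{\frac2p+\delta,p}\hookrightarrow\Cc^{\delta'}$ with $\delta'<\frac12$ from Lemma~\ref{LEM:GreenH}; feeding that H\"older exponent into your (otherwise correct) mean-zero kernel computation returns the same uniform $N_1^{-\delta'}$ as the paper, not the $(N_1\dg(x,y))^{-(1-\kappa)}$ you need. So either you prove the quantitative off-diagonal $\Cc^{1-\kappa}$ bound for $G^\H$ — a nontrivial piece of local regularity theory for $\H$ — or you should settle for \eqref{GHN2} with an extra additive $O(N_1^{-\delta})$ on the right-hand side, which is what the paper's argument actually delivers and which suffices for every subsequent use of the corollary.
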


\begin{proof}
We start with the proof of \eqref{GHN1}. Due to \eqref{GN1}, we have
\begin{align*}
(\P_N\otimes\P_N)G^\H(x,y)=(\P_N\otimes \P_N)\big(G^\H-G^{1-\Dl})(x,y)-\frac1{2\pi}\log\big(\dg(x,y)+N^{-1}\big)+O_{L^\infty}(1).
\end{align*}
Since $\P_N$ is uniformly bounded on $L^\infty(\M)$, we also have
\begin{align*}
\big\|(\P_N\otimes \P_N)\big(G^\H-G^{1-\Dl})\big\|_{L^\infty}\les \|G^\H-G^{1-\Dl}\|_{L^\infty}\les 1
\end{align*}
uniformly in $N\ge 1$, due to Lemma~\ref{LEM:GreenH}. This proves \eqref{GHN1}.

As for \eqref{GHN2}, we first show that 
\begin{align}\label{PNdiff}
\big\|\P_{N_1}-\P_{N_2}\big\|_{W^{\sigma+\delta,p}\to W^{\sigma,p}}\les N_1^{-\delta}
\end{align}
for any $N_1\le N_2$, $\delta\in [0;2]$, $\sigma\in\R$, and $1\le p\le \infty$. Indeed, for any $\lambda_n$, we have from the mean value theorem
\begin{align}\label{psidiff}
\psi(N_1^{-2}\lambda_n)-\psi(N_2^{-2}\lambda_n)=(N_1^{-2}-N_2^{-2})\lambda_n\int_0^1\psi'\big((\theta N_1^{-2}+(1-\theta)N_2^{-2})\lambda_n\big)d\theta,
\end{align}
Since $\psi'$ is Schwartz, we get from \eqref{psidiff} and \eqref{K2b} that
\begin{align}\label{PNdiff2}
\big\|\P_{N_1}-\P_{N_2}\big\|_{W^{\sigma+2,p}\to W^{\sigma,p}}&\le (N_1^{-2}-N_2^{-2})\int_0^1\big\|\Delta\psi'\big((\theta N_1^{-2}+(1-\theta)N_2^{-2})\Delta\big)\big\|_{W^{\sigma+2,p}\to W^{\sigma,p}}d\theta\notag\\
&\les N_1^{-2}\int_0^1\big\|\psi'\big((\theta N_1^{-2}+(1-\theta)N_2^{-2})\Delta\big)\big\|_{W^{\sigma+2,p}\to W^{\sigma+2,p}}d\theta\\
&\les N_1^{-2},\notag
\end{align}
while using directly \eqref{K2b},
\begin{align}\label{PNdiff3}
\big\|\P_{N_1}-\P_{N_2}\big\|_{W^{\sigma,p}\to W^{\sigma,p}}\le \big\|\P_{N_1}\big\|_{W^{\sigma,p}\to W^{\sigma,p}}+\big\|\P_{N_2}\big\|_{W^{\sigma,p}\to W^{\sigma,p}} \les 1.
\end{align}
Thus \eqref{PNdiff} follows from interpolating \eqref{PNdiff2} and \eqref{PNdiff3}.

We can then write
\begin{align*}
&(\P_{N_1}\otimes \P_{N_1})G^{\H}(x,y)-(\P_{N_1}\otimes \P_{N_2})G^{\H}(x,y)\\
&=(\P_{N_1}\otimes \P_{N_1})G^{1-\Dl}(x,y)-(\P_{N_1}\otimes \P_{N_2})G^{1-\Dl}(x,y)\\
&\qquad+\big(\P_{N_1}\otimes (\P_{N_1}-\P_{N_2})\big)(G^{\H}-G^{1-\Dl})(x,y).
\end{align*}
As for the terms on the first line, we estimate them with the use of \eqref{GN2}, which is of course enough for \eqref{GHN2}. For the terms on the last line, since by Lemma~\ref{LEM:GreenH}, $G^\H-G^{1-\Dl}\in L^\infty W^{\sigma,p}$ for $4\le p <\infty$ and $\frac2p<\sigma<\frac2p+\frac12$, we have together with Sobolev embedding and \eqref{PNdiff}:
\begin{align*}
&\big\|\big(\P_{N_2}\otimes (\P_{N_1}-\P_{N_2})\big)(G^{\H}-G^{1-\Dl})\big\|_{L^\infty_{x,y}}\\
&\les \big\|\big(\P_{N_2}\otimes (\P_{N_1}-\P_{N_2})\big)(G^{\H}-G^{1-\Dl})\big\|_{L^\infty_x W^{\frac2p+\eps,p}_y}\\
&\les N_1^{-\delta}\big\|G^{\H}-G^{1-\Dl}\big\|_{L^\infty_x W^{\frac2p+\eps+\delta,p}_y}\les N_1^{-\delta}.
\end{align*}
The difference $(\P_{N_2}\otimes \P_{N_2})G^{\H}-(\P_{N_1}\otimes \P_{N_2})G^{\H}$ is estimated similarly. This finally proves \eqref{GHN2} provided that we take $\eps,\delta>0$ with $\eps+\delta<\frac12$.
\end{proof}

We have similar estimates if we use a ``regularization'' based on $\H$ instead of $\Dl$.
\begin{corollary}
Let $\psi\in\S(\R)$ be as in \eqref{PN}.\\
 \textup{(i)} There exists $C>0$ such that for any $N\in\N$ and $(x,y)\in\M\times\M\setminus\mathrm{diag}$ it holds
    \begin{align}\label{GHM1}
        \Big|(\psi(N^{-2}\H)\otimes \psi(N^{-2}\H))G^{\H}(x,y) +\frac1{2\pi}\log\big(\dg(x,y)+N^{-1}\big)\Big| \le C.
    \end{align}
    \textup{(ii)} For all $0<\dl\ll1$ there exists $c,C>0$ such that for any $1\le M_1\le M_2$ and $N\in\N^*$, it holds for any $j\in\{1;2\}$:
    \begin{align}\label{GHM2}
        &\Big\|(\P_N\psi(M_j^{-2}\H)\otimes \P_N\psi(M_j^{-2}\H))G^{\H}-(\P_N\psi(M_1^{-2}\H)\otimes \P_N\psi(M_2^{-2}\H))G^{\H}\Big\|_{L^\infty_{x,y}}\notag\\
        &\qquad\qquad\le CN^{c\delta}M_1^{-\delta}.
    \end{align}

\end{corollary}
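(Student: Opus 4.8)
The plan is to run the proof of Corollary~\ref{COR:GHN} again, replacing each Laplace-based smoothing estimate by its Anderson-based analogue and paying a harmless power $N^{c\delta}$ whenever one has to pass between the $L^2$-based scale $\D^\sigma$ of $\H$ and the pointwise bound. Write $\psi_M:=\psi(M^{-2}\H)$. The ingredients I would use, all from Section~\ref{SEC:Anderson}, are: $\H^{-1}:\D^{\sigma-2}\to\D^\sigma$ boundedly for all $\sigma$; $\D^\sigma=H^\sigma$ for $|\sigma|<1$; $\sup_{z\in\M}\|\delta_z\|_{H^{-1-\kappa}}<\infty$; the spectral identity $\|m(\H)\|_{\D^a\to\D^b}=\sup_{\lambda\ge\lambda_0}\langle\lambda\rangle^{(b-a)/2}|m(\lambda)|$ for Borel $m$; uniform boundedness of $\psi_M,\P_N$ on $\Cc^\alpha$ ($|\alpha|<1$, Corollary~\ref{COR:PMH}, Lemma~\ref{LEM:PM}) and of $\P_N$ on $L^\infty$; and Bernstein's inequality $\|\P_N\|_{H^a\to H^b}\les N^{b-a}$ for $b\ge a$. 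From the spectral identity and the mean value theorem applied to $\psi$, exactly as for \eqref{psidiff}, one has $|\psi(M_1^{-2}\lambda)-\psi(M_2^{-2}\lambda)|\les M_1^{-\delta}\langle\lambda\rangle^{\delta/2}$ for $\delta\in[0,2]$, hence the clean difference bound
\begin{align*}
\big\|\psi_{M_1}-\psi_{M_2}\big\|_{\D^{\sigma+\delta}\to\D^\sigma}\les M_1^{-\delta}\qquad\text{uniformly in }\sigma\in\R\text{ and }M_2\ge M_1,
\end{align*}
which is the analogue of \eqref{PNdiff} and needs no multiplier theorem.

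For \eqref{GHM1}, start from \eqref{GHN1}, i.e. $(\P_N\otimes\P_N)G^\H=-\frac1{2\pi}\log(\dg(x,y)+N^{-1})+O_{L^\infty}(1)$, and write
\begin{align*}
(\psi_N\otimes\psi_N)G^\H-(\P_N\otimes\P_N)G^\H=\big((\psi_N-\P_N)\otimes\psi_N\big)G^\H+\big(\P_N\otimes(\psi_N-\P_N)\big)G^\H.
\end{align*}
By Lemma~\ref{LEM:GreenH}, $G^\H(x,\cdot)\in\Cc^{-\kappa}$ uniformly in $x$ (with the requisite joint Hölder regularity, $G^\H$ being symmetric), and $\psi_N,\P_N$ keep it uniformly in $\Cc^{-\kappa}$ when applied in one slot. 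The crucial point is that $\psi_N-\P_N:\Cc^\alpha\to\Cc^{\alpha+r}$ uniformly in $N$, for some fixed $r\in(0,1)$ and $\alpha$ near $0$: this follows from the perturbative description of $\H$ (after conjugation by $\Gamma$, $\H^\sharp=-\Dl+G_\xi$ with $G_\xi$ of order $1+\kappa$ strictly below $2$, and $\Gamma,\Gamma^{-1}$ bounded on $\Cc^\gamma$, $|\gamma|<1$, with $B=\Gamma-1$ even gaining $1-\kappa$), which is the same mechanism that yields the Schauder estimate \eqref{SchauderH}---concretely one expresses $\psi_N$ and the corresponding Laplace multiplier through a Helffer--Sjöstrand formula and uses $(z-\H^\sharp)^{-1}-(z+\Dl)^{-1}=(z-\H^\sharp)^{-1}G_\xi(z+\Dl)^{-1}$, which gains $1-\kappa$ derivatives with enough decay in $z$ for the $\bar\partial$-integral to converge uniformly in $N$. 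Feeding $G^\H\in\Cc^{-\kappa}$ into this gain, each term above lies in $\Cc^{r-\kappa}_x\hookrightarrow L^\infty$ once $\kappa<r$, uniformly in $N$; with \eqref{GHN1} this gives \eqref{GHM1}.

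For \eqref{GHM2}, by symmetry it is enough to treat $j=1$, i.e. to bound in $L^\infty_{x,y}$ the kernel of $T:=\P_N\psi_{M_1}\,\H^{-1}\,(\psi_{M_1}-\psi_{M_2})\,\P_N$ (since $G^\H$ is the kernel of $\H^{-1}$ and $\P_N,\psi_{M_j},\H^{-1}$ are self-adjoint); no leading logarithm is subtracted here, so Lemma~\ref{LEM:GreenH} is not even needed. I would estimate $\|T\delta_y\|_{L^\infty}$ uniformly in $y$ by following the operators from the right: $\delta_y\in H^{-1-\kappa}$ uniformly, hence $\P_N\delta_y\in H^{-1+\kappa}=\D^{-1+\kappa}$ with norm $\les N^{2\kappa}$ by Bernstein; then $\psi_{M_1}-\psi_{M_2}$ loses $\delta$ with norm $\les M_1^{-\delta}$, $\H^{-1}$ gains $2$, $\psi_{M_1}$ is bounded on the $\D$-scale, and a final $\P_N$ lands us in some $H^{1+\eta}\hookrightarrow L^\infty$ at a further Bernstein cost $N^{O(\delta)}$. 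Routing the Sobolev exponents so that the identification $\D^\sigma=H^\sigma$ is only invoked for $|\sigma|<1$, this yields $\|T\delta_y\|_{L^\infty}\les N^{c\delta}M_1^{-\delta}$ uniformly in $y,N,M_1\le M_2$, which is \eqref{GHM2}; the factor $N^{c\delta}$ is exactly the Bernstein cost of pushing the Dirac mass, smoothed only by the Laplace-based $\P_N$, into the Anderson functional calculus.

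The hard part is the $N$-uniform regularity gain of $\psi_N-\P_N$ used in \eqref{GHM1}: unlike the $\Dl$-versus-$\Dl$ symbol computation \eqref{PNdiff}, there is no elementary comparison of the Laplace-based and Anderson-based functional calculi, and one genuinely has to invoke the perturbative structure of $\H$ from Section~\ref{SEC:Anderson}. Everything in \eqref{GHM2} is, by contrast, soft once the clean $\D$-scale difference bound above is available.
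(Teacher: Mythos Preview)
Your overall architecture matches the paper's: for \eqref{GHM1} you decompose exactly as the paper does and reduce to the multiplier comparison $\psi(N^{-2}\H)-\P_N$, which the paper handles via the Helffer--Sj\"ostrand argument you describe (this is precisely Lemma~\ref{LEM:multipliers}). There is, however, a small but genuine imprecision in how you phrase that step. You claim a uniform-in-$N$ gain $\psi_N-\P_N:\Cc^\alpha\to\Cc^{\alpha+r}$ and feed in $G^\H(x,\cdot)\in\Cc^{-\kappa}$. The Helffer--Sj\"ostrand argument in the paper runs through the $L^2$-scale (the only place one has clean resolvent bounds for $\H$ is $\D^\nu$), and what it actually yields is $\psi_N-\P_N:H^\beta\to W^{\sigma,p}$ with norm $\lesssim N^{\max(\sigma+\kappa-2/p-\beta,0)}$. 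To land in $W^{2/p+,\,p}\hookrightarrow L^\infty$ \emph{uniformly in $N$} one therefore needs $\beta>\kappa>0$; starting from $\Cc^{-\kappa}\hookrightarrow H^{-\kappa-}$ is not enough. The fix is immediate: Lemma~\ref{LEM:GreenH}, which you already cite, gives the stronger information $G^\H(x,\cdot)\in W^{2/p-\delta,p}\hookrightarrow H^{\frac12-}$ uniformly in $x$, and with this positive $H$-regularity as input the multiplier comparison does reach $L^\infty$ uniformly---this is exactly how the paper closes \eqref{GHM1}. So the idea is right, but you should route through $H^{\frac12-}$ rather than $\Cc^{0-}$.

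For \eqref{GHM2} your route is genuinely different from the paper's, and cleaner. The paper first upgrades the spectral difference bound to the $W^{s,p}$-scale (their \eqref{PMdiff}, proved via Lemma~\ref{LEM:multipliers}) and then runs a Bernstein-plus-Sobolev chain in $W^{s,p}_{x,y}$. You instead keep the trivial $\D$-scale bound $\|\psi_{M_1}-\psi_{M_2}\|_{\D^{\sigma+\delta}\to\D^\sigma}\lesssim M_1^{-\delta}$, view the kernel as $T\delta_y$, and use $\D^\sigma=H^\sigma$ only at the two endpoints $|\sigma|<1$ where you pass to and from the Laplace-based Bernstein for $\P_N$. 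The routing you describe works (take e.g.\ $\kappa<\delta$ so that after $\H^{-1}$ you sit in $\D^{1+\kappa-\delta}=H^{1+\kappa-\delta}$ and one more $\P_N$ reaches $H^{1+}$), and it avoids invoking Lemma~\ref{LEM:multipliers} altogether for this part. Your diagnosis that \eqref{GHM1} is the hard half and \eqref{GHM2} is soft is exactly right.
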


\begin{proof}
For \eqref{GHM1}, we decompose
\begin{align*}
(\psi(N^{-2}\H)\otimes \psi(N^{-2}\H))G^{\H}(x,y)&=(\P_N\otimes\P_N)G^\H(x,y)\\
&\qquad+\big(\P_N\otimes(\psi(N^{-2}\H)-\P_N)\big)G^\H(x,y)\\
&\qquad\qquad+\big((\psi(N^{-2}\H)-\P_N)\otimes \psi(N^{-2}\H)\big)G^\H(x,y).
\end{align*}
The first term is estimated by Corollary~\eqref{COR:GHN}. As for the second term, we use the uniform boundedness of $\P_N$ on $L^\infty(\M)$ and Lemma~\ref{LEM:multipliers} below to get for $0<\eps,\kappa\ll1$
\begin{align*}
\big\|\big(\P_N\otimes(\psi(N^{-2}\H)-\P_N)\big)G^\H\big\|_{L^\infty_{x,y}}&\les \big\|\big(1\otimes(\psi(N^{-2}\H)-\P_N)\big)G^\H\big\|_{L^\infty_x W^{\frac2p+\eps,p}_y}\\
&\les \big\|G^\H\big\|_{L^\infty_x H^{\kappa+\eps}_y} \les 1.
\end{align*}
Similarly, we have by symmetry and Lemma~\ref{LEM:multipliers} again that 
\begin{align*}
&\big\|\big((\psi(N^{-2}\H)-\P_N)\otimes \psi(N^{-2}\H)\big)G^\H\big\|_{L^\infty_{x,y}}\\
&\qquad\les\big\|\big(1\otimes (\psi(N^{-2}\H)-\P_N)^2\big)G^\H\big\|_{L^\infty_x W^{\frac2p+\eps,p}_{y}}+\big\|\big(\P_N\otimes(\psi(N^{-2}\H)-\P_N)\big)G^\H\big\|_{L^\infty_{x,y}}\\
&\qquad\les \big\|\big(1\otimes (\psi(N^{-2}\H)-\P_N)\big)G^\H\big\|_{L^\infty_x H^{\kappa+\eps}_{y}}+1\\
&\qquad\les 1.
\end{align*}
This shows \eqref{GHM1}.

As for \eqref{GHM2}, we start by showing the analogue of \eqref{PNdiff}. From \eqref{psidiff}, we have
\begin{align*}
&\big\|\psi(M_1^{-2}\H)-\psi(M_2^{-2}\H)\big\|_{W^{\sigma+2+\eps,p}\to W^{\sigma,p}}\\
&\qquad\le (M_1^{-2}-M_2^{-2})\int_0^1\big\|\H\psi'\big((\theta M_1^{-1}+(1-\theta)M_2^{-2})\H\big)\big\|_{W^{\sigma+2+\eps,p}\to W^{\sigma,p}}d\theta\\
&\qquad\les (M_1^{-2}-M_2^{-2})\int_0^1\big\|\psi'\big((\theta M_1^{-1}+(1-\theta)M_2^{-2})\H\big)\big\|_{W^{\sigma+2+\eps,p}\to W^{\sigma+2,p}}d\theta\\
&\qquad\les M_1^{-2},
\end{align*} 
thanks to Lemma~\ref{LEM:multipliers}. Interpolating with
\begin{align*}
\big\|\psi(M_1^{-2}\H)-\psi(M_2^{-2}\H)\big\|_{W^{\sigma+\eps,p}\to W^{\sigma,p}} \les 1
\end{align*}
due to Lemma~\ref{LEM:multipliers} again, we get
\begin{align}\label{PMdiff}
\big\|\psi(M_1^{-2}\H)-\psi(M_2^{-2}\H)\big\|_{W^{\sigma+\delta+\eps,p}\to W^{\sigma,p}}\les M_1^{-\delta},
\end{align}
for any $\delta\in[0;2]$.

Then we estimate using Lemma~\ref{LEM:PM} and \eqref{PMdiff}:
\begin{align*}
&\Big\|(\P_N\psi(M_1^{-2}\H)\otimes \P_N\psi(M_1^{-2}\H))G^{\H}-(\P_N\psi(M_1^{-2}\H)\otimes \P_N\psi(M_2^{-2}\H))G^{\H}\Big\|_{L^\infty_{x,y}}\\
&\les \Big\|(\P_N\otimes\P_N)\big(\psi(M_1^{-2}\H)\otimes (\psi(M_1^{-2}\H)-\psi(M_2^{-2}\H)\big)G^{\H}\Big\|_{W^{\frac2p+\eps,p}_{x,y}}\\
&\les N^{-2(\frac2p+2\eps)}\Big\|\big(\psi(M_1^{-2}\H)\otimes (\psi(M_1^{-2}\H)-\psi(M_2^{-2}\H)\big)G^{\H}\Big\|_{W^{-\eps,p}_{x,y}}\\
&\les N^{2(\frac2p+2\eps)}M_1^{-\frac\eps4}\big\|G^{\H}\big\|_{W^{-\frac\eps2,p}_{x}W^{-\frac\eps4,p}_y}\\
&\les N^{2(\frac2p+2\eps)}M_1^{-\frac\eps4}.
\end{align*}
This shows \eqref{GHM2} provided that we take $p$ large enough and $\eps$ small enough.
\end{proof}

\subsection{Comparison of semi-classical multipliers}
In this part, we establish a comparison principle between Schwartz multipliers for the Anderson operator and Schwartz multipliers for the Laplace-Beltrami operator. This is similar to (A.15) in \cite{BGT}, but there the authors deal with abstract symmetric perturbations of $-\Delta$ of order 1 in $\R^d$, whereas here we deal with a perturbation of order $1+\kappa$, $0<\kappa\ll1$, with rough coefficients.

\begin{lemma}\label{LEM:multipliers}
Let $\psi\in \S(\R)$, $2\le p <\infty$ and $\frac2p-1<\sigma< \frac2p+1$, and $0<\kappa<\delta<1+\kappa$ and $\kappa-\delta \le \beta\le 2+\kappa-\delta$ such that $0\le \sigma-\frac2p+\delta\le 2$. Then there exists $C>0$ such that for any $N\ge 1$ and $f\in H^{\beta}(\M)$, it holds
\begin{align}\label{multipliers}
    \big\|\psi(N^{-2}\H)f-\psi(N^{-2}\Delta)f\big\|_{W^{\sigma,p}}\le CN^{\max(\sigma+\kappa-\frac2p-\beta,0)}\|f\|_{H^{\beta}}.
\end{align}
Moreover, if $\psi$ is compactly supported away from 0, it holds
\begin{align}\label{multipliersannulus}
    \big\|\psi(N^{-2}\H)f-\psi(N^{-2}\Delta)f\big\|_{W^{\sigma,p}}\le CN^{\sigma+\kappa-\frac2p-\beta}\|f\|_{H^{\beta}}.
\end{align}
\end{lemma}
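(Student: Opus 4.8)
The plan is to represent both spectral multipliers via the Helffer--Sj\"ostrand formula and thereby reduce \eqref{multipliers}--\eqref{multipliersannulus} to a quantitative comparison of the resolvents of $\H$ and of $-\Delta$ (both self-adjoint, nonnegative after the usual shift, with spectrum contained in $[0,\infty)$). Fix an almost analytic extension $\tilde\psi\in C^\infty(\C)$ of $\psi$ obeying $|\bar\partial\tilde\psi(w)|\lesssim_L|\Im w|^L\langle w\rangle^{-L}$ for all $L\ge0$, and set $\tilde\psi_N(z)=\tilde\psi(N^{-2}z)$, so that $|\bar\partial\tilde\psi_N(z)|\lesssim_L N^{-2-2L}|\Im z|^L\langle N^{-2}z\rangle^{-L}$, the integrand being moreover supported in $|z|\sim N^2$ when $\psi$ is supported away from $0$. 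Then
\begin{align*}
\psi(N^{-2}\H)f-\psi(N^{-2}\Delta)f=\frac1\pi\int_\C\bar\partial\tilde\psi_N(z)\,\big[(\H-z)^{-1}-(-\Delta-z)^{-1}\big]f\,dL(z),
\end{align*}
and it suffices to bound $\|(\H-z)^{-1}-(-\Delta-z)^{-1}\|_{H^\beta\to W^{\sigma,p}}$ by a fixed power of $\langle z\rangle$ times $\mathrm{dist}(z,[0,\infty))^{-M}$ times the appropriate power of the frequency scale, and then integrate.

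For the resolvent comparison I would reuse the perturbative identity underlying the proof of Lemma~\ref{LEM:GreenH}. Writing $\H=(\H\Gamma)\Gamma^{-1}=(-\Delta+G_\xi)\Gamma^{-1}$ and $\Gamma=1+B$ with $B$ the smoothing operator of order $1-\kappa$ from \eqref{decomp-Gamma}, one has $(\H-z)^{-1}=\Gamma\big((-\Delta-z)+(G_\xi-zB)\big)^{-1}$, and a single use of the resolvent identity $(A+K)^{-1}=A^{-1}-(A+K)^{-1}KA^{-1}$ with $A=-\Delta-z$, $K=G_\xi-zB$, gives
\begin{align*}
(\H-z)^{-1}-(-\Delta-z)^{-1}=B(-\Delta-z)^{-1}-(\H-z)^{-1}(G_\xi-zB)(-\Delta-z)^{-1}.
\end{align*}
Iterating this a fixed finite number of times redistributes the $1-\kappa$ derivatives gained at each step --- $(-\Delta-z)^{-1}$ smooths by $2$, $G_\xi$ costs $1+\kappa$, $B$ gains $1-\kappa$ --- so that in every resulting term the remaining factor $(\H-z)^{-1}$ acts only between Sobolev spaces of regularity in $(-1,1)$, where $\D^\tau=H^\tau$ and the functional calculus for the self-adjoint $\H$ yields $\|(\H-z)^{-1}\|_{H^\tau\to H^{\tau+s}}\lesssim\langle z\rangle^{s/2}\mathrm{dist}(z,[0,\infty))^{-1}$ for $0\le s\le2$ (and the analogue for $-\Delta$), all of which pass to the $W^{\cdot,p}$ scale with polynomial loss in $\langle z\rangle$ only. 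Combining with the boundedness of $B,\Gamma^{\pm1}$ on regularity $<1$, of $G_\xi$ on regularity $>1+\kappa$, and with the two-dimensional embedding $H^s\hookrightarrow W^{s-1+2/p,p}$, each term of the resolvent difference is estimated as a bounded map $H^\beta\to W^{\sigma,p}$ times a fixed power of $\langle z\rangle\,\mathrm{dist}(z,[0,\infty))^{-1}$, the net derivative balance between $\beta$ and $\sigma$ being exactly $\sigma+\kappa-\tfrac2p-\beta$, with the $\kappa$ coming from the order of $G_\xi$ and the $\tfrac2p$ from the Sobolev embedding.

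It then remains to integrate in $z$: the polynomial growth in $\langle z\rangle$ and the factor $\mathrm{dist}(z,[0,\infty))^{-M}$ are absorbed by taking $L$ large in the bound on $\bar\partial\tilde\psi_N$, and what survives is a power of $N$. When the balance is in deficit, $\sigma+\kappa-\tfrac2p-\beta>0$, it is paid for by the frequency localization of $\psi(N^{-2}\cdot)$ at scale $N$ and one gets $N^{\sigma+\kappa-2/p-\beta}$; otherwise the resolvent difference is already bounded $H^\beta\to W^{\sigma,p}$ and only $N^0$ survives, whence the $\max(\cdot,0)$ in \eqref{multipliers}. For \eqref{multipliersannulus}, with $\psi$ supported away from $0$ the integration runs over $|z|\sim N^2$ only, so the resolvent bounds there carry an extra factor $\langle z\rangle^{-\beta/2}\sim N^{-\beta}$ from acting on $H^\beta$ at frequency $\sim N$, and the maximum disappears. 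I expect the main difficulty to be the bookkeeping of the second step: arranging the finitely many iterations so that $G_\xi$ is always applied in regularity $>1+\kappa$ while $\Gamma^{\pm1}$ and the last occurrence of $(\H-z)^{-1}$ are applied in regularity $<1$ --- which is exactly what forces the numerical hypotheses on $p,\sigma,\delta,\beta,\kappa$ --- and then tracking the powers of $\langle z\rangle$ and $\mathrm{dist}(z,[0,\infty))^{-1}$ finely enough that the final exponent is precisely $\sigma+\kappa-\tfrac2p-\beta$ and not larger.
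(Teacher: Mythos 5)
Your overall plan---Helffer--Sj\"ostrand formula plus a perturbative comparison of the resolvents of $\H$ and $-\Dl$ via the $\Gamma$-conjugation structure---is the same starting point as the paper's proof, but there is a genuine gap in the way you propose to run the resolvent comparison, and the paper's argument is structured differently precisely to sidestep it.

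You propose to iterate the identity $(\H-z)^{-1}-(-\Dl-z)^{-1}=B(-\Dl-z)^{-1}-(\H-z)^{-1}(G_\xi-zB)(-\Dl-z)^{-1}$ until the remaining factor of $(\H-z)^{-1}$ ``acts only between Sobolev spaces of regularity in $(-1,1)$''. But in every remainder term produced by this iteration, $(\H-z)^{-1}$ is the \emph{leftmost} operator, so its output is the output of the whole term, and that output must be controlled in $W^{\sigma,p}$. By the two-dimensional Sobolev embedding $H^s\hookrightarrow W^{s-1+2/p,p}$, landing in $W^{\sigma,p}$ requires regularity $s\ge \sigma+1-\tfrac{2}{p}$, which the hypotheses $\sigma<\tfrac{2}{p}+1$ allow to be arbitrarily close to $2$. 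However, your resolvent bound $\|(\H-z)^{-1}\|_{H^\tau\to H^{\tau+s}}\lesssim \langle z\rangle^{s/2}\,\mathrm{dist}(z,[0,\infty))^{-1}$ relies on the identification $\D^\nu=H^\nu$, which holds only for $|\nu|<1$; for $\nu\in(1,2]$ one has $\D^\nu=\Gamma H^\nu$, which is a strictly different space (elements of $\D(\H)$ are only $\Cc^{1-}$, not $H^2$). So the remainder term cannot be placed in $W^{\sigma,p}$ for $\sigma$ near the upper end of the allowed range, and no finite number of iterations fixes this, because the constraint sits on the last operator applied. There is also a secondary issue you acknowledge but do not resolve: the $zB$ part of your perturbation $K=G_\xi-zB$ injects a factor $|z|\sim N^2$ at each iteration, and the bookkeeping needed to confirm that the final power of $N$ is exactly $\sigma+\kappa-\tfrac2p-\beta$ (and, for \eqref{multipliers}, its positive part) is not carried out.

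The paper's route is structurally different on exactly these points. It conjugates by $\Gamma$ \emph{before} invoking Helffer--Sj\"ostrand, writing $\psi(N^{-2}\H)-\psi(-N^{-2}\Dl)$ as the sum of $[\psi(N^{-2}\H^\sharp)-\psi(-N^{-2}\Dl)]\Gamma^{-1}$, $(\Gamma-1)\psi(N^{-2}\H^\sharp)\Gamma^{-1}$ and $\psi(-N^{-2}\Dl)(\Gamma^{-1}-1)$, as in \eqref{diffResolvent}. Only the first piece is treated via Helffer--Sj\"ostrand, and there the resolvent identity is applied \emph{once}, not iterated, because the semi-classical resolvent bound for $\H^\sharp$ is available up to $H^\nu$, $\nu\in(-1,2]$: this is \eqref{resolventH1}--\eqref{resolventH3}, which go through $\|(z-N^{-2}\H^\sharp)^{-1}u\|_{H^\nu}\sim\|(z-N^{-2}\H)^{-1}\Gamma u\|_{\D^\nu}$ together with $\D^\nu=\Gamma H^\nu$ for such $\nu$. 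The second and third pieces, which carry the $\Gamma$-corrections, are \emph{not} estimated via resolvents at all: the second is bounded by a direct functional-calculus estimate for $\psi(N^{-2}\H)$ in the $\D^\nu$ scale (with $|\nu|<1$, the missing regularity being supplied by the smoothing of $B=\Gamma-1$), and the third by the Schwartz-multiplier bound \eqref{K2t} for $-\Dl$. It is precisely this direct treatment that produces the $\max(\sigma+\kappa-\tfrac2p-\beta,0)$ truncation in \eqref{multipliers}: uniform boundedness of $\psi(N^{-2}\cdot)$ in $N$ is what caps the exponent at $0$, whereas a bare derivative count in a resolvent expansion would report a negative power of $N$ that is false for general Schwartz $\psi$. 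If you want to salvage an iterated-resolvent scheme, you would have to insert $\Gamma\Gamma^{-1}$ around the leftmost $(\H-z)^{-1}$ so as to work with $\H^\sharp$ there, at which point you have essentially reconstructed the paper's decomposition.
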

\begin{proof}
The proof closely follows that of \cite[Proposition 2.1 and Theorem 6]{BGT}: by Helffer-Sj\"ostrand's formula, we have
\begin{align}\label{Helffer}
    \psi(N^{-2}\H)-\psi(-N^{-2}\Delta)=-\frac1\pi\int_\C \overline{\partial}\widetilde{\psi}(z)\Big((z-N^{-2}\H)^{-1}-(z+N^{-2}\Delta)^{-1}\Big)dz,
\end{align}
where $\widetilde{\psi}$ is an almost analytic extension of $\psi$. Here we take
\begin{align*}
    \widetilde{\psi}(z)=\Big(\sum_{k=0}^2\frac{\psi^{(k)}(\Re z)}{k!}(i\Im z)^k\Big)\chi(\Im z)
\end{align*}
with $\chi\in C^{\infty}_0(\R)$ satisfying $\chi\equiv 1$ near 0, so that
\begin{align*}
    \overline{\partial}\widetilde{\psi}(z)&=\frac{\widetilde{\psi}^{(2)}(\Re z)}{2}(i\Im z)^2\chi(\Im z)+\Big(\sum_{k=0}^2\frac{\widetilde{\psi}^{(k)}(\Re z)}{k!}(i\Im z)^k\Big)\chi'(\Im z),
\end{align*}
providing
\begin{equation}\label{dbarpsitilde}
    |\overline{\partial}\widetilde{\psi}(z)|\les \langle \Re z\rangle^{-10} |\Im z|^2 \mathbf{1}_{|\Im z|\les 1}.
\end{equation}
Then we decompose the right-hand side of \eqref{Helffer} as
\begin{align}
&-\frac1\pi\int_\C \overline{\partial}\widetilde{\psi}(z)\Big((z-N^{-2}\H^\sharp)^{-1}-(z+N^{-2}\Delta)^{-1}\Big)dz \Gamma^{-1}\notag\\
   &\qquad-\frac1\pi(\Gamma-1)\int_\C \overline{\partial}\widetilde{\psi}(z)(z-N^{-2}\H^\sharp)^{-1}dz\Gamma^{-1}\label{diffResolvent}\\
   &\qquad\qquad-\frac1\pi\int_\C \overline{\partial}\widetilde{\psi}(z)(z+N^{-2}\Delta)^{-1}dz(\Gamma^{-1}-1).\notag
\end{align}

We start by estimating the norm of the semi-classical resolvents in Sobolev spaces. First, for any $\nu\in (-1;2]$, since $\D^\nu=\Gamma H^\nu$, we have
\begin{align}\label{resolventH1}
    \big\|(z-N^{-2}\H^\sharp)^{-1}u\big\|_{H^\nu}&\sim \big\|(z-N^{-2}\H)^{-1}\Gamma u\big\|_{\D^\nu} \sim \Big(\sum_{n\geq 0} \lambda_n^{\nu}\frac{|\langle \Gamma u,\varphi_n\rangle|^2}{|z+N^{-2}\lambda_n|^2}\Big)^2\notag\\
    &\les |\Im z|^{-1}\|\Gamma u\|_{D^\nu},
\end{align}
and similarly, for any $\nu\in\R$,
\begin{align}\label{resolventD1}
    \big\|(z+N^{-2}\Delta)^{-1}u\big\|_{H^\nu}\les |\Im z|^{-1}\|u\|_{H^\nu},
\end{align}
uniformly in $N$. Moreover, if $u\in H^{\nu-2}(\M)$, letting $v=(z+N^{-2}\Delta)^{-1}u$, we have $(1-\Delta)v = v +N^2(zv-u)$, from which we infer
\begin{align}\label{resolventD2}
    \big\|(z+N^{-2}\Delta)^{-1}u\big\|_{H^\nu}&\les (1+N^2|z|)\big\|(z+N^{-2}\Delta)^{-1}u\big\|_{H^{\nu-2}}+N^2\|u\|_{H^{\nu-2}}\notag\\
    &\les |\Im z|^{-1}\langle z\rangle N^2\|u\|_{H^{\nu-2}}.
\end{align}
Proceeding similarly, since $v=(z-N^{-2}\H)^{-1}\Gamma u$ satisfies $\H v = zN^2 v - N^2 \Gamma u$, we have for any $\nu \in (-1;2]$:
\begin{align}\label{resolventH2}
\big\|(z-N^{-2}\H^\sharp)^{-1} u\big\|_{H^\nu}&\sim \big\|(z-N^{-2}\H)^{-1}\Gamma u\big\|_{\D^\nu}\notag\\
&\les |z|N^2\big\|(z-N^{-2}\H)^{-1}\Gamma u\big\|_{\D^{\nu-2}}+N^2\|\Gamma u\|_{\D^{\nu-2}}\\
&\les |\Im z|^{-1}\langle z\rangle N^2\|\Gamma u\|_{\D^{\nu-2}} ,\notag
\end{align}
where we used that $\Gamma$ is invertible on $H^s$, $s\in (-1;1)$ and $\Gamma H^s=\D^s$ for $s\in (-1;2]$.

Interpolation between \eqref{resolventH1} and \eqref{resolventH2} on the one hand, and between \eqref{resolventD1} and \eqref{resolventD2} on the other hand, leads to both
\begin{align}\label{resolventH3}
    \big\|(z-N^{-2}\H^\sharp)^{-1}u\big\|_{H^\nu}\les |\Im z|^{-1}\langle z\rangle^{\frac\eta2} N^\eta \|\Gamma u\|_{\D^{\nu-\eta}}
\end{align}
for any $\nu \in (-1;2]$ and $\eta\in[0;2]$, and
\begin{align}\label{resolventD3}
     \big\|(z+N^{-2}\Delta)^{-1}u\big\|_{H^\nu}\les |\Im z|^{-1}\langle z\rangle^{\frac\eta2} N^\eta \|u\|_{H^{\nu-\eta}},
\end{align}
for any $\nu\in\R$ and $\eta\in [0;2]$.

We can then estimate the terms in the right-hand side of \eqref{diffResolvent}. For the first term, we use the resolvent identity, \eqref{dbarpsitilde}, \eqref{resolventH3}-\eqref{resolventD3}, and the mapping property of $G_\xi$ and $\H\Gamma$, to get
\begin{align*}
    &\Big\|\int_\C \bar{\partial}\widetilde{\psi}(z)\Big((z-N^{-2}\H^\sharp)^{-1}-(z+N^{-2}\Delta)^{-1}\Big)dz\Gamma^{-1}f\Big\|_{W^{\sigma,p}}\\
    &\les \int_{|\Im z|\les 1} \langle z\rangle^{-10}|\Im z|^2\\
    &\qquad\times \big\|(z-N^{-2}\H^\sharp)^{-1}N^{-2}(G_\xi + (\H\Gamma \cdot)\pl X)(z+N^{-2}\Delta)^{-1}\Gamma^{-1}f\big\|_{H^{\sigma+1-\frac2p}}dz\\
   &\les N^{\sigma-\frac2p+\delta-2}\int_{|\Im z|\les 1} \langle z\rangle^{\frac\sigma2-\frac1p+\frac\delta2-10}|\Im z|\Big\{ \big\|G_\xi(z+N^{-2}\Delta)^{-1}\Gamma^{-1}f\big\|_{H^{1-\delta}}dz\\
   &\qquad\qquad+ \Big\|\big(\H\Gamma (z+N^{-2}\Delta)^{-1}\Gamma^{-1}f\big)\pl X\Big\|_{H^{1-\delta}}\Big\}dz \\
  &\les N^{\sigma-\frac2p+\delta-2}\int_{|\Im z|\les 1} \langle z\rangle^{\frac\sigma2-\frac1p+\frac\delta2-10}|\Im z|\Big\{ \big\|(z+N^{-2}\Delta)^{-1}\Gamma^{-1}f\|_{H^{2-\delta+\kappa}}\\
   &\qquad\qquad+ \big\|\H\Gamma (z+N^{-2}\Delta)^{-1}\Gamma^{-1}f\big\|_{H^{\kappa-\delta}}\Big\}dz \\
   &\les  N^{\sigma-\frac2p+\delta-2}\int_{|\Im z|\les 1}\langle z\rangle^{\frac\sigma2-\frac1p+\frac\delta2-10}|\Im z| \big\|(z+N^{-2}\Delta)^{-1}\Gamma^{-1}f\big\|_{H^{2-\delta+\kappa}}dz\\
   &\les N^{\sigma-\frac2p+\kappa-\beta}\int_{|\Im z|\les 1} \langle z\rangle^{\frac{\sigma+\kappa-\beta}{2}-\frac1p-10} \big\|\Gamma^{-1}f\big\|_{H^{\beta}}dz\\
   &\les N^{\sigma-\frac2p+\kappa-\beta}\|f\|_{H^{\beta}},
\end{align*}
where in the last step we used that $\Gamma^{-1}$ is bounded on $H^{\beta}(\M)$ by choice of $\beta$.

As for the second term in \eqref{diffResolvent}, we write again $\Gamma = 1+B$ as in \eqref{decomp-Gamma}, with $B$ bounded from $W^{\sigma-1+\kappa,p}(\M)$ to $W^{\sigma,p}(\M)$. Using that $\psi\in\S(\R)$, this term contributes as
\begin{align*}
    \Big\|B\psi(N^{-2}\H^\sharp)^{-1}\Gamma^{-1} f\Big\|_{W^{\sigma,p}} &\les \big\|\Gamma^{-1}\psi(N^{-2}\H) f\big\|_{W^{\sigma-1+\kappa,p}}\\
    &\les \big\|\Gamma^{-1}\psi(N^{-2}\H) f\big\|_{H^{\sigma+\kappa-\frac2p}}\sim\big\|\psi(N^{-2}\H) f\big\|_{\D^{\sigma+\kappa-\frac2p}}\\
    &= \Big(\sum_{n\ge 0}\psi(N^{-2}\lambda_n)\lambda_n^{\sigma+\kappa-\frac2p}|\langle f,\varphi_n\rangle|^2\Big)^\frac12\\
    &\les_L\Big(\sum_{n\ge 0}\langle N^{-2}\lambda_n\rangle^{-L}\lambda_n^{\sigma+\kappa-\frac2p}|\langle f,\varphi_n\rangle|^2\Big)^\frac12\\
 &\les\Big(\sum_{\lambda_n\les N^2}\lambda_n^{\sigma+\kappa-\frac2p}|\langle f,\varphi_n\rangle|^2\Big)^\frac12\\
 &\qquad\qquad+\Big(\sum_{\lambda_n\gg N^2}N^{2L}\lambda_n^{\sigma+\kappa-\frac2p-L}|\langle f,\varphi_n\rangle|^2\Big)^\frac12\\
  &\les\Big(\sum_{\lambda_n\les N^2}N^{2\max(\sigma+\kappa-\frac2p-\beta,0)}\lambda_n^{\beta}|\langle f,\varphi_n\rangle|^2\Big)^\frac12\\
  &\qquad\qquad+\Big(\sum_{\lambda_n\gg N^2}N^{2\max(\sigma+\kappa-\frac2p-\beta,0)}\lambda_n^{\beta}|\langle f,\varphi_n\rangle|^2\Big)^\frac12\\
  &\les N^{\max(\sigma+\kappa-\frac2p-\beta,0)}\|f\|_{\D^\beta}\sim  N^{\max(\sigma+\kappa-\frac2p-\beta,0)}\|f\|_{H^\beta}
\end{align*}
since $L\ge 0$ is arbitrary, and since $\D^\beta=H^\beta$ for the range of $\beta$ in Lemma~\ref{LEM:multipliers}.

At last, using \eqref{K2t}, the third term in \eqref{diffResolvent} is estimated similarly by
\begin{align*}
    \big\|\psi(N^{-2}\Delta)(f\pl X)\big\|_{W^{\sigma,p}}&\les N^{\max(\sigma+\kappa-\frac2p-\beta,0)} \big\|f\pl X\big\|_{B^{\beta+\frac2p-\kappa}_{p,2}}\\
    & \les N^{\max(\sigma+\kappa-\frac2p-\beta,0)}\|f\|_{B^{\beta+\frac2p-1}_{p,2}}\les N^{\max(\sigma+\kappa-\frac2p-\beta,0)}\|f\|_{H^{\beta}}.
\end{align*}
Putting everything together finally proves \eqref{multipliers}.

As for \eqref{multipliersannulus}, the only differences are that in treating the second term in the right-hand side of \eqref{diffResolvent}, the sums on $\lambda_n\les N^2$ and $\lambda_n \gg N^2$ are now reduced to a single sum on $\lambda_n\sim N^2$, for which the same bound holds with $N^{\max(\sigma+\kappa-\frac2p-\beta,0)}$ replaced by $N^{\sigma+\kappa-\frac2p-\beta}$. Similarly for the third term, using the definition of Besov spaces, that $\psi$ is compactly supported away from zero, together with the definition \eqref{funcalc} of the functional calculus, and \eqref{K2b}:
\begin{align*}
 \big\|\psi(N^{-2}\Delta)(f\pl X)\big\|_{W^{\sigma,p}}& \les  \big\|\psi(N^{-2}\Delta)(f\pl X)\big\|_{B^{\sigma}_{p,2}}\\
 &= \big\|\psi(N^{-2}\Delta)M^{\sigma}\mathbf{1}_{M\sim N}\Q_M(f\pl X)\big\|_{L^p_x\ell^2_M}\\
  &\les N^{\sigma+\kappa-\frac2p-\beta}\big\|M^{\beta+\frac2p-\kappa}\mathbf{1}_{M\sim N}\Q_M(f\pl X)\big\|_{L^p_x\ell^2_M}\\
  &\les N^{\sigma+\kappa-\frac2p-\beta}\big\|(f\pl X)\big\|_{B^{\beta+\frac2p-\kappa}_{p,2}}\\
  &\les N^{\sigma+\kappa-\frac2p-\beta}\big\|f\big\|_{B^{\beta+\frac2p-1}_{p,2}}\les N^{\sigma+\kappa-\frac2p-\beta}\big\|f\big\|_{H^\beta}.
\end{align*}
This proves \eqref{multipliersannulus}.
\end{proof}
%

As a Corollary, we will use the following uniform boundedness property of smooth projectors for $\H$ in H\"older spaces.
\begin{corollary}\label{COR:PMH}
Let $\chi\in C^\infty_0(\R)$. Then for any $\sigma\in (-1;1)$ and $0<\kappa\ll 1$ it holds
\begin{align*}
\big\|\chi(M^{-2}\H)\big\|_{\Cc^{\sigma+\kappa}\to\Cc^\sigma}\les 1,
\end{align*}
uniformly in $M\in\N^*$.
\end{corollary}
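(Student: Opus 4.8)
The strategy is to compare $\chi(M^{-2}\H)$ with the Schwartz multiplier $\chi(M^{-2}\Delta)$ of the Laplace--Beltrami operator, combining the comparison estimate of Lemma~\ref{LEM:multipliers} with the Sobolev--Besov embeddings available on the compact surface $\M$. Since $\chi\in C^\infty_0(\R)$ and $\H$ has discrete spectrum tending to $+\infty$, $\chi(M^{-2}\H)$ is of finite rank, so $\chi(M^{-2}\H)f\in\Cc^\sigma(\M)$ is well defined for $f\in\Cc^{\sigma+\kappa}(\M)$ and only the uniformity in $M$ is at stake. I would write
\begin{equation*}
\chi(M^{-2}\H)=\chi(M^{-2}\Delta)+R_M,\qquad R_M:=\chi(M^{-2}\H)-\chi(M^{-2}\Delta).
\end{equation*}
The first term is a classical multiplier of $-\Delta$, uniformly bounded on $\Cc^\sigma(\M)$ by the Mikhlin--H\"ormander multiplier theorem (the H\"older analogue of \eqref{K2b}, exactly as for $\P_N$ in Lemma~\ref{LEM:PM}); together with the compact embedding $\Cc^{\sigma+\kappa}(\M)\hookrightarrow\Cc^\sigma(\M)$ this piece is harmless, uniformly in $M$.

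The substance is the bound on $R_M$. Fix an auxiliary parameter $0<\kappa_0<\tfrac\kappa2$ — we only invoke $\xi\in\Cc^{-1-\kappa_0}$, which is almost sure for every $\kappa_0>0$ — and a large exponent $2\le p<\infty$. I would apply Lemma~\ref{LEM:multipliers} with $\psi=\chi$, $N=M$, its regularity index equal to $\sigma+\tfrac2p$, its loss parameter equal to $\kappa_0$, $\delta=1$, and $\beta=\sigma+\kappa-\kappa_0$. One checks directly that all hypotheses of Lemma~\ref{LEM:multipliers} are met once $\kappa$ is small enough that $\sigma+\kappa<1$: the condition $\tfrac2p-1<\sigma+\tfrac2p<\tfrac2p+1$ reduces to $\sigma\in(-1,1)$; the condition $0\le(\sigma+\tfrac2p)-\tfrac2p+\delta\le2$ reduces to $\sigma\in[-1,1]$; and $\kappa_0<\delta=1<1+\kappa_0$ together with $\kappa_0-1\le\beta\le1+\kappa_0$ follow from $-1<\sigma$ and $\sigma+\kappa<1$ using $2\kappa_0<\kappa$. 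With these choices the exponent in \eqref{multipliers} is
\begin{equation*}
\max\Big(\big(\sigma+\tfrac2p\big)+\kappa_0-\tfrac2p-\beta,\ 0\Big)=\max\big(2\kappa_0-\kappa,\ 0\big)=0,
\end{equation*}
so that $\|R_Mf\|_{W^{\sigma+\frac2p,p}}\les\|f\|_{H^{\sigma+\kappa-\kappa_0}}$ uniformly in $M\in\N^*$.

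To conclude, I would absorb the loss through two elementary embeddings on the compact $2$-manifold $\M$: first $\Cc^{\sigma+\kappa}(\M)\hookrightarrow H^{\sigma+\kappa-\kappa_0}(\M)$, obtained from $\|g\|_{L^2(\M)}\les\|g\|_{L^\infty(\M)}$ applied on each frequency block and summing the resulting geometric series (the strict loss $\kappa_0>0$ is essential here, since $\Cc^{s}\not\hookrightarrow H^{s}$ on a surface); second $W^{\sigma+\frac2p,p}(\M)\hookrightarrow B^{\sigma}_{\infty,\infty}(\M)=\Cc^\sigma(\M)$ by the Sobolev embedding in dimension $2$. Chaining these with the bound on $\chi(M^{-2}\Delta)$ gives
\begin{equation*}
\|\chi(M^{-2}\H)f\|_{\Cc^\sigma}\le\|R_Mf\|_{\Cc^\sigma}+\|\chi(M^{-2}\Delta)f\|_{\Cc^\sigma}\les\|f\|_{H^{\sigma+\kappa-\kappa_0}}+\|f\|_{\Cc^\sigma}\les\|f\|_{\Cc^{\sigma+\kappa}}
\end{equation*}
uniformly in $M\in\N^*$, which is the claim.

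The only genuine obstacle is the norm mismatch just highlighted: Lemma~\ref{LEM:multipliers} produces an $L^2$-based bound $\|f\|_{H^\beta}$, while a uniform-in-$M$ estimate forces the exponent in \eqref{multipliers} to vanish, i.e. $\beta\ge\sigma+\kappa_0$, yet the H\"older datum embeds into $H^\beta$ only for $\beta<\sigma+\kappa$ strictly. These are reconciled precisely because Lemma~\ref{LEM:multipliers} may be run with its own regularity-loss parameter $\kappa_0$ strictly smaller than the loss $\kappa$ allotted in the statement; everything else is routine bookkeeping with Besov embeddings.
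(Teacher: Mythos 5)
Your proof is correct and takes essentially the same approach as the paper's: split $\chi(M^{-2}\H)=\chi(M^{-2}\Dl)+R_M$, bound the $\Dl$-multiplier uniformly using Lemma~\ref{LEM:PM}, and control $R_M$ by applying Lemma~\ref{LEM:multipliers} with a zero net exponent and chaining with Sobolev/Besov embeddings between H\"older, $L^2$-based, and $L^p$-based spaces. Your bookkeeping is in fact slightly more explicit than the paper's terse one-line version — in particular you correctly flag that $\Cc^{s}\not\hookrightarrow H^{s}$ on a compact surface, so the internal loss parameter of Lemma~\ref{LEM:multipliers} must be chosen strictly smaller than the $\kappa$ of the statement, whereas the paper absorbs its margin into the codomain exponent $W^{\sigma+\frac2p+\frac\kappa2,p}$ without spelling out the corresponding small loss needed on the domain side.
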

\begin{proof}
Applying Lemmas~\ref{LEM:multipliers} and \eqref{LEM:PM}, we get for $p\gg 1$:
\begin{align*}
\big\|\chi(M^{-2}\H)\big\|_{\Cc^{\sigma+\kappa}\to\Cc^\sigma}&\le \big\|\chi(M^{-2}\H)-\chi(M^{-2}\Dl)\big\|_{\Cc^{\sigma+\kappa}\to\Cc^\sigma}+\big\|\chi(M^{-2}\Dl)\big\|_{\Cc^{\sigma+\kappa}\to\Cc^\sigma}\\
&\les \big\|\chi(M^{-2}\H)-\chi(M^{-2}\Dl)\big\|_{H^{\sigma+\kappa}\to W^{\sigma+\frac2p+\frac\kappa2,p}}+M^{-\kappa}\\
&\les 1+M^{-\kappa}.
\end{align*}
\end{proof}

\appendix

\section{Analytic toolbox}\label{SEC:Appendix}

\subsection{Analysis on manifolds and function spaces}
Recall that we work on a two-dimensional closed (compact, boundaryless), connected, orientable smooth Riemannian manifold $(\M,\gm)$, where we fix the metric $\gm$ once and for all and drop it in the remaining of the paper. We set $\{\varphi_n^{\Dl}\}_{n\ge 0}\subset C^{\infty}(\M)$ to be a basis of $L^2(\M)$ consisting of eigenfunctions of $-\Dl$ associated with the eigenvalue $\ld_n^\Dl\ge 0$, assumed to be arranged in increasing order: $0\le \ld_0^\Dl <\ld_1^\Dl\le\ld_2^\Dl\le...$ As for $\ld_n^\Dl$, $n\to \infty$, we have the following asymptotic behaviour given by Weyl's law:
\begin{align}\label{Weyl}
\frac{\ld_n^\Dl}{n} \too \frac{|\M|}{4\pi} \text{ as }n\to\infty.
\end{align}

Using the above basis of $L^2(\M)$, we can expand any $u\in\D'(\M)$ as
\begin{align*}
u = \sum_{n\ge 0}\langle u,\varphi_n^\Dl\rangle_\gm \varphi_n^\Dl,
\end{align*}
where $\langle\cdot,\cdot\rangle_\gm$ denotes the distributional pairing $\D'(\M)\times\D(\M)\to \R$ which coincides with the usual inner product in $L^2(\M)$ for distributions which are regular enough. For any $s\in\R$ and $1\le p\le \infty$, we thus define the Sobolev spaces
\begin{align*}
W^{s,p}(\M,\gm) \deff\Big\{u\in\D'(\M),~\|u\|_{W^{s,p}(\M)} <\infty\Big\}
\end{align*}
where
\begin{align*}
\|u\|_{W^{s,p}(\M)}  \deff \big\|(1-\Dl)^{\frac{s}2}u\big\|_{L^p(\M)} = \Big\|\sum_{n\ge 0}(1+\ld_n^\Dl)^s\langle u,\varphi_n^\Dl\rangle_\gm \varphi_n^\Dl\Big\|_{L^p(\M)}.
\end{align*}
When $p=2$ we write $H^s(\M)\deff W^{s,2}(\M)$.

Next, recall that for any self-adjoint elliptic operator $\AA$ on $L^2(\M)$ with discrete spectrum $\{\ld_n\}$ and orthonormal basis of eigenfunctions $\{\varphi_n\}$, the functional calculus of $\AA$ is defined for any $\psi\in L^{\infty}(\R)$ by
\begin{align}\label{funcalc}
\psi\big(\AA\big)u = \sum_{n\ge 0}\psi(\ld_n^\AA)\langle u,\varphi_n^\AA\rangle \varphi_n^\AA,
\end{align}
for all $u\in C^{\infty}(\M)$. This in particular allows us to define the more general class of Besov spaces on $\M$ associated with $\Dl$. First, using the functional calculus, we can define the Littlewood-Paley projectors $\Q_M$ for a dyadic integer $M\in 2^{\Z_{\ge-1}}\deff \{0,1,2,4,...\}$ as 
\begin{align}\label{QM}
\Q_M = \begin{cases}
\psi_0\big(-(2M)^{-2}\Dl\big)-\psi_0\big(-M^{-2}\Dl\big),~~M\ge 1,\\
\psi_0\big(-\Dl\big),~~M=0,
\end{cases}
\end{align}
where $\psi_0\in C^{\infty}_0(\R)$ is non-negative and such that $\supp\psi_0\subset [-1,1]$ and $\psi_0\equiv 1$ on $[-\frac12,\frac12]$. With the inhomogeneous dyadic partition of unity $\{\Q_M\}_{M\in 2^{\Z_{-1}}}$, we can then define the Besov norms for $p,q \in [1,\infty]$ and $s\in \R$,
\begin{align}\label{Besov norm}
\| f \|_{\Bb^s_{p,q}(\M)} \deff \Big\|  \jb{M}^{qs} \| \Q_M  f \|_{L^p(\M)} \Big\|_{\ell^q_M} = \Big(\sum_{M\in 2^{\Z_{-1}}}\jb{M}^{qs}\big\|\Q_M f \big\|_{L^p(\M)}^q\Big)^{\frac1q}.
\end{align}
for any smooth function $f$. The last sum is of course understood to be the supremum for $q=\infty$.
\begin{definition}\label{def}\rm Fix $p,q \in [1,\infty]$ and $s \in \R$. We define the Besov space $\Bb^s_{p,q}(\M)$ to be the completion of $C^{\infty}(\M)$ for the norm $ \| \cdot \|_{\Bb^s_{p,q}(\M)} $. In particular, this defines the H\"older spaces $\Cc^s(\M) = \Bb^s_{\infty, \infty}(\M)$ for any $s\in \R$. 
\label{Besov}
\end{definition}
These function spaces are the natural generalization of the usual Besov spaces on $\R^d$ to the context of closed manifolds. Let us recall the following characterization of these spaces from \cite[Proposition 2.5]{ORT}
\begin{lemma}\label{LEM:Beloc}
Let $(U,V,\kk)$ be a coordinate patch and $\chi\in C^{\infty}_0(V)$. For any $s\in\R$ and $1\leq p,r\leq \infty$, there exist $c,C>0$ such that for any $u\in C^{\infty}(\M)$,
\begin{align*}
c\|\chi u\|_{B^s_{p,r}(\M)}\leq  \|\kk^{\star}(\chi u)\|_{B^{s}_{p,r}(\R^2)}\leq C\|u\|_{B^s_{p,r}(\M)}.
\end{align*}
\end{lemma}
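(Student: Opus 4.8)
The statement to prove is the equivalence, on a coordinate patch $(U,V,\kk)$ with cutoff $\chi\in C^\infty_0(V)$, between the manifold Besov norm of $\chi u$ and the Euclidean Besov norm of the pushforward $\kk^\star(\chi u)$. Note that this is actually a result quoted from \cite{ORT} (Proposition 2.5 there), so the plan is really to recall the standard argument. The plan is to reduce everything to comparing the two scales of Littlewood--Paley projectors: the ones on $\M$ built from the functional calculus of $\Dl$ via \eqref{QM}, and the standard Fourier ones on $\R^2$. The key tool is that the heat (or resolvent-based) functional calculus for $-\Dl$ on a closed manifold is given by a smoothing operator whose Schwartz kernel is, modulo a globally smooth remainder, a properly supported pseudodifferential operator; hence in local coordinates $\psi(-M^{-2}\Dl)$ acts, up to smoothing terms, like a Fourier multiplier of symbol $\psi(M^{-2}|\eta|^2_{\gm(x)})$ plus lower-order corrections. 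I would invoke the standard semiclassical/pseudodifferential description of functions of an elliptic operator (e.g. via Helffer--Sj\"ostrand as already used in Lemma~\ref{LEM:multipliers}, or the Seeley--type parametrix) rather than reprove it.

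First I would fix a slightly larger cutoff $\widetilde\chi\in C^\infty_0(V)$ with $\widetilde\chi\equiv 1$ on a neighbourhood of $\supp\chi$, and insert $\widetilde\chi$ freely thanks to the off-diagonal rapid decay of the kernels of $\Q_M$ (pseudolocality): $(1-\widetilde\chi)\Q_M\chi$ is $O(M^{-\infty})$ in every operator norm, so it contributes only a harmless term controlled by a low norm of $u$. Next, on the support of $\widetilde\chi$ one transfers to $\R^2$ via $\kk$; there one compares $\kk^\star\Q_M\kk_\star$ (conjugated by $\widetilde\chi$) with the Euclidean projector $\Delta_M^{\R^2}$ at the same dyadic scale. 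The symbols agree to leading order, and the difference is an operator that loses at most one derivative at scale $M$, i.e. is bounded $B^{s}_{p,r}\to B^{s}_{p,r}$ on $\R^2$ with a gain; summing the geometric series in the standard way (Fefferman--Stein / square-function argument for the $\ell^r_M$ norm) yields $\|\kk^\star(\chi u)\|_{B^s_{p,r}(\R^2)}\lesssim \|u\|_{B^s_{p,r}(\M)}$, and symmetrically the reverse bound by running the same comparison with the roles of the two projector families exchanged. The low-frequency block $M=0$ is trivial since $\Q_0$ and $\Delta_0^{\R^2}$ are both smoothing.

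The main obstacle — and the only genuinely technical point — is the uniformity in $M$ of the commutator/remainder estimates when passing through the coordinate chart: one must check that conjugating $\psi(M^{-2}\Dl)$ by a compactly supported diffeomorphism and a cutoff produces, for each dyadic $M$, a Fourier integral/pseudodifferential operator whose symbol estimates are uniform in the semiclassical parameter $M^{-1}$, with the error one order down. This is exactly the type of uniform boundedness established in the pseudodifferential calculus with parameter and is precisely the mechanism already exploited in the proof of Lemma~\ref{LEM:multipliers} above; so I would cite that circle of ideas (and \cite{ORT,BGT}) rather than redo the symbolic calculus. Everything else is bookkeeping with dyadic sums and H\"older's inequality in the $\ell^r_M$ index.
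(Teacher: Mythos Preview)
The paper does not actually prove this lemma: it is stated with an explicit reference to \cite[Proposition~2.5]{ORT} and no argument is given in the text. Your sketch is a faithful outline of the standard proof (and of what is done in \cite{ORT}): reduce to a single chart via pseudolocality/off-diagonal decay of $\Q_M$, use the semiclassical pseudodifferential description of $\psi(-M^{-2}\Dl)$ in local coordinates to compare with the Euclidean Littlewood--Paley blocks at the same scale, and sum the lower-order remainders. There is nothing to correct; since the paper itself defers to the literature, your plan to cite \cite{ORT,BGT} for the uniform-in-$M$ symbol estimates rather than redo the calculus is exactly in the spirit of the paper.
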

This allows to transfer the usual linear or nonlinear estimates on Besov spaces on $\R^d$ to the case of Besov spaces on $\M$; see for example Lemma~\ref{LEM:Besov} below, which is taken from \cite[Corollary 2.7]{ORT}.
\begin{lemma}\label{LEM:Besov}
Let $B^s_{p,q}(\M)$ be the Besov spaces defined above. Then the following properties hold.\\
\textup{(i)} For any $s\in\R$ we have $B^s_{2,2}(\M)=H^s(\M)$, and more generally for any $2\le p <\infty$ and $\eps>0$ we have
\begin{align*}
\|u\|_{B^s_{p,\infty}(\M)}\les \|u\|_{W^{s,p}(\M)}\les \|u\|_{B^s_{p,2}(\M)} \les \|u\|_{B^{s+\eps}_{p,\infty}(\M)}.
\end{align*}
\textup{(ii)} Let $s\in\R$ and $1\leq p_1\leq p_2\leq \infty$ and $q_1,q_2\in [1,\infty]$. Then for any $f\in B^s_{p_1,q}(\M)$ we have
\begin{align*}
\|f\|_{B^{s-2\big(\frac{1}{p_1}-\frac{1}{p_2}\big)}_{p_2,q}(\M)}\les \|f\|_{B^s_{p_1,q}(\M)}.
\end{align*}
\textup{(iii)} Let $\alpha,\beta\in \R$ with $\alpha+\beta>0$, and $p_1,p_2,q_1,q_2\in [1,\infty]$ with
\begin{align*}
\frac1p = \frac{1}{p_1}+\frac{1}{p_2}\qquad\text{ and }\qquad\frac1q=\frac{1}{q_1}+\frac{1}{q_2}.
\end{align*}
 Then for any $f\in B^{\alpha}_{p_1,q_1}(\M)$ and $g\in B^{\beta}_{p_2,q_2}(\M)$, we have $fg\in B^{\alpha\wedge \beta}_{p,q}(\M)$, and it holds:
 \begin{itemize}
 \item If $\alpha\wedge \beta<0$, then
 \begin{align*}
\|fg\|_{B^{\alpha\wedge\beta}_{p,q}(\M)}\les \|f\|_{B^{\alpha}_{p_1,q_1}(\M)}\|g\|_{B^{\beta}_{p_2,q_2}(\M)}.
\end{align*}
\item If $\alpha\wedge \beta >0$, then
 \begin{align*}
\|fg\|_{B^{\alpha\wedge\beta}_{p,q}(\M)}\les \|f\|_{B^{\alpha\wedge\beta}_{p_1,q_1}(\M)}\|g\|_{B^{\alpha\wedge\beta}_{p_2,q_2}(\M)}.
\end{align*}
 \end{itemize}
\end{lemma}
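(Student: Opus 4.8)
The plan is to reduce all three statements to their classical counterparts on $\R^2$ by means of the localization principle of Lemma~\ref{LEM:Beloc}. Fix a finite atlas $(U_i,V_i,\kk_i)_i$ of $\M$ with a subordinate smooth partition of unity $(\chi_i)_i$, $\sum_i\chi_i\equiv1$, $\chi_i\in C^\infty_0(V_i)$. Writing $u=\sum_i\chi_iu$ and applying Lemma~\ref{LEM:Beloc} in each chart yields $\|u\|_{B^s_{p,r}(\M)}\sim\sum_i\|\kk_i^\star(\chi_iu)\|_{B^s_{p,r}(\R^2)}$ for all $s\in\R$ and $1\le p,r\le\infty$; the same equivalence for $W^{s,p}(\M)$ follows once one knows $W^{s,p}(\M)=F^s_{p,2}(\M)$, which is a consequence of a Mihlin--H\"ormander type multiplier theorem for the functional calculus of $-\Dl$ on $\M$ (or may simply be absorbed into the cited statement \cite[Corollary 2.7]{ORT}). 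With these norm equivalences, (i)--(iii) become textbook embedding and product theorems for Besov and Triebel--Lizorkin spaces on $\R^2$, and the only point requiring care beyond localization is the product estimate (iii).

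For (i) I would establish $B^s_{2,2}(\M)=H^s(\M)$ directly on $\M$: by Parseval in the eigenbasis $(\varphi_n^\Dl)$ one has $\|\Q_Mu\|_{L^2(\M)}^2=\sum_n\psi_M(\ld_n^\Dl)^2|\langle u,\varphi_n^\Dl\rangle|^2$, where $\psi_M$ is the spectral symbol of $\Q_M$, supported in $\{\ld\sim M^2\}$, and since $\sum_M\langle M\rangle^{2s}\psi_M(\ld_n^\Dl)^2\sim\langle\ld_n^\Dl\rangle^s$ uniformly in $n$, summing in $M$ gives $\|u\|_{B^s_{2,2}(\M)}^2\sim\|u\|_{H^s(\M)}^2$. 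The inclusion $B^{s+\eps}_{p,\infty}(\M)\hookrightarrow B^s_{p,2}(\M)$ is then immediate from summing the convergent dyadic series $\sum_M\langle M\rangle^{-2\eps}$, while $B^s_{p,2}(\M)\hookrightarrow W^{s,p}(\M)\hookrightarrow B^s_{p,\infty}(\M)$ for $p\ge2$ is, after localization, the classical chain $B^s_{p,2}\hookrightarrow F^s_{p,2}\hookrightarrow B^s_{p,p}\hookrightarrow B^s_{p,\infty}$ on $\R^2$. For (ii) I would argue termwise with Bernstein's inequality on the two-dimensional manifold $\M$: for $f$ spectrally localized at frequency $\sim M$ one has $\|f\|_{L^{p_2}(\M)}\les M^{2(\frac1{p_1}-\frac1{p_2})}\|f\|_{L^{p_1}(\M)}$, so applying this to $\Q_Mf$, multiplying by $\langle M\rangle^{s-2(\frac1{p_1}-\frac1{p_2})}$ and taking the $\ell^q_M$ norm gives the claimed embedding.

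Part (iii) is the substantive step: I would use Bony's paraproduct calculus adapted to $\M$, decomposing $fg=\PA_fg+\PI(f,g)+\PA_gf$ with the projectors $\Q_M$ of $\M$ and invoking the continuity estimates for $\PA$ and $\PI$ already recalled in Section~\ref{SEC:Anderson} (following \cite{Bony,Mouzard22}), namely $\|\PA_fg\|_{B^{\alpha\wedge0+\beta}_{p,q}}\les\|f\|_{B^{\alpha\wedge0}_{p_1,\infty}}\|g\|_{B^\beta_{p_2,q}}$ and $\|\PI(f,g)\|_{B^{\alpha+\beta}_{p,q}}\les\|f\|_{B^\alpha_{p_1,q_1}}\|g\|_{B^\beta_{p_2,q_2}}$ when $\alpha+\beta>0$; combining the three pieces, together with the trivial embeddings $B^t_{p,q}(\M)\hookrightarrow B^{t'}_{p,q}(\M)$ for $t\ge t'$, produces the two stated bounds according to whether $\alpha\wedge\beta<0$ or $\alpha\wedge\beta>0$. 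The main obstacle is that on $\M$ the $\Q_M$ do not have exact spectral-support additivity, so the almost-orthogonality underlying the paraproduct and resonant estimates --- that $\Q_M(\Q_{M_1}f\,\Q_{M_2}g)$ is negligible unless $M\les\max(M_1,M_2)$, respectively unless $M_1\sim M_2$ --- must be obtained only approximately, with rapidly decaying off-diagonal tails, using finite propagation speed for the wave equation on $\M$ (equivalently, kernel estimates for the $\Q_M$). Since this is precisely the content of \cite{ORT} and the references therein, the cleanest route is to record the reduction above and then quote \cite[Corollary 2.7]{ORT}.
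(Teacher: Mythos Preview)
Your proposal is correct and matches the paper's approach: the paper does not give a proof of this lemma but simply records it as taken from \cite[Corollary~2.7]{ORT}, whose argument is precisely the localization-to-$\R^2$ scheme via Lemma~\ref{LEM:Beloc} that you outline. Your additional direct arguments for (i) via Parseval and for (ii) via Bernstein (which follows from \eqref{K2b}) are valid alternatives, and your remark on the almost-orthogonality issue for (iii) correctly identifies the only nontrivial point.
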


We will also make use of the following fractional Leibniz rule.
\begin{lemma}\label{LEM:prodHugo}
Let $r,s<1$ with $r+s>0$, and $t=r+s-1$. Then there exists $C>0$ such that for any $u\in H^s(\M)$ and $v\in H^r(\M)$, we have the fractional Leibniz rule
    $$
    \|uv\|_{H^t}\le C\|u\|_{H^s}\|v\|_{H^r}.
    $$
 \end{lemma}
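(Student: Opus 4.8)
The statement is the fractional Leibniz rule $\|uv\|_{H^t}\lesssim\|u\|_{H^s}\|v\|_{H^r}$ for $r,s<1$, $r+s>0$ and $t=r+s-1$. Since $t=r+s-1<r\wedge s$ and $t$ may be negative while $r,s$ can have either sign, the cleanest route is via Bony's paraproduct decomposition $uv=\PA_uv+\PI(u,v)+\PA_vu$ together with the continuity estimates for $\PA$ and $\PI$ already recalled in Section~\ref{SEC:Anderson}, combined with the characterization of Besov spaces on $\M$ from Lemma~\ref{LEM:Beloc} and the identification $B^s_{2,2}(\M)=H^s(\M)$ from Lemma~\ref{LEM:Besov}~(i). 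The overall strategy is: localize to a coordinate chart via Lemma~\ref{LEM:Beloc} so that it suffices to prove the estimate on $\R^2$ (or simply work directly with the paraproduct calculus on $\M$ as set up in \cite{Mouzard22}), then bound each of the three pieces of the Bony decomposition separately in $H^t=B^t_{2,2}$.

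\textbf{Key steps.} First, for the two paraproduct terms, I would use the continuity bound $\|\PA_uv\|_{B^{\alpha\wedge0+\beta}_{2,2}}\lesssim\|u\|_{B^{\alpha\wedge0}_{\infty,\infty}}\|v\|_{B^\beta_{2,2}}$ recalled above, but here it is more efficient to use the $L^2$-valued version $\|\PA_uv\|_{B^{(\alpha\wedge0)+\beta}_{2,2}}\lesssim\|u\|_{B^{\alpha\wedge0}_{\infty,\infty}}\|v\|_{B^\beta_{2,2}}$ after paying a Sobolev embedding $H^s(\M)=B^s_{2,2}(\M)\hookrightarrow B^{s-1}_{\infty,\infty}(\M)$ valid in dimension $2$ (losing $d/2=1$ derivatives). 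Thus $\PA_uv$ lands in $B^{(s-1)\wedge0+r}_{2,2}$: when $s\le1$ this is $B^{s+r-1}_{2,2}=H^t$, giving $\|\PA_uv\|_{H^t}\lesssim\|u\|_{H^s}\|v\|_{H^r}$; the symmetric term $\PA_vu$ is handled identically with the roles of $r,s$ swapped, landing in $B^{(r-1)\wedge0+s}_{2,2}=H^t$ since $r\le1$. Second, for the resonant term, the Young-type condition $\alpha+\beta>0$ is exactly the hypothesis $r+s>0$, and I would apply $\|\PI(u,v)\|_{B^{\alpha+\beta}_{p,q}}\lesssim\|u\|_{B^\alpha_{p_1,\infty}}\|v\|_{B^\beta_{p_2,q}}$ with again one Sobolev embedding $H^s\hookrightarrow B^{s-1}_{\infty,\infty}$ applied to the rougher factor so that $\PI(u,v)\in B^{(s-1)+r}_{2,2}=B^{t}_{2,2}=H^t$, with norm controlled by $\|u\|_{H^s}\|v\|_{H^r}$. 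Summing the three contributions yields the claim. One should double check that in each term the exponent actually equals $t=r+s-1$ and not something larger, which only requires $r\le1$ and $s\le1$ — precisely the hypotheses.

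\textbf{Main obstacle.} The only real subtlety is bookkeeping the regularity exponents so that all three pieces genuinely land in $H^t$ with the stated product of norms, and in particular making sure the Sobolev embedding $H^\sigma(\M)\hookrightarrow B^{\sigma-1}_{\infty,\infty}(\M)$ in dimension $2$ is the right loss to absorb: it is exactly $d/2$ with $d=2$, and it appears in Lemma~\ref{LEM:Besov}~(i)--(ii) through $\|u\|_{B^{\sigma-1}_{\infty,\infty}}\lesssim\|u\|_{B^{\sigma-1}_{\infty,2}}\lesssim\|u\|_{B^{\sigma-1+\frac22}_{2,2}}=\|u\|_{H^\sigma}$. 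A secondary point is that the resonant estimate and one of the paraproduct estimates require the rougher of the two factors to have \emph{some} Hölder regularity of sign matching the $\alpha\wedge0$ or $\alpha+\beta>0$ conditions, which is why the hypothesis $r+s>0$ is used precisely there; no further positivity is needed. Apart from this routine tracking of indices, there is no analytic difficulty — the heavy lifting is entirely contained in the paraproduct continuity bounds and the Besov embeddings quoted above.
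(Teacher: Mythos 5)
Your approach via Bony's decomposition is a genuinely different route from the paper, which simply cites the known $\R^d$ estimate from \cite[Corollary 2.1]{Tambaca} and localizes to $\M$ via a finite partition of unity and Lemma~\ref{LEM:Beloc}, referring to the proof of Lemma~\ref{LEM:Besov}~(iii) in \cite{ORT} for the localization argument. The two paraproduct pieces $\PA_u v$ and $\PA_v u$ in your scheme are handled correctly: applying $H^s\hookrightarrow B^{s-1}_{\infty,\infty}$ to one factor and the paraproduct continuity with $\alpha=s-1<0$, $\beta=r$, $p_1=\infty$, $p_2=p=q=2$ lands each term in $B^{(s-1)\wedge 0+r}_{2,2}=B^{s+r-1}_{2,2}=H^t$ as you claim, since $s<1$ and $r<1$.

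However, your treatment of the resonant term $\PI(u,v)$ contains a genuine gap. You state that the Young condition $\alpha+\beta>0$ ``is exactly the hypothesis $r+s>0$,'' which is true only if one takes $\alpha=s$ and $\beta=r$. Yet you then apply the embedding $H^s\hookrightarrow B^{s-1}_{\infty,\infty}$ to the rougher factor and set $\alpha=s-1$, so the Young condition actually required by the resonant continuity bound becomes $(s-1)+r>0$, i.e.\ $r+s>1$, which is strictly stronger than the hypothesis. Whenever $t=r+s-1\le 0$ (for instance $r=s=0.3$) the resonant estimate cannot be invoked as you describe, and this is precisely the regime where the lemma has content. The fix is to reverse the order of operations: apply the resonant continuity first with both factors kept in $L^2$-based spaces, taking $\alpha=s$, $\beta=r$, $p_1=p_2=2$, $p=1$, $q=2$, which gives $\PI(u,v)\in B^{s+r}_{1,2}$ under exactly the hypothesis $r+s>0$; then invoke the Besov embedding from Lemma~\ref{LEM:Besov}~(ii), $B^{s+r}_{1,2}\hookrightarrow B^{s+r-2\left(\frac{1}{1}-\frac{1}{2}\right)}_{2,2}=B^{t}_{2,2}=H^t$, which costs exactly one derivative in dimension two. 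With this correction the paraproduct route does close, and is arguably more self-contained than citing \cite{Tambaca}, at the price of relying on the full range of exponents in the manifold paraproduct estimates.
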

    \begin{proof}
    The corresponding estimate in $\R^d$ is proved in \cite[Corollary 2.1]{Tambaca}. Then the one on $\M$ follows from the one in $\R^d$ through the use of a finite partition of unity and Lemma~\ref{LEM:Beloc}; see the proof of Lemma~\ref{LEM:Besov}~(iii) in \cite{ORT} for details.
    \end{proof}

\subsection{Schwartz multipliers of the Laplace-Beltrami operator}
We will need to estimate the action of semi-classical multipliers on $\M$ with symbol in $\S(\R)$. We start by recalling the following universal bound on their kernel from \cite[Lemma 2.5]{ORTW}.
\begin{lemma}\label{LEM:PM}
Let $\psi\in\S(\R)$, and for any $h\in (0,1]$ define the kernel
\begin{align}\label{K1}
\K_h(x,y)\deff \sum_{n\ge 0}\psi\big(h^{2}\ld_n^\Dl\big)\varphi_n(x)\varphi_n(y)
\end{align}  
Then for any $L\ge 0$, there exists $C>0$ such that for any $h\in (0,1]$ and $x,y\in\M$ it holds
\begin{align}\label{K2}
\big|\K_h(x,y)\big|\le C h^{-2}\jb{h^{-1}\dg(x,y)}^{-L},
\end{align}
where $\dg$ is the geodesic distance on $\M$. In particular, it holds
\begin{align}\label{K2b}
\Big\|\psi(-h^{2}\Dl)\Big\|_{L^p(\M)\to L^q(\M)}\les h^{-2(\frac1p-\frac1q)}
\end{align}
and 
\begin{align}\label{K2t}
\Big\|\psi(-h^{2}\Dl)\Big\|_{B^{\sigma_1+\sigma_2}_{p,q}(\M)\to B^{\sigma_1}_{p,q}(\M)}\les h^{-\sigma_2}
\end{align}
for any $h\in (0,1]$ and $1\le p,q \le\infty$ and $\sigma_1\in\R$, $\sigma_2\ge 0$.
\end{lemma}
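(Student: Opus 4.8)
The plan is to deduce the operator bounds \eqref{K2b} and \eqref{K2t} from the pointwise kernel estimate \eqref{K2}, and to prove \eqref{K2} itself via finite propagation speed for the wave group. Granting \eqref{K2}: since $\M$ is $2$–dimensional, $\int_\M\jb{h^{-1}\dg(x,y)}^{-L}\,dy\les h^2$ for $L>2$, so Schur's test and interpolation give $\|\psi(-h^2\Delta)\|_{L^p\to L^p}\les 1$ for all $p\in[1,\infty]$, while trivially $\|\psi(-h^2\Delta)\|_{L^1\to L^\infty}=\sup_{x,y}|\K_h(x,y)|\les h^{-2}$; interpolating these two bounds yields $\|\psi(-h^2\Delta)\|_{L^p\to L^q}\les h^{-2(\frac1p-\frac1q)}$ for $p\le q$, which is \eqref{K2b}. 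For \eqref{K2t}, decompose $\psi(-h^2\Delta)=\sum_{M}\Q_M\psi(-h^2\Delta)$; the operator $\Q_M\psi(-h^2\Delta)$ has symbol equal to $\psi(h^2\lambda)$ times the symbol of $\Q_M$ in \eqref{QM}, supported on $\lambda\sim M^2$, hence is again of the form governed by \eqref{K2} after rescaling, and the rapid decay of $\psi$ together with this frequency localization forces $\|\Q_M\psi(-h^2\Delta)\|_{L^p\to L^p}\les_L\min\big(1,(hM)^{-L}\big)$ for every $L\ge 0$. Summing $\jb M^{q\sigma_1}\|\Q_M\psi(-h^2\Delta)f\|_{L^p}^q$ and using $\jb M^{\sigma_2}\les h^{-\sigma_2}$ on the range $M\les h^{-1}$ (recall $\sigma_2\ge0$), the tail $M\gtrsim h^{-1}$ being absorbed by the factor $(hM)^{-L}$, gives \eqref{K2t}.

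It remains to prove \eqref{K2}. Writing $\Phi(\tau):=\psi(\tau^2)$, an even Schwartz function, we have $\psi(-h^2\Delta)=\Phi(h\sqrt{-\Delta})$ and, through the functional calculus applied to the half-wave group,
\begin{align*}
\Phi\big(h\sqrt{-\Delta}\big)=\frac1{2\pi h}\int_\R\widehat\Phi(t/h)\cos\big(t\sqrt{-\Delta}\big)\,dt.
\end{align*}
Fix $\eps_0>0$ less than the injectivity radius of $(\M,\gm)$ and split this integral at $|t|=\eps_0$. On $|t|>\eps_0$ the rapid decay of $\widehat\Phi$ gives $|\widehat\Phi(t/h)|\les_N h^{N}|t|^{-N}$; writing $\cos(t\sqrt{-\Delta})=\jb{\sqrt{-\Delta}}^{-2k}\jb{\sqrt{-\Delta}}^{2k}\cos(t\sqrt{-\Delta})$ with $2k>1$ (so that $\jb{\sqrt{-\Delta}}^{-2k}$ has bounded kernel on the $2$–manifold $\M$), trading $\jb{\sqrt{-\Delta}}^{2k}$ for $t$–derivatives of $\cos(t\sqrt{-\Delta})$ and integrating by parts in $t$, this part is $O_N(h^N)$ uniformly in $x,y$; since $\dg(x,y)\le\mathrm{diam}(\M)$ and hence $\jb{h^{-1}\dg(x,y)}^{-L}\gtrsim h^{L}$, taking $N=N(L)$ large enough bounds it by $C_L h^{-2}\jb{h^{-1}\dg(x,y)}^{-L}$. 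On $|t|\le\eps_0$, finite propagation speed confines the kernel of $\cos(t\sqrt{-\Delta})$ to $\{\dg(x,y)\le|t|\}$, and the Hadamard parametrix represents it, modulo a smooth remainder, as a finite sum of oscillatory integrals of the form $\int_{\R^2}e^{i\langle\kk(x)-\kk(y),\eta\rangle\pm it|\eta|}a_j(t,x,y,\eta)\,d\eta$ in a coordinate patch $\kk$, with classical symbols $a_j$. Substituting this into the $t$–integral against $h^{-1}\widehat\Phi(t/h)$, rescaling $\eta\mapsto h^{-1}\eta$, and integrating by parts in $\eta$ (non-stationary phase, using $|\kk(x)-\kk(y)|\sim\dg(x,y)$) produces exactly the claimed bound $|\K_h(x,y)|\les h^{-2}\jb{h^{-1}\dg(x,y)}^{-L}$.

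The main obstacle is this small–$t$ analysis: building the half-wave parametrix on the compact manifold and tracking the symbol bounds with enough care that, after rescaling in the frequency variable and iterated integration by parts, one simultaneously recovers the $h^{-2}$ prefactor and decay of arbitrary polynomial order in $h^{-1}\dg(x,y)$. This is classical (it is the content of the standard wave–parametrix construction, in the spirit of \cite{BGT}), and indeed \eqref{K2} is exactly \cite[Lemma 2.5]{ORTW}, whose argument we follow.
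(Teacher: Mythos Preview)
Your proof is correct and follows the same classical route that the paper invokes by citation: \eqref{K2} via the cosine transform, finite propagation speed, and the Hadamard parametrix is precisely the content of \cite[Lemma~2.5]{ORTW} (as you yourself note), \eqref{K2b} via Schur plus interpolation is the standard consequence, and your Littlewood--Paley argument for \eqref{K2t} is exactly the ``straightforward adaptation'' of \cite[Lemma~2.6]{ORTW} that the paper alludes to. You have simply unpacked what the paper leaves to the references.
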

\begin{proof}
Estimates \eqref{K2} and \eqref{K2b} are proved in \cite[Lemma 2.5]{ORTW}. As for \eqref{K2t}, it is a straightforward adaptation of the proof of \eqref{Schauder} below given in \cite[Lemma 2.6]{ORTW}, since there only the fact that $e^{t\Dl}$ is a Schwartz multiplier was used, but not the precise form of its symbol; see (2.13) in \cite{ORTW}.
\end{proof}

As a corollary of the estimates above, we can then investigate the behaviour of the solutions to the linear heat equation on $\M$. Indeed, we can use the eigenfunctions expansion to represent the solution of the heat equation
\begin{align*}
\begin{cases}
\dt u -\Dl u = 0,\\
u(0) = u_0\in \D'(\M)
\end{cases}
~~(t,x)\in \R_+\times\M,
\end{align*}
as the distribution
\begin{align*}
u(t) = e^{t\Dl}u_0 = \sum_{n\ge 0}e^{-t\ld_n^\Dl} \langle u_0,\varphi_n\rangle\varphi_n.
\end{align*}
The well-known heat kernel is then the kernel of the above propagator, defined as
\begin{align}\label{heatker}
p_t^\Dl(x,y)\deff \sum_{n\ge 0}e^{-t\ld_n^\Dl}\varphi_n(x)\varphi_n(y).
\end{align}
As a corollary to Lemma~\ref{LEM:PM}, we have the following bounds; see \cite[Lemma 2.6]{ORTW} for a proof.
\begin{lemma}\label{LEM:heatker}
\textup{(i)} For any $L\ge 0$, there exists $C>0$ such that for any $0<t\le1$ and $x,y\in\M$ it holds
\begin{align}\label{heatkerbound}
|p_t^\Dl(x,y)|\le C t^{-1}\langle t^{-\frac12}\dg(x,y)\rangle^{-L}.
\end{align} 
\textup{(ii)} For any $1\le p\le q\le \infty$, any $1\le r\le\infty$ and $\al_1,\al_2\in\R$ with $\al_1\le \al_2$, there exists $C>0$ such that for any $0<t\le 1$ and any $u\in B^{\al_1}_{p,r}(\M)$, we have \textup{Schauder's estimate}
\begin{align}\label{Schauder}
\big\|e^{t\Dl}u\big\|_{B^{\al_2}_{q,r}(\M)}\le C t^{-\frac{\al_2-\al_1}2-(\frac1p-\frac1q)}\big\|u\big\|_{B^{\al_1}_{p,r}(\M)}.
\end{align}
\end{lemma}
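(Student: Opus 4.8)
The plan is to read off both statements from the universal kernel bound \eqref{K2} of Lemma~\ref{LEM:PM}, after identifying the heat propagator with a semiclassical Schwartz multiplier.

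For part (i) I would observe that the heat kernel \eqref{heatker} is nothing but the kernel $\K_h$ of \eqref{K1} with $h=\sqrt t\in(0,1]$ and with $\psi$ any Schwartz function that coincides with $s\mapsto e^{-s}$ on $[0,\infty)$ --- for instance $\psi(s)=\chi(s)e^{-s}$ where $\chi\in C^\infty(\R)$ equals $1$ on $[0,\infty)$ and $0$ on $(-\infty,-1]$, which is smooth, compactly supported to the left and exponentially decaying to the right, hence Schwartz. Since $\ld_n^\Dl\ge 0$ one has $\psi(h^2\ld_n^\Dl)=e^{-t\ld_n^\Dl}$, so $p_t^\Dl=\K_{\sqrt t}$, and \eqref{heatkerbound} is exactly \eqref{K2} with $h=\sqrt t$.

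For part (ii) I would argue dyadically. Writing $e^{t\Dl}u=\sum_M (e^{t\Dl}\widetilde\Q_M)\,\Q_M u$ with $\widetilde\Q_M$ a fattened Littlewood--Paley projector localized to $\ld_n^\Dl\sim M^2$ and satisfying $\widetilde\Q_M\Q_M=\Q_M$, for $M\ge 1$ the operator $e^{t\Dl}\widetilde\Q_M$ equals $\eta_{M,t}\big(-M^{-2}\Dl\big)\,e^{-ctM^2}$, where $\eta_{M,t}$ is supported in a fixed annulus $\{\s\sim 1\}$ and arises from the rescaled symbol $e^{-tM^2\s}$ times a bump; since differentiating $e^{-tM^2\s}$ on that annulus only produces powers of $tM^2$, which are reabsorbed by the exponential, the family $\{\eta_{M,t}\}$ is bounded in every Schwartz seminorm uniformly in $M\ge 1$ and $t\in(0,1]$. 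Applying \eqref{K2b} with $h=M^{-1}$ then gives, using also $\|\widetilde\Q_M\|_{L^p\to L^p}\les 1$,
\begin{align*}
\|\Q_M e^{t\Dl}u\|_{L^q}\les e^{-ctM^2}M^{2(\frac1p-\frac1q)}\|\Q_M u\|_{L^p},\qquad M\ge 1,
\end{align*}
while the mode $M=0$ is handled directly since $e^{t\Dl}\Q_0=e^{t\Dl}\psi_0(-\Dl)$ has, through the functional calculus \eqref{funcalc}, a symbol bounded by $1$ with support in $\{\ld\le1\}$, whence $\|e^{t\Dl}\Q_0\|_{L^p\to L^q}\les1\les t^{-a}$ by $t\le1$, with $a:=\tfrac{\al_2-\al_1}2+(\tfrac1p-\tfrac1q)\ge0$.

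It then remains to assemble the Besov norm: for $M\ge1$, since $\al_2-\al_1\ge0$ and $\tfrac1p-\tfrac1q\ge0$,
\begin{align*}
\jb{M}^{\al_2}M^{2(\frac1p-\frac1q)}e^{-ctM^2}\les \jb{M}^{\al_1}(tM^2)^{a}e^{-ctM^2}\,t^{-a}\les t^{-a}\jb{M}^{\al_1}e^{-\frac c2 tM^2},
\end{align*}
so that
\begin{align*}
\|e^{t\Dl}u\|_{B^{\al_2}_{q,r}}\les \big\|\jb{M}^{\al_2}M^{2(\frac1p-\frac1q)}e^{-ctM^2}\|\Q_M u\|_{L^p}\big\|_{\ell^r_M}\les t^{-a}\big\|\jb{M}^{\al_1}e^{-\frac c2 tM^2}\|\Q_M u\|_{L^p}\big\|_{\ell^r_M}\le t^{-a}\|u\|_{B^{\al_1}_{p,r}},
\end{align*}
which is \eqref{Schauder}. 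The only genuinely delicate point is the uniform-in-$(M,t)$ control of the Schwartz seminorms of the rescaled symbols once the scalar factor $e^{-ctM^2}$ has been extracted; everything else is bookkeeping (the $M=0$ mode, the fattened projectors, the elementary inequality $x^a e^{-cx}\les e^{-cx/2}$). In fact this argument shows that the mapping properties of Lemma~\ref{LEM:PM} are stable under composition with $e^{t\Dl}$, which is precisely the route used in \cite[Lemma 2.6]{ORTW}.
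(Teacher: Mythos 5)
Your proof is correct and follows exactly the route the paper intends: the paper states this lemma as a corollary of Lemma~\ref{LEM:PM} and defers to \cite[Lemma 2.6]{ORTW}, where part (i) is read off the universal kernel bound \eqref{K2} with $h=\sqrt t$ and part (ii) is obtained by treating $e^{t\Dl}$ dyadically as a family of Schwartz multipliers with uniformly controlled seminorms. Your identification of the delicate point (uniformity of the Schwartz seminorms of the rescaled symbols after extracting $e^{-ctM^2}$) and your handling of the $M=0$ mode are both accurate.
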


We will also use of the following bound on the kernel of fractional antiderivatives on $\M$.
\begin{lemma}\label{LEM:Gsigma}
For $\sigma\in (0;2)$, let $G_\sigma$ be the distributional kernel of $(1-\Dl)^{-\frac{\sigma}2}$. Then $G_\sigma$ is non-negative distribution, smooth away from the diagonal in $\M^2$, and we have the estimate
\begin{align}\label{Gsigma}
G_\sigma(x,y)\les \dg(x,y)^{\sigma-2}.
\end{align} 
\end{lemma}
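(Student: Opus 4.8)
\textbf{Proof plan for Lemma~\ref{LEM:Gsigma}.}

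The plan is to reduce the statement to the corresponding well-known fact on $\R^2$ via a partition of unity and the local characterization of Besov/Sobolev spaces (Lemma~\ref{LEM:Beloc}), exactly as done for the other function-space estimates in this appendix. First I would recall that $(1-\Dl)^{-\frac\sigma2}$ can be represented through the subordination formula
\[
(1-\Dl)^{-\frac\sigma2} = \frac1{\Gamma(\tfrac\sigma2)}\int_0^\infty t^{\frac\sigma2-1}e^{-t}e^{t\Dl}\,dt,
\]
so that the kernel is
\[
G_\sigma(x,y) = \frac1{\Gamma(\tfrac\sigma2)}\int_0^\infty t^{\frac\sigma2-1}e^{-t}p_t^\Dl(x,y)\,dt.
\]
Non-negativity is then immediate since the heat kernel $p_t^\Dl$ is non-negative (it is the transition density of Brownian motion on $\M$, or alternatively one invokes the maximum principle), and smoothness away from the diagonal follows from smoothness of $(x,y)\mapsto p_t^\Dl(x,y)$ for $t>0$ together with the rapid decay $p_t^\Dl(x,y)\les t^{-1}\langle t^{-1/2}\dg(x,y)\rangle^{-L}$ from Lemma~\ref{LEM:heatker}~(i), which makes the $t$-integral converge locally uniformly in all derivatives on $\{x\neq y\}$.

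For the quantitative bound \eqref{Gsigma}, I would split the integral at $t = \dg(x,y)^2$ (we may assume $\dg(x,y)\le 1$, the complementary region being trivial by smoothness and compactness). On $0<t\le \dg(x,y)^2$ I use \eqref{heatkerbound} with $L$ large: $p_t^\Dl(x,y)\les t^{-1}(t^{-1/2}\dg(x,y))^{-L} = t^{L/2-1}\dg(x,y)^{-L}$, and since $\frac\sigma2-1+\frac L2-1 > -1$ for $L$ large, $\int_0^{\dg(x,y)^2}t^{\frac\sigma2-1}t^{\frac L2-1}\,dt\,\dg(x,y)^{-L}\les \dg(x,y)^{\sigma-2+L}\dg(x,y)^{-L} = \dg(x,y)^{\sigma-2}$. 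On $\dg(x,y)^2 < t\le 1$ I use the crude bound $p_t^\Dl(x,y)\les t^{-1}$, giving $\int_{\dg(x,y)^2}^1 t^{\frac\sigma2-1}e^{-t}t^{-1}\,dt \les \int_{\dg(x,y)^2}^\infty t^{\frac\sigma2-2}\,dt \les \dg(x,y)^{\sigma-2}$ since $\frac\sigma2-2 < -1$. On $t>1$ the factor $e^{-t}$ makes the contribution $O(1)\les \dg(x,y)^{\sigma-2}$. Summing the three pieces yields \eqref{Gsigma}.

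Alternatively, and perhaps more in keeping with the rest of the appendix, one can avoid even the subordination identity: localize with a coordinate patch $(U,V,\kk)$ and $\chi\in C^\infty_0(V)$, and compare $(1-\Dl)^{-\frac\sigma2}$ acting on $\chi$-supported functions with the Euclidean operator $(1-\Dl_{\R^2})^{-\frac\sigma2}$, whose kernel is the classical Bessel potential $\mathcal G_\sigma$ satisfying $\mathcal G_\sigma(z)\les |z|^{\sigma-2}$ near the origin (and exponential decay at infinity) — see e.g.\ Stein. The difference of the two operators is smoothing of order at least one by standard pseudodifferential/parametrix considerations, hence has a kernel bounded better than $\dg(x,y)^{\sigma-2}$, and Lemma~\ref{LEM:Beloc} transfers the Euclidean pointwise bound to $\M$. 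The main (and really only) subtlety is bookkeeping the behaviour near $t=0$ (equivalently, near the diagonal): one must be careful that $\sigma<2$ is exactly what makes the near-diagonal integral converge and produce the stated power, and that $\sigma>0$ is what is needed for the operator to be a genuine smoothing of positive order; both constraints are used precisely once in the computation above.
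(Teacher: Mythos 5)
Your proposal is correct, and your primary argument (subordination formula, non-negativity and off-diagonal smoothness from the heat kernel, splitting the $t$-integral at $\dg(x,y)^2$ and using the Gaussian-type bound \eqref{heatkerbound} on the near-diagonal piece) is essentially identical to the paper's proof, which takes $L=10$ where you take $L$ large. The alternative parametrix/Bessel-potential route you sketch is not the one used in the paper, but the main argument stands on its own.
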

\begin{proof}
Using the eigenfunction expansion, we can write
\begin{align*}
G_\sigma(x,y) &= \sum_{n\ge 0}\frac{\varphi_n(x)\varphi_n(y)}{(1+\ld_n^\Dl)^\sigma} = \sum_{n\ge 0}\varphi_n(x)\varphi_n(y)\Gamma\Big(\frac{\sigma}2\Big)^{-1}\int_0^{\infty} t^{\frac{\sigma}2-1}e^{-t(1+\ld_n^\Dl)}dt\\
&= \Gamma\Big(\frac{\sigma}2\Big)^{-1}\int_0^{\infty} t^{\frac{\sigma}2-1}e^{-t}p_t^\Dl(x,y)dt,
\end{align*}
where $\Gamma$ is the Gamma function, and the equality holds in the sense of distributions on $\M\times\M$. Note that the last integral converges since we assumed $\sigma>0$. This shows that $G_\sigma$ is a non-negative distribution, smooth away from the diagonal, and from the estimate \eqref{heatkerbound} on the heat kernel, we get for $0<\dg(x,y)\le 1$:
\begin{align*}
G_\sigma(x,y)&\les \int_0^{1} t^{\frac{\sigma}{2}-2}\jb{t^{-\frac12}\dg(x,y)}^{-10}dt + 1\\
 &\les \int_0^{\dg(x,y)^2}t^{\frac{\sigma}{2}+3}\dg(x,y)^{-10}+\int_{\dg(x,y)^2}^1t^{\frac{\sigma}{2}-2} +1\\
 &\les \dg(x,y)^{\sigma-2}.
\end{align*}
This shows \eqref{Gsigma}.
\end{proof}

\vspace{2cm}

\noindent \textcolor{gray}{$\bullet$} H. Eulry -- Univ Rennes, CNRS, IRMAR - UMR 6625, F- 35000 Rennes, France.\\
{\it E-mail}: hugo.eulry@ens-rennes.fr

\smallskip

\noindent \textcolor{gray}{$\bullet$} A. Mouzard -- CNRS \& Department of Mathematics and Applications, ENS Paris, 45 rue d'Ulm, 75005 Paris, France.\\
{\it E-mail}: antoine.mouzard@math.cnrs.fr

\smallskip

\noindent \textcolor{gray}{$\bullet$} T. Robert -- Université de Lorraine, CNRS, IECL, F-54000 Nancy, France\\
{\it E-mail}: tristan.robert@univ-lorraine.fr


\begin{thebibliography}{99}


\bibitem{AllezChouk}
R.~Allez and K.~Chouk,
{\it The continuous Anderson Hamiltonian in dimension two,}
arXiv:1511.02718.

\bibitem{BDM}
I.~Bailleul, N. V.~Dang, and A.~Mouzard,  
{\it Analysis of the Anderson operator}, arXiv:2201.04705.

\bibitem{BDVZ}
N.~Barashkov, F.C.~De~Vecchi, I.~Zachhuber,
{\it Invariant Gibbs measure for Anderson NLW}, arXiv:2309.01635.

\bibitem{BG1}
N.~Barashkov, M.~Gubinelli,
{\it A variational method for $\Phi^4_3$,}
 Duke Math. J. 169 (17) 3339 - 3415, 15 November 2020.

\bibitem{BG2}
N.~Barashkov, M.~Gubinelli,
{\it On the variational method for Euclidean quantum fields in infinite volume,} arXiv:2112.05562.

\bibitem{BL}
N. Barashkov, P. Laarne,
{\it Invariance of $\phi^4$ measure under nonlinear wave and Schr\"odinger equations on the plane}, arXiv:2211.16111.

\bibitem{Bony}
J.-M. Bony, 
{\it Calcul symbolique et propagation des singularites pour les \'equations aux d\'eriv\'ees partielles non lin\'eaires.}, Ann. Sci. \'Ec. Norm. Sup\'er. (4), 14 (1981), pp.~209--246.

\bibitem{Bourgain94}
J.~Bourgain, 
{\it Periodic nonlinear Schrödinger equation and invariant measures,}
Comm. Math. Phys. 166(1), 1-26, (1994) 

\bibitem{Bourgain96}
J.~Bourgain,
{\it Invariant measures for the 2D-defocusing nonlinear Schrödinger equation,}
Comm. Math. Phys. 176(2), 421-445, (1996) 

\bibitem{BDNY}
B. Bringmann, Y. Deng, A. R. Nahmod, and H. Yue,
{\it Invariant Gibbs measures for the three dimensional cubic nonlinear wave
equation,}
arXiv:2205.03893v2.

\bibitem{BGT}
N.~Burq, P.~G\'erard, N.~Tzvetkov,
{\it Strichartz inequalities and the nonlinear Schr\"odinger equation on compact manifolds,}
 Amer. J. Math. 126 (2004), no. 3, 569--605. 
 
\bibitem{BT1}
N.~Burq, N.~Tzvetkov,
{\it Random data Cauchy theory for supercritical wave equations. I. Local theory,}
Invent. Math., 173(3):449--475, 2008

\bibitem{BT2}
N.~Burq, N.~Tzvetkov,
{\it Random data Cauchy theory for supercritical wave equations. II. A global existence result,} Invent. Math., 173(3):477--496, 2008

\bibitem{DPD}
G.~Da Prato, A.~Debussche,
{\it Strong solutions to the stochastic quantization equations}, Ann. Probab. 31 (2003), no. 4, 1900–1916.

\bibitem{DPT}
G.~Da Prato, L.~Tubaro
{\it Wick powers in stochastic PDEs: an introduction}, Technical Report UTM, 2006, 39 pp.

\bibitem{DBDF}
A. de Bouard, A. Debussche, R. Fukuizumi,
{\it Two dimensional Gross-Pitaevskii equation with space-time white noise}, IMRN (2023), no. 12, 10556–10614.

\bibitem{DLTV}
A. Debussche, R. Liu, N. Tzvetkov, N. Visciglia,
{\it Global well-posedness of the 2D nonlinear Schrödinger equation with multiplicative spatial white noise on the full space},
    arXiv:2301.10825 .
 
\bibitem{DM}
A. Debussche, J. Martin, 
{\it Solution to the stochastic Schrödinger equation on the full space},
Nonlinearity 32 (2019), no. 4, 1147–1174.

\bibitem{DW}
A. Debussche, H. Weber, 
{\it The Schrödinger equation with spatial white noise potential}, Electron. J. Probab. 23 (2018), no. 28, 16 pp.

\bibitem{DNY1}
Y.~Deng, A.~R.~Nahmod, H.~Yue, 
{\it Invariant Gibbs measures and global strong solutions for nonlinear Schr\"odinger equations in dimension two}, arXiv:1910.08492.

\bibitem{DL}
L.~Dumaz and C.~Labbé,
{\it  Localization crossover for the continuous Anderson Hamiltonian in 1-d},  Invent. math. (2023).

\bibitem{FN}
{\sc M.~Fukushima and S.~Nakao}, 
{\it On spectra of the {S}chr\"{o}dinger operator with a white Gaussian noise potential}, 
Z. Wahrscheinlichkeitstheorie und Verw. Gebiete, 37 (1976/77), pp.~267--274.

\bibitem{GH}
M. Gubinelli, M. Hofmanová,
{\it Global solutions to elliptic and parabolic $\Phi^4$ models in Euclidean space,}
Comm. Math. Phys. 368 (2019), no. 3, 1201–1266.

\bibitem{GIP}
M.~Gubinelli, P.~Imkeller, N.~Perkowski,
{\it Paracontrolled distributions and singular PDEs}, Forum Math. Pi 3 (2015), e6, 75 pp.

\bibitem{GUZ}
M.~Gubinelli, B.~Ugurcan and I. Zachhuber,
{\it Semilinear evolution equations for the Anderson Hamiltonian in two and three dimensions},
Stoch. and Part. Diff. Eq.: Analysis and Computations. 8:82--149, (2020).

\bibitem{GKO}
M.~Gubinelli, H.~Koch, T.~Oh,
{\it  Renormalization of the two-dimensional stochastic nonlinear wave equations}, Trans. Amer. Math. Soc. 370 (2018), no. 10, 7335–7359. 

\bibitem{GKOT}
M. Gubinelli, H. Koch, T. Oh, L. Tolomeo, {\it Global dynamics for the two-dimensional stochastic nonlinear
wave equations,} Int. Math. Res. Not. (2022), no. 21, 16954–16999.

\bibitem{Hairer}
M.~Hairer, 
{\it A theory of regularity structures,} Invent. Math. 198 (2014), no. 2, 269--504. 

\bibitem{HL}
M. Hairer, C. Labbé, 
{\it A simple construction of the continuum parabolic Anderson model on $\R^2$},
Electron. Commun. Probab. 20 (2015), no. 43, 11 pp.

\bibitem{HL2}
M. Hairer, C. Labbé, 
{\it Multiplicative stochastic heat equations on the whole space}, J. Eur.
Math. Soc. (JEMS) 20 (2018), no. 4, 1005–1054.

\bibitem{Labbe}
C.~Labb\'e,
{\it The continuous Anderson hamiltonian in $d\leq 3$},
J. Funct. Anal., 227(9):3187--3225, (2019).

\bibitem{LRS}
J.L.~Lebowitz, H.A.~Rose, E.R.~Speer,
{\it Statistical mechanics of the nonlinear Schrödinger equation,} 
J Stat Phys 50, 657–687 (1988).

\bibitem{MM}
L.~Morin, A.~Mouzard,
{\it 2D random magnetic Laplacian with white noise magnetic field}, Stochastic Process. Appl. 143 (2022), 160–184.


\bibitem{Mouzard22}
A.~Mouzard, 
{\it Weyl law for the Anderson Hamiltonian on a two-dimensional manifold}, 
Ann. Inst. Henri Poincar{\'e}, Probab. Stat. 58 (2022), no.~3, 1385--1425 (English).



\bibitem{MW}
J.C.~Mourrat, H.~Weber,
{\it Global well-posedness of the dynamic $\Phi^4$ model in the plane},
Ann. of Prob., vol. 45, no. 4, (2017), pp. 2398–476.

\bibitem{MZ}
A.~Mouzard, I.~Zachhuber,
{\it  Strichartz inequalities with white noise potential on compact surfaces}, to appear in Analysis \& PDE.

\bibitem{Nualart}
D.~Nualart
{\it The Malliavin Calculus and Related Topics}, Springer-Verlag Berlin Heidelberg 2006

\bibitem{ORSW}
T.~Oh, T.~Robert, P.~Sosoe, Y.~Wang,
{\it Invariant Gibbs dynamics for the dynamical sine-Gordon model}, Proc. Roy. Soc. Edinburgh Sect. A. 68 (2020), 1-17.

\bibitem{ORT}
T.~Oh, T.~Robert, N.~Tzvetkov,
{\it  Stochastic nonlinear wave dynamics on compact surfaces}, Annales Henri Lebesgue 6 (2023), 161-223.

\bibitem{ORTW}
T.~Oh, T.~Robert, N.~Tzvetkov, Y.~Wang,
{\it Stochastic quantization of Liouville conformal field theory}, arXiv:2004.04194.

\bibitem{ORW}
T.~Oh, T.~Robert, Y.~Wang,
{\it On the parabolic and hyperbolic Liouville equations},  Comm. Math. Phys. 387 (2021), no. 3, 1281–1351.

\bibitem{OT}
T.~Oh, L.~Thomann,
{\it A pedestrian approach to the invariant Gibbs measures for the 2-d defocusing nonlinear Schrödinger equations,}
Stoch PDE: Anal Comp 6, 397–445 (2018). 

\bibitem{OT1}
T.~Oh, L.~Thomann, 
{\it Invariant Gibbs measures for the 2-d defocusing nonlinear wave equations,} Ann. Fac. Sci. Toulouse Math. (6) 29 (2020), no. 1, 1--26.

\bibitem{OTWZ}
T. Oh, L. Tolomeo, Y. Wang, and G. Zheng,
{\it Hyperbolic $P(\Phi)_2$-model on the plane,}
arXiv:2211.03735v2.

\bibitem{OM}
E.M.~Ouhabaz and A.~Mouzard,
{\it A simple construction of the Anderson operator via its quadratic form in dimensions two and three}, arXiv:2309.02821. 

\bibitem{PaleyZygmund30}
R.~E. A.~C. Paley and A.~Zygmund, 
{\it On some series of functions. I,II.}, 
Proc. Camb. Philos. Soc., 26 (1930), pp.~337--357, 458--474.

\bibitem{PaleyZygmund32}
{\sc R.~E. A.~C. Paley and A.~Zygmund}, 
{\it On some series of functions. III}, 
Proc. Camb. Philos. Soc., 28 (1932), pp.~190--205.

\bibitem{RZ}
T.~Robert, Y.~Zine,
{\it Stochastic complex Ginzburg-Landau equation on compact surfaces}, preprint.

\bibitem{Shen}
H. Shen, 
{\it Stochastic quantization of an Abelian gauge theory}, Comm. Math. Phys. 384 (2021), no. 3, 1445-1512.

\bibitem{Simon}
B.~Simon,
{\it The $P(\Phi)_2$ euclidian (quantum) field theory}, Princeton, N.J. : Princeton University Press , 1974

\bibitem{Tambaca}
J.~Tambača,
{\it Estimates of the Sobolev Norm of a Product of Two Functions}, J. Math. Analysis and Applications 255 (2001), no. 1, 137--146,

\bibitem{Trenberth}
W.J.~Trenberth,
{\it Global well-posedness for the two-dimensional stochastic complex Ginzburg-Landau equation}, arXiv:1911.09246.

\bibitem{TW}
P. Tsatsoulis, H. Weber,
{\it Spectral gap for the stochastic quantization equation on the two-dimensional
torus}, Ann. Inst. Henri Poincaré - Proba. Stat. 54 (2018), no. 3, 1204–1249.

\bibitem{Tz}
N.~Tzvetkov, 
{\it Invariant measures for the defocusing nonlinear Schr\"odinger equation}, Ann. Inst. Fourier
58 (2008), no. 7, 2543–2604.

\bibitem{TV1}
N. Tzvetkov, N. Visciglia, 
{\it Two dimensional nonlinear Schrödinger equation with spatial white noise potential and fourth order nonlinearity}, Stoch PDE: Anal Comp (2022).

\bibitem{TV2}
N. Tzvetkov, N. Visciglia, 
{\it Global dynamics of the 2d NLS with white noise potential and
generic polynomial nonlinearity}, arXiv:2204.03280v1.

\bibitem{Ugurcan}
B. Ugurcan, 
{\it Anderson Hamiltonian and associated Nonlinear Stochastic Wave and
Schrödinger equations in the full space}, arXiv:2208.09352v1.

\bibitem{Ustunel}
A. S. Üstünel. 
{\it Variational calculation of Laplace transforms via entropy on Wiener space and applications},
J. Funct. Anal., 267(8):3058–3083, (2014).
\end{thebibliography}
\end{document}